\DeclareMathAlphabet{\pazocal}{OMS}{zplm}{m}{n}
\newtheorem{theorem}{Theorem}[section]
\newtheorem{lemma}[theorem]{Lemma}
\newtheorem{proposition}[theorem]{Proposition}
\newtheorem{corollary}[theorem]{Corollary}
\theoremstyle{definition}
\newtheorem{definition}[theorem]{Definition}
\newtheorem{example}[theorem]{Example}
\newtheorem{conjecture}{Conjecture}[section]
\newtheorem{question}[conjecture]{Question}
\newtheorem{main}{Theorem}
\theoremstyle{remark}
\newtheorem{remark}[theorem]{Remark}
\numberwithin{equation}{section}
\newcommand{\N}{\ensuremath{\mathbb{N}}}
\newcommand{\B}{\ensuremath{\mathcal{B}}}
\newcommand{\set}[1]{\left\{#1\right\}}
\renewcommand{\u}{\pazocal{U}}
\newcommand{\ub}{\mathbf{U}} 
\newcommand{\ul}{\ensuremath{\mathcal{U}}} 
\newcommand{\V}{\pazocal{V}}
\newcommand{\Tr}{\mathcal{T}}
\newcommand{\NTr}{\mathcal{NT}}
\newcommand{\vl}{\mathcal{V}}
\newcommand{\vb}{\mathbf{V}}
\newcommand{\I}{\mathcal{I}}
\newcommand{\IS}{\mathcal{I}^{*}}
\newcommand{\SI}{\mathcal{S}_{\mathcal{I}}}
\newcommand{\WI}{\mathcal{W}_{\mathcal{I}}}
\newcommand{\WB}{\mathbf{W}}
\newcommand{\C}{\mathcal{C}}
\newcommand{\Ci}{\mathcal{C}_1'}
\newcommand{\Cii}{\mathcal{C}_2'}
\newcommand{\Ciii}{\mathcal{C}_3'}
\newcommand{\Civ}{\mathcal{C}_4'}
\newcommand{\Cv}{\mathcal{C}_5'}
\newcommand{\om}{\omega}
\newcommand{\ga}{\gamma}
\newcommand{\f}{\infty}
\newcommand{\al}{\alpha}
\newcommand{\si}{\sigma}
\newcommand{\La}{\pazocal{L}}
\newcommand{\F}{\pazocal{F}}
\newcommand{\x}{\mathbf{x}}
\newcommand{\y}{\mathbf{y}}
\newcommand{\Sig}{\Sigma_{M}}
\newcommand{\lle}{\preccurlyeq}
\newcommand{\lge}{\succcurlyeq}
\title{On the specification property and synchronisation of unique $q$-expansions}
\date{\today}
\dedicatory{In loving memory of Patricia Barrera}
\author{Rafael Alcaraz Barrera}
\address{Instituto de F\'isica, Universidad Aut\'onoma de San Luis Potos\'i. Av. Manuel Nava 6, Zona Universitaria, C.P. 78290. San Luis Potos\'i, S.L.P. M\'exico}
\email{ralcaraz@ifisica.uaslp.mx}
\subjclass[2010]{Primary 37B10, 11A63; Secondary 37B40, 68R15.}
\keywords{Expansions in non integer bases, specification property, synchronised systems, Hausdorff dimension}
\thanks{Research of R. Alcaraz Barrera was sponsored by CONACYT-FORDECYT 265667}
\begin{document}
\begin{abstract}
Given a positive integer $M$ and $q \in (1, M+1]$ we consider expansions in base $q$ for real numbers $x \in \left[0, {M}/{q-1}\right]$ over the alphabet $\set{0, \ldots, M}$. In particular, we study some dynamical properties of the natural occurring subshift $(\vb_q, \si)$ related to unique expansions in such base $q$. We characterise the set of $q \in \vl \subset (1,M+1]$ such that $(\vb_q, \si)$ has the specification property and the set of $q \in \vl$ such that $(\vb_q, \si)$ is a synchronised subshift. Such properties are studied by analysing the combinatorial and dynamical properties of the quasi-greedy expansion of $q$. We also calculate the size of such classes as subsets of $\vl$ giving similar results to those shown by Blanchard \cite{Bla1989} and Schmeling in \cite{Sch1997} in the context of $\beta$-transformations.    
\end{abstract}
\maketitle

\section{Introduction}
\label{sec:intro}

\noindent Since their introduction in R\'{e}nyi's \cite{Ren1957} and Parry's \cite{Par1960} seminal papers, the theory of expansions in non-integer bases, colloquially known as \emph{$\beta$-expansions} or \emph{$q$-expansions}, has received much attention by researchers in many areas of mathematics, most notably ergodic theory, fractal geometry, number theory and symbolic dynamics.

Let us remind the reader the setting of $q$-expansions. Let $M \in \N$ and set 
$$\Sig = \mathop{\prod}\limits_{i=1}^{\f}\set{0, 1, \ldots, M}$$
equipped with the product topology. Given $q\in(1,M+1]$ for every $x \in I_{q,M} = \left[0, M/(q-1)\right]$ there is a sequence $\x = x_1x_2\ldots \in \Sig$ satisfying
\begin{equation}
\label{eq:q-expansion}
x=\pi_{q}(\x):=\sum_{i=1}^\f x_i/q^i
\end{equation}
The sequence $\x$ is called an \textit{expansion of $x$ in base $q$} (or simply a \textit{$q$-expansion of $x$}).

The \emph{greedy $q$-shift} (most commonly known as \emph{$\beta$-shift}), emerges from the set of expansions generated by the greedy algorithm for $x \in [0,1]$. The greedy $q$-shift is the subshift $(\Sigma_q, \si)$ given by $$\Sigma_{q} = \set{\x \in \Sig : \si^n(\x) \lle \al(q) \textrm{ for every } n \geq 0}$$ where $\al(q)$ stands for the quasi-greedy expansion of $1$ in base $q$, that is, the lexicographically largest infinite $q$-expansion of $1$. The properties of $(\Sigma_q, \si)$ have been studied extensively. For example, it is widely known that the topological entropy of $(\Sigma_q, \si)$ is $\log (q)$. The following theorem summarises the results regarding the symbolic dynamics and the size of certain classes of $q$-shifts ---see \cite{BerMat1986}, \cite{FalPfi2009}, \cite{Sch1997} and \cite{Sid20031}.

\begin{theorem}\label{teo:Schmeling}
Let $q \in (1,M+1]$ and $(\Sigma_q, \si)$ be the corresponding greedy $q$-shift. Then: 
\begin{enumerate}[$i)$]
\item $(\Sigma_q, \si)$ is topologically mixing for every $q \in (1,M+1]$. 
\item $(\Sigma_q, \si)$ is a subshift of finite type if and only if $\al(q)$ is periodic. Moreover, $$\C_1 = \set{q \in (1,M+1] : (\Sigma_q, \si) \textrm{ is a subshift of finite type }}$$ is a countable and dense subset of $(1,M+1]$.
\item $(\Sigma_q, \si)$ is a sofic subshift if and only if $\al(q)$ is eventually periodic. Moreover, $$\C_2 = \set{q \in (1,M+1] : (\Sigma_q, \sigma) \textrm{ is a sofic subshift }}$$ is a countable subset. 
\item $(\Sigma_q, \si)$ has the specification property (see Definition \ref{def:topologicalproperties} $iii)$) if and only if $\al(q)$ does not contain arbitrarily long strings of consecutive $0$'s. Moreover $$\C_3 = \set{q \in (1,M+1] : (\Sigma_q, \sigma) \textrm{ has the specification property }}$$ is an uncountable subset of Lebesgue measure zero and $\dim_{\operatorname{H}}(\C_3) = 1$.
\item $(\Sigma_q, \si)$ is synchronised (see Definition \ref{def:topologicalproperties} $iv)$) if and only if the orbit of $\al(q)$ is not dense in $\Sigma_q$. Moreover $$\C_4 = \set{q \in (1,M+1]: \Sigma_q \textrm{ is synchronised }}$$ is a meagre set in $(1,M+1]$.
\item The set $$\C_5 = (1,M+1] \setminus \C_4$$ is a residual set. 
\end{enumerate}
\end{theorem}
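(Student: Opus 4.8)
The plan is to obtain $vi)$ as an immediate consequence of $v)$. Recall that a subset of a topological space is \emph{residual} (comeagre) exactly when its complement is meagre, i.e. a countable union of nowhere dense sets. Part $v)$ already asserts that $\C_4$ is meagre in $(1,M+1]$, so by definition $\C_5 = (1,M+1]\setminus\C_4$ is residual. The only point worth recording is that $(1,M+1]$, with the topology inherited from $\mathbb{R}$, is locally compact Hausdorff and hence a Baire space, so that residual subsets of it are dense (indeed contain a dense $G_\delta$); this is what makes the assertion meaningful rather than vacuous.

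If a self-contained argument not relying on the bare statement of $v)$ is preferred, I would instead exhibit a dense $G_\delta$ contained in $\C_5$. By the characterisation in $v)$, $\C_5$ is precisely the set of $q\in(1,M+1]$ for which the forward orbit $\set{\si^n(\al(q)) : n\geq 0}$ is dense in $\Sigma_q$. Enumerate the finite words over $\set{0,\ldots,M}$ as $(w_k)_{k\geq 1}$, and for each $k$ let $G_k$ consist of those $q$ for which $w_k$ is \emph{not} an admissible word of $\Sigma_q$, together with those $q$ for which some iterate $\si^n(\al(q))$ begins with $w_k$. Then $\bigcap_{k\geq 1}G_k\subseteq\C_5$, since a parameter in this intersection has the $\si$-orbit of $\al(q)$ meeting every cylinder of $\Sigma_q$; so the task reduces to showing that each $G_k$ is open and dense in $(1,M+1]$. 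Density is automatic once $v)$ is granted (it can also be seen directly by perturbing $\al(q)$), while openness rests on the monotone, one-sided-continuous dependence of $\al(q)$ on $q$ and on the fact that the collection of admissible words of $\Sigma_q$ grows monotonically with $q$ — both standard facts for greedy $q$-shifts.

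The substantive content lies entirely in $v)$, namely the proof that $\C_4$ is meagre; once that is in hand $vi)$ costs nothing. For the alternative route, I expect the openness of the sets $G_k$ to be the only delicate point, precisely because $q\mapsto\al(q)$ is not continuous and one must control how the cylinders of $\Sigma_q$ deform as $q$ ranges over a neighbourhood of a given parameter — in particular the definition of $G_k$ may need a small adjustment at the boundary parameters where $w_k$ becomes admissible; everything else is bookkeeping.
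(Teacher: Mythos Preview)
Your deduction of $vi)$ from $v)$ is correct: by definition the complement of a meagre set is residual, and $(1,M+1]$ being a Baire space makes the statement non-vacuous. However, there is nothing to compare against in the paper. Theorem~\ref{teo:Schmeling} is not proved in the paper at all; it is stated in the introduction as a summary of known results from the literature, with the citations \cite{BerMat1986}, \cite{FalPfi2009}, \cite{Sch1997} and \cite{Sid20031} given immediately before the statement. In particular, parts $v)$ and $vi)$ are due to Schmeling \cite{Sch1997}, and the paper simply quotes them as background.

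Your alternative route via an explicit dense $G_\delta$ is in the spirit of Schmeling's original argument, and you are right to flag the openness of the sets $G_k$ as the only delicate point, since $q\mapsto\al(q)$ is only one-sided continuous. But since the paper itself offers no proof, there is no meaningful comparison to make beyond noting that your first paragraph already suffices.
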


Perhaps the most noticeable feature of expansions in non-integer bases is the fact that they are not always unique. In fact, for any $k\in \N \cup\{\aleph_{0}\}\cup\{2^{\aleph_{0}}\}$ and any $M\in \N$, there is $q\in(1,M+1]$ and $x\in I_{M,q}$ such that $x$ has precisely $k$ different $q$-expansions---see \cite{ErdJooKom1990, Sid2009}. Sidorov in \cite{Sid2003} describes the generic behaviour of the set of expansions; that is, for any $q\in(1,M+1)$ and for Lebesgue-almost-every $x\in I_{M,q}$ has a continuum of $q$-expansions. The situation described above is of course completely different to the usual expansions in integer bases where every number has a unique $M$-expansion except for a countable set of exceptions, the $M$-adic rationals, that have precisely two.

Another particularly well-studied topic in expansions in non-integer bases is the set of numbers in $I_{M,q}$ with a unique $q$-expansion. The properties of this set have received lots of attention recently, see for example \cite{AlcBakKon2016, All2017, Bak2014, DeVKom2008, GeTan2017, KalKonLiLu2017, KonLi2015} and references therein. Let us remind the reader of this setting. For $q\in(1, M+1]$ the  \emph{univoque set on base $q$} is $$\u_q:=\{x\in[0,M/(q-1)]: x \textrm{ has a unique }q\textrm{-expansion}\}.$$  We consider $$\ub_q:=\pi_{q}^{-1}(\u_{q}) \subset \Sig$$ to be the corresponding set of expansions. The pair $(\ub_q, \si)$ is not necessarily a subshift \cite[Theorem 1.8]{DeVKom2008}. However, the set
\begin{equation}\label{eq:symshift}
\vb_q:=\set{\x \in \Sig : \overline {\al(q)}\preccurlyeq \si^n(\x)\preccurlyeq \al(q)\textrm{ for every } n\geq 0}
\end{equation}
where $\overline{\al(q)} = (M-\al_i(q))$, is a closed, forward-$\si$-invariant and non-empty subset of $\Sig$ for every $q \in [q_G(M), M+1]$ where $q_G(M)$ is \emph{the generalised golden ratio} introduced by Baker in \cite{Bak2014} (see also \cite[Lemma 2.4]{AlcBakKon2016}). We will refer to $(\vb_q, \si)$ as the \emph{symmetric $q$-shift}. A natural question to ask is when the symmetric $q$-shift satisfies similar properties to the properties of the greedy $q$-shift. As we will see along the paper, the behaviours of the subshifts $(\Sigma_q, \si)$ and $(\vb_q, \si)$ are completely different. In this direction, it was shown in \cite{KomKonLi2015} (see also \cite{AllKon2018}) that the entropy function, i.e. $H:(1, M+1] \to [0, 1]$ given by $H(q) = h_{\operatorname{top}}(\vb_q)$, is a devil staircase. In \cite{AlcBakKon2016} a full description of the plateaus of the entropy function, as well as a full description of its bifurcation set, was given. Also, in \cite[Theorem 1]{AlcBakKon2016} the set of $q$'s such that the symmetric $q$-shift is a transitive subshift was characterised using the symbolic properties of the quasi-greedy expansion of $1$ in base $q$. It is also worth mentioning that in \cite{AlcBakKon2016} it was shown that the sets 
$$\Tr = \set{q \in [q_G(M), M+1] : (\vb_q, \sigma) \text{ is topologically transitive}}$$ 
and 
$$\NTr = \set{q \in [q_G(M), M+1] : (\vb_q, \si) \text{ is not topologically transitive}}$$ 
both have positive Lebesgue measure. Furthermore, for every $q \in (q_G(M), q_T(M))$, the subshift $(\vb_q, \si)$ is not transitive.
We will give a brief explanation of the constants $q_G(M)$ and $q_T(M)$ in Section \ref{sec:prelim}.

The objective of the paper is to solve some questions similar to the ones posed by Blanchard in \cite{Bla1989} in the context of symmetric $q$-shifts and to develop a similar result to Theorem \ref{teo:Schmeling} ---see \cite[p. 693]{Sch1997}. For this purpose, inspired by \cite{Bla1989} we introduce the following classes of subshifts: 

\begin{definition}\label{def:classesofsubshifts}
Let 
$$\vl = \vl(M) = \set{q \in (1, M+1]: \overline{\al(q)} \preccurlyeq \sigma^n(\al(q)) \preccurlyeq \al(q) \textrm{ for every } n \geq 0}.$$ 
We define the following classes of symmetric $q$-shifts:
\begin{align*}
\Ci &= \Ci(M) = \set{q \in \vl: (\vb_q, \si) \textrm{ is a subshift of finite type} }\\
\Cii &= \Cii(M) = \set{q \in \vl: (\vb_q, \si) \textrm{ is a strictly sofic subshift} }\\
\Ciii &= \Ciii(M) = \set{q \in \vl: (\vb_q, \si) \textrm{ is a subshift with the specification property} }\\ 
\Civ &= \Civ(M) = \set{q \in \vl: (\vb_q, \si) \textrm{ is a synchronised subshift} }\\
\Cv &= \Cv(M) = \vl \setminus \Civ.
\end{align*}
\end{definition}

From \cite[Theorem 1.7]{DeVKom2008} it is not difficult to see that for Lebesgue-almost-every $q \in (1,M+1]$, the subshift $(\vb_q, \sigma)$ is a subshift of finite type. However, we can ask about the size of the classes $\Ci, \Cii, \Ciii, \Civ \textrm{ and } \Cv$ and their topological structure as subsets of $\vl$. Also, we want to understand what the symbolic properties of $\al(q)$ are when $q$ belongs to one of the considered classes.  

Using the results obtained by De Vries and Komornik in \cite[Theorem 1.7, 1.8]{DeVKom2008} (see also \cite[Theorem 1.3]{LiSahSam2016}) it is immediate that the class $\Ci$ is countable and dense in $\vl$. Moreover, $q \in \Ci$ if and only if $\al(q)$ is periodic. As a consequence of \cite[Proposition 2.14]{KalSte2012}, Kalle and Steiner characterised the class $\Cii$; namely, $q \in \Cii$ if and only if $\al(q)$ is eventually periodic. It is not difficult to check that $\Cii$ is a countable set and that 
$$\Cii \subset \ul = \ul(M) = \set{q \in (1, M+1] : \overline{\al(q)} \prec \si^n(\al(q)) \prec \al(q) \text{ for every } n \geq 0} \cup \set{M+1}.$$ 

The main result of our work is the following. 
\begin{main}\label{th:symshift1}
\emph{Let $M \in \mathbb{N}$, $q \in [q_{G}(M), M+1]$ and consider $(\vb_{q}, \sigma)$ the symmetric $q$-shift. Then:
\begin{enumerate}[$i)$]
\item $(\vb_{q}, \si)$ has the specification property if and only if $\al(q)$ is a strongly irreducible sequence (see Definition \ref{def:strongweaksequences}) and there exists $K \in \N$ such that $d(\si^k(\al(q)), \overline{\al(q)}) \geq 1/2^K$ for every $k \in \N$.
\item $(\vb_q, \si)$ is synchronised if and only if $\al(q)$ is an irreducible sequence and $\al(q), \, \overline{\al(q)}$ are not dense in $\vb_q$. Moreover $\dim_{\operatorname{H}}(\Civ) = 1$. 
\item $\dim_{\operatorname{H}}(\Cv) = 1.$
\end{enumerate}
}
\end{main}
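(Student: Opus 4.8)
\noindent\emph{Proof proposal.} The plan is to reduce all three parts to combinatorial statements about the quasi-greedy expansion $\al=\al(q)$, through the lexicographic description $\vb_q=\{\x:\overline{\al}\lle\si^n(\x)\lle\al\text{ for all }n\ge0\}$. The bookkeeping device is the \emph{state} of an admissible word $w$: the pair $(k_+(w),k_-(w))$, where $k_+(w)$ (resp.\ $k_-(w)$) is the length of the longest suffix of $w$ that is a prefix of $\al$ (resp.\ of $\overline{\al}$); the admissible right extensions of $w$ are then controlled by $\si^{k_+(w)}(\al)$ from above and by $\si^{k_-(w)}(\overline{\al})$ from below. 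Throughout I use that $\si^n(\al)\lle\al$ for all $n$ (so that $q\in\vl$ encodes precisely $\overline{\al}\lle\si^n(\al)$ for all $n$), and the transitivity criterion of \cite[Theorem~1]{AlcBakKon2016}: $(\vb_q,\si)$ is topologically transitive if and only if $\al$ is irreducible.

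For part $(i)$ I would prove the two implications of the characterisation of the specification property (Definition~\ref{def:topologicalproperties}$(iii)$) separately. For sufficiency assume $\al$ is strongly irreducible (Definition~\ref{def:strongweaksequences}) and $d(\si^k(\al),\overline{\al})\ge 2^{-K}$ for all $k$, i.e.\ no $\si^k(\al)$ agrees with $\overline{\al}$ on a prefix of length $\ge K$. Given admissible $u,v$, I would build a connecting word $w$ of length bounded by a constant $N=N(q)$: first \emph{lift} the right end of $u$, using strong irreducibility to append a controlled block $w_1$ so that $uw_1$ is admissible with upper state $k_+$ bounded (this is the combinatorial heart, and exactly where the precise content of Definition~\ref{def:strongweaksequences} is used); then run along $\al$, where the uniform-gap hypothesis prevents the lower state $k_-$ from exceeding $K$, so one may cut off after boundedly many symbols at a state whose follower set contains $v$, and append $v$; the glued word $uw_1w_2v$ has $|w_1w_2|\le N$, and iterating the construction furnishes the required periodic point. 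For necessity suppose specification holds with gluing length $N$: failure of strong irreducibility yields admissible $u,v$ with no admissible bridge of length $\le N$, while failure of the gap condition ($\si^{k_n}(\al)$ sharing prefixes with $\overline{\al}$ of length $\to\infty$) lets one take $u_n$ a prefix of $\al$ of length $k_n$ and an extremal word $v_n$ whose insertion is obstructed by the forced agreement with $\overline{\al}$, forcing bridges of unbounded length. This is the two-sided refinement of the no-long-strings-of-zeros criterion of Theorem~\ref{teo:Schmeling}$(iv)$.

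For part $(ii)$, recall that a subshift is synchronised (Definition~\ref{def:topologicalproperties}$(iv)$) iff it is transitive and possesses an intrinsically synchronising word; transitivity is $\al$ irreducible by \cite{AlcBakKon2016}, so under that hypothesis it remains to characterise existence of a synchronising word. If $\al$ does not have dense orbit in $\vb_q$ (equivalently, by the symmetry $\x\mapsto\overline{\x}$ of $\vb_q$, neither does $\overline{\al}$), choose an admissible $z$ that is a factor of no $\si^n(\al)$; by transitivity pick $c$ with $zc\overline{z}$ admissible, and note $zc\overline{z}$ is a factor of neither $\al$ nor $\overline{\al}$ (else $z$ itself, resp.\ the suffix $z$ of its reflection, would be a factor of $\al$). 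Then $zc\overline{z}$ is synchronising: wherever it occurs its state is strictly below $|zc\overline{z}|$ in each coordinate and therefore depends only on $zc\overline{z}$, so all occurrences land in the same state. Conversely, if $\al$ has dense orbit then $\mathcal{L}(\vb_q)$ equals the language of the orbit of $\al$ (and of $\overline{\al}$), every admissible word is a factor of $\al$ and of $\overline{\al}$, so for any candidate word one exhibits contexts realising different states; hence no synchronising word exists and $(\vb_q,\si)$ is not synchronised. Finally, for $\dim_{\operatorname{H}}(\Civ)=1$ I would construct, for each $\ep>0$, a Cantor set of bases $q\in\vl$ close to $M+1$ with $\dim_{\operatorname{H}}>1-\ep$ and contained in $\Civ$: prescribe $\al(q)$ as a free concatenation of blocks drawn from a large pool $\mathcal{B}_\ep$, engineered so that the resulting sequence is always $*$-admissible (hence $q\in\vl$), irreducible, and avoids a fixed word (hence has non-dense orbit); since $q\mapsto\al(q)$ is increasing and distorts Hausdorff dimension only by a factor tending to $1$ as $q\to M+1$, letting $\#\mathcal{B}_\ep\to\infty$ drives the dimension to $1$. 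The same scheme, refined to also enforce strong irreducibility and the uniform gap, yields $\dim_{\operatorname{H}}(\Ciii)=1$.

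For part $(iii)$, a synchronised subshift is in particular transitive, so $\Civ\subseteq\Tr$ and hence $\Cv=\vl\setminus\Civ\supseteq\vl\cap\NTr$. As recalled in the introduction, $(\vb_q,\si)$ fails to be transitive for every $q$ in the interval $(q_G(M),q_T(M))$, which is contained in $\vl$; thus $\Cv$ contains a set of positive Lebesgue measure and $\dim_{\operatorname{H}}(\Cv)=1$. I expect part $(i)$ to be the main obstacle: turning the ``lift, follow $\al$, cut off'' scheme into a bound $N$ independent of $u,v$ requires carefully meshing the purely combinatorial strong irreducibility with the metric separation from $\overline{\al}$, and the necessity direction requires exhibiting exactly the right extremal word pairs to certify that each of the two hypotheses is indispensable; the full-dimension construction in part $(ii)$ is the secondary difficulty, essentially a bookkeeping argument preserving admissibility and irreducibility while retaining combinatorial freedom.
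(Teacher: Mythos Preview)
Your approaches to the characterisations in parts $(i)$ and $(ii)$ are broadly aligned with the paper's, though the paper organises the argument differently: rather than tracking a ``state'' pair directly, it works through the \emph{natural approximation from below} $\{q^-_m\}$ (Definition~\ref{def:approximations}), showing that irreducible periodic $\al(q^-_m)$ give mixing SFTs (Proposition~\ref{pr:mixingsft}) and then bounding the bridge length via Lemmas~\ref{lem:spec3} and~\ref{lem:suffix}, which play the role of your ``lift'' step. For necessity the paper isolates the two obstructions separately (Propositions~\ref{pr:spec1} and~\ref{pr:spec4}), essentially as you sketch. For the synchronisation characterisation, the paper's Lemma~\ref{lem:synch1} is slightly simpler than your $zc\overline{z}$ construction: any word that is a factor of neither $\al$ nor $\overline{\al}$ is already intrinsically synchronising, so the extra concatenation is unnecessary. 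For $\dim_{\operatorname{H}}(\Civ)=1$ your block-concatenation scheme is exactly the $\I_N$ construction of \cite{KomKonLi2015} used in Proposition~\ref{pr:hdc3}.

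Your argument for part $(iii)$, however, has a genuine error. You claim that the interval $(q_G(M),q_T(M))$ is contained in $\vl$, hence that $\Cv$ contains a set of positive Lebesgue measure. This is false: $\vl$ has Lebesgue measure zero, since $\vl\setminus\overline{\ul}$ is countable (Theorem~\ref{thm:uclosureuv}) and $\overline{\ul}$ is a null set. The classes $\Ci,\dots,\Cv$ are defined as subsets of $\vl$, so $\Cv\subset\vl$ never has positive measure, and $\vl\cap(q_G,q_T)$ is a measure-zero set whose Hausdorff dimension is in fact bounded above by $\dim_{\operatorname{H}}(\u_{q_T})<1$ (via \cite[Theorem~2]{KalKonLiLu2017}). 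Thus the non-transitive part of $\vl$ below $q_T$ cannot by itself give full dimension.

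The paper's route is entirely different and requires real work: it constructs, for every $q\in\overline{\B}\cap[q_T,M+1]$ and every $\varepsilon>0$, a base $p\in\I$ with $|p-q|<\varepsilon$ such that the orbit of $\al(p)$ is dense in $\vb_p$ (Proposition~\ref{pr:existnonsynchsubshift} and Corollary~\ref{pr:notsyncisdense}). These $p$ are \emph{transitive but not synchronised}, hence lie in $\Cv\cap\I$, and their density in $\overline{\B}\cap[q_T,M+1]$ combined with the continuity of local Hausdorff dimension on $\overline{\B}$ (Proposition~\ref{pr:continuityinB}) and $\dim_{\operatorname{H}}^{\operatorname{loc}}(\overline{\B},q)\to 1$ as $q\to M+1$ yields $\dim_{\operatorname{H}}(\Cv)=1$. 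The construction of such $p$ (Subsection~\ref{constructdense}) is the substantial missing ingredient in your proposal: one must build an irreducible $\al(p)$ that visits every admissible word of $\vb_p$, which is done by an inductive scheme producing words $\theta(q_n)$ containing all of $B_{m_n}(\vb_{q_n})$ while remaining irreducible and fundamental (Lemmas~\ref{lem:primitive} and~\ref{lem:densegenerator}).
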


The structure of the paper is the following. In Section \ref{sec:prelim} we recall the relevant concepts of symbolic dynamics and unique $q$-expansions needed to develop our investigation. In Section \ref{sec:approx} we introduce \emph{the natural approximation from below} of a symmetric subshift $(\vb_q, \si)$. Using this approximation we prove that every transitive symmetric subshift is mixing and coded. In Section \ref{sec:topologicaldynamics} we will characterise the elements of the class $\Ciii$. In Section \ref{sec:synchr} we will study the classes $\Civ$ and $\Cv$. Finally, in Section \ref{sec:hd} we will calculate the Hausdorff dimension of the classes $\Ciii$, $\Civ$ and $\Cv$.  

\section{Preliminaries}
\label{sec:prelim}

\noindent In this section, we recall some basic tools and definitions used in our study. We will adopt most of the notation used in \cite{AlcBakKon2016} and \cite{KalKonLiLu2017} and in Subsection \ref{subsec:betaexpansions} we summarise the  results in those papers relevant for this work.

We refer the reader to \cite{LinMar1995} for a thorough exposition of symbolic dynamics. A standard reference for the theory of Hausdorff dimension is \cite{Fal1990}. Finally, detailed works on unique expansions in non-integer bases are \cite{DarKat1995, DeVKom2008, Kom2011, KomLor2007}. 

\subsection{Symbolic Dynamics}
\label{subsec:symbolic}

We recall some basic notions of symbolic dynamics firstly. Fix $M \in \N$. We call the set $\set{0, \ldots, M}$ an \emph{alphabet} and its elements are called \emph{symbols}. A \emph{word} $\om = w_1 \ldots w_n$ is a finite string of symbols. We denote the \emph{length of a word $\om$} by $|\om|$. Given two words $\om = w_1 \ldots w_n$ and $\nu = v_1 \ldots v_m$ their \emph{concatenation} is the word $\om \nu = w_1 \ldots w_n v_1 \ldots v_m$. Also, $\om^k$ is the word obtained by concatenating $\om$ with itself $k$ times and $\om^\f$ is the infinite concatenation of $\om$ with itself. We denote the \emph{empty word }by $\epsilon$.

We consider 
\[
\Sig = \mathop{\prod}\limits_{n=1}^{\f} \set{0, 1, \ldots, M},
\] 
i.e. $\Sig$ is the set of infinite one-sided sequences whose symbols belong to $\set{0, \ldots, M}$. It is a well known fact that $\Sig$ is a compact space with the product topology. Also, the product topology on $\Sig$ is equivalent to the topology induced by the distance given by 

\begin{equation}\label{eq:metric d}
d(\x,\y) = \left\{
\begin{array}{clrr}
2^{-j} & \hbox{\rm{ if }}  \x \neq \y, & \hbox{\rm{where }} j = \min\{i : x_i \neq y_i \};\\
0 & \hbox{\rm{ otherwise.}}&\\
\end{array}
\right.
\end{equation}

The \emph{one-sided shift map} $\si: \Sig \to \Sig$ is given by $\sigma(\x) = \sigma((x_{i})) = (x_{i+1})$. We call the pair $(\Sig, \si)$ a \emph{full one-sided shift}. We call a sequence $\x \in \Sig$ \emph{periodic} if there exist $m \in \mathbb{N}$ such that $\sigma^m(\x) = \x$. The smallest $m$ satisfying this property is called \emph{the period of $\x$} and the word $x_1 \ldots x_m$ is called \emph{the periodic block of $\x$}. A sequence $\x \in \Sig$ is said to be \emph{eventually periodic} if there exist $m \in \mathbb{N}$ such that $\si^m(\x)$ is a periodic sequence and $\sigma^n(\x)$ is not a periodic sequence for every $0 \leq n \leq m-1$.

A word $\omega = w_1 \ldots w_n$ is called a \emph{factor of a sequence $\x \in \Sig$} if there exists $k \in \mathbb{N}$ such that $x_{k+1} \ldots x_{k+n} = w_1 \ldots w_n$. If $k = 1$ we say that $\omega$ is a \emph{prefix of $\x$}. Note that the notions of factor and prefix can be defined on finite words in a similar fashion. Then, if a word $\omega$ of length $n$ is a factor of a word $\nu$ and $w_1 \ldots w_n = v_{m-n+1} \ldots v_m$ where $|\nu| = m$ we call $\omega$ a \emph{suffix of $\nu$ of length $n$}.   

Given a sequence $\x \in \Sig$ we define its \emph{reflection} as $\overline{\x} = (\overline{x_i}) = (M-x_i).$ For a word $\om$ we define the \emph{reflection of $\om$} similarly. In this case $\overline{\om} = \overline{w_1 \ldots w_n} = \overline{w_1} \ldots \overline{w_n}.$ If $\om$ is a word of length $n$ such that $w_n < M$ we write $$\om^+ = w_1 \ldots w_n^+ = w_1 \ldots w_{n-1} w_n+1$$ and if $\om$ is a word of length $n$ with $w_n > 0$ we put $$\om^- = w_1 \ldots w_n^- = w_1 \ldots w_{n-1} w_n-1.$$ 

Throughout this work, we use the \emph{lexicographic order}. Given $\x, \y \in \Sigma_M$ we say that $\x \prec \y$ if there exist $n \in \N$ such that $x_i = y_i$ for every $i \leq n -1$, and $x_n < y_n$. Also, we write $\x \lle \y$ if $\x = \y$ or $\x \prec \y$. We can define the lexicographic order between words of the same length in a similar way.

A \emph{subshift} $(X, \si)$ is a pair where $X$ is a non-empty, closed and forward-$\si$-invariant subset of $\Sig$ and $\si$ is understood to be $\si_{\mid_X}$. From \cite[Theorem 6.1.21]{LinMar1995} we have that for every closed, non-empty and forward-$\si$-invariant subset $X$ of $\Sig$ there is a set of finite words $\F$ with symbols in $\set{0, \ldots , M}$ such that $X = X_{\F}$ where 
$$X_{\F} = \set{\x \in \Sigma_M : \om \textrm{ is not a  factor of } \x \textrm{ for any } \omega \in \F}.$$ 
If $\F$ can be chosen to be finite we say that $(X, \si)$ is a \emph{subshift of finite type}. We say that a subshift $(X, \si)$ is a \emph{sofic subshift} if it is a factor of a subshift of finite type, i.e. there exists a subshift of finite type $(X', \sigma)$ (not necessarily in the same alphabet) and a semi-conjugacy $h: X' \to X$, i.e. a continuous and surjective map such that $h \circ \si_{\mid_{X'}} = \si_{\mid_{X}} \circ h$. Alternatively, a subshift $(X,\si)$ is sofic if there is a labelled graph $\pazocal{G} = (\mathcal{G}, \mathcal{E})$ which represents $(X, \si)$.

For a subshift $(X, \si)$ and $n \in \mathbb{N}$, \emph{the set of admissible words of length $n$} is given by 
$$B_n(X) = \set{\omega : \omega \textrm{ is a factor for some } \x \in X \textrm{ and } |\omega| = n}.$$ 
For $n = 0$, $B_n(X) = \set{\epsilon}$. We define \emph{the language of $X$} by 
$$\La(X) = \mathop{\bigcup}\limits_{n=0}^{\f}B_n(X).$$ 
Given $M \in \mathbb{N}$ and a subshift $(X, \si)$ of $\Sig$ we define \emph{the topological entropy of $(X, \si)$} by $$h_{\operatorname{top}}(X) = \mathop{\lim}\limits_{n \to \f} (1/n) \log (\# B_n(X))$$ where $\log = \log_{M+1}$ and $\#$ denotes the cardinality of a set.  

Given a subshift $(X,\si)$ and $\om \in \La(X)$ we define the \emph{follower set of $\om$} to be 
$$F_X(\om) = \set{\nu \in \La(X): \om\nu \in \La(X)};$$ 
and the \emph{prefix set of $\om$} to be 
$$P_X(\om) = \set{\upsilon \in \La(X) : \upsilon \om \in \La(X)}.$$ 
Given $m \in \N$ and a word $\om \in \La(X)$ we denote by $F^m_X(\om) = \set{\upsilon \in F_X(\om): |\upsilon| = m}$. 

\begin{definition}\label{def:topologicalproperties}
We say a subshift $(X, \si)$:
\vspace{0.2em} 
\begin{enumerate}[$i)$]
\setlength\itemsep{0.7em}
\item is \emph{topologically transitive}, if for every ordered pair of words $\upsilon, \nu \in \La(X)$ there is a word $\omega \in \La(X)$ such that $\upsilon\omega\nu \in \La(X)$. This is equivalent to having a point $\x \in X$ such that $\set{\si^n(\x)}_{n = 0}^\f$ is a dense subset of $X$;

\item is \emph{topologically mixing}, if for every ordered pair of words $\upsilon, \nu \in \La(X)$ there exists $N = N(\upsilon, \nu) \in \N$ such that for every $n \geq N$ there is a word $\omega \in B_n(X)$ such that $\upsilon \omega \nu \in \La(X)$;

\item has \emph{the specification property}, or simply \emph{has specification}, if there exist $S \in \mathbb{N}$ such that for any $\upsilon, \nu \in \La(X)$ there exist $\om \in B_S(X)$ such that $\upsilon\om\nu \in \La(X)$, i.e. every two words $\upsilon$ and $\nu$ can be connected by a word $\om$ of length $S$;

\item has the \emph{almost specification property}, or simply \emph{A-specification}, if there exist $S \in \mathbb{N}$ such that for any $\upsilon,\nu \in \La(X)$ there exists $\om \in \La(X)$ such that $\upsilon\om\nu \in \La(X)$ and $|\om| \leq S$;

\item is a \textit{coded system}, if 
$$X = \overline{\mathop \bigcup \limits_{n=1}^{\infty} X_n},$$
where $\overline{\mathop \bigcup \limits_{n=1}^{\infty} X_n}$ denotes the topological closure of $\mathop \bigcup \limits_{n=1}^{\infty} X_n$, each $(X_n, \sigma)$ is a transitive subshift of finite type, and $X_n \subset X_{n+1}$ for every $n \in \N$ - see\cite[Theorem 2.1]{FieFie2001} and \cite{Kri2000}.
\end{enumerate}
\end{definition}

It is well known that transitive subshifts of finite type and transitive sofic subshifts are coded. Also, observe that every subshift $(X, \si)$ with specification is topologically transitive, and in fact mixing. It is easy to show that the almost specification property coincides with the specification property if $(X, \si)$ is topologically mixing. Finally, we would like to remark that all coded systems are topologically transitive \cite{Bla1989}.

We now state the specification property in a more convenient way for our purposes. Given a subshift $(X, \si)$ we define 
\begin{align*}
s_n = s_n(X) = \inf \{k \in \N : &\hbox{\rm{ for every }} \upsilon, \nu \in B_n(X) \hbox{\rm{ there exists }} \omega \in B_k(X)\\ 
&\hbox{\rm{ such that }} \upsilon \omega \nu \in \La(X) \}.
\end{align*}

Then $(X, \si)$ has specification if and only if $\mathop{\lim}\limits_{n \to \infty} s_n < \f$. If such limit exists we call it \emph{the specification number of $(X, \si)$} and we denote it by $s_X$.

Given a transitive subshift $(X,\si)$ a word $\om \in \La(X)$ is said to be \emph{intrinsically synchronising} if whenever $\upsilon\omega$ and $\omega\nu \in \La(X)$ we have $\upsilon\omega\nu \in \La(X).$ A transitive subshift $(X, \si)$ is \emph{synchronised} if there exists an intrinsically synchronising word $\om \in \La(X)$. 

Finally, we list the following list of implications stated in \cite{Boy2000} for topologically mixing subshifts:

\begin{equation}\label{eq:sequenceofshifts}
\text{Finite Type } \Rightarrow \text{ Sofic }\Rightarrow \text{ Specification } \Leftrightarrow \text{ A-specification }\Rightarrow \text{ Synchronised }\Rightarrow \text{ Coded.}
\end{equation}

\subsection{Hausdorff Dimension}
\label{subsec:hd}

Let $A$ be a subset of a metric space $X$. Recall that a collection of subsets of $X$, $\pazocal{U} = \set{U_{\lambda}}_{\lambda \in\Lambda}$
is an \emph{open cover of $A$} if each $U_{\lambda} \in \pazocal{U}$ is an open set and $$A \subset \mathop{\bigcup}\limits_{\lambda \in \Lambda}U_{\lambda}.$$ 
If $\pazocal{U} = \set{U_{i}}_{i=1}^\f$ is a countable open cover of $A$ and $\operatorname{diam}(U_i) \leq \delta$ for a given $\delta > 0$ we say that $\pazocal{U}$ is a \emph{$\delta$-cover of $A$}. 

Fix $s\geq 0$. For $\delta>0$ we set 
\[
\pazocal{H}^{s}_{\delta}(A):=
\inf\left\{\sum_{i=1}^{\infty} \text{diam}(U_{i})^{s}:
\set{U_{i}}_{i=1}^\f 
\text{is a }\delta-\text{cover of } A \right\}.
\]
The \emph{$s$-dimensional Hausdorff measure of $A$} is defined to be 
$$\pazocal{H}^{s}(A):=\mathop{\lim}\limits_{\delta\to 0} \pazocal{H}^{s}_{\delta}(A).$$ 
The \emph{Hausdorff dimension of $A$} is given by
$$\dim_{\operatorname{H}}(A)=\inf\{s:\pazocal{H}^{s}(A)=0\}=\sup \{s:\pazocal{H}^{s}(A)=\infty\}.$$ 
We will only consider $X = \mathbb{R}$ with the usual topology in this work.

Given a subset $A \subset \mathbb{R}$ and $x \in A$ we define, following \cite{Urb1987}, \emph{the local Hausdorff dimension of $A$ at $x$} to be
$$\dim_{\operatorname{H}}^{\operatorname{loc}}(A, x) = \mathop{\lim}\limits_{\delta \to 0} \dim_{\operatorname{H}}((x - \delta, x + \delta) \cap A).$$ 
The following standard result is useful to compute the Hausdorff dimension of a subset of $\mathbb{R}$. 

\begin{lemma}\label{th:hdderong}
Let $X$ be a metric space and $A \subset X$ be a compact subset. Then, 
$$\dim_{\operatorname{H}}(A) = \mathop{\sup}\limits_{x \in A} \set{\dim_{\operatorname{H}}^{\operatorname{loc}}(A, x)}.$$ 
Moreover, if $A\subset X$ satisfies that its topological closure $\overline{A}$ is compact, then 
$$\dim_{\operatorname{H}}(A) = \mathop{\sup}\limits_{x \in A} \set{\dim_{\operatorname{H}}^{loc}(A, x)}$$ 
if the map $x \longmapsto \dim_{\operatorname{H}}^{\operatorname{loc}}(A,x)$ is continuous on $\overline{A}$. 
\end{lemma}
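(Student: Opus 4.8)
The plan is to prove the two inequalities $\dim_{\operatorname{H}}(A) \geq \sup_{x \in A}\dim_{\operatorname{H}}^{\operatorname{loc}}(A,x)$ and $\dim_{\operatorname{H}}(A) \leq \sup_{x \in A}\dim_{\operatorname{H}}^{\operatorname{loc}}(A,x)$, and then to adapt the second one to obtain the ``moreover'' statement. The only facts needed are the two standard properties of Hausdorff dimension --- monotonicity (if $E \subset F$ then $\dim_{\operatorname{H}}(E) \leq \dim_{\operatorname{H}}(F)$) and countable stability ($\dim_{\operatorname{H}}(\bigcup_i E_i) = \sup_i \dim_{\operatorname{H}}(E_i)$) --- together with compactness. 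First I would note that, for fixed $x$, the map $\delta \mapsto \dim_{\operatorname{H}}((x-\delta,x+\delta)\cap A)$ is non-decreasing by monotonicity, so the limit defining $\dim_{\operatorname{H}}^{\operatorname{loc}}(A,x)$ exists and equals $\inf_{\delta>0}\dim_{\operatorname{H}}((x-\delta,x+\delta)\cap A)$. The inequality ``$\geq$'' is then immediate, since $(x-\delta,x+\delta)\cap A \subset A$ gives $\dim_{\operatorname{H}}((x-\delta,x+\delta)\cap A) \leq \dim_{\operatorname{H}}(A)$ for all $\delta > 0$, hence $\dim_{\operatorname{H}}^{\operatorname{loc}}(A,x) \leq \dim_{\operatorname{H}}(A)$ for every $x \in A$, and one takes the supremum over $x$.

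For ``$\leq$'', set $s = \sup_{x \in A}\dim_{\operatorname{H}}^{\operatorname{loc}}(A,x)$ and fix $\varepsilon > 0$. For each $x \in A$ we have $\inf_{\delta>0}\dim_{\operatorname{H}}((x-\delta,x+\delta)\cap A) = \dim_{\operatorname{H}}^{\operatorname{loc}}(A,x) \leq s$, so there is $\delta_x > 0$ with $\dim_{\operatorname{H}}((x-\delta_x,x+\delta_x)\cap A) \leq s+\varepsilon$. The intervals $(x-\delta_x,x+\delta_x)$, $x \in A$, cover the compact set $A$, so finitely many of them --- centred at $x_1,\dots,x_n$, say --- already cover $A$; therefore $A = \bigcup_{i=1}^n\big((x_i-\delta_{x_i},x_i+\delta_{x_i})\cap A\big)$ and finite stability gives $\dim_{\operatorname{H}}(A) = \max_{1\leq i\leq n}\dim_{\operatorname{H}}((x_i-\delta_{x_i},x_i+\delta_{x_i})\cap A) \leq s+\varepsilon$. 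Letting $\varepsilon \to 0$ yields $\dim_{\operatorname{H}}(A) \leq s$, which proves the first assertion. (For a general metric space $X$ one runs the same argument with open balls $B(x,\delta_x)$ in place of the intervals.)

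For the ``moreover'' part the difficulty is that $A$ need not be compact, and one cannot simply pass to $\overline{A}$, since in general $\dim_{\operatorname{H}}(A) \neq \dim_{\operatorname{H}}(\overline{A})$; this is exactly where the continuity hypothesis is used. I would set $g(x) := \inf_{\delta>0}\dim_{\operatorname{H}}((x-\delta,x+\delta)\cap A)$, a formula that makes sense for every $x \in \overline{A}$ and restricts to $\dim_{\operatorname{H}}^{\operatorname{loc}}(A,x)$ on $A$. Since $g$ is continuous on $\overline{A}$ and $A$ is dense in $\overline{A}$, we get $\sup_{x\in\overline{A}} g(x) = \sup_{x\in A} g(x) =: s$. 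Now the covering argument of the previous paragraph applies verbatim, except that for each $x \in \overline{A}$ one picks $\delta_x > 0$ with $\dim_{\operatorname{H}}((x-\delta_x,x+\delta_x)\cap A) \leq s+\varepsilon$ (possible since $g(x) \leq s$), extracts a finite subcover of the compact set $\overline{A} \supset A$, intersects the pieces with $A$, and applies finite stability; the ``$\geq$'' direction is unchanged. The genuinely delicate point is this transfer of the supremum from $A$ to $\overline{A}$, which fails without continuity --- $g$ could a priori be much larger on $\overline{A}\setminus A$ than on $A$ --- so that hypothesis cannot be dropped.
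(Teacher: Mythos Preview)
The paper does not actually prove this lemma; it is introduced as ``the following standard result'' and stated without proof. Your argument is correct and is the standard one: monotonicity gives the inequality $\geq$, a finite subcover of $\{B(x,\delta_x):x\in A\}$ together with finite stability of Hausdorff dimension gives $\leq$, and for the ``moreover'' clause you correctly identify that continuity on $\overline{A}$ is needed precisely to transfer the supremum from $A$ to $\overline{A}$ so that the compactness of $\overline{A}$ can be exploited in the covering step.
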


\subsection{Expansions in non-integer bases}
\label{subsec:betaexpansions}

Let us bring up to mind the properties of $q$-expansions used in our study. 

Fix $M \in \N$ and let $q \in (1, M+1]$. The \textit{greedy $q$-expansion} of $\x \in I_{M,q} = \left[0, M/q -1\right]$ is the lexicographically largest $q$-expansion of $x$, and the \textit{quasi-greedy $q$-expansion of $x \in I_{M,q} \setminus \{0\}$} is the lexicographically largest $q$-expansion of $x$ with infinitely many non-zero elements. We denote by $\beta(q) = (\beta_i(q))$ the greedy $q$-expansion of $1$, and the quasi-greedy $q$-expansion of $1$ is denoted by $\al(q) = (\al_i(q))$. It is not hard to check that if $\beta(q)$ is not a finite sequence, then $\beta(q)= \al(q)$ and if $\beta(q)$ is a finite sequence then 
$$\al(q) = (\beta_1(q) \ldots \beta_{k-1}(q) \beta_k(q)^-)^\f$$ 
where $k$ satisfies that $\beta_j(q) = 0$ for every $j > k$. We define 
$$\vl = \vl(M)=\set{q\in(1, M+1]: \overline{\al(q)}\lle\si^n(\al(q))\lle \al(q)\textrm{ for all } n\ge 0}$$ and $$\vb = \vb(M) = \set{\al \in \Sig : \overline{\al}\lle\si^n(\al)\lle \al \textrm{ for all } n\ge 0}.$$ 
The following result is essentially due to Parry \cite{Par1960}; see also \cite[Theorem 2.5]{BaiKom2007}, \cite[Proposition 2.3]{DeVKomLor2016}.

\begin{lemma}\label{lem:quasigreedyexpansion}
The map $\Phi:\vl \to \vb$ given by $\Phi(q) = \al(q)$ is strictly increasing and bijective. Moreover, $\Phi$ is continuous from the left and $\Phi^{-1}$ is strictly increasing, bijective and continuous.
\end{lemma}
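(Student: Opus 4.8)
The plan is to prove Lemma \ref{lem:quasigreedyexpansion} by first pinning down the image $\Phi(\vl)$ and then establishing the monotonicity and continuity statements separately, using the lexicographic characterisation of quasi-greedy expansions together with the basic dictionary between $q$ and $\al(q)$.

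\textbf{Step 1: $\Phi$ maps $\vl$ into $\vb$ and is injective.} By definition, $q \in \vl$ exactly when $\overline{\al(q)} \lle \si^n(\al(q)) \lle \al(q)$ for all $n \geq 0$, which is precisely the membership condition for $\al(q) \in \vb$; so $\Phi(\vl) \subseteq \vb$. For injectivity, recall that $\al(q)$ is by definition the lexicographically largest infinite $q$-expansion of $1$, so the map $q \mapsto \al(q)$ on all of $(1, M+1]$ is already injective: from $\al(q)$ one recovers $1 = \sum_i \al_i(q) q^{-i}$, and two distinct values of $q$ cannot share this series (the function $q \mapsto \sum_i \al_i q^{-i}$ is strictly decreasing in $q$ for a fixed sequence, while $\al(q)$ moving up lexicographically forces $q$ up — this is the classical Parry argument). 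So $\Phi$ restricted to $\vl$ is injective.

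\textbf{Step 2: strict monotonicity and surjectivity onto $\vb$.} The key classical fact (Parry) is: for $q, q' \in (1, M+1]$, $q < q'$ if and only if $\al(q) \prec \al(q')$. One direction is the strict-decrease-of-the-series observation above; the converse follows because $\al(q)$ is the lexicographically largest expansion of $1$ and a smaller base makes every digit-sequence represent a larger number, forcing the largest admissible sequence to shrink lexicographically. This gives strict monotonicity of $\Phi$ immediately. For surjectivity onto $\vb$: given $\al \in \vb$, the condition $\si^n(\al) \lle \al$ for all $n$ is exactly Parry's admissibility criterion guaranteeing that $\al$ is the quasi-greedy expansion of $1$ in some base $q \in (1, M+1]$ (one defines $q$ by $\sum_i \al_i q^{-i} = 1$ and checks $\al = \al(q)$ using $\si^n(\al) \lle \al$; the extra lower bound $\overline{\al} \lle \si^n(\al)$ then places $q$ in $\vl$). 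Hence $\Phi : \vl \to \vb$ is a strictly increasing bijection, and consequently $\Phi^{-1}$ is strictly increasing and bijective as well.

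\textbf{Step 3: continuity.} For $\Phi$ being continuous from the left: if $q_k \uparrow q$ with $q_k < q$, then $\al(q_k) \prec \al(q)$ and the sequences $\al(q_k)$ increase lexicographically, so they converge coordinatewise to some $\al^* \lle \al(q)$; passing to the limit in $\sum_i \al_i(q_k) q_k^{-i} = 1$ gives $\sum_i \al^*_i q^{-i} = 1$, and since the $\al(q_k)$ are admissible their limit is admissible, forcing $\al^* = \al(q)$. (Left-continuity, not full continuity, is the correct statement because $\Phi$ can jump at $q$ where $\beta(q)$ is finite.) For continuity of $\Phi^{-1}$ on $\vb$: since $\vl \subseteq (1, M+1]$ and $\vb \subseteq \Sig$ with $\Sig$ compact, and $\Phi^{-1}$ is a monotone bijection between a subset of $\R$ and $\vb$, one argues that a lexicographic limit $\al(q_k) \to \al$ forces $q_k \to q$ with $\Phi(q) = \al$: the map $\al \mapsto $ (the unique base solving $\sum \al_i q^{-i} = 1$) is continuous because the series converges uniformly on $q$ bounded away from $1$, and $q \geq q_G(M) > 1$ on the relevant range — more care is needed near $q = 1$, but the standard treatment (via Lemma-type arguments in \cite{BaiKom2007, DeVKomLor2016}) handles this. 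The main obstacle is this last point: getting the continuity of $\Phi^{-1}$ (equivalently, that left-continuity of $\Phi$ cannot be upgraded but the inverse is genuinely two-sided continuous) requires the careful lexicographic/topological bookkeeping near bases where greedy expansions terminate, and I would simply cite \cite{Par1960} and \cite[Theorem 2.5]{BaiKom2007} / \cite[Proposition 2.3]{DeVKomLor2016} for the delicate estimates rather than reproving them.
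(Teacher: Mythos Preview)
The paper does not give its own proof of this lemma; it simply attributes the result to Parry \cite{Par1960} and cites \cite[Theorem 2.5]{BaiKom2007} and \cite[Proposition 2.3]{DeVKomLor2016}. Your sketch is correct in outline and ultimately defers to precisely these references for the delicate continuity statements, so your approach coincides with the paper's treatment.
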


Let us remind the reader that for every $q\in(1, M+1]$ the \emph{univoque set on base $q$}  is given by $$\u_q:=\{x\in I_{M,q}: x \textrm{ has a unique }q\textrm{-expansion}\},$$ and $\ub_q:=\pi_{q}^{-1}(\u_{q})$ is the corresponding set of $q$-expansions. It was shown in \cite{DeVKom2008} that every sequence $\x \in \ub_q$ satisfies the following lexicographic inequalities:
$$\left\{
\begin{array}{lll}
\si^n(\x)\prec \al(q) &\textrm{whenever}& x_n<M,\\
\si^n(\x) \succ \overline{\al(q)} &\textrm{whenever}& x_n>0.
\end{array}
\right.$$ 

Also, recall that \emph{the set of univoque bases} is given by 
$$\ul = \ul(M):=\{q\in(1,M+1]: 1 \textrm{ has a unique } q\textrm{-expansion}\}.$$ 
The set $\ul$ has Lebesgue measure zero and full Hausdorff dimension---see \cite{ErdJoo1992, DarKat1995, KomKonLi2015}. More\-over, the topological closure of $\ul$, $\overline{\ul}$, is a \emph{Cantor set} \cite[Theorem 1.2]{DeVKomLor2016}. The sets $\ul$ and $\overline{\ul}$ were characterised symbolically in \cite[Theorem 2.5, Theorem 3.9]{DeVKomLor2016} as follows:

\begin{lemma}\label{lem:uandclosureu}
Let $M \in \mathbb{N}$. Then:
\begin{enumerate}[$i)$]
\item $\ul=\set{q\in(1,M+1): \overline{\al(q)}\prec\si^n(\al(q))\prec \al(q)\textrm{ for all }n\ge 1}\cup\set{M+1};$
\item $\overline{\ul}=\set{q\in(1,M+1]: \overline{\al(q)}\prec \si^n(\al(q))\lle \al(q)\textrm{ for all }n\ge 0}.$
\end{enumerate}
\end{lemma}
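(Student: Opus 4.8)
The plan is to derive both characterisations from the lexicographic description of $\ub_q$ recalled above (and \cite{DeVKom2008}), from Parry's admissibility criterion \cite{Par1960} for the greedy and quasi-greedy expansions of $1$, and from the one-sided continuity of $\Phi$ in Lemma \ref{lem:quasigreedyexpansion}. Throughout write $\al=\al(q)$ and $\beta=\beta(q)$, and let $\mathcal{W}'$ denote the right-hand side of $(ii)$.

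For $(i)$ I would argue as follows. Over $\set{0,\dots,M}$ the integer base $M+1$ admits $M^{\f}$ as its unique $q$-expansion of $1$, so $M+1\in\ul$, and it remains to treat $q\in(1,M+1)$. Since $\beta$ is a $q$-expansion of $1$, we have $1\in\u_q$ iff $\beta\in\ub_q$. If $\beta$ is finite, the displayed formula for $\al$ in terms of $\beta$ produces a second, purely periodic $q$-expansion of $1$, so $q\notin\ul$; and periodicity of $\al$ gives $\si^n(\al)=\al$ for some $n\ge1$, so $q\notin\mathcal{W}'$ too and both sides of $(i)$ agree. Assume then that $\beta$ is infinite; then $\beta=\al$, and Parry's criterion gives $\si^n(\al)\prec\al$ for all $n\ge1$, so the upper conditions in the description of $\ub_q$ hold automatically and $1\in\u_q$ iff, for every $n\ge1$, $\al_n>0$ implies $\si^n(\al)\succ\overline{\al}$; at $n=1$ this already yields $\overline{\al}\prec\si(\al)\prec\al$, hence $\overline{\al}\prec\al$. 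The remaining point is to upgrade the conditional inequality to the unconditional one. Fix $n$ with $\al_n=0$; let $\ell<n$ be the last index with $\al_\ell>0$ (it exists since $\al_1\ge1$), let $0^{s}$ with $s\ge1$ be the maximal zero block of $\al$ ending at position $n$, and $0^{r}$ with $r\ge0$ the maximal zero block immediately following position $n$. Applying the conditional inequality at $\ell$ and using that $0$ is the lexicographically least symbol forces $\al_1=\cdots=\al_{s+r}=M$, so that $\overline{\al}$ begins with $s+r$ zeros while $\si^n(\al)=0^{r}\al_{n+r+1}\al_{n+r+2}\cdots$ with $\al_{n+r+1}\ge1$; comparing these at position $r+1$ (using $s\ge1$) gives $\si^n(\al)\succ\overline{\al}$. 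Conversely, the strict inequalities of $(i)$ force $\al$ non-periodic, so $\beta$ is infinite, $\beta=\al$ obeys the conditions defining $\ub_q$, and $1\in\u_q$.

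For $(ii)$, part $(i)$ gives $\ul\subseteq\mathcal{W}'$ (for $q\in\ul\setminus\set{M+1}$ the $n=0$ inequalities follow from those for $n\ge1$, and $M+1\in\mathcal{W}'$), so $\overline{\ul}\subseteq\overline{\mathcal{W}'}$; it then suffices to show that $\mathcal{W}'$ is closed and that $\mathcal{W}'\subseteq\overline{\ul}$. For closedness, let $q_j\to q$ with $q_j\in\mathcal{W}'\subseteq\vl$. Since $\vb$ is compact and $\Phi^{-1}$ is continuous and injective, $\vl=\Phi^{-1}(\vb)$ is compact, and any subsequential limit $\al^{*}$ of $(\al(q_j))$ lies in $\vb$ with $\Phi^{-1}(\al^{*})=\lim q_j=q$, so $\al^{*}=\al(q)$ and $\al(q_j)\to\al(q)$. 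Letting $j\to\f$ in $\overline{\al(q_j)}\prec\si^n(\al(q_j))\preccurlyeq\al(q_j)$ gives $\overline{\al(q)}\preccurlyeq\si^n(\al(q))\preccurlyeq\al(q)$, so it only remains to exclude $\si^n(\al(q))=\overline{\al(q)}$ for every $n\ge0$. Any such equality makes $\al(q)$ periodic (since $\si^{2n}(\al(q))=\al(q)$), say $\al(q)=v^{\f}$; but then when $q_j>q$ infinitely often, $\al(q_j)\succ v^{\f}$ agrees with $v^{\f}$ on arbitrarily long prefixes and a shift past whole copies of $v$ produces $\si^{m}(\al(q_j))\succ\al(q_j)$, contradicting $\al(q_j)\in\vb$; and when $q_j<q$ infinitely often, $\al(q_j)\prec v^{\f}$ and, shifting by a suitable $n\ge1$ with $\si^n(v^{\f})=\overline{v^{\f}}$, one obtains $\si^n(\al(q_j))\prec\overline{\al(q_j)}$, contradicting $q_j\in\mathcal{W}'$. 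Hence $q\in\mathcal{W}'$ and $\mathcal{W}'$ is closed.

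For $\mathcal{W}'\subseteq\overline{\ul}$, if $q\in\mathcal{W}'\cap\ul$ there is nothing to do, and if $q\in\mathcal{W}'\setminus\ul$ then by $(i)$ $\al(q)=w^{\f}$ is periodic with $\overline{w^{\f}}\prec\si^n(w^{\f})\preccurlyeq w^{\f}$ for all $n\ge0$; I would then approximate $q$ from below via a generalised Thue--Morse construction on the period block $w$: for each $k$, iteratively concatenate a long prefix built from $w^{k}$ with its reflection, suitably incremented, to obtain sequences $\mathbf{b}^{(k)}\uparrow w^{\f}$ lying in $\vb$ with $\overline{\mathbf{b}^{(k)}}\prec\si^n(\mathbf{b}^{(k)})\prec\mathbf{b}^{(k)}$ for all $n\ge1$; by $(i)$ and Lemma \ref{lem:quasigreedyexpansion}, $q_k:=\Phi^{-1}(\mathbf{b}^{(k)})\in\ul$ and $q_k\to q$, so $q\in\overline{\ul}$, and combining the inclusions $\overline{\ul}=\mathcal{W}'$. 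I expect the main obstacle to be this last step — choosing the approximants $\mathbf{b}^{(k)}$ and checking, uniformly in $k$, that each of their shifts obeys both strict lexicographic inequalities — where the genuine combinatorics of unique expansions is used; the closedness step is also delicate, as it rests on the one-sided continuity of $\Phi$ and on the impossibility of approximating a periodic quasi-greedy expansion from above within $\vl$.
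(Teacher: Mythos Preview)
The paper does not prove this lemma: it is quoted verbatim from \cite[Theorem~2.5, Theorem~3.9]{DeVKomLor2016} as background, with no argument supplied, so there is no in-paper proof to compare against. Your attempt is therefore an independent reconstruction of a result the paper imports.

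Your argument for $(i)$ is correct. The only nontrivial point is upgrading the conditional inequality $\si^n(\al)\succ\overline{\al}$ (required only when $\al_n>0$) to an unconditional one, and your zero-block analysis does this: with $\ell=n-s$ the last positive index before $n$ and $0^{s+r}$ the maximal zero block straddling position $n$, the inequality $\si^\ell(\al)\succ\overline{\al}$ forces $\al_1=\cdots=\al_{s+r}=M$, after which comparing $\si^n(\al)=0^r\al_{n+r+1}\cdots$ with $\overline{\al}=0^{s+r}\cdots$ at position $r+1$ (using $s\ge1$) gives the strict inequality.

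For $(ii)$, your strategy --- show $\ul\subseteq\mathcal W'$, show $\mathcal W'$ is closed, and show $\mathcal W'\subseteq\overline{\ul}$ --- is the standard one used in the cited reference. Your closedness argument is correct once one notes that $\Phi^{-1}:\vb\to\vl$ is a continuous bijection from a compact space to a Hausdorff space, hence a homeomorphism, so $\al(q_j)\to\al(q)$ whenever $q_j\to q$ within $\vl$; your dichotomy on the sign of $q_j-q$ then works (the case $q_j>q$ infinitely often is in fact vacuous once $\al(q)$ is periodic, reducing everything to the second case). The remaining inclusion $\mathcal W'\subseteq\overline{\ul}$ is indeed where the substantive combinatorics sits: given periodic $\al(q)=w^\infty$ satisfying the $\mathcal W'$ inequalities, one must exhibit a sequence in $\ul$ converging to $q$. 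You sketch a Thue--Morse-type recursion but do not carry it out, and you are right that this is the genuine obstacle; the cited reference performs exactly this construction, and filling in the verification that every shift of each approximant satisfies both strict inequalities is nontrivial. So your proposal has the right architecture and the first two thirds are complete, but the final step is only a plan.
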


Clearly $\ul\subsetneq\overline{\ul}\subsetneq\vl$. The topological and symbolic properties of $\ul$,  $\overline{\ul}$ and $\vl$ are summarised in the following theorem---see \cite{DeVKom2008, DeVKomLor2016}. 

\begin{theorem}\label{thm:uclosureuv}
Let $M \in \mathbb{N}$ then:
\begin{enumerate}[$i)$]
\item $\overline{\ul}\setminus\ul$ and $\vl\setminus\overline{\ul}$ are both countable; 
\item $\overline{\ul}\setminus\ul$ is dense in $\overline{\ul}$. Moreover, if $q \in \overline{\ul}\setminus\ul$ then $\al(q)$ is periodic;
\item $\vl\setminus\overline{\ul}$ is discrete and dense in $\vl$. Moreover, if $q \in \vl\setminus \overline{\ul}$ then $\al(q)$ is periodic.
\item If $q \in \vl\setminus\ul$ then $\al(q)$ is periodic. 
\end{enumerate}  
\end{theorem}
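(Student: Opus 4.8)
The plan is to prove Theorem \ref{thm:uclosureuv} by combining the symbolic characterisations of $\ul$ and $\overline{\ul}$ from Lemma \ref{lem:uandclosureu} with the order-isomorphism $\Phi$ of Lemma \ref{lem:quasigreedyexpansion}, transporting everything to the space of sequences $\vb$. First I would set up the comparison: a base $q$ lies in $\vl$, $\overline{\ul}$ or $\ul$ precisely according to which of the three nested lexicographic conditions (all $\preccurlyeq$; left strict, right $\preccurlyeq$; both strict) is satisfied by $\al(q)$, so the differences $\overline{\ul}\setminus\ul$ and $\vl\setminus\overline{\ul}$ are governed by the \emph{failure} of strictness, i.e. by $q$ such that $\si^n(\al(q)) = \al(q)$ or $\si^n(\al(q)) = \overline{\al(q)}$ for some $n$. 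The first of these says $\al(q)$ is eventually periodic with $\si^n(\al(q)) = \al(q)$; since $\al(q)\in\vb$ forces $\si^n(\al(q))\preccurlyeq\al(q)$ for all $n$, such an equality at a \emph{prefix shift} in fact forces $\al(q)$ to be \emph{periodic} with period dividing $n$ (any later deviation would violate the $\preccurlyeq\al(q)$ constraint on some shift). This is the mechanism that will yield the ``$\al(q)$ periodic'' conclusions in $(ii)$, $(iii)$ and $(iv)$; the case $\si^n(\al(q)) = \overline{\al(q)}$ is handled by reflecting and using the symmetry of the defining conditions, again producing periodicity.

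For the countability statements in $(i)$, I would argue that $\overline{\ul}\setminus\ul$ and $\vl\setminus\overline{\ul}$ inject into the countable set of periodic sequences over $\set{0,\dots,M}$: by the previous paragraph each $q$ in either difference has $\al(q)$ periodic, and $\Phi$ is injective, so the map $q\mapsto(\text{periodic block of }\al(q))$ is an injection into a countable set. For the density and discreteness claims in $(ii)$ and $(iii)$ I would use the continuity properties of $\Phi$ and $\Phi^{-1}$ from Lemma \ref{lem:quasigreedyexpansion} together with the fact, standard for the $\beta$-expansion order, that periodic sequences of the form $(\al_1\dots\al_{n-1}\al_n^-)^\f$ approximate any admissible $\al(q)$ from below while staying in $\vb$: given $q\in\ul$ one truncates $\al(q)$ and takes $\al_1\dots\al_{k-1}\al_k^-$ repeated, checks it lies in $\vb$ for $k$ large (using that $\al(q)$ is strongly enough admissible), and thereby produces nearby elements of $\vl\setminus\overline{\ul}$; density of $\overline{\ul}\setminus\ul$ in $\overline{\ul}$ is obtained similarly, approximating by periodic sequences whose periodic blocks are chosen to make the left inequality non-strict. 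Discreteness of $\vl\setminus\overline{\ul}$ follows because between any $q\in\vl\setminus\overline{\ul}$ and the nearest point of $\overline{\ul}$ there are no further points of $\vl$ at all, which one reads off from the fact that the ``defect'' $\si^n(\al(q)) = \al(q)$ isolates such $q$ in $\vl$ (a small perturbation of $\al(q)$ in $\vb$ either restores strictness, landing in $\overline{\ul}$, or breaks admissibility).

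Finally, $(iv)$ is a clean-up: if $q\in\vl\setminus\ul$ then either $q\in\overline{\ul}\setminus\ul$ or $q\in\vl\setminus\overline{\ul}$, and in both cases $(ii)$ and $(iii)$ already give that $\al(q)$ is periodic, so $(iv)$ is immediate (modulo also noting $q = M+1$ is excluded or handled trivially since $\al(M+1) = M^\f$ is periodic anyway). The main obstacle I expect is the rigidity step in the first paragraph — showing that a single coincidence $\si^n(\al(q)) = \al(q)$ or $\si^n(\al(q)) = \overline{\al(q)}$ for $\al(q)\in\vb$ upgrades all the way to genuine periodicity of $\al(q)$ (not merely eventual periodicity). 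This requires carefully exploiting both halves of the defining inequality $\overline{\al(q)}\preccurlyeq\si^m(\al(q))\preccurlyeq\al(q)$ simultaneously and tracking how an equality at one shift propagates constraints to all larger shifts; the reflected case needs the extra observation that $\overline{\al(q)}\in\vb$ as well, so the same argument applies after passing to reflections. Everything else is bookkeeping with the lexicographic order and the topology transported through $\Phi$.
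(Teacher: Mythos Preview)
The paper does not prove this theorem; it is stated as a summary of results from \cite{DeVKom2008, DeVKomLor2016} and cited without proof. So there is no ``paper's own proof'' to compare against, and your proposal is effectively an attempt to reconstruct the argument from those references.

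That said, there is a genuine gap in your density argument for $(iii)$. Your plan is to approximate an arbitrary $q\in\ul$ by the truncations $(\al_1(q)\dots\al_k(q)^-)^\f$ and claim these land in $\vl\setminus\overline{\ul}$. But by Lemma~\ref{lem:uandclosureu}, a base $p$ lies in $\vl\setminus\overline{\ul}$ exactly when $\si^n(\al(p)) = \overline{\al(p)}$ for some $n\ge 0$, whereas your truncations satisfy $\si^k(\al(p)) = \al(p)$ (period $k$), which places them in $\overline{\ul}\setminus\ul$ provided the \emph{left} inequality stays strict --- and for $q\in\ul$ it typically does. So your construction produces nearby elements of $\overline{\ul}\setminus\ul$, which is the content of $(ii)$, not $(iii)$. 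To get density of $\vl\setminus\overline{\ul}$ one needs a different construction, e.g.\ doubling a periodic block to $(\al_1\dots\al_m\,\overline{\al_1\dots\al_m})^\f$, which forces $\si^m(\al(p)) = \overline{\al(p)}$ and hence $p\in\vl\setminus\overline{\ul}$; this is essentially the mechanism used in \cite{DeVKomLor2016}.

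A smaller remark: you flag as the main obstacle the step ``$\si^n(\al(q)) = \al(q)$ implies $\al(q)$ periodic,'' but this is immediate from the definition of periodicity (and the reflected case $\si^n(\al(q)) = \overline{\al(q)}$ gives $\si^{2n}(\al(q)) = \al(q)$ just as directly). The genuinely delicate parts are the discreteness of $\vl\setminus\overline{\ul}$ --- your one-sentence sketch does not explain why two such points cannot be close to each other --- and the density claims, where you need to be careful about which of the two sets your approximants actually fall into.
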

 
It follows from \cite[Lemma 3.5]{DeVKomLor2016} that the smallest element of $\vl$ is the \emph{generalised golden ratio}, denoted by $q_G = q_G(M)$, and defined as 
\begin{equation}\label{eq:21}
 q_G(M) =\left\{\begin{array}{lll}
k+1 &\textrm{ if } & M=2k,\\
 {k+1+\sqrt{k^2+6k+5}}/2&\textrm{ if }& M=2k+1.
\end{array}\right.
\end{equation}
Furthermore $\al(q_G)=k^\f$ if $M=2k$ and $\al(q_G)=((k+1)k)^\f$ if $M=2k+1$. Thus, $q_G \in \vl \setminus \overline{\ul}$. Also, it was shown in \cite{KomLor2002} that the smallest element of $\ul$ is the so-called \emph{Komornik–Loreti constant}, denoted by $q_{KL} = q_{KL}(M)$, which is defined using the classical \emph{Thue-Morse sequence} $(\tau_i)_{i=0}^\f$; this sequence is defined as follows: $\tau_0=0$, and if $\tau_i$ has already been defined for some $i\ge 0$, then $\tau_{2i}=\tau_i$ and $\tau_{2i+1}=1-\tau_i$. The Komornik-Loreti constant is defined explicitly by 
\begin{equation}\label{eq:23}
\al(q_{KL})=(\lambda_i)_{i=1}^\f:=\left\{
\begin{array}{lll}
(k+\tau_i-\tau_{i-1})_{i=1}^\f&\textrm{if}& M=2k,\\
(k+\tau_i)_{i=1}^\f&\textrm{if}& M=2k+1.
\end{array}
\right.
\end{equation}

Notice that the sequence $(\lambda_i)_{i=1}^\f$ in \eqref{eq:23} depends on $M$. Also, from the definition of $(\tau_i)_{i=0}^\f$ it follows that $(\lambda_i)$ satisfies the recursive equations:
\begin{equation}\label{eq:24}
\lambda_1\ldots \lambda_{2^{n+1}}=\lambda_1\ldots \lambda_{2^n}\overline{\lambda_1\ldots \lambda_{2^n}}^+\quad\textrm{for all}\quad n\ge 0.
\end{equation}
\cite{KomLor2002}. Therefore, $\al(q_{KL})$ starts with
\begin{align*}
&(k+1)k(k-1)(k+1)\; (k-1)k(k+1)k\cdots&\textrm{if}&\quad M=2k,\\
&(k+1)(k+1)k(k+1)\; k k (k+1)(k+1)\cdots&\textrm{if}&\quad M=2k+1.
\end{align*}
Using \eqref{eq:21}, \eqref{eq:23} and Lemma \ref{lem:quasigreedyexpansion} we obtain that $q_G < q_{KL} < q_T$. Here $q_T$ refers to the \emph{transitive base} defined in \eqref{eq:transconstant}.

We would like to bring to mind that $(\ub_q, \sigma)$ is not always a subshift \cite[Theorem 1.8]{DeVKom2008}. However we consider the subshift $(\vb_q, \sigma)$, where $q \in \vl$ and
$$\vb_q=\set{\x\in \Sig: \overline{\al(q)}\lle\si^n(\x)\lle\al(q)\textrm{ for all }n\ge 0}.$$ 
Accordingly, we define $\V_q = \pi_q(\vb_q)$. It is easy to check that $\om \in \La(\vb_q)$ if and only if $\om = w_1 \ldots w_n$ satisfies
\begin{equation}\label{eq:lang1}
\overline{\al_1(q) \ldots \al_{n-i}(q)} \lle w_{i+1} \ldots w_n \lle \al_1(q) \ldots \al_{n-i}(q)\text{ for every }0 \leq i \leq n-1.
\end{equation}  
Also, it is clear that if $\om \in \La(\vb_q)$ then $\overline{\om} \in \La(\vb_q)$.

In \cite[Proposition 2.8]{KomKonLi2015} Komornik et. al. showed that
$$h_{\operatorname{top}}(\vb_q)=h_{\operatorname{top}}(\ub_q) \quad\textrm{for all }q\in[q_G, M+1].$$ 
Moreover, in \cite[Theorem 1.3]{KomKonLi2015} the authors showed that 
\begin{equation}\label{eq:hausdoffdim}
\dim_{\operatorname{H}}(\ul_q) = h_{\operatorname{top}}(\vb_q)/\log(q)\quad \textrm{for all } q\in[q_G, M+1].
\end{equation} 

Komornik et. al. also studied the \emph{entropy function} $H:[q_G, M+1] \to [0, 1]$ given by $H(q) = h_{\operatorname{top}}(\vb_q)$ and the \emph{Hausdorff dimension function} $HD:[q_G, M+1] \to [0, 1]$ given by $HD(q) = \dim_{\operatorname{H}}(\vl_q)$. In \cite[Lemma 2.11]{KomKonLi2015} (see also, \cite{AllKon2018} \cite[Theorem 2.6]{KonLi2015}) was shown that $H$ is a devil's staircase and, as a consequence of \ref{eq:hausdoffdim}, it is shown in \cite[Theorem 1.4]{KomKonLi2015} that $HD$ is continuous, has bounded variation and has devil's-staircase-like behaviour. 

In \cite[Theorem 2, Theorem 3]{AlcBakKon2016} the \emph{entropy plateaus}, i.e, the maximal intervals $[p_L, p_R]$ for which
$$H(q)=H(p_L)\quad\textrm{for all }q\in[p_L, p_R],$$ 
as well as the bifurcation set 
$$\B = \set{q\in(1,M+1]:  H(p)\neq H(q) \quad \text{for any }p \neq q}$$ 
and its topological closure 
$$\overline{\B} = \set{q \in (1, M+1]: \text{ for all }\varepsilon > 0 \text{ there exists }p \in (q-\varepsilon, q+\varepsilon) \text{ such that } H(p) \neq H(q)}$$ were characterised. The set 
$$\mathcal{T} = \set{q \in \vl: (\vb_q, \si) \text{ is a transitive subshift }}$$ 
was also characterised (see \cite[Theorem 1]{AlcBakKon2016}).

The following special classes of sequences in $\vb$ were introduced in \cite{AlcBakKon2016} (see also \cite{AlcBar2014}) in order to characterise the entropy plateaus and the sets $\B, \overline{\B}$ and $\mathcal{T}$. 

Let $\zeta_i = \al(q_G(M))_i$. For every $n \in \N$ we define the sequence 
$$\xi(n):=\left\{
\begin{array}
  {lll}
  \zeta_1\ldots\zeta_{2^{n-1}}(\overline{\zeta_1\ldots\zeta_{2^{n-1}}}\,^+)^\f&\textrm{if}& M=2k,\\
  \zeta_1\ldots\zeta_{2^n}(\overline{\zeta_1\ldots\zeta_{2^n}}\,^+)^\f
	&\textrm{if}& M=2k+1.
\end{array}\right.
$$

\begin{definition}\label{def:irreducible}
\leavevmode
\begin{enumerate}[$i)$]
\item A sequence $\al \in \vb$ is said to be \emph{irreducible} if 
$$\al_1\ldots \al_j(\overline{\al_1\ldots \al_j}\,^+)^\f\prec \al$$ 
whenever $(\al_1\ldots \al_j^-)^\f\in {\vb}$.
\item A sequence $\al \in \vb$ is said to be \emph{$*$-irreducible} if there exists $n\in\N$ such that $\xi(n+1)\lle \al \prec \xi(n),$ and 
$$\al_1\ldots \al_j(\overline{\al_1\ldots \al_j}\,^+)^\f\prec \al,$$ 
whenever 
$$(\al_1\ldots \al_j^-)^\f\in\vb\quad\textrm{and}\quad j>
 \left\{\begin{array}{lll}
 2^n&\textrm{if}& M=2k,\\
 2^{n+1}&\textrm{if}& M=2k+1.
 \end{array}\right.
$$
\end{enumerate}  
\end{definition}

We denote by 
$$\I = \set{q \in \vl : \al(q) \text{ is irreducible }}$$ and by  $$\IS= \set{q \in \vl : \al(q) \text{ is }*\text{-irreducible }}.$$ 

We would like to mention that $\I$ and $\IS$ are subsets of $\overline{\ul}$ \cite[Lemma 4.7]{AlcBakKon2016}. It is not difficult to check that $\al(q_G)$ is irreducible, and hence it is the smallest irreducible sequence. Also, the base $q_T = q_T(M)$, called the \emph{transitive base}, was introduced in \cite{AlcBakKon2016}. The base $q_T$ is defined implicitly as

\begin{equation}\label{eq:transconstant} 
\al(q_T)=\left\{
\begin{array}
  {lll}
  (k+1)\,k^\f&\textrm{if}& M=2k,\\
  (k+1)\,((k+1)k)^\f&\textrm{if}&M=2k+1.
\end{array}
\right.
\end{equation}

Notice that $\al(q_T)\in\vb$ and therefore $q_T\in\vl$. Moreover $q_T \in \ul$ and $q_T > q_G$. The following Theorem summarises \cite[Theorem 1, Theorem 2, Theorem 3]{AlcBakKon2016}.

\begin{theorem}\label{th:mainrsd}
\leavevmode
\begin{enumerate}[$i)$]
\item Let $q\in \vl$. Then $(\vb_q, \si)$ is a transitive subshift if and only if  $\al(q)$ is  irreducible, or $q=q_T$;
\item The interval $[q_L, q_R]\subseteq(q_{KL}, M+1]$ is an entropy plateau of $H$ if and only if $q_L \in \I\cup \IS$ and $\al(q_L)$ is periodic, and 
$$\al(q_R) = \al_1(q_L) \ldots \al_m(q_L)^{+}(\overline{\al_1(q_L) \ldots \al_m(q_L)})^\f;$$  
\item The topological closure of the bifurcation set $\B$ is 
$$\overline{\B}= \overline{\I \cup \IS} \subset \overline{\ul}.$$ 
Moreover $\overline{\B}$ is a Cantor set and $\dim_{\operatorname{H}}(\overline{\B})=1$.
\end{enumerate}
\end{theorem}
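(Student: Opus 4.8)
These three assertions are exactly \cite[Theorem 1, Theorem 2 and Theorem 3]{AlcBakKon2016}, so strictly speaking the proof in the present paper is a reference; what follows is the route one would take to establish them directly from the combinatorial description \eqref{eq:lang1} of $\La(\vb_q)$. For part $(i)$ I would treat the two directions separately. For the ``only if'' direction, suppose $\al(q)$ is reducible, so there is $j$ with $(\al_1\ldots\al_j^-)^\f\in\vb$ while $\al_1\ldots\al_j(\overline{\al_1\ldots\al_j}\,^+)^\f\lge\al(q)$; then $\set{\x\in\vb_q:\si^n(\x)\lle\al_1\ldots\al_j(\overline{\al_1\ldots\al_j}\,^+)^\f\text{ for all }n\ge 0}$ and its reflection are proper, nonempty, closed, $\si$-invariant subsets of $\vb_q$, and an orbit that enters one of the corresponding blocks is lexicographically trapped in it, so no point of $\vb_q$ has a dense orbit; one then checks by hand, using \eqref{eq:transconstant}, that $q_T$ is the only reducible base whose symmetric shift is nonetheless transitive. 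For the ``if'' direction, given $u,v\in\La(\vb_q)$ one must produce a connecting word $w$ with $uwv\in\La(\vb_q)$: the idea is that irreducibility guarantees that after any admissible $u$ one may append a controlled block, built from a prefix of $\al(q)$ or of $\overline{\al(q)}$, that respects $\overline{\al(q)}\lle\si^n\lle\al(q)$ and steers the tail into a central window from which every admissible $v$ is reachable.

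For part $(ii)$ the plan is first to reduce the entropy computation on a maximal constancy interval to a subshift of finite type. If $\al(q_L)$ is periodic with period block $\al_1\ldots\al_m$, then for $q$ slightly above $q_L$ every sequence of $\vb_q$ either already belongs to the finite-type system determined by $\al_1\ldots\al_m$ or has subexponentially many extensions, whence $H(q)=H(q_L)$; one then shows that passing from $\al_1\ldots\al_m$ to $\al_1\ldots\al_m^+(\overline{\al_1\ldots\al_m})^\f$ is exactly the operation that does not raise entropy while every strictly larger base does, which pins down $q_R$. Finally one shows that the left endpoint of such an interval must satisfy $q_L\in\I\cup\IS$ with $\al(q_L)$ periodic; the dichotomy $M=2k$ versus $M=2k+1$ and the sequences $\xi(n)$ enter in the regime near $q_{KL}$, where $\IS$ rather than $\I$ is the relevant notion in Definition \ref{def:irreducible}.

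For part $(iii)$: by $(ii)$, $H$ is locally constant precisely on the interiors of the plateaus, whose left endpoints lie in $\I\cup\IS$, so taking complements and closures yields $\overline{\B}\subseteq\overline{\I\cup\IS}$; conversely every periodic irreducible base is a two-sided limit of plateau endpoints, hence of bifurcation points, giving the reverse inclusion. Since $\I,\IS\subset\overline{\ul}$ by \cite[Lemma 4.7]{AlcBakKon2016} and $\overline{\ul}$ is closed, $\overline{\B}\subset\overline{\ul}$. To see $\overline{\B}$ is a Cantor set I would check that it is closed, that it is perfect because near any irreducible sequence one builds infinitely many distinct irreducible sequences by modifying a long prefix, and that it is nowhere dense because its complement contains the dense open set $\vl\setminus\overline{\ul}$ together with all plateau interiors. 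The equality $\dim_{\operatorname{H}}(\overline{\B})=1$ would then follow by realising inside $\overline{\B}$ a family of bases, parametrised by arbitrarily long admissible blocks, whose symmetric shifts carry topological entropy arbitrarily close to $1$, and transferring this into a lower bound on Hausdorff dimension via \eqref{eq:hausdoffdim}, in the same way that $\ul$ is shown to have full Hausdorff dimension.

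The main obstacle is part $(i)$ together with the identification of $q_R$ in $(ii)$: both rest on showing that irreducibility, respectively the increment-and-reflect normal form, is the \emph{exact} combinatorial threshold, which demands careful symbol-by-symbol bookkeeping of how the suffixes of an admissible word approach the two forbidden boundaries $\al(q)$ and $\overline{\al(q)}$.
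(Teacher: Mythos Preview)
Your opening sentence is exactly right: in the paper this theorem is not proved at all---it is stated as a summary of \cite[Theorem~1, Theorem~2, Theorem~3]{AlcBakKon2016}, with no argument given beyond the citation. Everything you write after that sentence goes beyond what the paper provides; your sketch of the arguments is broadly faithful to the strategy actually carried out in \cite{AlcBakKon2016} (trapping regions for reducible $\alpha(q)$, the reflection-recurrence machinery for the connecting word in the transitive direction, the finite-type reduction on a plateau, and the construction of the families $\I_N$ for the dimension bound), so there is no error to flag, but for the purposes of comparing with \emph{this} paper's proof there is nothing further to say.
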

The interval $[q_L, q_R]$ described in $ii)$ is known as the \emph{irreducible interval generated by $q_L$} whenever $q_L \in \I$. We will denote an irreducible interval generated by $q \in \I$ with $\al(q)$ periodic by $I(q)$, whenever necessary.

In \cite[Remark 1.2]{KalKonLiLu2017} Kalle et. al. mentioned that $\overline{\B}\setminus \B$ is a countable set. Moreover, it is clear that if $q \in \overline{\B}\setminus \B$ then $q$ is an end point of an entropy plateau. Finally in \cite[Theorem 2]{KalKonLiLu2017} it is shown that
$$\dim_{\operatorname{H}}^{\operatorname{loc}}(\overline{\B}, q) = \dim_{\operatorname{H}}(\u_q) = \dim_{\operatorname{H}}(\V_q).$$ 
By \cite[Theorem 2]{KalKonLiLu2017} and \cite[Theorem 1.4]{KomKonLi2015} the following proposition holds.

\begin{proposition}\label{pr:continuityinB}
The functions $H$ and $HD$ are continuous in $\overline{\B}$. Moreover the map 
$$q \to \dim_{\operatorname{H}}^{\operatorname{loc}}(\overline{\B}, q)$$
is continuous in $\overline{\B}$. 
\end{proposition}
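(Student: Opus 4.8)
The plan is to deduce Proposition~\ref{pr:continuityinB} directly from the two results quoted just before it, namely \cite[Theorem~1.4]{KomKonLi2015} and \cite[Theorem~2]{KalKonLiLu2017}. First I would recall the precise statements: \cite[Theorem~1.4]{KomKonLi2015} asserts that the Hausdorff dimension function $HD(q)=\dim_{\operatorname{H}}(\V_q)$ is continuous on $[q_G,M+1]$ (with bounded variation and a devil's-staircase-like behaviour), and since $H(q)=HD(q)\log q$ and $q\mapsto\log q$ is continuous and nonvanishing on $(1,M+1]$, the entropy function $H$ is continuous there as well; in particular both $H$ and $HD$ are continuous when restricted to the closed set $\overline{\B}\subset(1,M+1]$. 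So the first two assertions of the proposition are immediate restrictions of known global continuity statements, and I would simply state this.

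The third assertion, continuity of $q\mapsto\dim_{\operatorname{H}}^{\operatorname{loc}}(\overline{\B},q)$ on $\overline{\B}$, is where the content lies, and here I would invoke \cite[Theorem~2]{KalKonLiLu2017}, which gives the pointwise identity
\[
\dim_{\operatorname{H}}^{\operatorname{loc}}(\overline{\B},q)=\dim_{\operatorname{H}}(\u_q)=\dim_{\operatorname{H}}(\V_q)=HD(q)
\]
for $q\in\overline{\B}$. Thus the local-dimension function on $\overline{\B}$ coincides with the restriction of the globally continuous function $HD$ to $\overline{\B}$, and continuity follows at once. So the whole proof is essentially a two-line argument: combine the formula from \cite{KalKonLiLu2017} with the continuity of $HD$ from \cite{KomKonLi2015}.

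The only subtlety — and the step I would be most careful about — is making sure the local Hausdorff dimension $\dim_{\operatorname{H}}^{\operatorname{loc}}(\overline{\B},q)$ is well defined for every $q\in\overline{\B}$, i.e.\ that the limit defining it exists; but this is exactly part of the content of \cite[Theorem~2]{KalKonLiLu2017} (the identity with $\dim_{\operatorname{H}}(\V_q)$ presupposes the limit exists), so I would lean on that reference rather than reprove it. I would also remark in passing that, together with $\dim_{\operatorname{H}}(\overline{\B})=1$ from Theorem~\ref{th:mainrsd}~$iii)$ and Lemma~\ref{th:hdderong}, continuity of the local-dimension map on the compact set $\overline{\B}$ is consistent with, and indeed implies via the supremum formula, that $\sup_{q\in\overline{\B}}HD(q)=1$; this is a sanity check rather than a needed step. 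I do not expect any real obstacle here: the proposition is a bookkeeping consequence of earlier cited theorems, assembled for later use in Section~\ref{sec:hd}.
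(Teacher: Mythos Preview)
Your proposal is correct and takes essentially the same approach as the paper: the paper gives no detailed proof at all, stating only that the proposition follows from \cite[Theorem~2]{KalKonLiLu2017} and \cite[Theorem~1.4]{KomKonLi2015}, and you have simply spelled out how to combine these two references. Your remark that continuity of $H$ follows from that of $HD$ via $H(q)=HD(q)\log q$ is fine, though note that the continuity of $H$ is already available directly from the devil's-staircase result \cite[Lemma~2.11]{KomKonLi2015} cited a few lines earlier in the paper.
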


\section{Approximation properties of symmetric $q$-shifts}
\label{sec:approx}

\noindent In this section we introduce a notion of approximation of symmetric $q$-shifts that will be useful for the rest of the paper. Using this approximation we show that every transitive symmetric $q$-shift is coded and mixing.

\begin{definition}\label{def:approximations}
Given $q \in \overline{\ul}$ we define \emph{natural approximation of $q$ from below} as the sequence $\left\{q^-_m\right\}_{m=1}^\f \subset \vl$ given by $q^-_1$ defined implicitly by $\al(q^-_1) = (\al_1(q) \ldots \al_{n_1}(q)^-)^\f$ where 
$$n_1 = \min \set{n \in \N : (\al_1(q)\ldots \al_{n}(q)^-)^\f \in \vb};$$ 
and if $q^-_{m-1}$ is already defined then $q^-_m$ is defined implicitly by
\begin{equation}\label{eq:approxbelow}
\al(q^-_m) = (\al_1(q) \ldots \al_{n_m}(q)^-)^\f 
\end{equation} 
where 
$$n_m = \min\set{n \in \N : n > n_{m-1} \text{ and } (\al_1(q) \ldots \al_{n}(q)^-)^\f \in \vb}.$$
\end{definition}

We make the following observation on Definition \ref{def:approximations}. 

\begin{remark}\label{rem:onvl}
Note that Theorem \ref{thm:uclosureuv} $iii)$ implies that for every $q \in (\vl \setminus \overline{\ul}) \setminus \set{q_G}$, $\al(q)$ is a periodic sequence. Set $m$ to be the period of $\al(q)$. Then, since $q \in \vl \setminus \overline{\ul}$ there exists $m' < m$ such that $\si^{m'}(\al(q)) = \overline{\al(q)}$. We claim that, for every $j > m'$ the sequence $(\al_1(q) \ldots \al_j(q)^-)^\f \notin \vb$. Suppose on the contrary that there exists $j > m'$ such that $(\al_1(q) \ldots \al_j(q)^-)^\f \in \vb$. Then, 
\[
\overline{(\al_1(q) \ldots \al_j(q)^-)^\f} \preccurlyeq \si^{m'}((\al_1(q) \ldots \al_j(q)^-)^\f) \preccurlyeq (\al_1(q) \ldots \al_j(q)^-)^\f,\] 
i.e 
\[
\overline{(\al_1(q) \ldots \al_j(q)^-)^\f} \preccurlyeq (\al_{m'+1}(q) \ldots \al_j(q)^-\al_1(q) \ldots \al_{m'}(q))^\f \preccurlyeq (\al_1(q) \ldots \al_j(q)^-)^\f.
\] 
Note that 
\[
\al_{m'+1}(q) \ldots \al_j(q)^- \prec \overline{\al_1(q) \ldots \al_{j-m'}(q)}. 
\]
Then 
\[
\overline{\al_{m'+1}(q) \ldots \al_j(q)^-} \succ \al_1(q) \ldots \al_{j-m'}(q),
\] 
which is a contradiction. 

We have shown that if $q \in (\vl \setminus \overline{\ul}) \setminus \set{q_G}$ there is $N \in \N$ such that for any $m \geq N$ there is no $n_m > n_N$ such that $$(\al_1(q) \ldots \al_{n_m}(q)^-)^\f \in \vb.$$ Thus, for sake of completeness, if $q \in (\vl \setminus \overline{\ul}) \setminus \set{q_G}$ we set the natural approximation from below of $q$ to be the finite sequence $\set{q^-_1, \ldots q^-_N, q^-_{N+1}}$ where $q^-_{N+1} = q$. Also, since $q_G$ is the smallest element of $\vl$ we set the natural approximation from below of $q_G$ to be $\set{q_G}$.    
\end{remark}

In the following propositions we show that indeed, the natural approximation from below approximates a given $q \in \overline{\ul}$. 

\begin{proposition}\label{pr:aproxconvergencebelow} 
Let $q \in \overline{\ul}$. Then, the natural approximation from below of $q$, $\left\{q^-_m\right\}_{m=1}^\f$, satisfies: 
\begin{enumerate}[$i)$]
\item For every $m \in \mathbb{N}$, $q^-_m < q^-_{m+1}$ and $q^-_m < q$. 
\item $q^-_m \mathop{\nearrow}\limits_{m \to \f} q$.
\end{enumerate}
\end{proposition}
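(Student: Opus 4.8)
The plan is to establish both claims from the explicit formula $\al(q^-_m) = (\al_1(q) \ldots \al_{n_m}(q)^-)^\f$ together with Lemma \ref{lem:quasigreedyexpansion}, which tells us that $q \mapsto \al(q)$ is strictly increasing and bijective from $\vl$ onto $\vb$, with the inverse continuous. So everything reduces to lexicographic comparisons of the sequences $\al(q^-_m)$ against each other and against $\al(q)$, plus one convergence argument in the sequence space $\Sig$ with its metric $d$ from \eqref{eq:metric d}.

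For part $i)$: First I would show $\al(q^-_m) \prec \al(q)$ for every $m$. The sequence $\al(q^-_m) = (\al_1(q)\ldots\al_{n_m}(q)^-)^\f$ agrees with $\al(q)$ on the first $n_m - 1$ symbols and then has $\al_{n_m}(q) - 1 < \al_{n_m}(q)$ in position $n_m$; hence $\al(q^-_m) \prec \al(q)$, and since both lie in $\vl$'s image $\vb$ (note $\al(q^-_m) \in \vb$ by the very definition of $n_m$), strict monotonicity of $\Phi^{-1}$ gives $q^-_m < q$. Next, $q^-_m < q^-_{m+1}$: both $\al(q^-_m)$ and $\al(q^-_{m+1})$ share the prefix $\al_1(q)\ldots\al_{n_m-1}(q)$; in position $n_m$, $\al(q^-_m)$ has $\al_{n_m}(q)-1$ while $\al(q^-_{m+1})$ has $\al_{n_m}(q)$ (since $n_{m+1} > n_m$ means $\al(q^-_{m+1})$ still agrees with $\al(q)$ at position $n_m$). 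Hence $\al(q^-_m) \prec \al(q^-_{m+1})$ and again $q^-_m < q^-_{m+1}$ by monotonicity. I should also note, as a sanity check, that the set over which $n_m$ ranges is genuinely infinite when $q \in \overline{\ul}$ — this is where the characterisation of $\overline{\ul}$ in Lemma \ref{lem:uandclosureu} $ii)$ is used, guaranteeing infinitely many truncation points $n$ with $(\al_1(q)\ldots\al_n(q)^-)^\f \in \vb$, so the sequence $\{q^-_m\}$ is well-defined and infinite; this may already be assumed from the placement of the definition, but a line recalling it is prudent.

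For part $ii)$: the sequence $\{q^-_m\}$ is increasing (by $i)$) and bounded above by $q$, so it converges to some limit $\ell \le q$; it remains to show $\ell = q$. Since $\Phi^{-1}$ is continuous and increasing, it suffices to show $\al(q^-_m) \to \al(q)$ in $\Sig$, equivalently $d(\al(q^-_m), \al(q)) \to 0$. But $\al(q^-_m)$ agrees with $\al(q)$ in all positions $1, \ldots, n_m - 1$, so $d(\al(q^-_m), \al(q)) \le 2^{-(n_m-1)}$, and $n_m \to \f$ because the $n_m$ form a strictly increasing sequence of positive integers. Hence $\al(q^-_m) \to \al(q)$, so $\Phi^{-1}(\al(q^-_m)) = q^-_m \to \Phi^{-1}(\al(q)) = q$, i.e. $q^-_m \nearrow q$.

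The main obstacle — really the only subtle point — is making sure the comparisons in the sequence space translate correctly to comparisons of the bases, i.e. that one genuinely has $\al(q^-_m) \in \vb = \Phi(\vl)$ so that $\Phi^{-1}$ applies; this is built into the definition of $n_m$ (the minimum is taken over $n$ with $(\al_1(q)\ldots\al_n(q)^-)^\f \in \vb$), so no extra work is needed beyond citing it. Everything else is a routine unwinding of the lexicographic order and the metric. I would present it in roughly the order above: first $\al(q^-_m) \prec \al(q)$, then $\al(q^-_m) \prec \al(q^-_{m+1})$, then convert both via Lemma \ref{lem:quasigreedyexpansion} to get $i)$, and finally the monotone-bounded plus metric-estimate argument for $ii)$.
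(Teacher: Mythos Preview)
Your proposal is correct and follows essentially the same approach as the paper's proof: both arguments reduce to lexicographic comparisons of $\al(q^-_m)$, $\al(q^-_{m+1})$ and $\al(q)$, invoke Lemma~\ref{lem:quasigreedyexpansion} to transfer these to inequalities among the bases, and then establish $ii)$ via the metric estimate $d(\al(q^-_m),\al(q)) = 2^{-n_m}\to 0$ together with the continuity of $\Phi^{-1}$. Your added sanity check on well-definedness and the monotone-bounded preamble for $ii)$ are harmless extras not present in the paper but perfectly reasonable.
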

\begin{proof}
Let us show $i)$. From Lemma \ref{lem:quasigreedyexpansion} it suffices to show that 
$$\al(q^-_m) \lle \al(q^-_{m+1}) \text{ and } \al(q^-_m) \lle \al(q)$$ 
for every $m \in \N$. From Definition \ref{def:approximations}, we have for every natural number $m$,
$$\al(q^-_m) = (\al_1(q) \ldots  \al_{n_m}(q)^-)^\f \quad \text{and} \quad \al(q^-_{m+1}) = (\al_1(q) \ldots \al_{n_m}(q) \ldots \al_{n_{m+1}}(q)^-)^\f.$$ 
Note that 
$$\al_{n_m}(q^-_m) =  \al_{n_m}(q)^- < \al_{n_m}(q) = \al_{n_m}(q^-_{m+1}),$$ 
so $\al(q^-_m) \prec \al(q^-_{m+1})$ and $q^-_m \leq q^-_{m+1}$. The proof for $q^-_m \leq q$ follows from the same argument.

To show $ii)$, let $q \in \overline{\ul}$ be fixed. Note that that $d(\al(q), \al(q^-_m)) = 1/2^{n_m}$. Then, for a given $\varepsilon > 0$ there is $N \in \N$ such that if $m \geq N$, then 
$$d(\al(q), \al(q^-_m)) = 1/2^{n_m} < 1/2^N < \varepsilon,$$ 
that is $\al(q^-_m) \longrightarrow \al(q)$ as $m \to \infty$. Then, by Lemma \ref{lem:quasigreedyexpansion} we have that $q^-_m \mathop{\nearrow}\limits_{m \to \f} q$.     
\end{proof}

We introduced the natural approximation from below of elements of $\overline{\ul}$ in order to approximate the subshifts $(\vb_q, \si)$ in the following way: we say that $(X, \si)$ is \emph{approximated from below} if there exists a sequence of subshifts of finite type $\set{(X_m, \si)}_{m=1}^\f$ such that $X_m \subset X_{m+1}$ for every $m \in \N$ and 
$$X = \overline{\mathop{\bigcup}\limits_{m=1}^\f X_n}.$$ 

\begin{lemma}\label{lem:approximations}
If $q \in \vl$ then the subshift $(\vb_q, \si)$ is approximated from below by the sequence of subshifts $(\vb_{q^-_m}, \si)$ where $\set{q^-_m}_{m=1}^\f$ is the natural approximation from below of $q$.
\end{lemma}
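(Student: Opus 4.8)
\textbf{Proof plan for Lemma \ref{lem:approximations}.}

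The plan is to unwind the definition of ``approximated from below'' directly: I need to show that each $(\vb_{q^-_m}, \si)$ is a subshift of finite type, that $\vb_{q^-_m} \subset \vb_{q^-_{m+1}}$ for every $m$, and that $\vb_q = \overline{\bigcup_{m=1}^\f \vb_{q^-_m}}$. The first two points are nearly immediate from the machinery already assembled in the excerpt. Since $\al(q^-_m) = (\al_1(q)\ldots\al_{n_m}(q)^-)^\f$ is by construction a periodic element of $\vb$, the comment following Theorem \ref{thm:uclosureuv} (periodicity of $\al$ forces a subshift of finite type, as recorded via \cite{DeVKom2008} in the discussion of $\Ci$) gives that $(\vb_{q^-_m}, \si)$ is a subshift of finite type; alternatively one checks directly from the lexicographic characterisation \eqref{eq:lang1} that forbidding the finitely many words that violate $\overline{\al_1(q^-_m)\ldots\al_j(q^-_m)} \lle w_{i+1}\ldots w_j \lle \al_1(q^-_m)\ldots\al_j(q^-_m)$ for $j \le n_m$ already cuts out $\vb_{q^-_m}$, since $\al(q^-_m)$ has period $n_m$. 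The nesting $\vb_{q^-_m}\subset\vb_{q^-_{m+1}}$ follows from Proposition \ref{pr:aproxconvergencebelow}$(i)$: $q^-_m < q^-_{m+1}$ gives $\al(q^-_m) \prec \al(q^-_{m+1})$ and $\overline{\al(q^-_{m+1})} \prec \overline{\al(q^-_m)}$, so every sequence squeezed between $\overline{\al(q^-_m)}$ and $\al(q^-_m)$ under all shifts is a fortiori squeezed between $\overline{\al(q^-_{m+1})}$ and $\al(q^-_{m+1})$.

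The substantive part is the equality $\vb_q = \overline{\bigcup_m \vb_{q^-_m}}$. The inclusion $\supseteq$ is easy: each $\vb_{q^-_m} \subseteq \vb_q$ because $\al(q^-_m) \lle \al(q)$ and $\overline{\al(q)} \lle \overline{\al(q^-_m)}$ (again Proposition \ref{pr:aproxconvergencebelow}$(i)$ plus reflection), so $\bigcup_m \vb_{q^-_m} \subseteq \vb_q$, and $\vb_q$ is closed. For $\subseteq$, I would take $\x \in \vb_q$ and produce, for each $\ell$, an element of some $\vb_{q^-_m}$ agreeing with $\x$ on the first $\ell$ coordinates; density then follows from the metric \eqref{eq:metric d}. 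The natural candidate is to use a prefix $x_1\ldots x_\ell$ of $\x$ and show it lies in $\La(\vb_{q^-_m})$ for all large $m$. Since $\x \in \vb_q$, for every $0 \le i \le \ell-1$ we have $\overline{\al_1(q)\ldots\al_{\ell-i}(q)} \lle x_{i+1}\ldots x_\ell \lle \al_1(q)\ldots\al_{\ell-i}(q)$ by \eqref{eq:lang1}. By Proposition \ref{pr:aproxconvergencebelow}$(ii)$, $\al(q^-_m) \to \al(q)$, so choosing $m$ large enough that $n_m > \ell$ forces $\al_1(q^-_m)\ldots\al_\ell(q^-_m) = \al_1(q)\ldots\al_\ell(q)$ (they agree up to coordinate $n_m - 1 \ge \ell$), hence $x_1\ldots x_\ell$ satisfies the defining inequalities \eqref{eq:lang1} for $\vb_{q^-_m}$ as well and so $x_1\ldots x_\ell \in \La(\vb_{q^-_m})$; pick any $\y \in \vb_{q^-_m}$ extending this prefix (possible since $\vb_{q^-_m}$ is a nonempty subshift containing every admissible word as a factor, and by shift-invariance we can realise the prefix at the start).

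The one genuine subtlety — and the step I expect to require the most care — is the case distinction hidden in Remark \ref{rem:onvl}: the natural approximation from below is an honest infinite sequence only for $q \in \overline{\ul}$, whereas for $q \in (\vl\setminus\overline{\ul})\setminus\{q_G\}$ it is the finite sequence $\{q^-_1,\ldots,q^-_N,q^-_{N+1}\}$ with $q^-_{N+1}=q$, and for $q_G$ it is $\{q_G\}$. In those degenerate cases the statement is trivial — the union stabilises at $\vb_q$ itself, and $(\vb_q,\si)$ is already a subshift of finite type since $\al(q)$ is periodic — but I must handle them explicitly rather than assume $q\in\overline{\ul}$, since the lemma is stated for all $q\in\vl$. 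So I would open the proof by splitting into these three cases, dispatch the two finite ones in a sentence each using Theorem \ref{thm:uclosureuv}$(iii)$ and the periodicity of $\al(q)$, and then run the density argument above for $q\in\overline{\ul}$, where Proposition \ref{pr:aproxconvergencebelow} applies and $n_m\to\infty$.
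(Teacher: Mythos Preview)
Your proposal is correct and follows essentially the same approach as the paper: both split into the degenerate cases $q=q_G$ and $q\in(\vl\setminus\overline{\ul})\setminus\{q_G\}$ (dispatched via Remark~\ref{rem:onvl}) versus the main case $q\in\overline{\ul}$, then use Proposition~\ref{pr:aproxconvergencebelow} for the nesting $\vb_{q^-_m}\subset\vb_{q^-_{m+1}}\subset\vb_q$ and establish density by showing that any word in $\La(\vb_q)$ already lies in $\La(\vb_{q^-_m})$ once $n_m$ exceeds its length. Your write-up is if anything slightly more careful than the paper's, since you explicitly note why each $(\vb_{q^-_m},\si)$ is of finite type and why a word in the language can be realised as a prefix via shift-invariance.
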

\begin{proof}
Firstly, let $q \in \vl \setminus \overline{\ul}$. If $q = q_G$ then, using Remark \ref{rem:onvl}, we have that the natural approximation of below of $q_G$ is $\set{q_G}$. Then, the conclusion of the Lemma follows easily. On the other hand, if $q \neq q_G$ from Remark \ref{rem:onvl} we have that there is $N \in \N$ such that the natural approximation from below of $q$, $\set{q^-_m}_{m=1}^\f$ satisfies that $q^-_m = q^-_{N+1} = q$ for all $m \geq N+1$. So, it is straightforward that $(\vb_{q^-_m}, \si)$ approximates from below the subshift $(\vb_q, \si)$.   

Now, consider $q \in \overline{\ul}$ and let $\set{q^-_m}_{m=1}^\f$ be the natural approximation from below of $q$. From Proposition \ref{pr:aproxconvergencebelow} we have that $q^-_m < q^-_{m+1}$ for every $m \in \N$. Since $\set{q^-_m}_{m=1}^\f \subset \vl$, then $\vb_{q^-_m} \subset \vb_{q^-_{m+1}}$. Since $q^-_m < q$ then $\vb_{q^-_m} \subset \vb_{q}$ for every $m \in \N$. This implies that $\mathop{\bigcup}\limits_{m=1}^\f \vb_{q^-_m} \subset \vb_{q}$. Also, recall that $\vb_q$ is a closed subset of $\Sig$, then $\overline{\mathop{\bigcup}\limits_{m=1}^\f \vb_{q^-_m}} \subset \vb_{q}$. On the other hand, consider $\om \in\La (\vb_q)$. Since $q^-_m \nearrow q$ as $m \longrightarrow \f$, there exists $n \in \mathbb{N}$ such that $\om\in \La(\vb_{q^-_m})$. This implies that there exists $\x \in \vb_{q^-_m}$ such that $\om$ is a factor of $\x$. Then $\mathop{\bigcup}\limits_{k=1}^\f\vb_{q^-_m}$ is dense in $\vb_q$ with respect to the metric $d$ defined in \eqref{eq:metric d}. Thus, 
$$\overline{\mathop{\bigcup}\limits_{m=1}^\f \vb_{q^-_m}} = \vb_{q}.$$
\end{proof}

From Definition \ref{def:approximations}, if $q \in \overline{\ul}$ we have $\set{q^-_m}_{m=1}^\f \subset \vl$. We wish to point out that the approximation from below constructed in Lemma \ref{lem:approximations} is similar to the approximation $(\WB_{p_L}, \si)$ considered in \cite{KalKonLiLu2017} and \cite{KomKonLi2015}. One of the advantages of our approach is that it is not necessary to compare finite words and sequences introducing a variation of the lexicographic order. Also, the constructed approximations together with Remark \ref{rem:onvl} allow us to always get strict inclusions when $q \in \overline{\ul}$. That is, since $q \in \overline{\ul}$, $\set{q^-_m}_{m=1}^\f \subset \vl$ and $q^-_n < q^-_m$ for every $n < m$ we have that $\vb_{q^-_n} \subsetneq \vb_{q^-_m}$ for every $n < m$.

We now show that given the natural approximation from below of $q$ it is also possible to approximate the language of $\vb_q$ by the languages of the associated subshifts of each of the elements of the natural approximation from below (compare with \cite[Lemma 3.4]{KalKonLiLu2017}).  

\begin{lemma}\label{lem:aproxwordsbelow}
Let $q \in \vl$ and consider the natural approximation from below of $q$, $\left\{q^-_m\right\}_{m=1}^\f$. Then, for every $k \in \N$ there exists $J \in \N$ such that if $m \geq J$, then 
\[
B_k(\vb_q) = B_k(\vb_{q^-_m}) = B_k(\vb_{q^-_J}).
\]
\end{lemma}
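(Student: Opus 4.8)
The plan is to prove the statement by combining the characterization of the language of $\vb_q$ given in \eqref{eq:lang1} with the convergence $q^-_m \nearrow q$ established in Proposition \ref{pr:aproxconvergencebelow} and the translation to quasi-greedy expansions provided by Lemma \ref{lem:quasigreedyexpansion}. First I would observe that, since the natural approximation from below satisfies $q^-_m \leq q^-_{m+1} \leq q$, the corresponding subshifts are nested, $\vb_{q^-_m} \subseteq \vb_{q^-_{m+1}} \subseteq \vb_q$ (this was already noted in the proof of Lemma \ref{lem:approximations}), and hence $B_k(\vb_{q^-_m}) \subseteq B_k(\vb_{q^-_{m+1}}) \subseteq B_k(\vb_q)$ for every $k$. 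So the only thing to prove is that for each fixed $k$ the reverse inclusion $B_k(\vb_q) \subseteq B_k(\vb_{q^-_m})$ holds once $m$ is large enough, and then monotonicity forces the chain of equalities to stabilise.

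The key step is a uniform comparison of admissibility. By \eqref{eq:lang1}, a word $\om = w_1 \ldots w_k$ lies in $B_k(\vb_q)$ if and only if for every $0 \leq i \leq k-1$ we have
\[
\overline{\al_1(q) \ldots \al_{k-i}(q)} \lle w_{i+1} \ldots w_k \lle \al_1(q) \ldots \al_{k-i}(q),
\]
and the analogous statement holds for $\vb_{q^-_m}$ with $\al(q^-_m)$ in place of $\al(q)$. Since all the relevant conditions only involve the first $k$ symbols of $\al(q)$ and of $\al(q^-_m)$, it suffices to show that $\al_1(q^-_m) \ldots \al_k(q^-_m) = \al_1(q) \ldots \al_k(q)$ for $m$ large. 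This is immediate from Proposition \ref{pr:aproxconvergencebelow} $ii)$: we have $\al(q^-_m) \to \al(q)$ in the metric $d$ of \eqref{eq:metric d}, equivalently $d(\al(q),\al(q^-_m)) = 1/2^{n_m}$ with $n_m \to \infty$, so once $n_m > k$ the prefixes of length $k$ coincide. Choosing $J$ so that $n_J > k$ (which exists by the definition of the $n_m$ and, in the degenerate cases $q \in (\vl \setminus \overline{\ul})$ or $q = q_G$, by Remark \ref{rem:onvl}, where the approximation is eventually constant equal to $q$), we get $B_k(\vb_q) = B_k(\vb_{q^-_m})$ for all $m \geq J$, and in particular $B_k(\vb_q) = B_k(\vb_{q^-_J})$.

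I do not expect any serious obstacle here; the statement is essentially a finitary reformulation of the density of $\bigcup_m \vb_{q^-_m}$ in $\vb_q$ proved in Lemma \ref{lem:approximations}, made quantitative by tracking how many symbols of $\al(q)$ are needed to decide membership of a length-$k$ word via \eqref{eq:lang1}. The only mild care needed is to handle the two boundary situations uniformly: when $q = q_G$ the approximation is the singleton $\set{q_G}$ and the claim is trivial with $J = 1$, and when $q \in (\vl \setminus \overline{\ul}) \setminus \set{q_G}$ Remark \ref{rem:onvl} gives a finite approximation ending at $q^-_{N+1} = q$, so one takes $J = N+1$; in the generic case $q \in \overline{\ul}$ the sequence $\set{n_m}$ is strictly increasing and unbounded, which is exactly what is needed. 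One should also remark (for later use, e.g. in computing entropies along the approximation) that the $J$ obtained depends on $k$ and that $B_k(\vb_{q^-_J})$ being equal to $B_k(\vb_q)$ does \emph{not} mean the subshifts agree — only their length-$k$ languages do — but that is all that is asserted.
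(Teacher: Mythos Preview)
Your proposal is correct and follows essentially the same route as the paper: both arguments use the monotone inclusions $\vb_{q^-_m}\subseteq\vb_{q^-_{m+1}}\subseteq\vb_q$ for one direction, and for the reverse choose $J$ with $n_J>k$ so that the length-$k$ prefixes of $\al(q^-_J)$ and $\al(q)$ coincide, whence the admissibility conditions in \eqref{eq:lang1} agree. Your handling of the degenerate cases $q=q_G$ and $q\in(\vl\setminus\overline{\ul})\setminus\{q_G\}$ via Remark~\ref{rem:onvl} is also in line with the paper's treatment.
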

\begin{proof}
Firstly, let us assume that $q \in \vl \setminus \overline{\ul}$. Then $\al(q)$ is a periodic sequence. Let $J \in \N$ be the period block of $\al(q)$. Then, from Lemma \ref{lem:approximations} there is $N \in \N$ such that $q^-_m = q^-_{N+1}$ and $\al(q^-_m) = \al(q)$. Thus, for all $k \geq J$ we have $B_k(\vb_q) = B_k(\vb_{q^-_m}) = B_k(\vb_{q^-_J}).$

Suppose that $q \in \overline{\ul}$. From Proposition \ref{pr:aproxconvergencebelow} we have $q^-_m < q^-_{m+1} < q$, for every $m \in \N$ and $q^-_m \mathop{\nearrow}\limits_{m \to \f} q$. This implies 
$$\vb_{q^-_m} \subsetneq \vb_{q^-_{m+1}} \subsetneq \vb_q.$$
Then, for every $k \in \N$ and $m, J \in \N$ with $m \geq J$ we have 
\[
B_k(\vb_{q^-_J}) \subset B_k(\vb_{q^-_m}) \subset B_k(\vb_q).
\]
Fix $k \in \N$ and let $n_m$ be the period of $\al(q^-_m)$. We claim that $J = \min\set{m \in N: k < n_m}$ satisfies 
\[
B_k(\vb_q) = B_k(\vb_{q^-_m}) = B_k(\vb_{q^-_J}).
\] 
To show this, it suffices to show that $B_k(\vb_q) \subset B_k(\vb_{q^-_J})$. Let $\om \in B_k(\vb_q)$. Then, for every $i \in \set{0, \ldots, k-1}$ 
\[
\overline{\al_1(q) \ldots \al_{k-i}(q)} \lle w_{i+1} \ldots w_k \lle \al_1(q) \ldots \al_{k-i}(q).
\] 
Since $n_K > k$ then $\al_1(q) \ldots \al_k(q) = \al_1(q^-_J) \ldots \al_k(q^-_J)$. This gives 
\[
\overline{\al_1(q^-_J) \ldots \al_{k-i}(q^-_J)} \lle w_{i+1} \ldots w_k \lle \al_1(q^-_J) \ldots \al_{k-i}(q^-_J) \text{ for every } i \in \set{0, \ldots, k-1}
\] 
and the proposition follows.
\end{proof}

We show now the following technical, but important results. We will show that if $q \in \I$ then there must exist infinitely many irreducible elements in the natural approximation from below $\set{q^-_m}_{m=1}^\f$. For this endeavour, we prove the following statement firstly.

\begin{lemma}\label{lem:spec1}
Let $q \in \I$. Then, if there is $j \in \N$ such that $q^-_j > q_T$ and 
$$(\al_1(q^-_j) \ldots \al_{n_j}(q^-_j))^\f = (\al_1(q) \ldots \al_{n_j}(q)^-)^\f$$ 
is not irreducible then there exists a unique $k < j$ and a unique $1 < n_k < n_j$ such that $q^-_k \in \I$ and $q^-_j \in I(q^-_k)$.
\end{lemma}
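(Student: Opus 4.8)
The plan is to exploit the fact that $q^-_j$ is a base in $\vl\setminus\overline{\ul}$ (its quasi-greedy expansion is periodic and reflection-periodic), that it exceeds $q_T$, and that it fails to be irreducible. Non-irreducibility of $\al(q^-_j)=(\al_1(q)\ldots\al_{n_j}(q)^-)^\f$ means that there is some index $\ell$ with $(\al_1(q)\ldots\al_\ell(q)^-)^\f\in\vb$ and
$$\al_1(q)\ldots\al_\ell(q)\big(\overline{\al_1(q)\ldots\al_\ell(q)}\big)^\f\succcurlyeq \al(q^-_j).$$
First I would take $\ell$ to be the \emph{smallest} such index; this $\ell$ satisfies $\ell<n_j$ (a witness with $\ell\ge n_j$ would be incompatible with $q^-_j$ itself lying in $\vl$, by the lexicographic admissibility in \eqref{eq:lang1}). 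Since $(\al_1(q)\ldots\al_\ell(q)^-)^\f\in\vb$ and $\ell<n_j$, the minimality in Definition \ref{def:approximations} forces $\ell=n_k$ for some $k<j$ in the natural approximation from below of $q$; this is the candidate for the claimed $k$, and one gets $q^-_k<q^-_j$ automatically. The constraint $n_k>1$ should come from $q^-_j>q_T$ together with the explicit form \eqref{eq:transconstant} of $\al(q_T)$: a witness $n_k=1$ would pin $\al(q^-_j)$ at or below $\al(q_T)$, contradicting $q^-_j>q_T$.

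**Showing $q^-_k$ is irreducible and $q^-_j\in I(q^-_k)$.** Next I would verify $q^-_k\in\I$. Suppose not: then there is an even smaller witness $\ell'<n_k$ with $(\al_1(q)\ldots\al_{\ell'}(q)^-)^\f\in\vb$ and $\al_1(q)\ldots\al_{\ell'}(q)(\overline{\al_1(q)\ldots\al_{\ell'}(q)})^\f\succcurlyeq\al(q^-_k)$. Using that $\al(q)$ and $\al(q^-_j)$ agree on the first $n_j-1$ coordinates and $\al(q^-_k),\al(q^-_j)$ agree on the first $n_k-1=\ell-1$ coordinates, a lexicographic comparison should propagate this inequality up to $\al(q^-_j)$, contradicting the minimality of $\ell$ as a witness of non-irreducibility for $q^-_j$; hence $q^-_k\in\I$. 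Then, because $q^-_k\in\I$ with $\al(q^-_k)$ periodic (it is in $\vl\setminus\overline{\ul}$, or one argues directly from periodicity of the block $\al_1(q)\ldots\al_{n_k}(q)^-$), Theorem \ref{th:mainrsd} $ii)$ tells us $q^-_k$ generates an irreducible interval $I(q^-_k)=[q^-_k,q_R]$ with $\al(q_R)=\al_1(q^-_k)\ldots\al_{n_k}(q^-_k)^+(\overline{\al_1(q^-_k)\ldots\al_{n_k}(q^-_k)})^\f$. Observing $\al_1(q^-_k)\ldots\al_{n_k}(q^-_k)^+=\al_1(q)\ldots\al_{n_k}(q)$, I would check $\al(q^-_k)\preccurlyeq\al(q^-_j)\preccurlyeq\al(q_R)$: the left inequality is $q^-_k<q^-_j$, and the right inequality is exactly the non-irreducibility witness $\al_1(q)\ldots\al_{n_k}(q)(\overline{\al_1(q)\ldots\al_{n_k}(q)})^\f\succcurlyeq\al(q^-_j)$ read backwards, so $q^-_j\in I(q^-_k)$ by Lemma \ref{lem:quasigreedyexpansion}.

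**Uniqueness.** For uniqueness of the pair $(k,n_k)$: the index $n_k$ in the natural approximation determines $k$ and vice versa, so it suffices to show $n_k$ is unique. If $q^-_j\in I(q^-_{k'})$ for another irreducible $q^-_{k'}$ with period $n_{k'}$, then both $\al(q^-_k)$ and $\al(q^-_{k'})$ would be left endpoints of irreducible intervals containing $q^-_j$; since entropy plateaus (equivalently irreducible intervals with periodic left endpoint) are \emph{maximal} intervals of constancy of $H$ and distinct plateaus are disjoint, while $q^-_j$ cannot be interior to two such intervals, one gets $I(q^-_k)=I(q^-_{k'})$ and hence $n_k=n_{k'}$. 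Alternatively, and more self-containedly, both $n_k$ and $n_{k'}$ are witnesses of non-irreducibility of $q^-_j$, so both are $\ge\ell$; and the left-endpoint relation forces each to equal the minimal witness $\ell$, giving $n_k=n_{k'}=\ell$.

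**Main obstacle.** The delicate point I expect to be hardest is the middle step: showing that the \emph{minimal} non-irreducibility witness $\ell$ for $q^-_j$ coincides with some $n_k$ from the natural approximation \emph{and} that the corresponding $q^-_k$ is itself irreducible. Both rely on carefully tracking where $\al(q)$, $\al(q^-_j)$ and $\al(q^-_k)$ agree and where they first differ, and on chaining lexicographic inequalities of the form $\al_1\ldots\al_\ell(\overline{\al_1\ldots\al_\ell})^\f$ through these agreements without the "$+$" and reflection operations spoiling the direction of the inequality — exactly the kind of bookkeeping the paper sets up its $\om^+,\om^-,\overline{\om}$ notation for. The hypothesis $q^-_j>q_T$ is what rules out degenerate small witnesses ($n_k=1$), so it must be invoked precisely at the point where one would otherwise be stuck with a trivial reducing block.
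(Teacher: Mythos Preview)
Your approach diverges from the paper's. The paper does not build $k$ from a minimal non-irreducibility witness; instead it invokes \cite[Lemma~4.9]{AlcBakKon2016} directly: since $q^-_j>q_T$ and $q^-_j\notin\I$, that lemma furnishes a \emph{unique} irreducible interval $I=I(q')$ containing $q^-_j$, with $\al(q')=(w_1\ldots w_m)^\f$ irreducible. The paper then only has to show $m<n_j$ (ruling out $m=n_j$ because that would make $q^-_j$ irreducible, and $m>n_j$ by a short period argument), and to observe that $w_1\ldots w_m=\al_1(q)\ldots\al_m(q)^-$, which identifies $q'$ with some $q^-_k$. Existence, irreducibility of the left endpoint, and uniqueness all come prepackaged from the cited lemma.

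Your attempt to prove irreducibility of $q^-_k$ by minimality has a real gap. You take $\ell=n_k$ minimal with $(\al_1(q)\ldots\al_\ell(q)^-)^\f\in\vb$ and $R_\ell:=\al_1(q)\ldots\al_\ell(q)(\overline{\al_1(q)\ldots\al_\ell(q)}^+)^\f\succcurlyeq\al(q^-_j)$, and then assume a smaller witness $\ell'<\ell$ for non-irreducibility of $\al(q^-_k)$, i.e.\ $R_{\ell'}\succcurlyeq\al(q^-_k)$. To contradict the minimality of $\ell$ you need $R_{\ell'}\succcurlyeq\al(q^-_j)$, but this does \emph{not} follow: minimality of $\ell$ as a witness for $q^-_j$ gives only $R_{\ell'}\prec\al(q^-_j)$, which is perfectly compatible with $\al(q^-_k)\preccurlyeq R_{\ell'}\prec\al(q^-_j)$. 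The ``lexicographic propagation'' you sketch would have to push an inequality \emph{upwards} from $\al(q^-_k)$ to the strictly larger $\al(q^-_j)$, and there is no mechanism for that here. Closing this gap essentially amounts to re-proving \cite[Lemma~4.9]{AlcBakKon2016} (e.g.\ via a nested-or-disjoint property of the intervals $[(\al_1\ldots\al_m^-)^\f,\,\al_1\ldots\al_m(\overline{\al_1\ldots\al_m}^+)^\f]$), which is substantially more work than you indicate. Your argument that $\ell<n_j$ by ``admissibility in \eqref{eq:lang1}'' is also too quick; the paper handles this by the period comparison mentioned above rather than by the language inequality.
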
 
\begin{proof}
Let $q \in \I$ and $q^-_j$ be such that $(\al_1(q^-_j) \ldots \al_{n_j}(q^-_j))^\f = (\al_1(q) \ldots \al_{n_j}(q)^-)^\f$ with $q^-_j > q_T$ and $q^-_j \notin \I$. Then, from \cite[Lemma 4.9]{AlcBakKon2016} there exists a unique irreducible interval $I$ such that $q^-_j \in I$. Let $q' \in I$ be such that $I = I(q')$. Since $I(q')$ is an irreducible interval then there exists a word $w_1 \ldots w_m$ such that $\al(q') = (w_1 \ldots w_m)^\f$ and $\al(q)$ is an irreducible sequence. From \cite[Lemma 4.1]{AlcBakKon2016} we can assume without loss of generality that $m$ is the period of $\al(q')$. From the uniqueness of $I$ we get that $w_1 \ldots w_m$ is unique. Since $q^-_j \in I$ we have  
$$(w_1 \ldots w_m)^\f \prec  (\al_1(q^-_j) \ldots \al_{n_j}(q^-_j))^\f \prec w_1 \ldots w_m^+(\overline{w_1 \ldots w_m})^\f.$$ Observe that $n_j \neq m$ as if $n_j = m$ then $(w_1 \ldots w_m)^\f$ is not irreducible, which is a contradiction. On the other hand, if $m > n_j$ then 
\begin{align*}
\al_1(q^-_j)\ldots \al_{n_j}(q^-_j)\al_{n_j+1}(q^-_j)\ldots \al_{m}(q^-_j) &= \al_1(q^-_j)\ldots \al_{n_j}(q^-_j)\al_{1}(q^-_j)\ldots \al_{m-n_j}(q_j)\\ 
&= w_1 \ldots w_{n_j}w_{n_j+1}\ldots w_m \\
&= w_1 \ldots w_{n_j}w_{1} \ldots w_{m-{n_j}}. 
\end{align*}
This implies that the period of $(w_1 \ldots w_m)^\f$ is smaller than $m$, which is a contradiction as well. Therefore, $m < n_j$. Since $\I \subset \overline{\ul}$, 
$$w_1 \ldots w_m^+ = \al_1(q^-_j) \ldots \al_{m}(q^-_j) = \al_1(q) \ldots \al_m(q).$$ 
This implies that 
$$w_1 \ldots w_m = \al_1(q^-_j) \ldots \al_{m}(q^-_j)^- = \al_1(q) \ldots \al_m(q)^-.$$ 
Then $q^-_k = q'$ and $m = n_k$ satisfy the desired properties.
\end{proof}

\begin{lemma}\label{lem:infinitelymanyirreducibles}
Let $q \in \I$ and $\set{q^-_m}_{m=1}^\f$ be the natural approximation from below of $q$. Then, there exist infinitely many $m \in \N$ such that $q^-_m \in \I$.
\end{lemma}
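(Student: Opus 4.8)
The plan is to argue by contradiction: suppose only finitely many $q^-_m \in \I$, so there exists $N$ such that for every $m \geq N$ the sequence $(\al_1(q) \ldots \al_{n_m}(q)^-)^\f = \al(q^-_m)$ is not irreducible. Since $q \in \I$ and $q_T \notin \I$ with $q > q_G$, we may take $N$ large enough so that additionally $q^-_m > q_T$ for all $m \geq N$ (using that $q^-_m \nearrow q$ by Proposition \ref{pr:aproxconvergencebelow}, together with $q \in \I \subset \overline{\ul}$ and the fact that $q \neq q_T$ since $q_T \notin \I$; note $q > q_T$ because $\al(q)$ irreducible forces $\al(q) \succ \al(q_T)$ as $\al(q_T)$ is not irreducible, or handle the small case $q < q_T$ separately where $\al(q) = \al(q_G)$ is the only irreducible sequence below $q_T$ and the claim is immediate from Remark \ref{rem:onvl}-type reasoning).

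Next I would apply Lemma \ref{lem:spec1} to each such $q^-_j$ with $j \geq N$: it produces a unique $k = k(j) < j$ and a unique $n_k < n_j$ with $q^-_k \in \I$ and $q^-_j \in I(q^-_k)$, where $I(q^-_k) = [q^-_k, q_R(k)]$ is the irreducible interval generated by $q^-_k$. The key point is that each such $q^-_k$ must lie in our finite exceptional set, i.e.\ $k(j) < N$, since for $j \geq N$ all indices $m$ with $N \leq m < j$ give non-irreducible $\al(q^-_m)$. Hence the map $j \mapsto k(j)$ sends the infinite set $\{j \geq N\}$ into the finite set $\{k < N : q^-_k \in \I\}$, so by pigeonhole there is a single $k_0 < N$ with $q^-_j \in I(q^-_{k_0})$ for infinitely many $j \geq N$, and in fact (relabelling) for all sufficiently large $j$ in a suitable subsequence.

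Now I would derive the contradiction from $q^-_j \in I(q^-_{k_0})$ for infinitely many $j$. Since $q^-_j \nearrow q$, this forces $q \in \overline{I(q^-_{k_0})} = [q^-_{k_0}, q_R(k_0)]$, and because $q > q^-_j$ for every $j$ while the $q^-_j$ accumulate at $q$ from inside the closed interval $I(q^-_{k_0})$, we get $q \in I(q^-_{k_0})$. But the right endpoint $q_R(k_0)$ of an irreducible interval satisfies $\al(q_R(k_0)) = \al_1(q^-_{k_0}) \ldots \al_{n_{k_0}}(q^-_{k_0})^+(\overline{\al_1(q^-_{k_0}) \ldots \al_{n_{k_0}}(q^-_{k_0})})^\f$ by Theorem \ref{th:mainrsd} $ii)$, which is eventually periodic but not irreducible (indeed by \cite[Lemma 4.7]{AlcBakKon2016} $\I \subset \overline{\ul}$, and $q_R(k_0) \notin \overline{\ul}$ since the tail of $\al(q_R(k_0))$ equals $\overline{\al_1 \ldots \al_{n_{k_0}}}^\f$, violating the strict inequality $\overline{\al(q)} \prec \si^n(\al(q))$; alternatively the interior of $I(q^-_{k_0})$ contains no irreducible bases by the defining property of entropy plateaus). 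Either way, $q \in I(q^-_{k_0})$ contradicts $q \in \I$.

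The main obstacle I anticipate is the bookkeeping around the boundary case $q \leq q_T$ and, more seriously, pinning down exactly that $q \in I(q^-_{k_0})$ rather than merely $q$ being a limit of points of $I(q^-_{k_0})$: one must rule out the degenerate possibility that the intervals $I(q^-_{k_0})$ shrink or that $q$ is precisely the left endpoint $q^-_{k_0}$ (impossible since $q > q^-_m$ for all $m$ and $q^-_{k_0}$ is one fixed such $m$) — so the real content is that a fixed nondegenerate interval $I(q^-_{k_0})$ contains infinitely many $q^-_j$ converging up to $q$, hence contains $q$, and no irreducible base other than its left endpoint lies in it. Carefully invoking Theorem \ref{th:mainrsd} $ii)$ and \cite[Lemma 4.9]{AlcBakKon2016} (uniqueness of the irreducible interval containing a given non-irreducible base) is what makes this rigorous.
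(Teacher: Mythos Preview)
Your approach is essentially the paper's: assume a tail of the $q^-_m$ is non-irreducible, push each into an irreducible interval $I(q^-_k)$ via Lemma~\ref{lem:spec1}, and conclude $q$ itself lands in such an interval, contradicting $q\in\I$. The paper shows that \emph{all} $q^-_m$ with $m\ge N$ lie in the \emph{same} $I(q^-_k)$ by invoking disjointness of irreducible intervals (\cite[Lemma~4.6]{AlcBakKon2016}); your pigeonhole variant, getting only infinitely many in a common $I(q^-_{k_0})$, is a harmless simplification since a subsequence still converges to $q$ and the interval is closed.

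One correction: your claim that $q_R(k_0)\notin\overline{\ul}$ is false --- in fact $q_R(k_0)\in\ul$ (the paper itself notes this when constructing strongly irreducible sequences). The tail $(\overline{\al_1\ldots\al_{n_{k_0}}})^\infty$ is strictly larger than $\overline{\al(q_R(k_0))}$, whose first block is $\overline{\al_1\ldots\al_{n_{k_0}}}^-$. The right way to see $q_R(k_0)\notin\I$ (and indeed that no $p\in(q^-_{k_0},q_R(k_0)]$ is irreducible) is direct from Definition~\ref{def:irreducible}: for any such $p$ one has $\al_1(p)\ldots\al_{n_{k_0}}(p)=\al_1(q^-_{k_0})\ldots\al_{n_{k_0}}(q^-_{k_0})^+$, so testing irreducibility at $j=n_{k_0}$ requires $\al(q_R(k_0))\prec\al(p)$, impossible for $p\le q_R(k_0)$. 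With this fix your argument is complete. For the boundary issue $q\le q_T$, the paper simply cites \cite[Lemma~4.4]{AlcBakKon2016} to get $q>q_T$ for $q\in\I$ (excluding the degenerate $q_G$ handled by Remark~\ref{rem:onvl}).
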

\begin{proof}
Let $q \in \I$. Let us assume on the contrary that there is $N_1 \in \N$ such that $q^-_m \notin \I$ for every $m \geq N_1$. 

From Proposition \ref{pr:aproxconvergencebelow}, $q^-_m \mathop{\nearrow} q$ as $m \to \f$ and since $q \in \I$ from \cite[Lemma 4.4]{AlcBakKon2016} we know that $q > q_T$. Then, there exists a minimal $N_2 \in \N$ such that, $q^-_m \in (q_T, M+1)$ for every $m \geq N_2$. Let $N = \max\set{N_1, N_2}$. Then, from Lemma \ref{lem:spec1} there is a unique $k < N$ such that $q^-_k \in \I$ and $q^-_N \in I(q^-_{k})$.

We claim that for every $m > N$, $q^-_m \in I(q^-_{k})$. Suppose this is not true. Then, there is $m > N$ such that $q^-_m \notin I(q^-_{k})$. Also note that Lemma \ref{pr:aproxconvergencebelow} implies that $q^-_N < q^-_m$. Then, Lemma \ref{lem:spec1} implies that there is a unique $k' < m$ such that $q^-_{k'} \in \I$ and $q^-_m \in I(q^-_{k'})$. Clearly $k < k'$. From \cite[Lemma 4.6]{AlcBakKon2016} we know that $I(q^-_k) \cap I(q^-_{k'}) = \emptyset$. This implies that $N_1 < k'$ which is a contradiction. Therefore, $q^-_m \in I(q^-_{k})$ for every $m \geq N$ which gives that that $q \in I(q^-_{k})$, thus $q \notin \I$. This establishes a contradiction. 
\end{proof}

We show now that every transitive symmetric $q$-shift $(\vb_q, \si)$ is a coded system. We want to emphasise that $\I \subset \overline{\B} \subset \overline{\ul}$ ---see \cite[Lemma 4.7, Lemma 6.1]{AlcBakKon2016}.

\begin{proposition}\label{pr:coded}
For every $q \in \I$ the subshift $(\vb_q, \si)$ is coded. 
\end{proposition}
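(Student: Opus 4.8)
The plan is to use the machinery developed in this section—the natural approximation from below together with Lemma \ref{lem:infinitelymanyirreducibles}—to realise $(\vb_q,\si)$ as an increasing union of transitive subshifts of finite type whose closure is $\vb_q$. Concretely, by Lemma \ref{lem:infinitelymanyirreducibles}, since $q\in\I$ there are infinitely many indices $m_1<m_2<\cdots$ with $q^-_{m_j}\in\I$. By Lemma \ref{lem:approximations} the subshift $(\vb_q,\si)$ is approximated from below by the full sequence $(\vb_{q^-_m},\si)$, and a fortiori by any cofinal subsequence; so $\vb_q=\overline{\bigcup_{j=1}^{\f}\vb_{q^-_{m_j}}}$ with $\vb_{q^-_{m_j}}\subset\vb_{q^-_{m_{j+1}}}$. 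It therefore suffices to check that each $(\vb_{q^-_{m_j}},\si)$ is a transitive subshift of finite type: the "subshift of finite type" part is immediate because $\al(q^-_{m_j})$ is periodic (its periodic block being $\al_1(q)\ldots\al_{n_{m_j}}(q)^-$), and the transitivity part is exactly Theorem \ref{th:mainrsd} $i)$, since $q^-_{m_j}\in\I$ means $\al(q^-_{m_j})$ is irreducible. This then matches Definition \ref{def:topologicalproperties} $v)$ verbatim and gives that $(\vb_q,\si)$ is coded.

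First I would dispatch the degenerate cases separately. If $q\in\vl\setminus\overline{\ul}$ then by Theorem \ref{thm:uclosureuv} $iii)$ (and the $q_G$ case) $\al(q)$ is periodic or $q=q_G$; in the periodic case $(\vb_q,\si)$ is itself a transitive subshift of finite type—transitivity again from Theorem \ref{th:mainrsd} $i)$ provided $\al(q)$ is irreducible, and for $q=q_G$ one checks $\al(q_G)$ is irreducible directly as remarked after Definition \ref{def:irreducible}—so such a $q$ is trivially coded (take all $X_n$ equal). Hence the substantive case is $q\in\overline{\ul}\cap\I$, where the approximation $\set{q^-_m}_{m=1}^\f$ is genuinely infinite and strictly increasing.

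Then I would assemble the argument: fix the cofinal set of irreducible approximants $\set{q^-_{m_j}}_{j=1}^\f$ given by Lemma \ref{lem:infinitelymanyirreducibles}, set $X_j=\vb_{q^-_{m_j}}$, note $X_j\subset X_{j+1}$ (from $q^-_{m_j}<q^-_{m_{j+1}}$ and Lemma \ref{lem:quasigreedyexpansion}), observe each $(X_j,\si)$ is a subshift of finite type because $\al(q^-_{m_j})$ is periodic, and each is transitive by Theorem \ref{th:mainrsd} $i)$. Finally, $\overline{\bigcup_{j}X_j}=\overline{\bigcup_{m}\vb_{q^-_m}}=\vb_q$ by Lemma \ref{lem:approximations} together with the fact that a cofinal subfamily has the same union. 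This is precisely the defining condition for a coded system.

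I do not expect a serious obstacle here: the real work has already been done in Lemma \ref{lem:infinitelymanyirreducibles} (which in turn rests on Lemmas \ref{lem:spec1}, \ref{lem:approximations}, and \cite[Lemmas 4.4, 4.6, 4.9]{AlcBakKon2016}), and in Theorem \ref{th:mainrsd} $i)$. The only mild subtlety is making sure that the subshift of finite type generated by a periodic $\al(q^-_{m_j})$ is transitive—one must invoke irreducibility, not merely periodicity, since a periodic but reducible $\al$ would give a non-transitive symmetric subshift—but Lemma \ref{lem:infinitelymanyirreducibles} is precisely tailored to supply irreducible approximants. A second point worth a sentence is that passing to the subsequence $\set{m_j}$ preserves the "increasing union with dense union" property; this is routine since $\bigcup_j X_{m_j}=\bigcup_m X_m$ whenever $\set{m_j}$ is cofinal and the $X_m$ are nested.
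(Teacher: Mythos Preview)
Your proposal is correct and follows essentially the same route as the paper's own proof: extract from the natural approximation from below the cofinal subsequence of irreducible approximants provided by Lemma \ref{lem:infinitelymanyirreducibles}, observe that each gives a transitive subshift of finite type (periodicity of $\al(q^-_{m_j})$ for the SFT part, Theorem \ref{th:mainrsd} $i)$ for transitivity), and apply Lemma \ref{lem:approximations}. The only cosmetic difference is the case split—the paper treats the periodic and eventually periodic cases separately via \cite{DeVKom2008} and \cite{KalSte2012}, whereas you single out $q\in\vl\setminus\overline{\ul}$—but since $\I\subset\overline{\ul}$ (up to the borderline point $q_G$, which you handle) your degenerate case is essentially vacuous and the substantive argument coincides with the paper's.
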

\begin{proof}
Let $q \in \I$. Then, from Theorem \ref{th:mainrsd} $i)$ the subshift $(\vb_q, \si)$ is transitive. Then, if $\al(q)$ is periodic then \cite[Theorem 1.7, 1.8]{DeVKom2008} implies that $(\vb_q, \si)$ is coded. Similarly if $\al(q)$ is eventually periodic then \cite[Proposition 2.14]{KalSte2012} implies that $(\vb_q, \si)$ is coded. So, let $q \in \I$ such that $\al(q)$ is neither periodic nor eventually periodic. Then, from Lemma \ref{lem:infinitelymanyirreducibles} and \cite[Lemma 6.1]{AlcBakKon2016} the natural approximation from below $\set{q^-_m}_{m=1}^\f$contains a subsequence $\set{q^-_{m_j}}_{j=1}^\f$ such that $q^-_{m_j} \in \I$, $\al(q^-_{m_j})$ is periodic for every $j \in \N$ and $q^-_{m_j} \mathop{\nearrow}\limits_{j \to \infty} q$. This implies that $\vb_{q^-_{m_j}} \subsetneq \vb_{q^-_{m_{j+1}}}$. Then, from Lemma \ref{lem:approximations} it follows that the sequence of subshifts $\set{(\vb_{q^-_{m_j}}, \si)}_{j=1}^\f$ also approximates $(\vb_q,\si)$ from below. Moreover, since $q^-_{m_j} \in \I$ for every $j \in \N$ we obtain from \cite[Theorem 1]{AlcBakKon2016} that $(\vb_{q^-_{m_j}}, \si)$ is a transitive subshift for every $j \in \N$. Finally, using \cite[Theorem 1.7, 1.8]{DeVKom2008} and \cite[Theorem 1.3]{LiSahSam2016} imply that $(\vb_{q^-_{m_j}}, \si)$ is a subshift of finite type for every $j \in \N$. Thus, $(\vb_q, \si)$ is coded. 
\end{proof}

To finish this section, we now show that every symmetric and transitive $q$-shift is topologically mixing. For this purpose we want to recall the usual Sharkovski\v{i} order of $\N$:

\begin{align*} 
 & 3 & \vartriangleright & 5 & \vartriangleright & 7  & \vartriangleright & \ldots & \vartriangleright & 2m + 1 &  \vartriangleright & \ldots\\
\vartriangleright & 2 \cdot 3 & \vartriangleright & 2\cdot 5 & \vartriangleright & 2 \cdot 7 & \vartriangleright & \ldots & \vartriangleright & 2\cdot(2m + 1) & \vartriangleright & \ldots\\
\vartriangleright & 4 \cdot 3 & \vartriangleright & 4\cdot 5 & \vartriangleright & 4 \cdot 7 & \vartriangleright & \ldots & \vartriangleright & 4\cdot(2m + 1)& \vartriangleright & \ldots\\
& \vdots &  & \vdots &  &\vdots& & & & \vdots&  & \\
\vartriangleright & 2^n \cdot 3 & \vartriangleright & 2^n\cdot 5 & \vartriangleright &2^n \cdot 7& \vartriangleright& \ldots & \vartriangleright & 2^n\cdot(2m + 1)& \vartriangleright & \ldots\\
& \vdots &  & \vdots &  &\vdots& & & & \vdots&  & \\
\vartriangleright & 2^\f   \ldots &  \vartriangleright &2^n & \vartriangleright &  \ldots &\vartriangleright&8& \vartriangleright& 4 & \vartriangleright & 2& \vartriangleright & 1.\\
\end{align*}

It has been shown in \cite[Theorem 1.3]{AllClaSid2009} and \cite[Theorem 1.1]{GeTan2017} that periodic points of $(\vb_q, \si)$ grow with respect to the Sharkowsk\v{i} order of $\N$, that is, if $\vb_q$ contains a periodic point of period $m$ with respect to $\si$, then $\vb_q$ contain points of period $n$ for every $n \vartriangleleft m$. On the other hand it is known (\cite[Proposition 4.5.10 (4)]{LinMar1995}) that a subshift of finite type $(X,\si)$ is topologically mixing if and only if it is transitive and the greatest common divisor of the periods of its periodic points is $1$; that is, there exists a pair of periodic points $\x$ and $\y \in X$ such that $\gcd(m, n) = 1$, where $m$ and $n$ are the periods of $\x$ and $\y$ respectively. Using \eqref{eq:transconstant} it follows that if $q \in \I$ then $(\vb_q, \si)$ contains a periodic orbit of odd period. As a consequence of these results we obtain the following: 

\begin{proposition}\label{pr:mixingsft}
If $q \in \I$ and $\al(q)$ is periodic then $(\vb_q, \si)$ is a mixing subshift of finite type. 
\end{proposition}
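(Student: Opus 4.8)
The plan is to combine the structural facts already assembled in this section with the standard characterisation of topological mixing for subshifts of finite type quoted just above. Fix $q \in \I$ with $\al(q)$ periodic. First I would note that since $\al(q)$ is periodic, $(\vb_q, \si)$ is a subshift of finite type: this follows from \cite[Theorem 1.7, 1.8]{DeVKom2008} together with \cite[Theorem 1.3]{LiSahSam2016}, exactly as in the proof of Proposition \ref{pr:coded}. Next, since $q \in \I$, Theorem \ref{th:mainrsd} $i)$ gives that $(\vb_q, \si)$ is topologically transitive. By \cite[Proposition 4.5.10 (4)]{LinMar1995}, it therefore suffices to exhibit two periodic points in $\vb_q$ whose periods are coprime; equivalently, to show that the greatest common divisor of the set of periods realised in $\vb_q$ equals $1$.

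The key observation is the one recorded in the paragraph preceding the statement: by \eqref{eq:transconstant} and the fact that $\I$ is characterised by the irreducibility condition of Definition \ref{def:irreducible}, every $q \in \I$ satisfies $q \geq q_T$ (indeed $q > q_T$, by \cite[Lemma 4.4]{AlcBakKon2016}), so $\al(q) \lge \al(q_T)$, and hence $\vb_{q_T} \subseteq \vb_q$. Since $\al(q_T) = (k+1)k^\f$ (if $M = 2k$) or $(k+1)((k+1)k)^\f$ (if $M = 2k+1$), one checks directly from the lexicographic characterisation \eqref{eq:lang1} that $\vb_{q_T}$, and therefore $\vb_q$, contains a periodic point of odd period: for $M = 2k$ the constant sequence $k^\f$ has period $1$; for $M = 2k+1$ one uses that $(k(k+1)(k+1))^\f$ or a similar odd-period word built from the admissible alphabet lies in $\vb_{q_T}$ (this is immediate from \eqref{eq:lang1} since $\overline{\al(q_T)} \lle \si^n(\cdot) \lle \al(q_T)$ holds for such low-complexity sequences). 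Thus $\vb_q$ contains a periodic point whose period is an odd number $m_0$. On the other hand, by \cite[Theorem 1.3]{AllClaSid2009} and \cite[Theorem 1.1]{GeTan2017} the periods realised in $\vb_q$ are closed downwards under the Sharkovski\u{\i} order; since every power of $2$ lies below any odd number $> 1$ in that order, $\vb_q$ also contains a periodic point of period $2$ (in fact of every period $2^n$). As $\gcd(m_0, 2) = 1$, the gcd of all periods is $1$, and the mixing criterion applies.

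I expect the only genuinely delicate point to be the verification that $\vb_q$ really does contain a periodic orbit of \emph{odd} period — i.e. pinning down an explicit admissible periodic word of odd length inside $\vb_{q_T}$, handling the even-$M$ and odd-$M$ cases of \eqref{eq:transconstant} separately, and confirming via \eqref{eq:lang1} that it satisfies the two-sided lexicographic constraint. Everything else is a matter of quoting: transitivity from Theorem \ref{th:mainrsd} $i)$, the SFT property from \cite{DeVKom2008, LiSahSam2016}, the Sharkovski\u{\i}-monotonicity of periods from \cite{AllClaSid2009, GeTan2017}, and the transitive-plus-coprime-periods $\Leftrightarrow$ mixing equivalence from \cite{LinMar1995}. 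One should also remark that the argument needs $q_G < q_T$ (so that $q_T \in \vl$ and $\vb_{q_T}$ is a genuine symmetric subshift), which was established in Section \ref{sec:prelim} via \eqref{eq:21}, \eqref{eq:23} and Lemma \ref{lem:quasigreedyexpansion}.
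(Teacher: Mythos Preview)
Your approach is essentially the paper's own: the proposition is stated there as a direct consequence of the preceding paragraph, which assembles exactly the ingredients you list (SFT from periodicity via \cite{DeVKom2008}, transitivity from Theorem~\ref{th:mainrsd}~$i)$, the Sharkovski\u{\i} ordering of periods from \cite{AllClaSid2009,GeTan2017}, the existence of an odd-period orbit from \eqref{eq:transconstant}, and the mixing criterion \cite[Proposition 4.5.10(4)]{LinMar1995}).

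One small caveat on the point you yourself flagged as delicate: your explicit candidate $(k(k+1)(k+1))^\f$ for $M=2k+1$ does \emph{not} lie in $\vb_{q_T}$, since its shift $((k+1)(k+1)k)^\f$ agrees with $\al(q_T)=(k+1)(k+1)k(k+1)k\ldots$ on the first four coordinates but exceeds it at the fifth. So if you want to produce the odd-period orbit inside $\vb_{q_T}$ itself, that particular word fails; you would need either a different explicit word or to work directly in $\vb_q$ for $q>q_T$ (where, e.g., $(\al_1(q)\ldots\al_{n_k}(q)^-)^\f$ from the natural approximation gives periods $n_k$ that need not all be even). The paper does not carry out this verification explicitly either --- it simply asserts that \eqref{eq:transconstant} yields an odd-period orbit --- so your reconstruction is faithful to what is actually written, and your instinct that this is the only nontrivial step is correct.
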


Now we prove that every transitive symmetric $q$-shift is mixing.

\begin{proposition}\label{pr:mixing}
If  $q \in \I$ then $(\vb_q, \si)$ is a mixing subshift. 
\end{proposition}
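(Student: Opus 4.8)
The plan is to reduce the mixing of $(\vb_q, \si)$ to the mixing of the approximating subshifts of finite type supplied by the previous results, and then transfer mixing across the approximation. First I would dispose of the easy cases: if $\al(q)$ is periodic, then Proposition \ref{pr:mixingsft} gives the conclusion immediately. If $\al(q)$ is eventually periodic, then $q \in \Cii$ and $(\vb_q,\si)$ is a transitive sofic subshift (by the characterisation via \cite[Proposition 2.14]{KalSte2012} quoted in the introduction); a transitive sofic shift containing a periodic orbit of odd period — which it does, since $q \in \I$ means $(\vb_q,\si)$ contains the image of $q_T$'s dynamics, hence an odd-period orbit via \eqref{eq:transconstant} and the Sharkovski\u\i-type growth of periodic points — has period (the gcd of its cycle lengths) equal to $1$, and a transitive sofic shift with period $1$ is topologically mixing. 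So the substantive case is $q \in \I$ with $\al(q)$ neither periodic nor eventually periodic.

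In that case, by Lemma \ref{lem:infinitelymanyirreducibles} together with \cite[Lemma 6.1]{AlcBakKon2016}, the natural approximation from below $\set{q^-_m}$ contains a subsequence $\set{q^-_{m_j}}$ with $q^-_{m_j} \in \I$, $\al(q^-_{m_j})$ periodic, and $q^-_{m_j} \nearrow q$; by Proposition \ref{pr:mixingsft} each $(\vb_{q^-_{m_j}}, \si)$ is a mixing subshift of finite type, and by Lemma \ref{lem:approximations} these approximate $(\vb_q,\si)$ from below with $\vb_{q^-_{m_j}} \subsetneq \vb_{q^-_{m_{j+1}}} \subset \vb_q$ and $\overline{\bigcup_j \vb_{q^-_{m_j}}} = \vb_q$. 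Now fix an ordered pair of words $\upsilon, \nu \in \La(\vb_q)$. By Lemma \ref{lem:aproxwordsbelow}, for all sufficiently large $j$ both $\upsilon$ and $\nu$ lie in $B_{k}(\vb_{q^-_{m_j}})$ where $k = \max(|\upsilon|,|\nu|)$; indeed the block sets stabilise, so $\La(\vb_{q^-_{m_j}})$ eventually contains any prescribed word of $\La(\vb_q)$. Pick one such $j$ and let $N = N(\upsilon,\nu)$ be the mixing constant of $(\vb_{q^-_{m_j}},\si)$. Then for every $n \geq N$ there is $\om \in B_n(\vb_{q^-_{m_j}}) \subset B_n(\vb_q)$ with $\upsilon\om\nu \in \La(\vb_{q^-_{m_j}}) \subset \La(\vb_q)$. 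This is exactly the definition of topological mixing for $(\vb_q,\si)$.

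The one point that needs care — and the main obstacle — is the bookkeeping that the mixing constant $N$ obtained at level $j$ genuinely produces connecting words that survive in $\vb_q$: this is automatic because $\La(\vb_{q^-_{m_j}}) \subset \La(\vb_q)$, but one must be sure that $\upsilon$ and $\nu$ already appear at level $j$, which is precisely where Lemma \ref{lem:aproxwordsbelow} is invoked (applied with $k = \max(|\upsilon|,|\nu|)$, it yields a $J$ with $B_k(\vb_q) = B_k(\vb_{q^-_{m_j}})$ for $m_j \geq J$; note one should pass to the subsequence after choosing $J$). A secondary subtlety is making the eventually-periodic case rigorous: one should verify that a transitive sofic shift which contains periodic orbits of two coprime periods is mixing, which follows from the sofic analogue of \cite[Proposition 4.5.10 (4)]{LinMar1995} via a labelled-graph (or Fischer cover) argument, or alternatively one can simply note that $(\vb_q,\si)$ for eventually-periodic $\al(q)$ is also approximated from below by mixing SFTs using the finite natural approximation of Remark \ref{rem:onvl}, bringing it under the same umbrella as the generic case.
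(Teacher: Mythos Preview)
Your proposal is correct and follows essentially the same route as the paper: reduce to the non-periodic case via Proposition~\ref{pr:mixingsft}, extract a cofinal subsequence of mixing SFTs from the natural approximation from below using Lemma~\ref{lem:infinitelymanyirreducibles}, place any pair $\upsilon,\nu\in\La(\vb_q)$ inside one of these levels via Lemma~\ref{lem:aproxwordsbelow}, and transfer the mixing constant up through the language inclusion. The only difference is that you carve out the eventually periodic case and argue separately via sofic-shift period considerations, whereas the paper treats all non-periodic $\al(q)$ uniformly with the approximation argument; your digression is harmless but unnecessary (and note that your fallback suggestion invoking Remark~\ref{rem:onvl} is misplaced, since that remark concerns $q\in\vl\setminus\overline{\ul}$, for which $\al(q)$ is periodic rather than eventually periodic).
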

\begin{proof}
From Proposition \ref{pr:mixingsft} we can assume that $\al(q)$ is not periodic. From Lemma \ref{lem:infinitelymanyirreducibles} we have that there is a subsequence $\set{q^-_{m_j}}_{j=1}^\f$ of the natural approximation from below of $q$, $\set{q^-_m}_{m=1}^\f$ such that $(\vb_{q^-_{m_j}}, \si)$ is a mixing subshift of finite type for every $j \in \N$. Also, Lemma \ref{lem:approximations} implies that $(\vb_q, \si)$ is approximated from below by $\set{(\vb_{q_{m_j}}, \si)}_{j=1}^\f$. Let $\upsilon, \nu \in \La(\vb_q)$. From Lemma \ref{lem:aproxwordsbelow} there are $J, J' \in \N$ such that $(\vb_{q^-_J}, \si)$ and $(\vb_{q^-_{J'}}, \si)$ are mixing subshifts of finite type and for every $m \geq J$, 
$$B_{|\upsilon|}(\vb_q) = B_{|\upsilon|}(\vb_{q^-_m}) = B_{|\upsilon|}(\vb_{q^-_J})$$ 
and for every $m \geq J$, 
$$B_{|\nu|}(\vb_q) = B_{|\nu|}(\vb_{q^-_m}) = B_{|\nu|}(\vb_{q^-_{J'}}).$$ 
Put $J'' = \max \set{J, J'}$. Since $(\vb_{q_{J''}}, \si)$ is a mixing subshift then there is $N \in \N$ such that for every $n \geq N$ there is $\om \in B_{n}(\vb_{q_{J''}})$ such that $\upsilon \om \nu \in \La(\vb_{q_{J''}})$. Since $\vb_{q_{J''}} \subset \vb_q$, the result follows.  
\end{proof}

\section{The specification property of $(\vb_q, \si)$}
\label{sec:topologicaldynamics}

\noindent In this section, we characterise the set of $q \in \vl$ such that $(\vb_q, \si)$ has the specification property. In order to do this, we introduce the notions of \emph{strongly irreducible} and \emph{weakly irreducible sequences}.

\begin{definition}\label{def:strongweaksequences}
We say that an irreducible sequence $\al = \al(q) \in \vb$ is \emph{strongly irreducible} if there exists $N \in \N$ such that for all $m \geq N$ one has $q^-_m \in \I$. We also say that an irreducible sequence is \emph{weakly irreducible} if there are infinitely many $m \in \mathbb{N}$ such that $q^-_m \notin \I$.  
\end{definition}

In a similar fashion we introduce the notion of \emph{strongly irreducible number}.

\begin{definition}\label{def:strongweaknumber}
A number $q \in \I$ is called \emph{strongly irreducible} if $\al(q)$ is strongly irreducible, similarly $q \in \I$ \emph{weakly irreducible} if $\al(q)$ is weakly irreducible. 
\end{definition}
We will use the notations
$$\SI = \set{q \in \ul : q \text{ is strongly irreducible}}$$  and  
$$\WI = \set{q \in \ul : q \text{ is weakly irreducible}}.$$ 
Clearly 
$$\WI = \I \setminus \SI \quad\text{ and }\quad \I, \WI \subsetneq \overline{\B} \cap [q_T, M+1].$$

In this section we will show some properties of the set of strongly and weakly irreducible sequences. Firstly, we mention that there are three different kinds of strongly irreducible sequences. 

\begin{definition}\label{def:typestrongseq}
We say that a strongly irreducible sequence $\al(q)$ is:
\begin{enumerate}[$i)$]
\item of \emph{Type 1} if for every $m \in \N$, $q^-_m \in \I$;  
\item of \emph{Type 2} if there is $N \in \N$ such that $q^-_m \in \I$ for every $m \geq N$ and $q^-_k < q_T$ for every $k < N$ ;
\item of \emph{Type 3} if there exists an $N \in \N$ and $m < N$ such that $q^-_m \notin \I$ with $q^-_m > q_T$ and $q^-_k \in \I$ for all $k \geq N$.
\end{enumerate}
\end{definition}

Let us illustrate Definition \ref{def:typestrongseq} with some examples.

\begin{example}
Consider $M = 1$, then:
\begin{enumerate}
\item Let $n \geq 3$. The sequence $(1^n0)^\f$ is strongly irreducible of type \emph{1}. Note that here $q^-_1 = q_G(1)$, so by \cite[Lemma 3.1]{AlcBakKon2016} $\al(q^-_1)$ is an irreducible sequence;
\item The sequence $(11010)^\f$ is strongly irreducible of type \emph{2}. Here $N = 4$ and the corresponding $n_N = 7$. The last non-irreducible sequence of the approximation from below is $q^-_3$ where $$\al(q^-_3) = (110100)^\f \lle \al(q_T(1));$$
\item The sequence $(1110010010)^\f$ is strongly irreducible of type \emph{3}. Here $N = 5$ and the last non-irreducible sequence of the 
approximation from below is $q^-_4$ where $$\al(q^-_4) = (111001000)^\f \lge \al(q_T(1)).$$
\end{enumerate}
Consider $M = 2$, then:
\begin{enumerate}
\item Let $n \geq 2$. The sequence $(2^n1)^\f$ is strongly irreducible of type $1$.
\item The sequence $(211211121111)^\f$ is strongly irreducible of type $2$. Here $N = 4$ and the last non-irreducible sequence of the approximation from below is $q^-_3$ where $\al(q^-_3) = (210)^\f \lle \al(q_T(2))$;
\item The sequence $(22010101)^\f$ is strongly irreducible of type $3$. Here $N = 6$ and the last non-irreducible sequence of the approximation from below is $q^-_5$ where $$\al(q^-_5) = (22010100)^\f \lge \al(q_T(2)).$$
\end{enumerate}
\end{example}

We can distinguish strongly irreducible numbers of types \emph{1, 2} and \emph{3} defining $q$ implicitly, as was done in Definition \ref{def:strongweaknumber}. 

To start our investigation, we want to show the reader why our intuition is that strongly irreducible sequences are, loosely speaking, the ``right ones'' to look for the specification property; i.e. a symmetric $q$-shift with the specification property is parametrised by a strongly irreducible sequence and vice versa. 

\begin{proposition}\label{prop:periodicstrong}
Set 
$$\operatorname{Per}(\I) = \set{q \in \I: \al(q) \text{ is periodic}}.$$ 
Then, $\operatorname{Per}(\I) \subset \SI$.
\end{proposition}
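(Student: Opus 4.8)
The goal is to show that if $q \in \I$ and $\al(q)$ is periodic, then $q$ is strongly irreducible, i.e.\ there exists $N \in \N$ such that $q^-_m \in \I$ for all $m \geq N$. The plan is to exploit that $\al(q)$ periodic forces the natural approximation from below $\set{q^-_m}_{m=1}^\f$ to \emph{stabilise}: by Remark~\ref{rem:onvl} and the discussion surrounding Lemma~\ref{lem:approximations}, once $q \in \overline{\ul}$ (and indeed $\I \subset \overline{\ul}$ by \cite[Lemma 4.7]{AlcBakKon2016}) the approximation is infinite, but when $\al(q) = (\al_1(q) \ldots \al_p(q))^\f$ is periodic of period $p$, for $n > p$ the word $\al_1(q) \ldots \al_n(q)^-$ is just a truncation of the periodic tail, and $(\al_1(q) \ldots \al_n(q)^-)^\f \in \vb$ exactly when $n$ is a multiple of the period (or, more precisely, for cofinitely many $n$ one has a controlled, eventually periodic, pattern). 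So first I would pin down precisely for which indices $n$ the sequence $(\al_1(q) \ldots \al_n(q)^-)^\f$ lies in $\vb$, using the lexicographic characterisation \eqref{eq:lang1} of $\La(\vb_q)$ together with the periodicity of $\al(q)$; this identifies the tail behaviour of $\set{n_m}$.

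Next, the key step: for $m$ large, $\al(q^-_m) = (\al_1(q) \ldots \al_{n_m}(q)^-)^\f$ and $n_m$ is large, so $\al(q^-_m)$ agrees with $\al(q)$ on a long prefix — indeed on the first $n_m$ symbols. I would show that for such $m$, $q^-_m$ lies in the irreducible interval $I(q)$ generated by $q$ (here using that $q \in \I$ with $\al(q)$ periodic, so $q$ is the left endpoint of its own irreducible interval, per Theorem~\ref{th:mainrsd}~$ii)$), or more directly that $\al(q^-_m)$ is itself irreducible. The cleanest route is probably: suppose for contradiction infinitely many $q^-_m \notin \I$; then (since $q > q_T$ by \cite[Lemma 4.4]{AlcBakKon2016} and so $q^-_m > q_T$ eventually) Lemma~\ref{lem:spec1} applies to each such $q^-_m$, yielding a unique $q^-_{k_m} \in \I$ with $q^-_m \in I(q^-_{k_m})$. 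Running the same nesting/disjointness argument as in the proof of Lemma~\ref{lem:infinitelymanyirreducibles} — using \cite[Lemma 4.6]{AlcBakKon2016} that distinct irreducible intervals are disjoint — forces all large $q^-_m$ to lie in a single irreducible interval $I(q^-_k)$ with $q^-_k \in \I$ strictly below $q$, and since $q^-_m \nearrow q$ this gives $q \in I(q^-_k)$, contradicting $q \in \I$ (an irreducible number cannot lie in the interior, nor be the right endpoint, of an irreducible interval generated by a strictly smaller base). This contradiction shows $q^-_m \in \I$ for all sufficiently large $m$, which is exactly strong irreducibility.

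Actually, a more economical argument may avoid the contradiction setup: since $\al(q)$ is periodic of period $p$, directly verify that for $m$ with $n_m$ a sufficiently large multiple of $p$, the periodic sequence $\al(q^-_m) = (\al_1(q)\ldots\al_{n_m}(q)^-)^\f$ satisfies Definition~\ref{def:irreducible}~$i)$: given any $j$ with $(\al_1(q^-_m)\ldots\al_j(q^-_m)^-)^\f \in \vb$, one checks $\al_1(q^-_m)\ldots\al_j(q^-_m)(\overline{\al_1(q^-_m)\ldots\al_j(q^-_m)}^+)^\f \prec \al(q^-_m)$ by comparing against the corresponding inequality for $\al(q)$, which holds because $q \in \I$, and transferring it via the long agreement between $\al(q^-_m)$ and $\al(q)$. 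Either way, the main obstacle is the same: controlling what happens at the "seam'' — the finitely many indices $j < n_m$ where the periodic blocks of $\al(q^-_m)$ and $\al(q)$ might first disagree under a shift — and ensuring that the irreducibility inequalities, known for $q$, are not destroyed by replacing $\al(q)$ with its truncation-to-periodic $\al(q^-_m)$. I expect the bookkeeping in Lemma~\ref{lem:spec1}'s hypothesis verification (that $(\al_1(q^-_j)\ldots\al_{n_j}(q^-_j))^\f$ is the relevant non-irreducible periodic sequence) together with the interval-nesting argument to be the technically delicate part, but no genuinely new idea beyond what is already developed in Section~\ref{sec:approx} should be needed.
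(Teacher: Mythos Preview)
Your contradiction route has a genuine gap. Notice that nowhere in that argument do you use the periodicity of $\al(q)$: you only invoke $q\in\I$, $q>q_T$, Lemma~\ref{lem:spec1}, and the disjointness of irreducible intervals. If the argument were sound it would show that \emph{every} $q\in\I$ is strongly irreducible, i.e.\ $\WI=\emptyset$, contradicting Proposition~\ref{pr:weakirreduciblesequence}. The specific failure is in transplanting the nesting argument from Lemma~\ref{lem:infinitelymanyirreducibles}: that proof assumes \emph{cofinitely many} $q^-_m\notin\I$ (equivalently, only finitely many are in $\I$), which forces all the intervals $I(q^-_{k_m})$ to coincide with a single $I(q^-_k)$, since any new irreducible $q^-_{k'}$ with $k'>N$ would contradict the hypothesis. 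Under your weaker hypothesis of only \emph{infinitely many} $q^-_m\notin\I$, there are also (by Lemma~\ref{lem:infinitelymanyirreducibles}) infinitely many $q^-_m\in\I$; the left endpoints $q^-_{k_m}$ of the trapping intervals can then march upward through this infinite supply, so the $I(q^-_{k_m})$ need not coalesce and $q$ need not land in any one of them. Also, the side remark that $q^-_m$ might lie in $I(q)$ is impossible: $I(q)=[q,q_R]$ and $q^-_m<q$.

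Your second approach is the paper's. One correction: the relevant indices $n_m$ are not in general multiples of the period (for instance, for $M=1$ and $\al(q)=(110)^\infty$ one has $n_3=5$), so you must treat all large $m$, not just those with $n_m$ divisible by the period. More substantially, the transfer you outline --- using the irreducibility inequality for $\al(q)$ to deduce it for $\al(q^-_m)$ via long agreement --- is not immediate, because $\al(q^-_{m+1})\prec\al(q)$, so $\al_1(q)\ldots\al_i(q)(\overline{\al_1(q)\ldots\al_i(q)}^+)^\infty\prec\al(q)$ does not directly give the same with $\al(q^-_{m+1})$ on the right. The paper handles this by induction: set $N=\min\{m:q^-_m\in\I\text{ and }n_m>k\}$ where $k$ is the period (well-defined by Lemma~\ref{lem:infinitelymanyirreducibles}), and then, assuming $q^-_j\in\I$ for $N\le j\le m$, verify irreducibility of $\al(q^-_{m+1})$ by splitting on whether the test index $i$ is below $n_m$ (where the inductive hypothesis $q^-_m\in\I$ applies, since the prefixes agree) or at least $n_m$ (where one argues directly that $(\overline{\al_1(q^-_{m+1})\ldots\al_i(q^-_{m+1})}^+)^\infty$ lies strictly inside $\vb_{q^-_{m+1}}$, hence is $\prec\si^i(\al(q^-_{m+1}))$). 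The ``seam'' you anticipated is exactly this second case, and it needs an argument independent of transferring from $q$.
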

\begin{proof}
Let $q \in \operatorname{Per}(\I)$. Suppose that $\al(q) = (\al_1(q), \ldots \al_k(q))^\f$ has period $k$. Let $\set{q^-_m}_{m=1}^\f$ the natural approximation from below of $q$ and set 
\begin{equation}\label{eq:perstrong1}
\al(q^-_m) = (\al_1(q) \ldots \al_{n_m}(q)^-)^\f = (\al_1(q^-_m) \ldots \al_{n_m}(q^-_m))^\f   
\end{equation}
the quasi-greedy expansion of $q^-_m$ for every $m \in \N$. 

Since $q \in \I$ then for every $m \in \N$ we have
\begin{equation}\label{eq:irrecper}
\al_1(q) \ldots \al_{n_m}(q)(\overline{\al_1(q) \ldots \al_{n_m}(q)}^+)^\f = \al_1(q^-_m) \ldots \al_{n_m}(q^-_m)^+(\overline{\al_1(q^-_m) \ldots \al_{n_m}(q^-_m)})^\f\prec \al(q).
\end{equation} 
Also, since $q \in \operatorname{Per}(\I) \subset \I$ then Lemma \ref{lem:infinitelymanyirreducibles} implies that there are infinitely many $m \in \N$ such that $q^-_m \in \I$. 

Let
\[
N=\min\{m \in \N : q^-_m \in \I \text{ and } n_m > k\},
\]
i.e. $N$ is the first irreducible element of the natural approximation from below such that $\al(q^-_m)$ has larger period than the period of $\al(q)$. Observe that Lemma \ref{lem:infinitelymanyirreducibles} implies that $N$ is well-defined.  

We claim that for every $m \geq N$, $q^-_m \in \I$, i.e. $\al(q^-_m) = (\al_1(q) \ldots \al_{n_m}(q)^-)^\f$ is an irreducible sequence. We will proceed by induction. From the definition of $N$ we have that $q^-_N \in \I$. Suppose that for every $N \leq j \leq m$ we have that $q^-_j \in \I$. We show now that $q^-_{m+1} \in \I$. Clearly, $n_{m} < n_{m+1}$. We show in the following two cases that for every $i \in \N$ such that $(\al_1(q^-_{m+1}) \ldots \al_{i}(q^-_{m+1})^-)^\f \in \vb$ then 
\begin{equation}\label{eq:irrecper1}
\al_1(q^-_{m+1}) \ldots \al_{i}(q^-_{m+1})(\overline{\al_1(q^-_{m+1}) \ldots \al_{i}(q^-_{m+1})}^+)^\f \prec \al(q^-_{m+1}). 
\end{equation}
 
\begin{itemize}
\setlength\itemsep{1em}
\item Assume firstly that $1 \leq i < n_m$. From the induction hypothesis $q^-_m \in \I$ then using \eqref{eq:irrecper} we have 
\begin{align*}
\al_1(q^-_{m+1}) \ldots \al_i(q^-_{m+1})(\overline{\al_1(q^-_{m+1}) \ldots \al_i(q^-_{m+1})}^+)^\f &\prec \al_1(q^-_{m}) \ldots \al_i(q^-_{m})(\overline{\al_1(q^-_{m}) \ldots \al_i(q^-_{m})}^+)^\f \\
&\prec \al(q^-_m) \prec \al(q^-_{m+1}).
\end{align*}
So, \eqref{eq:irrecper1} holds in this case.

\item Consider now $i \geq n_m$. Observe that 
\[
\al_1(q) \ldots \al_{i}(q) = \al_1(q^-_{m+1}) \ldots \al_{n_{m}}(q^-_{m+1})\ldots \al_{i}(q^-_{m+1}) \in \La(\vb_{q^-_{m+1}}),   
\]
and  
\begin{equation}\label{eq:irredper2}
(\al_1(q^-_{m+1}) \ldots \al_{i}(q^-_{m+1})^-)^\f \quad \text{and} \quad (\overline{\al_1(q^-_{m+1}) \ldots \al_{i}(q^-_{m+1})}^+)^\f \in \vb_{q^-_{m+1}}.
\end{equation}
We claim that for every $\ell \in \N$ 
\begin{equation}\label{eq:irredper3}
\overline{\al(q^-_{m+1})} \prec \si^\ell((\overline{\al_1(q^-_{m+1}) \ldots \al_{i}(q^-_{m+1})}^+)^\f) \prec \al(q^-_{m+1}).
\end{equation}
Let us show \eqref{eq:irredper3}. From \eqref{eq:irredper2} we have
\[
\overline{\al(q^-_{m+1})} \lle \si^\ell((\overline{\al_1(q^-_{m+1}) \ldots \al_{i}(q^-_{m+1})}^+)^\f) \lle \al(q^-_{m+1}).
\]
for every $\ell \in \N$. Now, clearly $(\al_1(q^-_{m+1}) \ldots \al_{i}(q^-_{m+1})^-)^\f \prec \al_{q^-_{m+1}}$. Since $$(\al_1(q^-_{m+1}) \ldots \al_{i}(q^-_{m+1})^-)^\f \in \vb,$$ then Lemma \ref{lem:quasigreedyexpansion} implies there is a unique $p \in \vl$ such that $$\al(p) = (\al_1(q^-_{m+1}) \ldots \al_{i}(q^-_{m+1})^-)^\f \quad \text{and} \quad p \prec q^-_{m+1}.$$ Moreover $\vb_p \subsetneq \vb_q$. Therefore, for every $\ell \in \N$ 
\begin{align*}
\overline{\al(q^-_{m+1})} &\prec (\overline{\al_1(q^-_{m+1}) \ldots \al_{i}(q^-_{m+1})}^+)^\f\\
&\lle \si^\ell((\overline{\al_1(q^-_{m+1}) \ldots \al_{i}(q^-_{m+1})}^+)^\f) \lle (\overline{\al_1(q^-_{m+1}) \ldots \al_{i}(q^-_{m+1})}^+)^\f \\
&\prec \al(q^-_{m+1}).
\end{align*} 
So \eqref{eq:irredper3} holds. Then \eqref{eq:irredper3} implies $$(\overline{\al_1(q^-_{m+1}) \ldots \al_{i}(q^-_{m+1})}^+)^\f = (\overline{\al_1(q)\ldots \al_{n_{m}}(q) \ldots \al_{i}(q)}^+)^\f  \prec \sigma^{i}(\al(q^-_{m+1}))$$ which implies that \eqref{eq:irrecper1} holds in this case.
\end{itemize}
\end{proof}

We now show that transitive sofic symmetric $q$-shifts are parametrised by strongly irreducible numbers. 

\begin{proposition}\label{prop:eventualperiodicstrong}
Let $q \in \I$. Then, if $\al(q)$ is eventually periodic then $q \in \SI$.
\end{proposition}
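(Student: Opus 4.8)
The plan is to reduce the eventually periodic case to the periodic case already handled in Proposition \ref{prop:periodicstrong}. Suppose $q \in \I$ with $\al(q)$ eventually periodic but not periodic; write $\al(q) = a_1 \ldots a_r (b_1 \ldots b_s)^\f$ where the pre-periodic part $a_1 \ldots a_r$ is taken to be minimal, so $r \geq 1$. The goal is to produce $N \in \N$ such that $q^-_m \in \I$ for every $m \geq N$. First I would analyse the structure of the natural approximation from below $\set{q^-_m}_{m=1}^\f$: each $\al(q^-_m) = (\al_1(q) \ldots \al_{n_m}(q)^-)^\f$, and for $n_m$ large the block $\al_1(q) \ldots \al_{n_m}(q)$ consists of the fixed prefix $a_1 \ldots a_r$ followed by several full copies of $b_1 \ldots b_s$ and then a proper prefix of $b_1 \ldots b_s$. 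The key observation I would establish is that for all sufficiently large $m$, the reducibility test in Definition \ref{def:irreducible} for $\al(q^-_m)$ reduces to the same inequalities that hold for $q$ itself, exactly because $q$ is irreducible; i.e., whenever $(\al_1(q^-_m) \ldots \al_i(q^-_m)^-)^\f \in \vb$ one gets $\al_1(q^-_m) \ldots \al_i(q^-_m)(\overline{\al_1(q^-_m) \ldots \al_i(q^-_m)}^+)^\f \prec \al(q^-_m)$.

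The argument would run by the same two-case induction as in Proposition \ref{prop:periodicstrong}. Let
\[
N = \min\set{m \in \N : q^-_m \in \I \text{ and } n_m > r + s},
\]
which is well-defined by Lemma \ref{lem:infinitelymanyirreducibles}. Assuming $q^-_j \in \I$ for $N \leq j \leq m$, I would show $q^-_{m+1} \in \I$. For a reducibility-index $i < n_m$ the induction hypothesis $q^-_m \in \I$ plus the chain of lexicographic inequalities $\al(q^-_m) \prec \al(q^-_{m+1})$ immediately gives the required strict inequality, verbatim as in the periodic case. For $i \geq n_m$ one uses that $\al_1(q) \ldots \al_i(q) = \al_1(q^-_{m+1}) \ldots \al_i(q^-_{m+1})$ agrees with the corresponding prefix of $\al(q)$, that $(\al_1(q^-_{m+1}) \ldots \al_i(q^-_{m+1})^-)^\f \in \vb$ forces (via Lemma \ref{lem:quasigreedyexpansion}) the existence of $p \prec q^-_{m+1}$ with $\vb_p \subsetneq \vb_{q^-_{m+1}}$, yielding
\[
\overline{\al(q^-_{m+1})} \prec \si^\ell\left((\overline{\al_1(q^-_{m+1}) \ldots \al_i(q^-_{m+1})}^+)^\f\right) \prec \al(q^-_{m+1}) \quad \text{for all } \ell \in \N,
\]
and in particular the string $(\overline{\al_1(q^-_{m+1}) \ldots \al_i(q^-_{m+1})}^+)^\f = (\overline{\al_1(q) \ldots \al_i(q)}^+)^\f \prec \si^i(\al(q^-_{m+1}))$, which is the desired inequality. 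Hence $q^-_{m+1} \in \I$ and the induction closes, so $q \in \SI$. The eventually-periodic-but-not-periodic hypothesis is only used to guarantee that infinitely many $n_m$ exceed $r+s$ so that $N$ exists; if $\al(q)$ is periodic we are already done by Proposition \ref{prop:periodicstrong}.

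The step I expect to be the main obstacle is verifying the $i \geq n_m$ case rigorously: one must be careful that the prefix coincidence $\al_1(q)\ldots\al_i(q) = \al_1(q^-_{m+1})\ldots\al_i(q^-_{m+1})$ genuinely holds for the relevant range of $i$ (it holds up to index $n_{m+1}-1$ by construction of the natural approximation), and that the replacement of $q$'s irreducibility condition by $q^-_{m+1}$'s is legitimate — this is where the choice $n_m > r+s$ matters, ensuring that the periodic tail of $\al(q)$ is ``seen'' inside the block $\al_1(q)\ldots\al_{n_m}(q)$ so that no genuinely new reducibility obstruction can appear in $\al(q^-_{m+1})$ that was not already controlled in $\al(q)$. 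Once this case analysis is set up, the proof is essentially identical in structure to that of Proposition \ref{prop:periodicstrong}; indeed, one could alternatively phrase it by invoking the results on irreducible intervals (\cite[Lemma 4.6, Lemma 4.9]{AlcBakKon2016}) to argue that if infinitely many $q^-_m$ were reducible, each would lie in an irreducible interval $I(q^-_{k})$ with $k$ bounded, forcing $q \in I(q^-_k)$ and contradicting $q \in \I$ — mirroring the proof of Lemma \ref{lem:infinitelymanyirreducibles}.
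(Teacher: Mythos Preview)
Your proposal is correct and takes essentially the same approach as the paper: define $N$ as the least $m$ with $q^-_m \in \I$ and $n_m$ exceeding the pre-period plus period of $\al(q)$ (well-defined by Lemma~\ref{lem:infinitelymanyirreducibles}), then run the two-case induction of Proposition~\ref{prop:periodicstrong} verbatim. In fact the paper's own proof is terser than yours---it simply sets up $N$ and then states that ``the argument to show that for every $j > N$, $q^-_j \in \I$ is exactly the same as in Proposition~\ref{prop:periodicstrong}''---so your write-up actually supplies more detail than the original.
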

\begin{proof}
Let $q \in \I$. Suppose that $\al(q) = \al_1(q) \ldots \al_r(q)(\al_{r+1}(q) \ldots \al_n(q))^\f$. Set $k$ to be the period of $\si^r(\al(q))$. Since $q \in \I$ then for every $j \in \N$ such that $(\al_1(q) \ldots \al_{j}(q)^-)^\f \in \vb$ then 
\begin{equation}\label{eq:irrecevper}
\al_1(q) \ldots \al_{j}(q)(\overline{\al_1(q) \ldots \al_{j}(q)}^+)^\f \prec \al(q).
\end{equation}
Let $\set{q^-_m}_{m=1}^\f$ the natural approximation from below of $q$. We will define $N$ in a similar way as in Proposition \ref{prop:periodicstrong}. Let 
\[
N = \mathop{\min} \set{ m \in \N: q^-_m \in \I \text{ and the period of }\al(q^-_m) > r + k }.
\] 
Using Lemma \ref{lem:infinitelymanyirreducibles} we get that $N$ is well defined. The argument to show that for every $j > N$, $q^-_j \in \I$ is exactly the same as in Proposition \ref{prop:periodicstrong}. 
\end{proof}

\subsection{Existence of non periodic strongly irreducible sequences and weakly irreducible sequences.}

We now show that, for a fixed $M \in \N$, $\I$ contains non-periodic and non-eventually periodic sequences, which can be either strongly or weakly irreducible.

We start our investigation by recalling that in \cite[Theorem 1.6]{KomKonLi2015} the following class of subsets was constructed. Let $M \in \N$ and fix $N \geq 2$. 

We consider a subset of $\vl$ that satisfy
\begin{equation}\label{eq:0NMN}
0^N\prec \al_{(r \cdot N)+1}(q)\ldots \al_{(r+1)\cdot N}(q)\prec M^N,\text{ for all } r\geq 2,
\end{equation}
namely
\begin{equation}\label{IN}
\I_N=\{
q\in\vl :
\al_1(q)\ldots \al_{2N}(q) = M^{2N-1}0 \text{ and \eqref{eq:0NMN} holds}
\}
\end{equation}

In \cite[Lemma 6.2]{AlcBakKon2016} it is shown that $\I_N \subset \I$ for every $N \geq 2$. Moreover $\I_N \subset \ul$. Then, Lemma \ref{lem:uandclosureu} $i)$ implies that for every $q \in \I_N$, $\al(q)$ is not periodic. Furthermore, in \cite[Lemma 6.4]{AlcBakKon2016} it is shown that $\dim_{\operatorname{H}}(\I_N) > 0$ for every $N \geq 2$, so $\I_N$ is uncountable. 

\begin{proposition}\label{pr:spec3}
Let $M \in \N$. Then, for every $N \geq 2$, $\I_N \subset \SI$.
\end{proposition}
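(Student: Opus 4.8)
The plan is to show that for every $q \in \I_N$, the natural approximation from below $\set{q^-_m}_{m=1}^\f$ eventually consists entirely of irreducible elements, which by Definition \ref{def:strongweaksequences} gives $q \in \SI$. First I would recall that by \eqref{IN} every $q \in \I_N$ satisfies $\al_1(q) \ldots \al_{2N}(q) = M^{2N-1}0$ and the block condition \eqref{eq:0NMN}, namely $0^N \prec \al_{(r\cdot N)+1}(q) \ldots \al_{(r+1)N}(q) \prec M^N$ for all $r \geq 2$; in particular no block of $N$ consecutive $0$'s and no block of $N$ consecutive $M$'s occurs beyond the prefix $M^{2N-1}0$. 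I would also recall that $\I_N \subset \ul$, so $\al(q)$ is not periodic, and that $q \in \I$.

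The key steps, in order: (1) Since $q \in \I_N \subset \I$, Lemma \ref{lem:infinitelymanyirreducibles} gives infinitely many $m$ with $q^-_m \in \I$, so there is a smallest $m$, call it $m_0$, with $q^-_{m_0} \in \I$ and $n_{m_0} > 2N$. (2) For $m \geq m_0$ I claim each candidate $(\al_1(q)\ldots\al_{n_m}(q)^-)^\f = \al(q^-_m)$ is irreducible. Suppose not; then by Lemma \ref{lem:spec1} (noting $q^-_m > q_T$ since $q \in \I_N$ forces $q$ large — indeed the prefix $M^{2N-1}0$ places $q$ well above $q_T$, and $q^-_m \to q$), there is a unique $k < m$ and $1 < n_k < n_m$ with $q^-_k \in \I$, $q^-_m \in I(q^-_k)$, and $\al_1(q)\ldots\al_{n_k}(q)^- $ is the periodic block of $\al(q^-_k)$ with $\al_1(q)\ldots\al_{n_k}(q)^+ = \al_1(q^-_k)\ldots\al_{n_k}(q^-_k)^+ = \al_1(q)\ldots\al_{n_k}(q)$ — the latter being forced because $\I \subset \overline{\ul}$. (3) The contradiction: $q^-_m \in I(q^-_k)$ for all such $m$ would force $q = \lim q^-_m \in I(q^-_k)$, hence $q \notin \I$ (the interior of an irreducible interval contains no irreducible base), contradicting $q \in \I_N \subset \I$; alternatively, one derives directly from \eqref{eq:0NMN} that $\al(q)$ cannot lie in any irreducible interval $I(q^-_k)$ with $n_k > 2N$, since membership in $I(q^-_k)$ constrains the tail of $\al(q)$ to be governed by the periodic word $\al_1(q)\ldots\al_{n_k}(q)^-$ and its reflection, which would eventually reproduce a block violating either the no-$0^N$ or no-$M^N$ condition — this mirrors the argument in Lemma \ref{lem:infinitelymanyirreducibles}. (4) Conclude: there is $N' \in \N$ (take $N' = m_0$ after the argument rules out non-irreducible $q^-_m$) with $q^-_m \in \I$ for all $m \geq N'$, so $\al(q)$ is strongly irreducible and $q \in \SI$.

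I expect the main obstacle to be step (3): making rigorous the claim that $q$ itself cannot belong to an irreducible interval. The cleanest route is the one used inside Lemma \ref{lem:infinitelymanyirreducibles}: once one shows $q^-_m \in I(q^-_k)$ for \emph{all} $m \geq N$ (via the uniqueness and disjointness of irreducible intervals, \cite[Lemma 4.6, Lemma 4.9]{AlcBakKon2016}, exactly as in that proof), then since $q^-_m \nearrow q$ and $I(q^-_k)$ is a closed interval, $q \in I(q^-_k)$, and because $q^-_k \in \I$ with $q^-_k < q$ strictly, $q$ lies in the interior or at the right endpoint of $I(q^-_k)$; in either case $\al(q)$ fails the irreducibility inequality witnessed by the periodic word of length $n_k$, so $q \notin \I$ — the desired contradiction. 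This is really a verbatim adaptation of Lemma \ref{lem:infinitelymanyirreducibles}, so the proof should be short: essentially "the assumption that infinitely many $q^-_m \notin \I$ leads, by Lemma \ref{lem:infinitelymanyirreducibles}'s argument, to $q \notin \I$; but $q \in \I_N \subset \I$." One should double-check that the hypotheses of Lemma \ref{lem:spec1} — in particular $q^-_j > q_T$ — hold for all large $j$, which follows since $q \in \I_N$ implies $q > q_T$ (the prefix $M^{2N-1}0$ of $\al(q)$ exceeds $\al(q_T)$ lexicographically) and $q^-_m \nearrow q$.
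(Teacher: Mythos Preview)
There is a genuine gap in step~(3). You assume that \emph{some} $q^-_m$ with $m \geq m_0$ fails to be irreducible, and you want a contradiction. Lemma~\ref{lem:spec1} places that particular $q^-_m$ inside an irreducible interval $I(q^-_k)$, but from this you cannot conclude that $q^-_{m'} \in I(q^-_k)$ for \emph{all} $m' \geq m_0$: Lemma~\ref{lem:infinitelymanyirreducibles} already guarantees that infinitely many $q^-_{m'}$ \emph{are} irreducible, and those do not lie in the interior of any irreducible interval. So the sequence $(q^-_{m'})$ exits $I(q^-_k)$, and you cannot pass to the limit to force $q \in I(q^-_k)$. Your appeal to ``Lemma~\ref{lem:infinitelymanyirreducibles}'s argument'' does not apply: that proof works under the hypothesis that \emph{all} $q^-_m$ for $m \geq N$ are non-irreducible, whereas the negation of strong irreducibility only says that \emph{infinitely many} are. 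These are genuinely different hypotheses, and the gap between them is exactly the existence of weakly irreducible bases (Subsection~\ref{constructweak}, Proposition~\ref{pr:weakirreduciblesequence}). Your ``alternative'' in step~(3) has the same defect: showing that $q$ itself cannot lie in $I(q^-_k)$ is no contradiction, since we already know $q \in \I$. Indeed, your argument never uses the structure of $\I_N$ in an essential way, so if it worked it would prove $\I \subset \SI$, i.e.\ $\WI = \emptyset$, which is false.

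The paper's proof is different in kind: it directly verifies the irreducibility inequality for each $\al(q^-_j)$ with $n_j \geq 3N$, using the combinatorics of $\I_N$. The key observation is that for $n_i \geq 2N$ the reflected block $\overline{\al_1(q^-_j)\ldots\al_{n_i}(q^-_j)}^+$ begins with $0^{2N-1}M$, while condition~\eqref{eq:0NMN} forbids $0^{2N-1}$ as a factor of $\al(q^-_j)$; this forces $\al_1(q^-_j)\ldots\al_{n_i}(q^-_j)(\overline{\al_1(q^-_j)\ldots\al_{n_i}(q^-_j)}^+)^\f \prec \al(q^-_j)$. The short-prefix case $n_i \leq 2N$ is handled separately using $\al_1(q)\ldots\al_{2N}(q) = M^{2N-1}0$. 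You need to use the special shape of $\I_N$ in this (or some equivalent) way; abstract interval-trapping alone cannot distinguish $\SI$ from $\WI$.
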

\begin{proof}
Let $q \in \I_N$. Since $q \in \I$ then from Lemma \ref{lem:infinitelymanyirreducibles} we have that 
\begin{equation}\label{eq:IN1}
K = \min\set{k \in \N: (\al_1(q) \ldots \al_{n_k}(q)^-)^\f \in \I \text{ and } n_k \geq 3N}
\end{equation}
is well-defined. We claim that for every $j \geq K$ the sequence $(\al_1(q) \ldots \al_{n_j}(q)^-)^\f$ is irreducible. Fix $j \geq K$ and let $q^-_j$ be the corresponding element of the natural approximation from below. Then 
$$\al(q^-_j) = (\al_1(q^-_j) \ldots \al_{n_j}(q^-_j))^\f = (\al_1(q) \ldots \al_{n_j}(q)^-)^\f.$$  
Since $N \geq 2$ and $\al(q)$ satisfies \eqref{IN} then the word $0^{2N-1}$ is not a factor of $\al(q^-_j)$. 

We split the proof in two cases:
\begin{itemize}
\item Suppose that $i \in \N$ satisfies $(\al_1(q^-_j)\ldots \al_{n_i}(q^-_j)^-)^\f \in \vb$ and $1 \leq n_i \leq 2N$. Then 
$$\al_1(q^-_j)\ldots \al_{n_i}(q^-_j)(\overline{\al_1(q^-_j)\ldots \al_{n_i}(q^-_j)}^+)^\f  = M^{n_i}(0^{n_i-1}1)^\f.$$ 
Since $1 \leq n_i \leq 2N$ then $$M^{n_i}(0^{n_i-1}1)^\f \prec (\al_1(q^-_j) \ldots \al_{n_j}(q^-_j))^\f.$$ 

\item Suppose that $i \in \N$ satisfies that $(\al_1(q^-_j)\ldots \al_{n_i}(q^-_j)^-)^\f \in \vb$ and $2N \leq n_i$. Then
\begin{multline*}
\al_1(q^-_j)\ldots \al_{n_i}(q^-_j)(\overline{\al_1(q^-_j)\ldots \al_{n_i}(q^-_j)}^+)^\f = \\
\al_1(q^-_j)\ldots \al_{n_i}(q^-_j)(0^{2N-1}M\overline{\al_{2N+1}(q) \ldots \al_{n_i}(q^-_j)}^+)^\f.
\end{multline*}
Since $0^{2N-1}$ is not a factor of $\al(q^-_j)$ we obtain $$\al_{1}(q^-_j)\ldots \al_{n_i}(q^-_j)(0^{2N-1}M\overline{\al_{2N+1}(q) \ldots \al_{n_i}(q^-_j)}^+)^\f \prec \al(q^-_j)$$
and the result follows.
\end{itemize}
\end{proof}

\subsection{Construction of strongly irreducible sequences}
\label{constructstrong}
We describe a construction to obtain non-periodic strongly irreducible sequences. This construction will allow us to find strongly irreducible sequences in $\I \setminus \mathop{\bigcup}\limits_{N \geq 2} \I_N$. 

Recently, Allaart in \cite[Definition 2.1]{All2018} introduced the notion of \emph{fundamental word}. Given $M \in \N$, a word $\om = w_1 \ldots w_n \in \La(\Sig)$ with $n > 2$ is said to be a \emph{fundamental word} if 
\begin{equation}\label{eq:admissible1}
\overline{w_1 \ldots w_{n-i}} \lle w_{i+1} \ldots w_n \prec w_1 \ldots w_{n-i} \text{ for every }1 \leq i \leq n-1.
\end{equation}  
It is clear that if $\al = \al_1 \ldots \al_m$ is a fundamental word then $(\al_1 \ldots \al_m)^\f \in \vb$ \cite[p.6510]{All2018}. 

\begin{lemma}\label{lem:admissible1}
Let $q \in \ul$. Then, there exists a strictly increasing sequence $\set{m_j}_{j=1}^\f \in \N$ such that $\al_1(q) \ldots \al_{m_j}(q)$ is a fundamental word. 
\end{lemma}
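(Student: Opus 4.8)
The plan is to prove Lemma~\ref{lem:admissible1} by producing the increasing sequence $\set{m_j}_{j=1}^\f$ explicitly from the lexicographic structure of $\al(q)$. Recall that $q \in \ul$ means, by Lemma~\ref{lem:uandclosureu}~$i)$, that $\overline{\al(q)} \prec \si^n(\al(q)) \prec \al(q)$ for all $n \geq 1$ (the case $q = M+1$ being trivial). Compare the defining inequalities \eqref{eq:admissible1} of a fundamental word, which for a prefix $w_1 \ldots w_n = \al_1(q) \ldots \al_n(q)$ read
\[
\overline{\al_1(q) \ldots \al_{n-i}(q)} \lle \al_{i+1}(q) \ldots \al_n(q) \prec \al_1(q) \ldots \al_{n-i}(q) \quad \text{for every } 1 \leq i \leq n-1.
\]
Since $q \in \ul$, the strict inequalities on infinite sequences $\overline{\al(q)} \prec \si^i(\al(q)) \prec \al(q)$ hold for every $i \geq 1$; so each such inequality is \emph{witnessed at a finite stage}, i.e. there is a least index where the compared strings differ and the desired inequality is already decided by that coordinate. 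The strategy is: for a given target length $\ell$, take $n$ large enough that, simultaneously for every $i \in \set{1, \ldots, \ell}$, the finite prefix $\al_{i+1}(q) \ldots \al_n(q)$ of $\si^i(\al(q))$ has already resolved both comparisons against $\al_1(q) \ldots \al_{n-i}(q)$ and against $\overline{\al_1(q) \ldots \al_{n-i}(q)}$.

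First I would fix $\ell \in \N$ and, for each $i \in \set{1, \ldots, \ell}$, use $\si^i(\al(q)) \prec \al(q)$ to obtain $p_i \in \N$ such that $\al_{i+1}(q) \ldots \al_{i+p_i}(q) \prec \al_1(q) \ldots \al_{p_i}(q)$ with the inequality decided strictly at some coordinate $\leq p_i$; likewise use $\overline{\al(q)} \prec \si^i(\al(q))$ to obtain $p_i' \in \N$ with $\overline{\al_1(q) \ldots \al_{p_i'}(q)} \prec \al_{i+1}(q) \ldots \al_{i+p_i'}(q)$. Set $P_\ell = \max_{1 \le i \le \ell}\set{i + p_i, i + p_i'}$ and let $m = m(\ell)$ be any integer with $m \geq P_\ell$ (in fact any $m \geq P_\ell$ works, which is exactly what gives the freedom to make the sequence strictly increasing). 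Then for every $i \in \set{1, \ldots, \ell}$ the strict inequality $\al_{i+1}(q)\ldots\al_m(q) \prec \al_1(q)\ldots\al_{m-i}(q)$ holds because it is already forced on the first $p_i$ symbols, and similarly $\overline{\al_1(q)\ldots\al_{m-i}(q)} \lle \al_{i+1}(q)\ldots\al_m(q)$. The remaining range $\ell < i \leq m-1$ of \eqref{eq:admissible1} must still be checked; here I would instead choose $\ell$ itself as a function of $m$ — run the argument the other way: show that for \emph{every} sufficiently large $m$, the word $\al_1(q)\ldots\al_m(q)$ is fundamental. This follows because $q \in \vl$ already guarantees the non-strict inequalities $\overline{\al_1(q)\ldots\al_{m-i}(q)} \lle \al_{i+1}(q)\ldots\al_m(q) \lle \al_1(q)\ldots\al_{m-i}(q)$ for all $0 \le i \le m-1$ (this is \eqref{eq:lang1} applied to the prefix, using $\al(q) \in \vb$), so only the \emph{upper} strict inequality $\al_{i+1}(q)\ldots\al_m(q) \prec \al_1(q)\ldots\al_{m-i}(q)$ needs to be upgraded from $\lle$ to $\prec$; and equality $\al_{i+1}(q)\ldots\al_m(q) = \al_1(q)\ldots\al_{m-i}(q)$ for some $i$ with $1 \le i \le m-1$ would be a genuine obstruction only if it persisted, so the key is that it cannot persist for arbitrarily large $m$ when $q \in \ul$.

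Concretely, the clean way to finish: suppose for contradiction that there are only finitely many $m$ for which $\al_1(q)\ldots\al_m(q)$ is fundamental. Then there is $M_0$ such that for every $m \geq M_0$ there exists $i = i(m) \in \set{1, \ldots, m-1}$ with $\al_{i+1}(q)\ldots\al_m(q) = \al_1(q)\ldots\al_{m-i}(q)$ (the lower inequality already being automatic, the only way to fail \eqref{eq:admissible1} is equality in the upper one). A pigeonhole / König's-lemma argument on the pairs $(m, i(m))$ — or a direct argument tracking the smallest such $i$ — then produces a fixed period $i_0$ with $\si^{i_0}(\al(q)) = \al(q)$, i.e. $\al(q)$ is periodic; but then $q \notin \ul$ by Lemma~\ref{lem:uandclosureu}~$i)$ (a periodic $\al(q)$ cannot satisfy $\si^n(\al(q)) \prec \al(q)$ for all $n \ge 1$), a contradiction. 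Hence infinitely many prefixes are fundamental, and choosing them in increasing order yields the desired $\set{m_j}_{j=1}^\f$. I expect the main obstacle to be the bookkeeping in this last step: cleanly extracting the periodicity contradiction from the hypothesis that equality in \eqref{eq:admissible1} recurs for all large $m$, since the offending index $i(m)$ may vary with $m$ — one must argue that it is bounded (the smallest witnessing $i$ stabilises) so that a single period emerges, and here the strictness $\overline{\al(q)} \prec \si^n(\al(q))$ available from $q \in \ul$ is what rules out the degenerate reflected-periodicity cases that occur for $q \in \vl \setminus \overline{\ul}$.
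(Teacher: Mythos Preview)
Your proposal takes a route quite different from the paper's, and the crucial step in your contradiction argument is not just unfinished bookkeeping --- as written, the strategy cannot succeed.

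You assume that for all large $m$ the prefix $\al_1(q)\ldots\al_m(q)$ has a proper period, and you want to conclude that $\al(q)$ is periodic by showing that the smallest such period $i(m)$ is bounded (``the smallest witnessing $i$ stabilises''). But this is exactly backward: the smallest period $p(m)$ of $\al_1(q)\ldots\al_m(q)$ is non-decreasing in $m$, and it is bounded \emph{if and only if} $\al(q)$ is periodic. Since $q\in\ul$ forces $\al(q)$ to be non-periodic, under your hypothesis you necessarily have $p(m)\to\infty$, so the periods do \emph{not} stabilise. There are non-periodic sequences (e.g.\ Sturmian words) for which every sufficiently long prefix has a proper period and the smallest period tends to infinity; so a contradiction cannot be extracted from period-combinatorics alone. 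One must genuinely use both strict inequalities $\overline{\al(q)}\prec\si^n(\al(q))\prec\al(q)$, and you do not indicate how.

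The paper avoids this issue entirely by a direct inductive construction rather than contradiction. It sets $m_1=\min\{m:\al_m(q)<\al_1(q)\}$ and checks that $\al_1(q)\ldots\al_{m_1}(q)$ is fundamental. Then, given $m_n$, it forms the periodic sequence $\al(q_n^+)=(\al_1(q)\ldots\al_{m_n}(q))^\infty\succ\al(q)$ and takes $m_{n+1}$ to be the \emph{first} index $\ell>m_n$ at which $\al_\ell(q)<\al_\ell(q_n^+)$. The minimality of $m_{n+1}$ is precisely what forces the strict upper inequality in \eqref{eq:admissible1} for all $1\le i\le m_{n+1}-1$. This explicit descent from the periodic majorant $\al(q_n^+)$ is the idea your argument is missing; it is what converts the infinite strict inequality $\si^{m_n}(\al(q))\prec\al(q)$ into a finite fundamental prefix in a way that automatically handles \emph{all} shifts $i$, not just those below a preassigned threshold $\ell$.
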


\begin{remark}\label{rem:admissible1}
We want to make clear that the statement of Lemma \ref{lem:admissible1} remains valid for $q \in \vl$ making the following modifications. From Theorem \ref{thm:uclosureuv} $iv)$ we have that for every $q \in \vl \setminus \ul$, $\al(q)$ is a periodic sequence.  If $\al(q) = s^\f$ with $s \in \set{k+1, \ldots, M}$ given that $M = 2k+1$ or $s \in \set{k, \ldots, M}$ if $M = 2k$, then no factor of $\al(q)$ is a fundamental word, however $s^\f \in \overline{\ul}$ and $(\al_1(q) \ldots \al_m(q))^\f = s^\f$ for every $m \in \N$. Also, if $q \in \vl \setminus \ul$ with $\al(q) \neq s^\f$ as in the former case then $\al(q) = (\al_1(q) \ldots \al_m(q))^\f$ with $m \neq 1$. It is clear that the periodic block $\al_1(q) \ldots \al_m(q)$ is a fundamental word. Then, the sequence $m_j = m \cdot j$ for each $j \in \N$ satisfies the consequence of Lemma \ref{lem:admissible1}. 
\end{remark}

\begin{proof}[Proof of Lemma \ref{lem:admissible1}]
Let $q \in \ul$. Then $\overline{\al(q)} \prec \si^n(\al(q)) \prec \al(q)$ for every $n \in \N$. Let 
$$m_1 = \mathop{\min}\set{m \in \N: \al_m(q) < \al_1(q)}.$$
Since $q \in \ul$ then $\al(q)$ is not a periodic sequence, so $m_1$ is well defined. Note that $$\overline{\al_1(q) \ldots \al_{m_1-i}(q)} \lle \al_{i+1}(q) \ldots \al_{m_1}(q) \prec \al_{1}(q) \ldots \al_{m_1-i}(q)$$ since $\al_1(q)\ldots \al_{m_1-i}(q) = \al_1(q)^{m_1-i}$ for every $i \in \set{1, \ldots, m_1-1}$. 

Clearly, $(\al_1(q)\ldots\al_{m_1}(q))^\f \succ \al(q).$ Let us set $(\al_1(q)\ldots\al_{m_1}(q))^\f = \al(q^+_1)$ where $q^+_1$ is defined implicitly. Then, there exists a minimal integer $\ell > m_1$ such that 
\[
\al_{\ell}(q) < \al_1(q) = \al_1(q^+_1),
\]
\[
\overline{\al_1(q^+_1) \ldots \al_{\ell-1}(q^+_1)} \prec \al_1(q) \ldots \al_{\ell-1}(q) = \al_1(q^+_1) \ldots \al_{\ell-1}(q^+_1)
\]
and 
\[
\overline{\al_{\ell}(q^+_1)} \leq \al_{\ell}(q) < \al_{\ell}(q^+_1).
\] 
So, let $m_2 = \ell$. Observe that 
$$\overline{\al_1(q) \ldots \al_{m_2-i}(q)} \lle \al_{i+1}(q)\ldots \al_{m_2}(q) \lle \al_1(q) \ldots \al_{m_2-i}(q)$$
holds for every $1 \leq i \leq m_2 -1$; this, the definition of $m_1$ and the minimality of $m_2$ give that 
$$\overline{\al_1(q) \ldots \al_{m_2-i}(q)} \lle \al_{i+1}(q)\ldots \al_{m_2}(q) \prec \al_1(q) \ldots \al_{m_2-i}(q)$$ 
for every $1 \leq i \leq m_2 -1$. Thus, the word $\al_1(q)\ldots\al_{m_2}(q)$ is fundamental and $$\al(q^+_1) \succ (\al_1(q) \ldots \al_{m_2}(q))^\f \succ \al(q).$$ Let us call $\al(q^+_2) = (\al_1(q)\ldots\al_{m_2}(q))^\f$. 

Now we proceed by induction. Suppose that $n \in \N$ and that $m_1 \ldots m_n$ and $q^+_1 \ldots q^+_n$ have been already defined. Then we define $m_{n+1}$ to be the smallest integer $\ell > m_n$ such that $\al_{\ell}(q) < \al_1(q)$, 
$$\overline{\al_1(q^+_n) \ldots \al_{\ell-1}(q^+_n)} \prec \al_1(q) \ldots \al_{\ell-1}(q) = \al_1(q^+_n) \ldots \al_{\ell-1}(q^+_n)$$ 
and 
$$\overline{\al_{\ell}(q^+_n)} \leq \al_{\ell}(q) < \al_{\ell}(q^+_n).$$ 
We claim that $\al_1(q) \ldots \al_{m_{n+1}}(q)$ is a fundamental word. From the definition of $m_{n+1}$ we only need to show that for every $1 \leq i \leq m_{n+1}-1$,

\begin{equation}\label{eq:indadmissible1}
\overline{\al_1(q)\ldots \al_{m_{n+1}-i}(q)} \lle \al_i(q) \ldots \al_{m_{n+1}}(q) \prec \al_1(q)\ldots \al_{m_{n+1}-i}(q).
\end{equation}
From the induction hypothesis \eqref{eq:indadmissible1} is valid for $1 \leq i < m_n-1$ and from the minimality of $m_{n+1}$ we obtain that 
\eqref{eq:indadmissible1} holds for $m_{n} \leq i \leq m_{n+1} -1$. 
\end{proof}

In comparison with Definition \ref{def:approximations}, in Lemma \ref{lem:admissible1} we constructed another ``natural'' approximation of $q \in \vl$. It is possible to show that the sequence $\set{q^+_n}_{n=1}^\f$ is strictly decreasing and $q^+_n \mathop{\searrow}\limits_{n \to \f} q$. We call this the \emph{natural approximation from above of $q$}. 

\begin{lemma}\label{lem:admissible2}
If $q \in \overline{\B}$ then for every $j \in \N$ such that $\al_1(q) \ldots \al_{m_j}(q)$ is a fundamental word, the sequence $(\al_1(q) \ldots \al_{m_j}(q))^\f$ is irreducible or $*$-irreducible. 
\end{lemma}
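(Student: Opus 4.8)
The plan is to show that every prefix $\al_1(q)\ldots\al_{m_j}(q)$ of $\al(q)$ that is a fundamental word generates, via its periodic completion, either an irreducible or a $*$-irreducible sequence, by directly checking the defining lexicographic inequalities of Definition \ref{def:irreducible}. Write $w = \al_1(q)\ldots\al_{m_j}(q)$, $n = m_j$, and $\ga = (w)^\f = \al(p)$ for the base $p \in \vl$ defined implicitly by this quasi-greedy expansion (legitimate since $w$ fundamental forces $(w)^\f \in \vb$, hence $p \in \vl$). Since $q \in \overline{\B} = \overline{\I \cup \IS}$ by Theorem \ref{th:mainrsd} $iii)$, we know $q \in \overline{\ul}$, so $\al(q)$ satisfies the inequalities of Lemma \ref{lem:uandclosureu} $ii)$, and in particular $\overline{\al(q)} \prec \si^\ell(\al(q)) \lle \al(q)$ for all $\ell \geq 0$; these inequalities, restricted to the prefix $w$ and its periodic extension, are exactly what the fundamental-word condition \eqref{eq:admissible1} encodes.

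First I would fix $j$ such that $(\al_1(q^-_j)\ldots\al_{n_j}(q^-_j)^-)^\f \in \vb$ — more precisely, fix $i < n$ with $(\ga_1\ldots\ga_i^-)^\f \in \vb$ — and compare $\ga_1\ldots\ga_i(\overline{\ga_1\ldots\ga_i}\,^+)^\f$ with $\ga = (w)^\f$ lexicographically. Since $w$ is a prefix of $\al(q)$, for $i < n$ we have $\ga_1\ldots\ga_i = \al_1(q)\ldots\al_i(q)$, and the crucial observation is that because $q \in \overline{\ul}$, the sequence $\al_1(q)\ldots\al_i(q)(\overline{\al_1(q)\ldots\al_i(q)}\,^+)^\f$ compared against $\al(q)$ is controlled by the strict inequality $\si^i(\al(q)) \succ \overline{\al(q)}$ together with $\al_{i+1}(q)\ldots \prec \overline{\al_1(q)\ldots}\,^+$ would fail — one must use that $(\al_1(q)\ldots\al_i(q)^-)^\f \in \vb$ forces, via the characterisation in Lemma \ref{lem:uandclosureu}, that $\al_1(q)\ldots\al_i(q)(\overline{\al_1(q)\ldots\al_i(q)}\,^+)^\f \prec \al(q)$, and hence (since the first $n$ symbols of both $\al(q)$ and $\ga$ agree up to the last symbol, where $\ga_n = \al_n(q) - 0$... carefully: $\ga$ has period $w = \al_1\ldots\al_n$ whereas $\al(q)$ continues beyond position $n$) the comparison with $\ga$ follows. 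Concretely, either the disagreement between $\ga_1\ldots\ga_i(\overline{\ga_1\ldots\ga_i}\,^+)^\f$ and $\ga$ occurs within the first $n$ coordinates, where it is inherited from the comparison with $\al(q)$, or it occurs later, where periodicity of $\ga$ and the fundamental-word inequalities \eqref{eq:admissible1} give strictness. This handles the case $i < n$.

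Next I would treat the constraint in Definition \ref{def:irreducible} that distinguishes irreducibility from $*$-irreducibility, namely the threshold $j > 2^n$ (resp. $2^{n+1}$) governed by the sequences $\xi(n)$. Here the dichotomy is: if $\ga = (w)^\f$ already satisfies the unrestricted irreducibility inequality for all admissible cut-points $i$, then $\ga$ is irreducible and we are done; otherwise the only failures occur at small cut-points, and one invokes the fact (from \cite{AlcBakKon2016}, Definition \ref{def:irreducible}) that $w$ being a prefix of $\al(q)$ with $q \in \overline{\B} = \overline{\I \cup \IS}$ forces $\ga$ to lie in one of the ranges $\xi(n+1) \lle \ga \prec \xi(n)$ with the failures confined below the corresponding power of two — this is precisely the situation captured by the natural approximation from above and Lemma \ref{lem:spec1}-type dichotomy (every non-irreducible periodic point of $\vb$ above $q_T$ sits inside a unique irreducible interval, and below $q_T$ one compares against the $\xi(n)$ directly). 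I expect \textbf{this second part — pinning down which of the two alternatives holds and verifying the $\xi(n)$-threshold} — to be the main obstacle, since it requires careful bookkeeping of where $\ga$ sits relative to the generalised-golden-ratio sequences $\zeta_1\zeta_2\ldots$ and matching that position to the period $n = m_j$; the $i < n$ case, by contrast, is a routine lexicographic manipulation of the fundamental-word inequalities. Once both cases are settled, $\ga = (\al_1(q)\ldots\al_{m_j}(q))^\f$ satisfies Definition \ref{def:irreducible} $i)$ or $ii)$, which is the claim.
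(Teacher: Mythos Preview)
Your proposal has a genuine gap. The central claim you rely on --- that ``$(\al_1(q)\ldots\al_i(q)^-)^\f \in \vb$ forces, via the characterisation in Lemma~\ref{lem:uandclosureu}, that $\al_1(q)\ldots\al_i(q)(\overline{\al_1(q)\ldots\al_i(q)}\,^+)^\f \prec \al(q)$'' --- is simply false. Lemma~\ref{lem:uandclosureu} only gives $\overline{\al(q)} \prec \si^n(\al(q)) \lle \al(q)$; it says nothing about the irreducibility inequality. Concretely, take $q = q_R$ the right endpoint of an irreducible entropy plateau $[q_L,q_R]$ with $\al(q_L)$ of period $m$. Then $q_R \in \overline{\B} \subset \overline{\ul}$, and at the cut $i = m$ one has $(\al_1(q_R)\ldots\al_m(q_R)^-)^\f = \al(q_L) \in \vb$, yet $\al_1(q_R)\ldots\al_m(q_R)(\overline{\al_1(q_R)\ldots\al_m(q_R)}\,^+)^\f = \al(q_R)$ exactly --- the strict inequality fails. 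So a direct verification anchored only on $q \in \overline{\ul}$ cannot succeed; the hypothesis $q \in \overline{\B}$ must enter in a more structural way.

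The paper's argument is the one you mention only in passing near the end: argue by contradiction. If $(\al_1(q)\ldots\al_{m_j}(q))^\f$ were neither irreducible nor $*$-irreducible, then (for $q \ge q_T$, the other case being analogous) by \cite[Lemma~4.9(1)]{AlcBakKon2016} it lies in a unique irreducible interval $I$. The key step --- which your plan does not carry out --- is then to show that \emph{every} subsequent element $q^+_n$ of the natural approximation from above, for $n \ge j$, also lies in $I$. Since $q^+_n \searrow q$, this forces $q$ into the closed plateau $I$, and in fact (because $\al(q)$ is not periodic and the approximation is strict) into its interior, contradicting $q \in \overline{\B}$. Your proposal should be reorganised around this contradiction; the direct lexicographic manipulation you sketch for ``$i < n$'' is neither sufficient nor the right decomposition.
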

\begin{proof}
Let $q \in \overline{\B}$. Let us assume that $q \geq q_T$. The proof for $q < q_T$ follows from a similar argument. In \cite[Lemma 2.6]{KalKonLiLu2017} (see also \cite[Theorem 3]{AlcBakKon2016}) it was shown that $\overline{\B} \subsetneq \overline{\ul}$. Also, from \cite[Lemma 4.10]{AlcBakKon2016} we have that if $q \in \overline{\B}$ satisfies that $\al(q)$ is periodic then $\al(q) \in \overline{\ul} \setminus \ul$. Then, the sequence given in Remark \ref{rem:admissible1} satisfies that $(\al_1(q) \ldots \al_{m_j}(q))^\f$ is irreducible for every $j \in \N$. So, let us assume that $\al(q)$ is not periodic. Fix $j \in \N$ such that $(\al_1(q) \ldots \al_{m_j}(q))^\f \in \vb$ and set $\al(q^+_j) = (\al_1(q) \ldots \al_{m_j}(q))^\f$. We want to show that if $i \in \N$ is such that $(\al_1(q)\ldots\al_i(q^+_j)^-)^\f \in \vb$ then 
\begin{equation}\label{eq:admissible2}
\al_1(q)\ldots\al_i(q^+_j)(\overline{\al_1(q)\ldots\al_i(q^+_j)}^+)^\f \prec \al(q^+_j).
\end{equation}
Suppose that there exists $i \in \N$ such that
$$\al_1(q)\ldots\al_i(q^+_j)(\overline{\al_1(q)\ldots\al_i(q^+_j)}^+)^\f \lge \al(q^+_j).$$ 
Without loss of generality we assume that such $i$ is minimal. Then, from \cite[Lemma 4.9 (1)]{AlcBakKon2016} there exists a unique $k < i$ such that $(\al_1(q^+_j)\ldots\al_k(q^+_j)^-)^\f$ is irreducible and 
\begin{align}\label{eq:admissible21}
(\al_1(q^+_j)\ldots\al_k(q^+_j)^-)^\f &\prec \al_1(q^+_j)\ldots\al_i(q^+_j)(\overline{\al_1(q)\ldots\al_i(q^+_j)}^+)^\f \\
&\prec \al_1(q^+_j)\ldots\al_k(q^+_j)(\overline{\al_1(q)\ldots\al_k(q^+_j)}^+)^\f. \nonumber       
\end{align}
Then \eqref{eq:admissible21} and Lemma \ref{lem:admissible1} imply that, whenever $n \geq j$, the sequence 
$$(\al_1(q^+_j)\ldots\al_k(q^+_j)^-)^\f \prec \al(q^+_n) \prec \al_1(q)\ldots\al_k(q^+_j)(\overline{\al_1(q)\ldots\al_k(q^+_j)}^+)^\f.$$ 
Thus, $q^+_n \in I(q^+_j)$ for every $n \geq j$, which is a contradiction to $q \in \overline{\B}$. 
\end{proof}

We now construct non-periodic strongly irreducible sequences. Fix $q_1 \in \operatorname{Per}(\I)$ with $\al(q_1) = (\al_1(q_1) \ldots \al_{m_1}(q_1))^\f$. Then $q_1$ parametrises an irreducible interval $I(q_1)$. Let $p_1$ be the right end point of $I(q_1)$. Then, $p_1 \in \overline{\B}\cap \ul$ and the quasi-greedy expansion of $p_1$ is given by
$$\al(p_1) = \al_1(q_1) \ldots \al_{m_1}(q_1)^+(\overline{\al_1(q_1) \ldots \al_{m_1}(q_1)})^\f.$$ 
Since $p_1 \in \overline{\B}$ then from Lemmas \ref{lem:admissible1} and \ref{lem:admissible2} there are infinitely many $m \in \N$ such that $\al_1(p_1) \ldots \al_{m}(p_1)$ is a fundamental word and $(\al_1(p_1) \ldots \al_{m}(p_1))^\f$ is an irreducible sequence. Taking  $m' = m_1+1$ it is clear that $\al_1(p_1) \ldots \al_{m'}(p_1)$ is a fundamental word, so there exists at least one $m \in \set{m_1+1 , \ldots, 2\cdot m_1}$ where $\al_1(p_1) \ldots \al_m(p_1)$ is a fundamental word. 

Let $m_2 \in \set{m_1+1 , \ldots, 2\cdot m_1}$ such that $\al_1(p_1)\ldots \al_{m_2}(p_1)$ is a fundamental word. Let $q_2$ be defined implicitly to be such that 
$$\al(q_2) = (\al_1(p_1)\ldots \al_{m_2}(p_1))^\f.$$ 
Thus, $q_2$ generates an irreducible interval $I(q_2)$ with right end point $p_2$. 

Set $m_3 \in \set{m_2+1 \ldots 2\cdot m_2}$ such that $\al_1(p_2) \ldots \al_{m_3}(p_2)$ is a fundamental word and $m_3 \neq m_2$. This generates $q_3$. So, let us assume that $q_1, \ldots, q_n$ and $p_1, \ldots, p_n$ has been already defined. Let $m_{n+1} \in \set{m_{n+1} \ldots 2\cdot m_n}$ such that $\al_1(p_n) \ldots \al_{m_{n+1}}(p_n)$ is a fundamental word and $m_{n+1} \neq m_{n}$, so we define $q_{n+1}$ implicitly to have quasi-greedy expansion 
$$\al(q_{n+1}) = (\al_1(p_n) \ldots \al_{m_{n+1}}(p_n))^\f.$$
Then $\set{q_n}_{n=1}^\f$ is a strictly increasing sequence of elements of $\I$. Moreover, $q_n < p^+_1$ where $p^+_1$ is given by the proof Lemma \ref{lem:admissible1} applied to $p_1$. Thus $q_n$ converges to a number $q\in (q_1, p^+_1)$. We claim that $q$ is strongly irreducible.

Let $\al(q)$ be the quasi-greedy expansion of $q$. From the above construction, it is clear that $q \in \overline{\B}$. To show that $\al(q)$ is an irreducible sequence, note that whenever $i \in \N$ satisfies that $(\al_1(q) \ldots \al_i(q)^-)^\f \in \vb$ then there exists $K \in \N$ such that for every $k \geq K$, 
$$\al_1(q) \ldots \al_i(q)(\overline{\al_1(q) \ldots \al_i(q)}^+)^\f \prec \al(q_k) \prec \al(q).$$ 
This implies the irreducibility of $\al(q)$. Let us show that $\al(q)$ is strongly irreducible. We claim that for every $i \geq m_1 +1$ such that 
\begin{equation}\label{eq:strong}
(\al_1(q)\ldots \al_i(q)^-)^\f \in \vb \quad (\al_1(q)\ldots \al_i(q)^-)^\f
\end{equation}
is an irreducible sequence.

Observe that for every $n \in \N$ there is no $i$, $m_n < i < m_{n+1}$, satisfying $(\al_1(q)\ldots \al_i(q)^-)^\f \in \vb$.
This holds since $\al_{m_n + 1}(q) \ldots \al_i(q)  = \al_1(q^+_{n-1}) \ldots \al_{i - m_{n-1}}(q^+_{n-1})$ for every $m_n < i < m_{n+1}.$ So, to show \eqref{eq:strong} we observe that if $i = m_n$ where $m_n$ is the period of $q_n$ for some $n \geq 2$ then $(\al_1(q) \ldots \al_{m_n}(q)^-)^\f = \al(q_n)$ which is an irreducible sequence. So, using $m_2$, we obtain that $\al(q)$ is strongly irreducible.

Observe that our construction depends entirely on the choice of $m_n$ at each step $n$. Note that at each step $n$ the number of choices of $m_{n+1}$ is strictly greater to the number of choices we have at the step $m_n$ since the period of $\al(q_n)$ is strictly smaller than the period of $\al(q_{n+1})$. Moreover, using Lemmas \ref{lem:admissible1} and \ref{lem:admissible2}, it is not difficult to check that for any $M \in \N$ one has $\I \setminus \I_N \neq \emptyset$ for every $N \geq 2$. We have then proved:

\begin{proposition}\label{pr:strongirreduciblesequence}
The set $\SI \setminus \mathop{\bigcup}\limits_{N \geq 2}\I_N$ is uncountable.
\end{proposition}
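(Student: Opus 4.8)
The plan is to read off the result from the construction carried out immediately before the statement. Fixing an initial base $q_1\in\operatorname{Per}(\I)$, that construction associates to each admissible sequence of choices $\mathbf m=(m_n)_{n\ge1}$ --- with $m_{n+1}\in\{m_n+1,\dots,2m_n\}$, $m_{n+1}\neq m_n$, and $\al_1(p_n)\dots\al_{m_{n+1}}(p_n)$ a fundamental word --- a base $q=q(\mathbf m)$ which, by that construction, is non-periodic and strongly irreducible, hence $q\in\SI$. So it is enough to exhibit uncountably many admissible $\mathbf m$ producing pairwise distinct bases, all of which avoid $\bigcup_{N\ge2}\I_N$.

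For pairwise distinctness, let $\mathbf m\neq\mathbf m'$ and let $n\ge2$ be the least index with $m_n\neq m_n'$, say $m_n<m_n'$. The agreement up to step $n-1$ forces $p_{n-1}=p'_{n-1}=:p$ and $\al(q_n)=(\al_1(p)\dots\al_{m_n}(p))^\f$, $\al(q'_n)=(\al_1(p)\dots\al_{m_n'}(p))^\f$. By Theorem~\ref{th:mainrsd}~$ii)$ the period block of $q_{n+1}$ begins with $\al_1(q_n)\dots\al_{m_n}(q_n)^{+}$, and every $\al(q_k)$ with $k\ge n+1$ agrees with $\al(q_{n+1})$ on its first $m_{n+1}-1$ symbols (and $m_{n+1}-1\ge m_n$), so the $m_n$-th symbol of the limit $\al(q)$ equals $\al_{m_n}(p)^{+}$. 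On the other hand every $\al(q'_k)$ with $k\ge n$ agrees with $\al(q'_n)$ on its first $m_n'-1\ge m_n$ symbols, so the $m_n$-th symbol of $\al(q')$ equals $\al_{m_n}(p)$. Hence $\al(q)\neq\al(q')$, and $q\neq q'$ by Lemma~\ref{lem:quasigreedyexpansion}. As observed in the remark preceding the statement, the number of admissible values of $m_{n+1}$ is strictly increasing in $n$ --- the window $\{m_n+1,\dots,2m_n\}$ widens as $m_n\to\infty$ while, by Lemma~\ref{lem:admissible1} applied to $p_n$, fundamental-word prefixes of $\al(p_n)$ keep occurring --- so for all large $n$ there are at least two admissible continuations. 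A binary Cantor scheme over these choices then yields a continuum of admissible $\mathbf m$ with pairwise distinct outputs.

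To keep every output outside $\bigcup_{N\ge2}\I_N$, recall from \eqref{IN} that $q\in\I_N$ forces $\al_1(q)\al_2(q)\al_3(q)=MMM$ (since $2N-1\ge3$). Every $\al(q_k)$ arising in the construction agrees with $\al(q_1)$ on its first $m_1-1$ symbols --- passing from $\al(q_{k-1})$ to $\al(q_k)$ only alters symbols in positions $\ge m_{k-1}\ge m_1$ --- hence so does the limit $\al(q)$. It therefore suffices to start the construction from a base $q_1\in\operatorname{Per}(\I)$ with $\al_i(q_1)<M$ for some $i\in\{1,2,3\}$ with $i\le m_1-1$; then $\al_1(q)\al_2(q)\al_3(q)\neq MMM$, so $q\notin\I_N$ for every $N\ge2$. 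Such a $q_1$ exists for each $M$: for $M\ge3$ take a periodic irreducible base whose quasi-greedy expansion begins with a digit $<M$ (e.g. one near $q_T$); for $M=2$ take the base with $\al(q_1)=(211211121111)^\f$ (here $\al_2(q_1)=1<2$ and $m_1=12$); for $M=1$ take the base with $\al(q_1)=(11010)^\f$ (here $\al_3(q_1)=0<1$ and $m_1=5$) --- the last two occur among the Examples, are strongly irreducible, hence lie in $\operatorname{Per}(\I)$. With such $q_1$, all of the (uncountably many) bases produced by the construction lie in $\SI\setminus\bigcup_{N\ge2}\I_N$, which is therefore uncountable.

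The one genuinely delicate point is the branching claim used for the cardinality count: one must verify rigorously that at infinitely many stages the recursion admits two admissible values of $m_{n+1}$. I expect this to be the main obstacle. It should follow from a quantitative refinement of Lemma~\ref{lem:admissible1} --- bounding the gaps between consecutive fundamental-word-prefix lengths of $\al(p_n)$ --- so that the widening window $\{m_n+1,\dots,2m_n\}$ eventually captures two such prefixes, combined with the distinctness argument above, which guarantees that different admissible choices really do propagate to different limit bases.
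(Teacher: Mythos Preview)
Your approach is essentially the paper's own: it also argues uncountability directly from the branching in the construction of Subsection~\ref{constructstrong}, asserting (without a detailed proof) that the number of admissible choices of $m_{n+1}$ is strictly increasing in $n$, and noting that one can start from a $q_1\in\operatorname{Per}(\I)$ whose quasi-greedy expansion avoids the prefix $M^{2N-1}0$ so that the limit lies outside every $\I_N$. Your distinctness argument and your explicit choices of $q_1$ for small $M$ go beyond what the paper writes out; the branching issue you flag as ``the main obstacle'' is disposed of in the paper by the single clause ``since the period of $\al(q_n)$ is strictly smaller than the period of $\al(q_{n+1})$'', so your level of rigor matches the source.
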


\subsection{Construction of weakly irreducible sequences} 
\label{constructweak}
Let us now construct a weakly irreducible sequence. The idea behind this construction is similar to the one for strongly irreducible sequences. The difference between both constructions is that for strongly irreducible sequences, in each step $n$ we find an irreducible sequence ``relatively far" from each entropy plateau, whereas for the case of weakly irreducible sequences we will strive to be ``relatively close" to entropy plateaus at each step.

Fix $q_1 \in \I$ with $\al(q_1) = (\al_1(q_1) \ldots \al_{m_1}(q_1))^\f$. Then, $q_1$ parametrises an irreducible interval $I(q_1)$. Let $p_1$ the right end point of $I(q_1)$. Then, $p_1 \in \overline{\B}\cap \ul$ and the quasi-greedy expansion of $p_1$ is given by 
$$\al(p_1) = \al_1(q_1) \ldots \al_{m_1}(q_1)^+(\overline{\al_1(q_1) \ldots \al_{m_1}(q_1)})^\f.$$ 
Fix $N_1 \in \N$. Then Lemmas \ref{lem:admissible1} and \ref{lem:admissible2} imply that there exist infinitely many $m \in \N$ such that $\al_1(p_1) \ldots \al_{m}(p_1)$ is a fundamental word and $(\al_1(p_1) \ldots \al_{m}(p_1))^\f$ is an irreducible sequence. In particular, there must exists at least one $m \in \set{(N_1 + 1) \cdot m_1 , \ldots, (N_1 + 2)\cdot m_1}$ where $\al_1(p_1) \ldots \al_m(p_1)$ is a fundamental word. Let $m_2 \in \set{(N_1 + 1) \cdot m_1 , \ldots, (N_1 + 2)\cdot m_1}$ be such that $\al_1(p_1)\ldots \al_{m_2}(p_1)$ is a fundamental word. Let $q_2$ be defined implicitly to be such that $$\al(q_2) = (\al_1(p_1)\ldots \al_{m_2}(p_1))^\f.$$ Notice that for every $i = k \cdot m_1$ with $1 \leq k \leq N_1$.

\begin{multline}\label{eq:strogirred1}
\al_1(p_1) \ldots \al_i(p_1) =\\ \al_1(q_1)\ldots\al_{m_1}(q_2)^+(\overline{\al_1(q_1)\ldots\al_{m_1}(q)})^{k-1}\overline{\al_1(q_1)\ldots\al_{m_1}(q)}^- \prec \al(p_1).
\end{multline}

Then, the cardinality of the set of integers $i$ such that $(\al_1(q_2) \ldots \al_i(q_2)^-)^\f \in \vb$ and $(\al_1(q_2) \ldots \al_i(q_2)^-)^\f$ is not irreducible is at least $N_1$.

Note that $q_1 < q_2$. Now, recall that $q_2$ generates an irreducible interval $I(q_2)$ with right end point $p_2$. Fix $N_2 \in \N$. Then there exists $m_3 \in \set{(N_2 + 1) \cdot m_2, \ldots (N_2 +2)\cdot m_2}$ such that $\al_1(p_1)\ldots \al_{m_3}(p_3)$ is a fundamental word. We define $q_3$ implicitly to have quasi-greedy expansion $(\al_1(p_1)\ldots \al_{m_3}(p_3))^\f$. Now, note that for every $i = k \cdot m_2$ with $1 \leq k \leq N_2$ 

\begin{multline}\label{eq:strogirred2} 
\al_1(p_2) \ldots \al_i(p_2) =\\ \al_1(q_2)\ldots\al_{m_1}(q_2)^+(\overline{\al_1(q_3)\ldots\al_{m_2}(q_3)})^{k-1}\overline{\al_1(q_2)\ldots\al_{m_2}(q)}^- \prec \al(p_2).
\end{multline}

This implies that 
$$\set{i \in \set{1, \ldots, m_2} : (\al_1(q_3) \ldots \al_i(q_3)^-)^\f \in \vb \text{ and } (\al_1(q_2) \ldots \al_i(q_2)^-)^\f \text{ is not irreducible}}$$ has cardinality at least $N_2+N_1$.

So, let us assume that $q_1, \ldots, q_n$, $p_1, \ldots, p_n$ and $N_1, \ldots, N_n$ have been already defined. Fix $N_{n+1}$ and let $m_{n+1} \in \set{(N_n + 1) \cdot m_{n} \ldots (N_n +2) \cdot m_n}$ such that $\al_1(p_n) \ldots \al_{m_{n+1}}(p_n)$ is a fundamental word, so we define $q_{n+1}$ implicitly to have quasi-greedy expansion $$\al(q_{n+1}) = (\al_1(p_n) \ldots \al_{m_{n+1}}(p_n))^\f.$$ Then $\set{q_n}_{n=1}^\f$ is a strictly increasing sequence of elements of $\I$. Moreover, $q_n < p^+_1$ as in Lemma \ref{lem:admissible1} applied to $p_1$. Thus $q_n$ converges to a number $q\in (q_1, p^+_1)$. Observe that $q \in \I$ since for every $i \in \N$ with $(\al_1(q) \ldots \al_{i}(q)^-)^\f$ there is $k \in \N$ such that $$\al_1(q) \ldots \al_{i}(q)(\overline{\al_1(q) \ldots \al_{i}(q)}^+)^\f \prec \al(q_k) \prec \al(q).$$ Note that, for each $n \geq 2$, $q_n$ forces the set
$$
\set{i \in \set{1, \ldots, m_2} : (\al_1(q_n) \ldots \al_i(q_n)^-)^\f \in \vb \text{ and } (\al_1(q_n) \ldots \al_i(q_n)^-)^\f \text{ is not irreducible
}}$$ to have cardinality at least $\sum_{j=1}^{n-1}N_j$. So $q \in \WI$.
This finishes our construction of weakly irreducible subsequences.

We want to remark once more that our construction now depends entirely on the choice of $m_n$ and $N_n$ at each step $n$. Note that at each step $n$ the number of choices of $m_{n+1}$ is strictly greater to the number of choices we have at the step $m_n$ since the period of $\al(q_n)$ is strictly smaller than the period of $\al(q_{n+1})$. Moreover, the set of sequences $\set{N_n}_{n = 1}^\f$ with $N_n \in \N$ is uncountable. Then, the following proposition holds.    
 
\begin{proposition}\label{pr:weakirreduciblesequence}
The set $\WI$ is uncountable.
\end{proposition}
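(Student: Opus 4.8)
The statement asserts that the construction of Subsection~\ref{constructweak} produces uncountably many elements of $\WI$; since that construction already verifies that each of the numbers it produces lies in $\WI$, the only thing left to supply is an injectivity argument. The plan is to fix, once and for all, the base point $q_1 \in \I$ with $\al(q_1)$ periodic and the deterministic selection rule ``$m_{n+1} :=$ the least integer of $\set{(N_n+1)m_n, \ldots, (N_n+2)m_n}$ for which $\al_1(p_n) \ldots \al_{m_{n+1}}(p_n)$ is a fundamental word'' (such an integer exists by Lemmas~\ref{lem:admissible1} and~\ref{lem:admissible2}, exactly as in the construction). With these conventions the output of the construction becomes a single-valued function $\Psi\colon \N^{\N} \to \WI$ of the parameter sequence $(N_n)_{n \ge 1}$, and it suffices to find an uncountable subset of $\N^{\N}$ on which $\Psi$ is injective.

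I would take $\mathcal{D} = \set{1, 3}^{\N}$, which has cardinality $2^{\aleph_0}$. The point of admitting the two values $1$ and $3$ is that the windows $\set{2m_n, \ldots, 3m_n}$ and $\set{4m_n, \ldots, 5m_n}$ used at step $n$ are disjoint, so the two possible values of $m_{n+1}$ are distinct. Let $(N_n), (N_n') \in \mathcal{D}$ be distinct and let $n_0$ be the first index at which they differ. Since $m_1, \ldots, m_{n_0}$, and hence $q_1, \ldots, q_{n_0}$ and $p_1, \ldots, p_{n_0}$, depend only on $N_1, \ldots, N_{n_0 - 1}$, these data agree for the two inputs; put $p := p_{n_0}$ and $a_i := \al_i(p)$. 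Disjointness of the windows gives $m_{n_0+1} \ne m_{n_0+1}'$, and we may assume $m_{n_0+1} < m_{n_0+1}'$.

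It remains to show that the numbers $q := \Psi((N_n))$ and $q' := \Psi((N_n'))$ are distinct; by Lemma~\ref{lem:quasigreedyexpansion} it is enough to show $\al(q) \ne \al(q')$. A short induction along the construction (using $\al(p_k) = \al_1(q_k) \ldots \al_{m_k}(q_k)^+ (\overline{\al_1(q_k) \ldots \al_{m_k}(q_k)})^{\f}$, the fact that the periods $m_k$ strictly increase, and that $q_k \nearrow q$ together with the left-continuity of $\Phi$) shows that $\al(q)$ begins with the length-$m_{n_0+1}$ word $(a_1 \ldots a_{m_{n_0+1}})^+$; in particular its $m_{n_0+1}$-th letter is $a_{m_{n_0+1}} + 1$ (which makes sense since $a_1 \ldots a_{m_{n_0+1}}$ is a fundamental word, hence $a_{m_{n_0+1}} < M$). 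The same argument for the primed data shows $\al(q')$ begins with $(a_1 \ldots a_{m_{n_0+1}'})^+$, whose $m_{n_0+1}$-th letter is $a_{m_{n_0+1}}$ because $m_{n_0+1} < m_{n_0+1}'$. Thus $\al(q)$ and $\al(q')$ differ at coordinate $m_{n_0+1}$, so $\Psi$ is injective on $\mathcal{D}$ and $\WI \supseteq \Psi(\mathcal{D})$ is uncountable.

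The one step that needs genuine care is the inductive claim that $\al(q)$ begins with $(a_1 \ldots a_{m_{n_0+1}})^+$; the rest is routine manipulation of the lexicographic order. This claim is exactly the statement that the periods of the intermediate bases $q_k$ leave a recoverable trace in $\al(q)$, and it is proved by following the prefixes of $\al(q_k)$ for $k \ge n_0 + 2$ through the explicit form of $\al(p_k)$ recorded above. If one preferred to bypass even this bookkeeping, it would suffice to replace $\set{1,3}$ by a set of admissible parameter values whose windows at step $n$ are separated by more than $m_n$, forcing $\al(q)$ and $\al(q')$ to disagree already at coordinate $m_{n_0+1}+1$; I anticipate no further obstacle.
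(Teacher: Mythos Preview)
Your argument is correct and follows the same route as the paper: the paper simply remarks, just before the proposition, that the construction of Subsection~\ref{constructweak} depends on the parameter sequence $(N_n)_{n\ge1}\in\N^{\N}$ and that this set of parameters is uncountable. What you add is exactly the missing piece---a deterministic selection rule and an explicit injectivity argument on a continuum-sized set of parameters---so your proof is a rigorous version of the paper's sketch rather than a different approach.
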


From Propositions \ref{pr:aproxconvergencebelow} and \ref{prop:periodicstrong}, and Lemma \ref{lem:infinitelymanyirreducibles} we obtain the following result.

\begin{proposition}\label{pr:irreducibledense}
The set of irreducible sequences $\I$ is dense in $\overline{\B} \cap [q_T, M+1]$. Moreover, $\SI$ is dense in $\overline{\B} \cap [q_T, M+1]$. 
\end{proposition}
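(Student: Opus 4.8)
The plan is to prove the slightly stronger statement that $\SI$ is dense in $\overline{\B} \cap [q_T, M+1]$; since $\SI \subseteq \I$, this yields both assertions at once. I would first record that $\SI$ genuinely lies in the target set: $\SI \subseteq \I \subseteq (q_T, M+1]$ by \cite[Lemma 4.4]{AlcBakKon2016} and $\I \subseteq \overline{\B}$ by \cite[Lemma 4.7, Lemma 6.1]{AlcBakKon2016}, so $\SI \subseteq \overline{\B} \cap [q_T, M+1]$.

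The first key step is to show that every $q \in \I$ lies in the closure of $\SI$. Given such a $q$, I would take its natural approximation from below $\set{q^-_m}_{m=1}^\f$ (available since $\I \subseteq \overline{\ul}$). By Definition \ref{def:approximations} each $\al(q^-_m)$ is periodic, and by Lemma \ref{lem:infinitelymanyirreducibles} infinitely many terms $q^-_{m_j}$ belong to $\I$; hence $q^-_{m_j} \in \operatorname{Per}(\I) \subseteq \SI$ by Proposition \ref{prop:periodicstrong}. Since $q^-_{m_j} \nearrow q$ by Proposition \ref{pr:aproxconvergencebelow}, we get $q \in \overline{\SI}$, and in particular $\overline{\I} = \overline{\SI}$.

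The second step is to locate the $*$-irreducible bases. By Theorem \ref{th:mainrsd} $iii)$, $\overline{\B} = \overline{\I \cup \IS} = \overline{\I} \cup \overline{\IS}$, so it suffices to show $\overline{\IS} \cap [q_T, M+1] = \emptyset$. Every $*$-irreducible sequence satisfies $\al \prec \xi(n) \preccurlyeq \xi(1)$ for some $n$ (Definition \ref{def:irreducible} $ii)$ together with $\xi(n) \searrow \al(q_G)$), and a direct comparison with \eqref{eq:transconstant} shows $\xi(1) \prec \al(q_T)$ for both parities of $M$; since $\Phi^{-1}$ is strictly increasing (Lemma \ref{lem:quasigreedyexpansion}), this forces $\IS \subseteq (q_G, \Phi^{-1}(\xi(1)))$ with $\Phi^{-1}(\xi(1)) < q_T$, so $\overline{\IS}$ lies strictly below $q_T$. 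Consequently $\overline{\B} \cap [q_T, M+1] = \overline{\I} \cap [q_T, M+1] = \overline{\SI} \cap [q_T, M+1]$, and the proposition follows: given any $q$ in this set, choose $q_\ell \in \I$ with $q_\ell \to q$, use the first step to get each $q_\ell \in \overline{\SI}$, and extract by a diagonal argument a sequence in $\SI$ converging to $q$.

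I expect the second step to be the main obstacle: one must carefully pin down that all $*$-irreducible bases lie strictly below $q_T$, so that the accumulation of the $\IS$-part of $\overline{\B}$ cannot interfere with density on $[q_T, M+1]$ (including the boundary point $q_T$ itself). The remaining pieces are a direct assembly of Propositions \ref{pr:aproxconvergencebelow} and \ref{prop:periodicstrong} and Lemma \ref{lem:infinitelymanyirreducibles} with a routine diagonal argument.
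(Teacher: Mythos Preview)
Your approach is essentially the paper's own: the heart of the argument---approximating each $q\in\I$ from below by elements of $\operatorname{Per}(\I)\subset\SI$ via Proposition~\ref{pr:aproxconvergencebelow}, Lemma~\ref{lem:infinitelymanyirreducibles} and Proposition~\ref{prop:periodicstrong}---is exactly the triple of results the paper invokes, and your Step~2 just makes explicit the background fact (implicit in \cite{AlcBakKon2016}) that $\IS$ lies below $q_T$, so that $\overline{\B}\cap[q_T,M+1]$ reduces to $\overline{\I}\cap[q_T,M+1]$. Two cosmetic points to tidy: the inclusion $\I\subseteq(q_T,M+1]$ fails at $q_G$ (harmless, since $q_G<q_T$), and in Step~2 you should avoid writing $\Phi^{-1}(\xi(1))$ (which need not lie in $\vl$) and not claim $\overline{\IS}$ stays \emph{strictly} below $q_T$---but since $q_T$ is also approximated from above by elements of $\operatorname{Per}(\I)$ (via Lemmas~\ref{lem:admissible1} and~\ref{lem:admissible2}), density at the boundary point $q_T$ holds regardless.
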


We now show that $\WI$ is also a dense subset relative to $\overline{\B} \cap [q_T, M+1]$. 

\begin{proposition}\label{pr:denseweakirr}
The set $\WI$ is dense in $\overline{\B} \cap [q_T, M+1]$
\end{proposition}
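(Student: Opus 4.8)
The plan is to run, and control quantitatively, the construction of weakly irreducible sequences from Subsection \ref{constructweak}. Since $\WI\subset\overline{\B}\cap[q_T,M+1]$, density amounts to showing: for every $r\in\overline{\B}\cap[q_T,M+1]$ and every $\varepsilon>0$ there is $q\in\WI$ with $|q-r|<\varepsilon$. Fix such $r$ and $\varepsilon$. Because $\vb$ is compact and $\Phi^{-1}$ is continuous on it (Lemma \ref{lem:quasigreedyexpansion}), $\Phi^{-1}$ is uniformly continuous, so I first choose $\delta>0$ with $d(\al,\al')<\delta\Rightarrow|\Phi^{-1}(\al)-\Phi^{-1}(\al')|<\varepsilon/2$, and an integer $T$ with $2^{-T}<\delta$.

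The first step is to locate a base $q_1\in\operatorname{Per}(\I)$ with $|q_1-r|<\varepsilon/2$ whose quasi-greedy expansion has minimal period $m_1\ge T$. If $r\in\I$, I use its natural approximation from below $\{r^-_m\}$ (Definition \ref{def:approximations}): by Proposition \ref{pr:aproxconvergencebelow} one has $r^-_m\nearrow r$, and by Lemma \ref{lem:infinitelymanyirreducibles} infinitely many $r^-_m$ lie in $\I$; for such $m$ the expansion $\al(r^-_m)=(\al_1(r)\ldots\al_{n_m}(r)^-)^\f$ is periodic, so $r^-_m\in\operatorname{Per}(\I)$. A periodicity (Fine--Wilf) argument then shows that the minimal period of $\al(r^-_m)$ tends to $\infty$: if it stayed bounded along a subsequence, the length-$(n_m-1)$ prefix $\al_1(r)\ldots\al_{n_m-1}(r)$ of $\al(r)$ would have a bounded period; since minimal periods of prefixes are non-decreasing, this forces $\al(r)$ to be periodic with small period $p$, and then decrementing the last symbol of $\al_1(r)\ldots\al_{n_m}(r)$ is incompatible with period $p$, a contradiction. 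Hence $q_1:=r^-_m$ works for $m$ large. If instead $r\in(\overline{\B}\cap[q_T,M+1])\setminus\I$, I first pick $q'\in\I$ with $|q'-r|<\varepsilon/4$ using Proposition \ref{pr:irreducibledense}, and apply the previous case to $q'$.

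Now I run the construction of Subsection \ref{constructweak} starting from this $q_1$, taking its period block of length $m_1$, and obtain the increasing sequence $q_1<q_2<\cdots$ converging to some $q\in\WI$. The key estimate is that $q$ stays $2^{-m_1}$-close to $q_1$ in the $\al$-metric. Indeed, $\al(p_1)=\al_1(q_1)\ldots\al_{m_1}(q_1)^+(\overline{\al_1(q_1)\ldots\al_{m_1}(q_1)})^\f$ agrees with $\al(q_1)$ on the first $m_1-1$ coordinates and has $m_1$-th coordinate $\al_{m_1}(q_1)+1$, while $\al(q_2)=(\al_1(p_1)\ldots\al_{m_2}(p_1))^\f$ with $m_2>m_1$; iterating this (each $\al(q_{n+1})$ begins with a prefix of $\al(p_n)$, which begins with the first $m_1$ symbols of $\al(q_n)$), every $\al(q_n)$ with $n\ge2$ agrees with $\al(q_1)$ on coordinates $1,\ldots,m_1-1$ and has $m_1$-th coordinate $\al_{m_1}(q_1)+1$. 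Passing to the increasing limit gives $d(\al(q),\al(q_1))=2^{-m_1}\le2^{-T}<\delta$, hence $|q-q_1|<\varepsilon/2$, so $|q-r|<\varepsilon$; as $q\in\WI$ by Subsection \ref{constructweak}, the proposition follows.

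The main obstacle is the first step: guaranteeing that periodic irreducible bases of arbitrarily large period accumulate at every point of $\overline{\B}\cap[q_T,M+1]$, the delicate case being targets $r$ with periodic or eventually periodic quasi-greedy expansion, where one must use Fine--Wilf to see that decrementing the last symbol of a long periodic block never collapses its period (this is exactly what prevents a short-period $q_1$ and therefore what would otherwise break the $2^{-m_1}$ estimate). The bookkeeping in the last step — that the construction never perturbs coordinates $1,\ldots,m_1-1$ — is elementary but must be tracked through every stage of the nested construction.
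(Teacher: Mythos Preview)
Your argument is correct and follows the same overall strategy as the paper: locate a periodic irreducible base $q_1$ with large period close to the target point, run the construction of Subsection~\ref{constructweak} starting from $q_1$, and verify that the resulting weakly irreducible limit stays within the prescribed $\varepsilon$-neighbourhood. The paper reaches $q_1$ differently---it first picks $q'\in\operatorname{Per}(\I)$ close to the target, passes to the right endpoint of the irreducible interval $I(q')$, and then selects a fundamental word of the form $\al_1(q')\ldots\al_m(q')^+(\overline{\al_1(q')\ldots\al_m(q')})^N\overline{\al_1(q')\ldots\al_i(q')}$ via Lemmas~\ref{lem:infinitelymanyirreducibles} and~\ref{lem:admissible2}---whereas you use the natural approximation from below of an irreducible $r$ (or of a nearby $q'\in\I$ supplied by Proposition~\ref{pr:irreducibledense}) together with a Fine--Wilf argument to force the minimal period to grow. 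Your route makes the distance bookkeeping $d(\al(q),\al(q_1))=2^{-m_1}$ especially transparent, while the paper's avoids the period-growth detour; neither buys anything essential over the other. One small remark: your ``incompatible with period $p$'' step can be made precise by passing to a further subsequence on which the bounded minimal periods are constant, say equal to $p^*$, and then noting that both $\al(r)$ and $\al(r^-_m)$ would equal $(\al_1(r)\ldots\al_{p^*}(r))^\infty$, contradicting $r^-_m<r$.
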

\begin{proof}
Let $q \in (\overline{\B}\cap[q_T, M+1]) \setminus \WI$ with quasi-greedy expansion $\al(q)$. Fix $\varepsilon > 0$. Then, from Lemma \ref{lem:quasigreedyexpansion} there is $\delta > 0$ such that if $d(\al(p), \al(q)) < \delta$ then $|p-q| \leq \varepsilon$. Let $n \in \N$ be sufficiently large to satisfy $1/2^n \leq \delta/2$. From Proposition \ref{pr:irreducibledense} and Lemma \ref{lem:infinitelymanyirreducibles} there exists $q' \in \I$ such that $\al(q') = (\al_1(q') \ldots \al_m(q'))$ is periodic of period $m$, irreducible, $\al(q') \prec \al(q)$ and $0 < d(\al(q'), \al(q)) < 1/2^n$. Since $\al(q)$ is irreducible and $\al(q') \prec \al(q)$ we have  
$$\al_1(q') \ldots \al_m(q')^+(\overline{\al_1(q') \ldots \al_m(q')})^\f \prec \al(q).$$ 
Then, from Lemma \ref{lem:infinitelymanyirreducibles} and Lemma \ref{lem:admissible2} there exist $N \in \N$ and $i \in \set{1, \ldots, m}$ such that $$\al_1(q') \ldots \al_m(q')^+(\overline{\al_1(q') \ldots \al_m(q')})^N\overline{\al_1(q') \ldots \al_i(q')}$$ is a fundamental word and 
$$\al_1(q') \ldots \al_m(q')^+(\overline{\al_1(q') \ldots \al_m(q')})^N\overline{\al_1(q') \ldots \al_i(q')} \prec \al(q).$$ 
Let $q_1$ be defined implicitly by 
$$\al(q_1) = (\al_1(q') \ldots \al_m(q')^+(\overline{\al_1(q') \ldots \al_m(q')})^N\overline{\al_1(q') \ldots \al_i(q')})^\f.$$
Then, appliying \ref{constructweak} to $q_1$ we obtain a weakly irreducible sequence $p$ such that 
$$0 < d(\al(p), \al(q)) < 1/2^n < \delta/2.$$
So, $d(p,q) < \varepsilon$. This shows that $\WI$ is dense in $\overline{\B} \cap [q_T, M+1]$. 
\end{proof}

\subsection{Characterisation of the specification property for $(\vb_q, \si)$}

We proceed now to characterise the set of $q \in \vl$ such that $(\vb_q, \si)$ has the specification property. For this endeavour we would like to recall the definition of the specification property: $(\vb_q, \si)$ has \emph{the specification property} if there exists $S \in \mathbb{N}$ such that for any $\upsilon, \nu \in \La(\vb_q)$ there
exists $\om \in B_S(\vb_q)$ such that $\upsilon\om\nu \in \La(\vb_q)$. Recall that, given $q \in \vl$ with $q \geq q_T$ the sequence $s_n(\vb_q)$ is given by
\begin{equation}\label{eq:spec}
\begin{split}
s_n = s_n(\vb_q) = \inf \{k \in \N : &\hbox{\rm{ for every }} \upsilon, \nu \in B_n(\vb_q) \hbox{\rm{ there exists }} \omega \in B_k(\vb_q)\\ 
&\hbox{\rm{ such that }} \upsilon \omega \nu \in \La(\vb_q) \}.
\end{split}
\end{equation}
Also, $(\vb_q, \si)$ has the specification property if and only if $\mathop{\lim}\limits_{n \to \infty} s_n < \infty$.

We now study the properties of $s_n(\vb_q)$. Firstly, let us prove some technical lemmas. Let us recall that $\{q^-_j\}_{j=1}^\f$ stands for the natural approximation from below of $q$ and let  $\al(q^-_j)$ be as in Definition \ref{def:approximations}. Lemma \ref{lem:spec1} will allow us to show that weakly irreducible sequences do not have specification. To prove that assertion, we find a lower bound for $s_n$, for large values of $n$.

\begin{lemma}\label{lem:spec2}
Let $q \in \I$ be such that there is $j \in \N$ such that $q^-_j > q_T$ and
$$(\al_1(q^-_{1})\ldots \al_{n_j}(q^-_j))^\f = (\al_1(q) \ldots \al_{n_j}(q)^-)^\f$$ 
is not irreducible. Let $k$ and $n_k$ be given by Lemma \ref{lem:spec1}. Then there exists $N_1 \in \N$ such that $s_n(\vb_q) > N_1 \cdot n_k  -1 $ for every $n \geq n_k$. 
\end{lemma}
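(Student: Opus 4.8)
The plan is to exploit the structure provided by Lemma \ref{lem:spec1}: we are given $q \in \I$, an index $j$ with $q^-_j > q_T$ and $(\al_1(q) \ldots \al_{n_j}(q)^-)^\f$ non-irreducible, and from Lemma \ref{lem:spec1} we extract $k < j$ and $n_k < n_j$ with $q^-_k \in \I$ and $q^-_j \in I(q^-_k)$. Write $w_1 \ldots w_{n_k}$ for the periodic block of $\al(q^-_k)$, so that $w_1 \ldots w_{n_k} = \al_1(q) \ldots \al_{n_k}(q)^-$ and $w_1 \ldots w_{n_k}^+ = \al_1(q) \ldots \al_{n_k}(q)$, as established in the proof of Lemma \ref{lem:spec1}. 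The key point is that $\al(q^-_j)$, being in the irreducible interval $I(q^-_k)$, begins with $w_1 \ldots w_{n_k}^+(\overline{w_1\ldots w_{n_k}})^{r}$ for some large $r$ — indeed, since $q^-_j \in I(q^-_k)$ we have $\al(q^-_k) \prec \al(q^-_j) \prec w_1 \ldots w_{n_k}^+(\overline{w_1\ldots w_{n_k}})^\f$, and the first disagreement with the right endpoint can be pushed arbitrarily far by the construction of the natural approximation from below (the $n_j$'s increase without bound when $q \in \overline{\ul}$). Thus I would first fix $N_1 \in \N$ and, using $q \in \I$ together with Lemma \ref{lem:infinitelymanyirreducibles} and the fact that $q^-_j \in I(q^-_k)$ for $j$ large, arrange that $\al(q)$ itself begins with $w_1 \ldots w_{n_k}^+(\overline{w_1 \ldots w_{n_k}})^{N_1}$ — more precisely, that this prefix lies in $\La(\vb_q)$, which it does because $I(q^-_k) \ni q$ forces $\al(q) \succ \al(q^-_k)$ and $\al(q) \prec$ the right endpoint.

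Next I would produce the two words that realise a large value of $s_n$. Set $n = n_k$ (or any $n \geq n_k$, which only makes the words longer and the gap no smaller) and take $\upsilon = w_1 \ldots w_{n_k}$ and $\nu = \overline{w_1 \ldots w_{n_k}}$ — both are in $B_{n_k}(\vb_q)$, the second because $\La(\vb_q)$ is closed under reflection. The heart of the argument is the claim that any admissible bridging word $\om$ with $\upsilon \om \nu \in \La(\vb_q)$ must have length at least $N_1 \cdot n_k - 1$. Suppose $|\om| < N_1 \cdot n_k - 1$. Then reading $\upsilon \om \nu$ we see the block $w_1 \ldots w_{n_k}$ followed within fewer than $N_1 \cdot n_k - 1$ symbols by $\overline{w_1 \ldots w_{n_k}}$. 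I would argue that this forces, somewhere inside $\upsilon\om\nu$, a shifted occurrence of a sequence lexicographically exceeding $\al(q)$ — using that $(w_1 \ldots w_{n_k})^\f = \al(q^-_k) \prec \al(q)$ is irreducible, that the right endpoint $w_1 \ldots w_{n_k}^+(\overline{w_1 \ldots w_{n_k}})^\f$ of $I(q^-_k)$ is $\succ \al(q)$ only after the $(N_1)$-th repetition disagreement, and that inserting a "short" bridge between $w_1\ldots w_{n_k}$ and its reflection creates a factor that, when compared against $\al_1(q)\ldots$, violates \eqref{eq:lang1} at some shift because one cannot reconcile ending in $w_{n_k}$-territory with arriving at $\overline{w_1}$ so quickly without either overshooting $\al(q)$ on the right or undershooting $\overline{\al(q)}$ on the left. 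This is where the role of $q^-_j > q_T$ and the concrete form of $I(q^-_k)$ enter quantitatively.

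I expect the main obstacle to be exactly this last combinatorial/lexicographic estimate: making precise why a bridge of length $< N_1 n_k - 1$ between $w_1\ldots w_{n_k}$ and $\overline{w_1\ldots w_{n_k}}$ is forbidden in $\La(\vb_q)$. The clean way to handle it is to observe that $\upsilon\om\nu \in \La(\vb_q)$ would give, via Lemma \ref{lem:quasigreedyexpansion}, a base $p \preccurlyeq q$ whose quasi-greedy expansion has $w_1\ldots w_{n_k}$ and $\overline{w_1\ldots w_{n_k}}$ close together, forcing $\al(p)$ to fall inside the irreducible interval $I(q^-_k)$ "too early", i.e. with the repetition count below $N_1$; but then $\al(p) \prec \al_1(q^-_k)\ldots$ contradicts $p \preccurlyeq q$ combined with $q \in I(q^-_k)$ having repetition count $\geq N_1$. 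Equivalently, one compares the factor $\overline{w_1\ldots w_{n_k}}$ appearing at position $n_k + |\om|$ against the reflected constraint $\overline{\al_1(q)\ldots}$ at that shift: the first $N_1$ blocks of $\al(q)$ after the initial $w_1\ldots w_{n_k}^+$ are exactly $(\overline{w_1\ldots w_{n_k}})^{N_1}$, and a premature appearance of $\overline{w_1\ldots w_{n_k}}$ not aligned with this pattern forces a strict lexicographic violation. Once this is set up, choosing $N_1$ at the outset to be as large as desired (possible since $q \in I(q^-_k)$ with arbitrarily large repetition count, or since the $n_j$ grow) yields $s_n(\vb_q) \geq |\om| + 1 \geq N_1 \cdot n_k - 1$... wait — more carefully, $s_n$ is the infimum of admissible bridge lengths, and we have shown every admissible bridge has length $\geq N_1 n_k - 1$, hence $s_n(\vb_q) \geq N_1 n_k - 1 > N_1 n_k - 1$ is not quite what is claimed; rather $s_n(\vb_q) \geq N_1 n_k - 1$, and re-indexing $N_1 \mapsto N_1 + 1$ (still an arbitrary choice) upgrades this to the strict inequality $s_n(\vb_q) > N_1 \cdot n_k - 1$ stated in the lemma, for every $n \geq n_k$ as required.
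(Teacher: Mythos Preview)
Your proposal has a genuine gap: the specific pair $\upsilon = w_1\ldots w_{n_k}$, $\nu = \overline{w_1\ldots w_{n_k}}$ does \emph{not} force a long bridge. Recall $w_1\ldots w_{n_k} = \al_1(q)\ldots\al_{n_k}(q)^-$, so $\upsilon\nu = \al_1(q)\ldots\al_{n_k}(q)^-\,\overline{\al_1(q)\ldots\al_{n_k}(q)}^+$. Since the last digit of $\upsilon$ is already strictly below $\al_{n_k}(q)$, the constraint \eqref{eq:lang1} at shift $0$ is satisfied with room to spare, and one checks directly that $\upsilon\nu \in \La(\vb_q)$ (try $M=1$, $\al(q^-_k)=(110)^\f$: then $\upsilon\nu = 110001$, and every shift lies between $\overline{\al(q)}$ and $\al(q)$ because $\al(q)$ begins with $111$). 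So the empty bridge works and your lower bound collapses. Relatedly, your assertion ``$I(q^-_k)\ni q$'' is false: $q\in\I$ means precisely that $q$ lies to the \emph{right} of every irreducible interval $I(q^-_k)$, so the heuristic you build on it (``repetition count $\geq N_1$ inside $I(q^-_k)$'') does not apply to $q$.

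The paper's argument avoids this by choosing $\upsilon$ to end in the \emph{extremal} prefix $\al_1(q)\ldots\al_{n_k}(q) = w_1\ldots w_{n_k}^+$ (not $w_1\ldots w_{n_k}$) and $\nu$ to begin with something strictly between $\overline{\al_1(q)\ldots\al_{n_k}(q)}^+$ and $\al_1(q)\ldots\al_{n_k}(q)^-$. The point is structural rather than a bare lexicographic estimate: both the suffix of $\upsilon$ and the word $\nu$ lie in $\La(\vb_{q^-_j})$, but since $q^-_j\notin\I$ the subshift $(\vb_{q^-_j},\si)$ is not transitive, and in fact the follower set of $\al_1(q)\ldots\al_{n_k}(q)$ inside $\vb_{q^-_j}$ is trapped near $(\overline{w_1\ldots w_{n_k}})^\f$ and never reaches $\nu$. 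Hence any bridge $\om$ with $\upsilon\om\nu\in\La(\vb_q)$ must lie in $\La(\vb_q)\setminus\La(\vb_{q^-_j})$, and then Lemma~\ref{lem:aproxwordsbelow} gives $|\om|\geq n_j - 1 \geq N_1\cdot n_k - 1$, where $N_1$ is the maximal $N$ for which $\al(q)$ agrees with $w_1\ldots w_{n_k}^+(\overline{w_1\ldots w_{n_k}})^N$ on a prefix of length $n_k(N+1)-1$. The fix you need is therefore twofold: replace your $\upsilon$ by one ending in $\al_1(q)\ldots\al_{n_k}(q)$, and replace the direct lexicographic estimate by the non-transitivity of $\vb_{q^-_j}$ combined with the block-equality Lemma~\ref{lem:aproxwordsbelow}.
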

\begin{proof}
Let $q \in \I$. Since there is $j \in \N$ such that $q^-_j > q_T$ and $(\al_1(q) \ldots \al_{n_j}(q)^-)^\f$ is not irreducible, then there exists $N \in \N$ such that 
\begin{equation}\label{eq:N}
\begin{split}
\al_1(q) \ldots \al_{n_k\cdot (N + 1)-1}(q) &= \al_1(q^-_j) \ldots \al_{n_k\cdot (N + 1)-1}(q^-_j) \\
&= \al_1(q^-_k) \ldots \al_{n_k}(q^-_k)^+(\overline{\al_1(q^-_k) \ldots \al_{n_k}(q^-_k)})^{N-1}\overline{\al_1(q^-_k) \ldots \al_{n_k-1}(q^-_k)},
\end{split}
\end{equation} 
where $k$ and $m_k$ are given by Lemma \ref{lem:spec1}. Let $N_1$ be the maximal integer satisfiying \eqref{eq:N}. Note that $\al(q_j)$ is periodic and  $q^-_j$ parametrises the subshift of finite type $(\vb_{q^-_j}, \si)$ . Moreover $\vb_{q^-_j} \subset \vb_{q}$. This shows that $N_1$ is well defined. 

Let $n \geq n_k$ and consider $\upsilon = u_1 \ldots u_n$, and let $\nu = v_1 \ldots v_n \in \La(\vb_q)$ be such that 
$$u_{n_k+1} \ldots u_n = \al_1(q) \ldots \al_{n_k}(q)$$ 
and 
$$\overline{\al_1(q) \ldots \al_{n_k}(q)}^+ \prec v_1 \ldots v_{n_k} \prec \al_1(q) \ldots \al_{n_k}(q)^-.$$ 
Since $q \in \I$ then $(\vb_q, \si)$ is a transitive subshift. Then, there exists $\omega \in \La(\vb_q)$ such that $\upsilon \omega \nu \in \La(\vb_q)$. Also, from the choice of $\upsilon$ and $\nu$ we have  
$$u_{n_k+1} \ldots u_n \text{ and } v_1 \ldots v_n \in \La(\vb_{q^-_j}).$$ 
Moreover, since $(\vb_{q^-_j}, \si)$ is not a transitive subshift, then $\om \in \La(\vb_q) \setminus \La(\vb_{q^-_j})$. Note that the period of $\al(q^-_j) = N_1 \cdot n_k.$ From Lemma \ref{lem:aproxwordsbelow} we have that $|\om| \geq N_1 \cdot n_k -1$. 
\end{proof}

\begin{proposition}\label{pr:spec1}
If the subshift $(\vb_q, \si)$ has the specification property then $q \in \SI$.
\end{proposition}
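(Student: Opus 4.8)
The plan is to show that specification of $(\vb_q,\si)$ forces $\al(q)$ first to be irreducible and then to be strongly irreducible. By the soft implications recalled in \eqref{eq:sequenceofshifts}, having the specification property implies topological mixing, hence topological transitivity, so Theorem \ref{th:mainrsd} $i)$ gives that either $\al(q)$ is irreducible or $q=q_T$; the base $q=q_T$ is ruled out by analysing $(\vb_{q_T},\si)$ directly. Thus one may assume $q\in\I$ and argue by contradiction: suppose $q\notin\SI$, so that by Definition \ref{def:strongweaksequences} the sequence $\al(q)$ is weakly irreducible, i.e.\ there are infinitely many $m$ with $q^-_m\notin\I$ in the natural approximation from below $\{q^-_m\}_{m=1}^\f$ of $q$.

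The next step is to convert each such ``bad'' index into a quantitative lower bound for the specification numbers $s_n(\vb_q)$. Since $q\in\I$ we have $q>q_T$ by \cite[Lemma 4.4]{AlcBakKon2016}, and $q^-_m\nearrow q$ by Proposition \ref{pr:aproxconvergencebelow}, so all but finitely many of the bad $q^-_m$ already satisfy $q^-_m>q_T$. Fixing such an $m$, Lemma \ref{lem:spec1} produces a unique $k=k(m)<m$ with $q^-_{k(m)}\in\I$ and $q^-_m\in I(q^-_{k(m)})$, and Lemma \ref{lem:spec2} then yields an integer $N_1=N_1(m)\ge 1$ with
\[
s_n(\vb_q)\ >\ N_1\,n_{k(m)}-1\ \ge\ n_{k(m)}-1\qquad\text{for every }n\ge n_{k(m)},
\]
where $n_{k(m)}$ denotes the period of $\al(q^-_{k(m)})$.

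It then remains to see that $n_{k(m)}$ is unbounded as $m$ ranges over the infinite set of bad indices; together with the displayed inequality this shows that $s_n(\vb_q)$ is unbounded, hence $\lim_n s_n(\vb_q)=\f$ and $(\vb_q,\si)$ does not have the specification property, contradicting the hypothesis. Since $\al(q)$ is weakly irreducible it is neither periodic nor eventually periodic, by Propositions \ref{prop:periodicstrong} and \ref{prop:eventualperiodicstrong}. If $k(m)$ took a fixed value $k_0$ for infinitely many bad indices, then all the corresponding $q^-_m$ would lie in the single interval $I(q^-_{k_0})$, whence $q\in\overline{I(q^-_{k_0})}$; but $q\in\I\subset\overline{\B}$, the interior of the entropy plateau $I(q^-_{k_0})$ is disjoint from $\overline{\B}$, its right endpoint has eventually periodic quasi-greedy expansion by Theorem \ref{th:mainrsd} $ii)$, and $q^-_{k_0}<q$ by Proposition \ref{pr:aproxconvergencebelow} --- each of these contradicts the non-eventual-periodicity of $\al(q)$. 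Hence $k(m)$ is unbounded over the bad indices; since $q^-_{k(m)}\nearrow q$ along a suitable subsequence and $\al(q^-_{k(m)})$ agrees with $\al(q)$ on its first $n_{k(m)}-1$ coordinates, so that $d(\al(q^-_{k(m)}),\al(q))=2^{-n_{k(m)}}$, the left-continuity of $\Phi$ from Lemma \ref{lem:quasigreedyexpansion} forces $2^{-n_{k(m)}}\to 0$, i.e.\ $n_{k(m)}\to\f$, which completes the argument.

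The step I expect to be the main obstacle is twofold: the separate treatment of the exceptional transitive base $q=q_T$, which lies outside $\I$ and so cannot be handled through the irreducibility machinery and must be examined by hand; and the verification that the bad approximants cannot all accumulate inside a single entropy plateau, which is exactly what forces $n_{k(m)}\to\f$ and which leans on the plateau and bifurcation-set description of $\vl$ developed in \cite{AlcBakKon2016} and \cite{KalKonLiLu2017}.
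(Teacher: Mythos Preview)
Your approach is essentially the same as the paper's: reduce to $q\in\I$ via transitivity, assume $q\in\WI$, and use Lemmas \ref{lem:spec1} and \ref{lem:spec2} to bound $s_n(\vb_q)$ from below by quantities depending on the periods $n_{k(m)}$, then show these periods are unbounded. The paper's proof differs only in how the unboundedness of $n_{k(m)}$ is established: rather than your pigeonhole argument (if $k(m)=k_0$ infinitely often then $q\in I(q^-_{k_0})$, contradicting $q\in\I$ and the non-eventual-periodicity of $\al(q)$), the paper explicitly chooses indices $m_1<m_2<\cdots$ where $q^-_{m_r}\in\I$ and $q^-_{m_r-1}\notin\I$, and argues directly that $n_{k_{r+1}}\ge n_{m_r}$ because $q^-_{m_r}$, being irreducible, cannot lie in the plateau $I(q^-_{k_{r+1}})$ unless $k_{r+1}\ge m_r$. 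Your argument is arguably cleaner; both rely on the disjointness of irreducible intervals from \cite[Lemma 4.6]{AlcBakKon2016}.

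On the exceptional base $q_T$: you are right to single it out, and in fact the paper's proof handles it imprecisely (it asserts that $q\in\vl\setminus\I$ implies $(\vb_q,\si)$ is not transitive, which is exactly what fails at $q_T$). Your deferral to ``direct analysis'' is therefore no worse than the paper, but if you want to close the gap you should actually carry out that analysis or else observe that $\al(q_T)$ is eventually periodic and check whether $(\vb_{q_T},\si)$ genuinely fails specification.
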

\begin{proof}
We will show that $(\vb_q , \si)$ does not have the specification property if $q \notin \SI$. Firstly, note that if $q \in \vl \cap ((q_G, M] \setminus \I)$, $\al(q)$ is not an irreducible sequence. Then, from \cite[Theorem 1]{AlcBakKon2016} $(\vb_q, \si)$ is not transitive, thus
 $(\vb_q, \si)$ cannot have the specification property.  

Recall that if $(\vb_q, \si)$ is a transitive subshift then $q \in \I$. Moreover, if $q \notin \SI$ then $q \in \WI$, that is, there are infinitely many $j \in \N$ such that $q^-_j$ satisfies that $\al(q^-_j)$ is periodic but not irreducible. On the other hand, \cite[Proposition 6.1]{AlcBakKon2016} and Proposition \ref{pr:coded} imply that there are also infinitely many $m \in \N$ such that $\al(q^-_{m})$ is periodic and irreducible. This implies that there exist infinitely many $j \in \N$ such that $q^-_j$ and such that $\al(q^-_j)$ is not irreducible and $q^-_j \geq q_T$. Let $m_1 \in \N$ be such that $q^-_{m_1}$ is irreducible and $q^-_{m_1-1} \geq q_T$ and $q^-_{m_1-1}$ is not irreducible. Let $m_1$ be the period of $\al(q^-_{m_1})$. Let $k_1$ and $n_{k_1}$ be given by Lemma \ref{lem:spec1}. Then, from Lemma \ref{lem:spec2} we have that for every $n \geq m_1$, $s_{n}(\vb_q) \geq N_1 \cdot n_{k_1}$ for some $N_1 \in \N$. Since $q$ is weakly irreducible, then there exists $m_2 > m_1 + 1 $ such that $q^-_{m_2}$ is irreducible and $q^-_{m_2-1}$ is not irreducible. Since $\{q^-_j\}$ is an increasing sequence then $q_T < q_{m_1-1} < q_{m_1} < q_{m_2-1} < q_{m_2}$. Then again, Lemma \ref{lem:spec1} and Lemma \ref{lem:spec2} imply that there are $N_2 \in \N$ and $n_{k_2} \in \N$ such that for every $n \geq m_2$, $s_{m_2}(\vb_q) \geq N_2 \cdot n_{k_2} - 1$ for $N_2 \in \N$. Note that $n_{k_2} \geq n_{m_1} \geq N_1 \cdot n_{k_1} - 1$. This implies $s_{m_2}(\vb_q) > s_{m_1}(\vb_q)$.  Then for every $r \in \N$, it is clear that
$$s_n(\vb_q) > s_{m_r}(\vb_q) \geq N_r \cdot n_{k_r} - 1.$$ 
Also, $n_{k_r} \longrightarrow \infty$ as $n \to \infty$. Then $\mathop{\lim}\limits_{n\to \infty} s_n (\vb_q)$ is not bounded from above, which shows that $(\vb_q, \si)$ does not have the specification property.
\end{proof}

Showing that strongly irreducible sequences parametrise symmetric subshifts with the specification property is more complicated since it is necessary to find an upper bound for $s_n$. The following technical lemma will allow us to obtain such an upper bound. 

\begin{lemma}\label{lem:spec3}
Let $q \in \SI$. Then, there exists $j \in \N$ such that either  $\om = \al_1(q) \ldots \al_j(q)^-$ or $\overline{\om} = \overline{\al_1(q) \ldots \al_j(q)}^+$ satisfy $\om\nu$ or $\overline{\om}\nu$ are in $\La(\vb_q)$, for any $\nu \in \La(\vb_q)$.
\end{lemma}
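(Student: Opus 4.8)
The plan is to use the strong irreducibility of $q$ to produce a "synchronising prefix" of $\al(q)$ that can be followed by an arbitrary admissible word. First I would fix, using Definition \ref{def:strongweaksequences}, an $N\in\N$ such that $q^-_m\in\I$ for all $m\geq N$; since by Lemma \ref{lem:infinitelymanyirreducibles} and \cite[Lemma 6.1]{AlcBakKon2016} there are infinitely many irreducible elements with periodic quasi-greedy expansion in the tail of $\{q^-_m\}_{m=1}^\f$, I may pick $m\geq N$ with $\al(q^-_m)$ periodic and set $j=n_m$, so that $\om=\al_1(q)\ldots\al_j(q)^-$ is exactly the periodic block of $\al(q^-_m)$, i.e. $\al(q^-_m)=(\om)^\f$, and $\overline{\om}=\overline{\al_1(q)\ldots\al_j(q)}^+$ is the periodic block of $\overline{\al(q^-_m)}$. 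Because $q^-_m\in\I$, Proposition \ref{pr:mixingsft} tells us $(\vb_{q^-_m},\si)$ is a mixing subshift of finite type, and $\vb_{q^-_m}\subset\vb_q$.

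The key point is then to show that $\om\nu\in\La(\vb_q)$ for every $\nu\in\La(\vb_q)$. By the lexicographic description \eqref{eq:lang1}, membership of $\om\nu$ in $\La(\vb_q)$ (where $|\om|=j$ and $|\nu|=\ell$) requires, for each $0\le i\le j+\ell-1$, that the suffix of $\om\nu$ of length $j+\ell-i$ lies lexicographically between $\overline{\al_1(q)\ldots\al_{j+\ell-i}(q)}$ and $\al_1(q)\ldots\al_{j+\ell-i}(q)$. For the suffixes entirely inside $\nu$ (i.e. $i\geq j$) this is immediate since $\nu\in\La(\vb_q)$. For the suffixes whose "head" is a proper suffix of $\om$ followed by all of $\nu$, I would argue as follows: the suffix of $\om$ of length $j-i$ is $\al_{i+1}(q)\ldots\al_{j-1}(q)\al_j(q)^-$, which is \emph{strictly} below $\al_1(q)\ldots\al_{j-i}(q)$ already in coordinate $j-i$ (it ends in $\al_j(q)^-<\al_j(q)$, and the earlier coordinates agree because $\om$ is a prefix of $\al(q)$, $\al(q)\in\vb$); hence appending anything keeps it $\prec\al_1(q)\ldots\al_{j+\ell-i}(q)$, and the upper lexicographic constraint is satisfied with room to spare. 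For the lower constraint, $\al_{i+1}(q)\ldots\al_j(q)^-\succcurlyeq\overline{\al_1(q)\ldots\al_{j-i}(q)}$ holds because $(\om)^\f=\al(q^-_m)\in\vb$, so every shift of $(\om)^\f$ is $\succcurlyeq\overline{(\om)^\f}$; and if equality holds up to position $j-i$ then we must continue to compare into $\nu$ versus $\overline{\al(q)}$ — but here I invoke that $\nu\in\La(\vb_q)$ forces $\nu\succcurlyeq\overline{\al_1(q)\ldots\al_\ell(q)}$ coordinatewise in the relevant sense, so the chain closes. The symmetric statement for $\overline{\om}$ follows by applying the reflection, using that $\overline{\nu}\in\La(\vb_q)$ iff $\nu\in\La(\vb_q)$.

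I expect the main obstacle to be the lower-bound (left) lexicographic inequality in the boundary case where a suffix of $\om$ coincides with an initial segment of $\overline{\al(q)}$, because then one has to track the comparison past the junction of $\om$ and $\nu$ and control it using both the admissibility of $\nu$ and the periodicity/irreducibility of $\om$; this is precisely where strong irreducibility (rather than mere irreducibility) is used, since it guarantees we can choose $j=n_m$ with $q^-_m\in\I$ so that $(\om)^\f$ itself is an admissible symmetric sequence and no "bad" shift of it collides with $\al(q)$. An alternative, cleaner route that avoids the case analysis is to argue at the level of subshifts: $\om$ is the periodic block of $\al(q^-_m)$ with $q^-_m\in\I$, so $\al_1(q^-_m)\ldots\al_j(q^-_m)=\om$ is an admissible word of the transitive SFT $(\vb_{q^-_m},\si)$ that can be followed in $\vb_{q^-_m}$ by an arbitrarily long block agreeing with $\overline{\al(q^-_m)}$ or with $\al(q^-_m)$; then show directly from \eqref{eq:lang1} that once a word in $\La(\vb_q)$ has the prefix $\om$, the constraints coming from the positions inside $\om$ are automatically dominated by the global constraints of $\vb_q$ on the remaining word, so $\om\nu\in\La(\vb_q)$ reduces to $\nu\in\La(\vb_q)$. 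I would present the direct lexicographic argument as the main proof and remark on the subshift-level reformulation.
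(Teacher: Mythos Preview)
Your argument has a genuine gap: you attempt to show that $\om\nu\in\La(\vb_q)$ for \emph{every} $\nu\in\La(\vb_q)$, but this is false for your choice of $j$. Take $M=1$, $\al(q)=(1110)^\infty$ (which lies in $\operatorname{Per}(\I)\subset\SI$), and pick $j=n_2=3$ so that $\om=110$ and $(\om)^\infty=(110)^\infty$ is irreducible. With $\nu=0001\in\La(\vb_q)$ one gets $\om\nu=1100001$, and at position $i=2$ the suffix $00001$ violates the lower constraint $\overline{\al_1(q)\ldots\al_5(q)}=00010$. So $\om\nu\notin\La(\vb_q)$. The specific error in your reasoning is twofold: in the upper bound you assert that ``the earlier coordinates agree,'' which is simply not true (the suffix $\al_{i+1}(q)\ldots\al_j(q)^-$ need not coincide with $\al_1(q)\ldots\al_{j-i-1}(q)$); and in the lower bound, when a suffix of $\om$ equals a prefix of $\overline{\al(q)}$, the continuation requires $\nu\succcurlyeq\overline{\al_{j-i+1}(q)\ldots\al_{j-i+\ell}(q)}$, whereas $\nu\in\La(\vb_q)$ only yields $\nu\succcurlyeq\overline{\al_1(q)\ldots\al_\ell(q)}$, and since $\si^{j-i}(\overline{\al(q)})\succcurlyeq\overline{\al(q)}$ these inequalities do not compose. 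Your ``alternative, cleaner route'' makes the same false reduction.

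The statement of the lemma is a disjunction: for each $\nu$, \emph{at least one} of $\om\nu$, $\overline{\om}\nu$ lies in $\La(\vb_q)$, and which one works depends on $\nu$ (essentially on whether $v_1$ is large or small). In the example above $\overline{\om}\nu=0010001$ is admissible even though $\om\nu$ is not. The paper establishes this dichotomy by a case analysis on the three types of strongly irreducible sequence (Definition~\ref{def:typestrongseq}), choosing $j$ small in each case ($j=1$ for Type~1, $j\in\{1,2\}$ for Type~2, and $j$ bounded by the last non-irreducible index for Type~3 via Lemma~\ref{lem:spec1}), and then splitting on $v_1$ to decide whether $\om$ or $\overline{\om}$ is prepended. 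Your large $j=n_m$ may well work once the dichotomy is argued correctly, but that argument is missing, and strong irreducibility would have to enter in a different place than you indicate.
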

\begin{proof}
Let $\nu = v_1 \ldots v_r \in \La(V_q)$. We consider three cases:

\noindent \emph{\textbf{Case 1}}: Suppose that $\al(q)$ is strongly irreducible of type $1$. Let us assume that $\overline{\al_1(q)} < v_1 \leq \al_1(q)$. Since $q > q_T$ one has $\overline{\al_1(q)} \leq \al_1(q)^- < \al_1(q)$, hence $$\overline{\al_1(q) \ldots \al_{r+1}(q)} \prec \al_1(q)^-v_1\ldots v_r \prec \al_1(q) \ldots \al_{r+1}(q).$$ Since $\nu \in \La(\vb_q)$ then our result holds. 

Now, suppose that $v_1 = \overline{\al_1(q)}$. Since $q > q_T$ it follows that $$\overline{\al_1(q)} < \overline{\al_1(q)}^+ \leq \al_1(q).$$  If $\overline{\al_1(q)}^+ < \al_1(q)$, using symmetry we obtain that
$$\overline{\al_1(q) \ldots \al_{r+1}(q)} \prec \overline{\al_1(q)}^+v_1 \ldots v_r \prec \al_1(q) \ldots \al_{r+1}(q),$$ and since $v_1 \ldots v_r \in \La(\vb_q)$ our result follows.   

Finally, we assume that $\overline{\al_1(q)}^+ = \al_1(q)$. Recall that $q > q_T$ and that $q \in \SI \subset \I$. Thus, $$\overline{\al_1(q)}^+v_1 = \al_1(q)\overline{\al_1(q)} \prec \al_1(q)\al_2(q).$$ Therefore, $$\overline{\al_1(q) \ldots \al_{r+1}(q)} \prec \overline{\al_1(q)}^+v_1 \ldots v_r \prec \al_1(q) \ldots \al_{r+1}(q).$$ As a result, the proposition follows. 

Note that $j = 1$.  

\noindent \emph{\textbf{Case 2}}: Suppose that $\al(q)$ is strongly irreducible of type $2$. Then, let 
$$k = \mathop{\max}\set{i \in \N: (\al_1(q)\ldots \al_i(q)^-)^\f \in \vb \text{ and } (\al_1(q)\ldots \al_i(q)^-)^\f \text{ is not irreducible}}.$$ 
Since $\al(q)$ is of type $2$, then $\al(q_T) \prec (\al_1(q)\ldots \al_k(q)^-)^\f \prec \al(q_T).$ This implies that 
\begin{align*}
\al_1(q) &= k+1 \quad\text{ if } M = 2k;\\
\al_1(q)\al_2(q) &= (k+1)(k+1) \quad \text{ if } M = 2k+1.
\end{align*}
We claim that $j = 1$ if $M = 2k$ and $j = 2$ if $M = 2k+1$. Let us prove firstly the case when $M = 2k$.  Since $\al(q)$ is strongly irreducible of type 2, then $\overline{\al_1(q)}^+ = \al_1(q)^- = k.$ Therefore, 
$$\overline{\al_1(q) \ldots \al_{r+1}(q)} \prec  k v_1 \ldots v_r \prec \al_1(q) \ldots \al_{r+1}(q),$$ 
which implies our result. 
Now, if $M = 2k+1$, since $\al(q)$ is strongly irreducible of type 2, then $\al_1(q_G)\al_2(q_G) = \al_1(q)\al_2(q)$. This implies that $\al(q) \in \set{k, k+1}^\f.$ Then, if $v_1 = \al_1(q)$ note that 
\begin{align}\label{con1}
\overline{\al_1(q)\ldots \al_{r+2}(q)}  &\prec k+1\, k v_1 \ldots v_r = \al_1(q) \al_2(q)^- v_1 \ldots v_r \prec \al_1(q)\ldots \al_{r+2}(q);\\
\overline{\al_1(q)\ldots \al_{r+1}(q)}  &\prec  k v_1 \ldots v_r = \al_2(q)^- v_1 \ldots v_r \prec \al_1(q)\ldots \al_{r+1}(q). \nonumber
\end{align}
Using symmetry, we have that if $v_1 = \overline{\al_1(q)}$ then 

\begin{equation}\label{con2}
\begin{split}
\overline{\al_1(q)\ldots \al_{r+2}(q)}  &\prec k\, k+1 v_1 \ldots v_r = \al_1(q) \al_2(q)^- v_1 \ldots v_r \prec \al_1(q)\ldots \al_{r+2}(q);\\
\overline{\al_1(q)\ldots \al_{r+1}(q)}  &\prec  k+1 v_1 \ldots v_r = \al_2(q)^- v_1 \ldots v_r \prec \al_1(q)\ldots \al_{r+1}(q). 
\end{split}
\end{equation} 
Then, \eqref{con1} and \eqref{con2} together with the fact that $\nu \in \La(\vb_q)$ imply that our result holds ---see also \cite[Lemma 3.9]{AlcBakKon2016}.

\noindent \emph{\textbf{Case 3}}: Suppose that $\al(q)$ is strongly irreducible of type $3$. Let 
$$k = \mathop{\max}\set{i \in \N: (\al_1(q)\ldots \al_i(q)^-)^\f \in \vb \text{ and } (\al_1(q)\ldots \al_i(q)^-)^\f \text{ is not irreducible}}.$$ 
Since $\al(q)$ is strongly irreducible of type \emph{3} then from Lemma \ref{lem:spec1} there exists a unique $j < k$ such that $\al_1(q) \ldots \al_j(q)^-$ is a fundamental word, $(\al_1(q) \ldots \al_j(q)^-)^\f$ is irreducible and 
$$(\al_1(q) \ldots \al_j(q)^-)^\f \prec (\al_1(q)\ldots \al_k(q)^-)^\f \prec \al_1(q) \ldots \al_j(q)(\overline{\al_1(q) \ldots \al_j(q)}^+)^\f \prec \al(q),$$
in particular 
\begin{equation}\label{con3}
\overline{\al_1(q) \ldots \al_j(q)} \prec \overline{\al_1(q) \ldots \al_j(q)}^+ \prec \al_1(q) \ldots \al_j(q)^- \prec \al_1(q) \ldots \al_j(q).
\end{equation}
If $\overline{\al_1(q)} \prec v_1 \prec \al_1(q)$ then, since $\nu \in \La(\vb_q)$, \eqref{con3} implies 
$$ \overline{\al_1(q) \ldots \al_j(q)}^+\nu, \quad \text{and}\quad \al_1(q) \ldots \al_j(q)^-\nu \in \La(\vb_q).$$
Suppose that $v_1 = \al_1(q)$, note that since $\al_1(q) \ldots \al_j(q)^-$ is a fundamental word then $\al_j(q)^- < \al_1(q) = v_1$. Therefore, from \eqref{con3} we obtain $$\overline{\al_1(q) \ldots \al_{j+r}(q)} \prec \al_1(q) \ldots \al_j(q)^-\nu \prec \al_1(q) \ldots \al_{j+r}(q).$$ 
Moreover, since $\al_1(q) \ldots \al_j(q)^-$ is a fundamental word and $\nu \in \La(\vb_q)$ we have that for every $2 \leq i \leq j$, 
\[
\overline{\al_1(q) \ldots \al_{j+r-i}(q)} \lle \al_{i+1}(q)\ldots\al_{j}(q)v_1 \ldots v_r \prec \al_1(q) \ldots \al_{j+r-i}(q),
\] 
which implies that $$\al_1(q) \ldots \al_j(q)^-\nu \in \La(\vb_q).$$
By symmetry (see \eqref{eq:lang1}), using a similar argument we obtain that if $v_1 = \overline{\al_1(q)}$ then $$\overline{\al_1(q) \ldots \al_j(q)}^+\nu \in \La(\vb_q).$$ 
\end{proof}

Note that the index $j$ obtained in Lemma \ref{lem:spec3} does not depend neither on $\nu \in \La(\vb_q)$ nor on the length of $\nu$. However, it depends on $q$. We now recall \cite[Lemma 3.8]{AlcBakKon2016}. We will include the proof since it will be used later on.

\begin{lemma}\label{lem:suffix}
Let $q \in \vl$. Then, for any word $\upsilon \in\La(\vb_q)$ and any $m>|\upsilon|$ there exists $\eta\in \La(\vb_q)$ such that $\upsilon\eta \in \La(\vb_q)$ and $\al_1(q)\ldots\al_m(q)$ or $\overline{\al_1(q)\ldots\al_m(q)}$ is a suffix of $\upsilon\eta$.
\end{lemma}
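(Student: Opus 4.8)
Looking at Lemma~\ref{lem:suffix}, the goal is: given $\upsilon \in \La(\vb_q)$ and $m > |\upsilon|$, find $\eta$ so that $\upsilon\eta \in \La(\vb_q)$ and one of $\al_1(q)\ldots\al_m(q)$, $\overline{\al_1(q)\ldots\al_m(q)}$ is a suffix of $\upsilon\eta$. The natural strategy is to append either $\al_1(q)\ldots\al_m(q)$ or its reflection directly after $\upsilon$, and check that the resulting word lies in $\La(\vb_q)$ using the lexicographic characterisation \eqref{eq:lang1}.

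The plan is as follows. Write $\upsilon = u_1\ldots u_n$ with $n = |\upsilon| < m$. Consider the two candidate words $\upsilon\, \al_1(q)\ldots\al_m(q)$ and $\upsilon\,\overline{\al_1(q)\ldots\al_m(q)}$; I will show at least one of them belongs to $\La(\vb_q)$. By \eqref{eq:lang1} (applied with total length $n+m$), membership amounts to checking, for each shift position $i$ with $0 \le i \le n+m-1$, that the length-$(n+m-i)$ suffix is squeezed between $\overline{\al_1(q)\ldots\al_{n+m-i}(q)}$ and $\al_1(q)\ldots\al_{n+m-i}(q)$. Split the positions into two regions. For $i \ge n$, the suffix being tested lies entirely inside the appended block $\al_1(q)\ldots\al_m(q)$ (or its reflection), i.e. it is $\al_{i-n+1}(q)\ldots\al_m(q)$ (resp.\ $\overline{\al_{i-n+1}(q)\ldots\al_m(q)}$); the required inequalities then follow immediately from $q \in \vl$, i.e.\ from $\overline{\al(q)} \preccurlyeq \si^k(\al(q)) \preccurlyeq \al(q)$ for all $k$, together with the fact that a prefix of $\al(q)$ is lexicographically $\preccurlyeq \al_1(q)\ldots\al_{n+m-i}(q)$ and its reflection is $\succcurlyeq$ the reflected bound. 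So these positions cause no trouble, and the reflection case is symmetric via $\overline{\om}\in\La(\vb_q)$ whenever $\om\in\La(\vb_q)$.

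The substantive part is the positions $0 \le i \le n-1$, where the tested suffix is $u_{i+1}\ldots u_n$ followed by all of $\al_1(q)\ldots\al_m(q)$ (resp.\ its reflection). The upper constraint requires
$$u_{i+1}\ldots u_n\,\al_1(q)\ldots\al_m(q) \preccurlyeq \al_1(q)\ldots\al_{n-i}(q)\,\al_{n-i+1}(q)\ldots\al_{n+m-i}(q),$$
and the lower one requires the symmetric inequality with the reflected bound. Since $\upsilon\in\La(\vb_q)$ we already know $\overline{\al_1(q)\ldots\al_{n-i}(q)} \preccurlyeq u_{i+1}\ldots u_n \preccurlyeq \al_1(q)\ldots\al_{n-i}(q)$ for every such $i$. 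If for the smallest offending $i$ this inequality is \emph{strict} on the relevant side, we win outright. The delicate case is when $u_{i+1}\ldots u_n = \al_1(q)\ldots\al_{n-i}(q)$ exactly (for the upper bound) or $= \overline{\al_1(q)\ldots\al_{n-i}(q)}$ (for the lower bound): then the comparison reduces to comparing the appended block against $\si^{n-i}(\al(q))$, i.e.\ we need $\al_1(q)\ldots\al_m(q) \preccurlyeq \al_{n-i+1}(q)\ldots\al_{n+m-i}(q)$, which can fail. Here is where appending the \emph{reflection} instead saves the day: if $u_{i+1}\ldots u_n = \al_1(q)\ldots\al_{n-i}(q)$ then $\overline{u_{i+1}\ldots u_n} = \overline{\al_1(q)\ldots\al_{n-i}(q)}$, and for the word $\upsilon\,\overline{\al_1(q)\ldots\al_m(q)}$ the lower constraint at position $i$ becomes $\overline{\al_1(q)\ldots\al_{n-i}(q)}\,\overline{\al_1(q)\ldots\al_m(q)} \succcurlyeq \overline{\al_1(q)\ldots\al_{n+m-i}(q)}$, equivalently $\al_1(q)\ldots\al_m(q) \preccurlyeq \al_{n-i+1}(q)\ldots\al_{n+m-i}(q)$ — the same inequality — so this does not by itself resolve it. The correct resolution is more careful: one picks the choice of block (direct vs.\ reflected) according to the behaviour of $\upsilon$ at its \emph{first} position where it meets one of the two bounds, and runs an induction on the suffix length, exactly as in the proof of Lemma~\ref{lem:spec3}; at a position where equality with $\al_1(q)\ldots\al_{n-i}(q)$ holds, $q\in\vl$ gives $\si^{n-i}(\al(q))\preccurlyeq\al(q)$ hence a prefix of $\si^{n-i}(\al(q))$ is $\preccurlyeq$ the corresponding prefix of $\al(q)$, which is precisely what is needed. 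I would therefore structure the proof as: (1) reduce via \eqref{eq:lang1} to checking positions $0\le i\le n-1$; (2) handle positions where the relevant bound inequality on $\upsilon$ is strict directly; (3) at positions of equality, invoke $q\in\vl$ to push the comparison into $\al(q)$ itself and conclude, choosing the reflected block when $\upsilon$ meets the lower bound and the direct block when it meets the upper bound, so that both the lower and upper constraints on the full concatenation are inherited from $\upsilon\in\La(\vb_q)$ and $q\in\vl$.

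I expect step (3) — the equality/boundary case — to be the main obstacle: it is exactly the point where one must use that $\al(q)\in\vb$, and the bookkeeping of which of the two bounds $u_{i+1}\ldots u_n$ saturates, and hence which block to append, is where the argument of Lemma~\ref{lem:spec3} gets reused. Everything else is a routine application of the lexicographic description \eqref{eq:lang1} of $\La(\vb_q)$ together with the closure of the language under reflection.
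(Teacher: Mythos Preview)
Your approach has a genuine gap: appending a \emph{fresh} copy of $\al_1(q)\ldots\al_m(q)$ (or its reflection) after $\upsilon$ does not always land in $\La(\vb_q)$. At a position $i$ where $u_{i+1}\ldots u_n = \al_1(q)\ldots\al_{n-i}(q)$, the upper constraint on $\upsilon\,\al_1(q)\ldots\al_m(q)$ reduces to
\[
\al_1(q)\ldots\al_m(q)\ \preccurlyeq\ \al_{n-i+1}(q)\ldots\al_{n+m-i}(q),
\]
i.e.\ a prefix of $\al(q)$ must be $\preccurlyeq$ the corresponding prefix of $\si^{n-i}(\al(q))$. But $q\in\vl$ gives the \emph{opposite} inequality $\si^{n-i}(\al(q))\preccurlyeq\al(q)$; your claim that this ``is precisely what is needed'' has the direction backwards. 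Switching to the reflected block does not save you either, since $\upsilon$ can simultaneously meet the \emph{lower} bound at a different position $i'$, and there the reflected block fails symmetrically. Concretely, with $M=1$, $\al(q)=(11010)^\infty$ and $\upsilon=110$: appending $\al_1\ldots\al_m$ fails at $i=0$ (since $1101\ldots\succ 11010\ldots$ is violated by $110\,1\ldots$), while appending $\overline{\al_1\ldots\al_m}$ fails at $i=2$ (the suffix $0\,\overline{\al_1\ldots\al_m}=00010\ldots$ drops below $\overline{\al_1\ldots\al_5}=00101$).

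The paper's proof avoids this by \emph{not} appending a fresh block. Instead it locates the smallest $s^+$ (resp.\ $s^-$) with $u_{s^++1}\ldots u_n=\al_1(q)\ldots\al_{n-s^+}(q)$ (resp.\ the reflected equality), and then \emph{continues} that match: one takes $\eta=\al_{n-s^++1}(q)\ldots\al_{m}(q)$, so that $\upsilon\eta=u_1\ldots u_{s^+}\,\al_1(q)\ldots\al_m(q)$. Now for $i\ge s^+$ the tested suffix is literally a prefix of $\al(q)$, and for $i<s^+$ minimality of $s^+$ forces strict inequalities on $u_{i+1}\ldots u_n$, so the extension causes no harm. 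The key idea you are missing is precisely this ``continue rather than restart'' step.
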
   
\begin{proof}
Let $\upsilon = u_1 \ldots u_n \in \La(\vb_q)$ and let $m > n$ be fixed. Then, 
\begin{equation}\label{eq:suffix1}
\overline{\al_1(q) \ldots \al_{n-i}(q)} \lle u_{i+1} \ldots u_n \lle \al_1(q) \ldots \al_{n-i}(q) \text{ for every }i \in \set{0, 1, \ldots, n-1}. 
\end{equation}
If 
\begin{equation}\label{eq:suffix2}
\overline{\al_1(q) \ldots \al_{n-i}(q)} \prec u_{i+1} \ldots u_n \prec \al_1(q) \ldots \al_{n-i}(q) \text{ for every }i \in \set{0, 1, \ldots, n-1}, 
\end{equation}
then it is clear that the words $\eta = \al_1(q) \ldots \al_m(q)$ and $\eta' = \overline{\al_1(q) \ldots \al_m(q)}$ satisfy the conclusion of the lemma. 

Suppose now that 
\begin{equation}\label{eq:suffix3}
\overline{\al_1(q) \ldots \al_{n-i}(q)} \prec u_{i+1} \ldots u_n \lle \al_1(q) \ldots \al_{n-i}(q) \text{ for every }i \in \set{0, 1, \ldots, n-1}. 
\end{equation}

Then, we define
\begin{equation}\label{eq:suffix4}
s^+ = s^+(\upsilon) = \min\set{s \in \set{0,1,\ldots,n-1}: u_{s+1}\ldots u_n=\al_1(q)\ldots\al_{n-s}(q)}.
\end{equation} 
Then, the minimality of $s^+$ and \eqref{eq:suffix1} imply that \[
\overline{\al_1(q)\ldots\al_{n-i}(q)}\prec u_{i+1}\ldots u_n\prec\al_1(q)\ldots\al_{n-i}(q) \textrm{ for all }\quad 0\leq i<s^+.
\] 
Then, for every $j \in \N$ the word $\eta = \al_{n-s^++1}(q)\ldots\al_{n-s^+ + j}(q)$ satisfies the conclusion of the lemma. 

Now, let us assume that 
\begin{equation}\label{eq:suffix5}
\overline{\al_1(q) \ldots \al_{n-i}(q)} \lle u_{i+1} \ldots u_n \prec \al_1(q) \ldots \al_{n-i}(q) \text{ for every }i \in \set{0, 1, \ldots, n-1}, 
\end{equation}

In a similar way to \eqref{eq:suffix4} we define
\begin{equation}\label{eq:suffix6}
s^- = s^-(\upsilon) = \min\set{s \in \set{0,1,\ldots,n-1}: u_{s+1}\ldots u_n=\overline{\al_1(q)\ldots\al_{n-s}(q)}}.
\end{equation} 
Here, the minimality of $s^-$ and \eqref{eq:suffix1} imply that
\[
\overline{\al_1(q)\ldots\al_{n-i}(q)}\prec u_{i+1}\ldots u_n\prec\al_1(q)\ldots\al_{n-i}(q) \textrm{ for all }\quad 0\leq i<s^-.
\] 
So, for every $j \in \N$ the word $\eta = \overline{\al_{n-s^-+1}(q)\ldots\al_{n-s^- + j}(q)}$ satisfies the conclusion of the lemma.

Finally, suppose that 
\begin{equation}\label{eq:suffix7}
\overline{\al_1(q) \ldots \al_{n-i}(q)} \lle u_{i+1} \ldots u_n \lle \al_1(q) \ldots \al_{n-i}(q) \text{ for every }i \in \set{0, 1, \ldots, n-1}, 
\end{equation}

Using symmetry, we may assume without losing generality that $s^+ < s^-$, i.e 
\[
u_{s^++1}\ldots u_n=\al_1(q)\ldots\al_{n-s^+}(q).
\] 
If $s^+=0$, then $\upsilon = \al_1(q)\ldots\al_{n}(q)$. Therefore, for every $j \in \N$ the word $\eta = \al_{n+1}(q)\ldots\al_{n+j}(q)$ satisfies the conclusion of the lemma. Let us assume now that $s^+ \neq 0$. Then, the minimality of $s^+$, as well as \eqref{eq:suffix1} and the inequality $s^+ < s^-$ imply 
\[
\overline{\al_1(q)\ldots\al_{n-i}(q)}\prec u_{i+1}\ldots u_n\prec\al_1(q)\ldots\al_{n-i}(q) \textrm{ for all }\quad 0\leq i<s^+.
\] 
Then, for every $j \in \N$ the word $\eta = \al_{n-s^++1}(q)\ldots\al_{n-s^+ + j}(q)$ satisfies the conclusion of lemma. 
\end{proof}

\begin{proposition}\label{pr:spec2}
If $q \in \SI$ and there exists $K \in \N$ such that
$$d(\si^k(\al(q)), \overline{\al(q)}) \geq 1/2^K$$  
for every $k \in \N$ then $(\vb_q, \si)$ has the specification property.
\end{proposition}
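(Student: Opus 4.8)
The plan is to bound the sequence $s_{n}(\vb_{q})$ from \eqref{eq:spec} uniformly in $n$. Since $q\in\SI\subset\I$, the subshift $(\vb_{q},\si)$ is topologically mixing by Proposition \ref{pr:mixing}, and for mixing subshifts the almost specification property is equivalent to the specification property; so it suffices to produce a constant $S=S(q)$ such that \emph{any} two words of $\La(\vb_{q})$ can be connected by a word of length \emph{at most} $S$. Throughout I may assume $q>q_{T}$, so that $\al_{1}(q)>\overline{\al_{1}(q)}$; the remaining bases in $\SI$ have $\al(q)$ periodic and $(\vb_{q},\si)$ a mixing subshift of finite type (Proposition \ref{pr:mixingsft}), for which the bounded--bridging argument below applies verbatim (indeed more simply).

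The right half of the bridge is handled by Lemma \ref{lem:spec3}: there is an index $j=j(q)\in\N$ and a fixed word $\om_{0}\in\{\al_{1}(q)\ldots\al_{j}(q)^{-},\ \overline{\al_{1}(q)\ldots\al_{j}(q)}^{+}\}$ such that, for every $\nu\in\La(\vb_{q})$, at least one of $\om_{0}\nu$, $\overline{\om_{0}}\,\nu$ lies in $\La(\vb_{q})$. Using the lexicographic description \eqref{eq:lang1} of $\La(\vb_{q})$, it is then enough to establish the left half: every $\upsilon\in\La(\vb_{q})$ admits a right extension $\upsilon\mu\in\La(\vb_{q})$ with $|\mu|\le C(q)$ which is \emph{strictly interior}, meaning that each of its suffixes of length $\ell$ lies strictly between $\overline{\al_{1}(q)\ldots\al_{\ell}(q)}$ and $\al_{1}(q)\ldots\al_{\ell}(q)$. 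Indeed, if $\upsilon\mu$ is strictly interior and $\nu\in\La(\vb_{q})$ is arbitrary, then for whichever of $\om_{0},\overline{\om_{0}}$ connects to $\nu$ the word $\upsilon\mu\om_{0}\nu$ (resp.\ $\upsilon\mu\overline{\om_{0}}\nu$) again satisfies \eqref{eq:lang1} at every position: suffixes lying inside $\om_{0}\nu$ are controlled since $\om_{0}\nu\in\La(\vb_{q})$, suffixes lying inside $\nu$ since $\nu\in\La(\vb_{q})$, and suffixes reaching into $\upsilon\mu$ inherit a \emph{strict} inequality from $\upsilon\mu$. Hence such a word is in $\La(\vb_{q})$, and taking $\om:=\mu\om_{0}$ or $\mu\overline{\om_{0}}$ and $S:=C(q)+j$ yields the almost specification property with constant $S$, hence specification by mixing.

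It remains to produce the strictly interior extension. Here I would split $\upsilon$ according to the cases of the proof of Lemma \ref{lem:suffix}: $\upsilon$ strictly interior (take $\mu=\epsilon$), or some suffix of $\upsilon$ equals a prefix of $\al(q)$ (``hugging the top''), or the reflected situation (``hugging the bottom''), the latter being dealt with by symmetry. In the top case, let $t_{0}$ be the length of the longest suffix of $\upsilon$ equal to a prefix of $\al(q)$, continue $\upsilon$ by following $\al(q)$, i.e.\ append $\al_{t_{0}+1}(q)\al_{t_{0}+2}(q)\ldots$, and at the first admissible opportunity replace the prescribed symbol by a strictly smaller one, stepping inward; one then checks that \emph{every} suffix of the resulting word is strictly inside the window. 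The content of the hypothesis $d(\si^{k}(\al(q)),\overline{\al(q)})\ge 1/2^{K}$ is precisely that this ``follow-then-deviate'' procedure terminates, and lands strictly interior, after a number of steps bounded by a constant depending only on $q$ (through $K$, the length of the initial block of $M$'s of $\al(q)$, and $j$): without it, a suffix of the form $\overline{\al_{1}(q)\ldots\al_{L}(q)}$ could be forced to persist for arbitrarily large $L$ while the word stays pinned to a boundary, and this is exactly the mechanism through which $s_{n}\to\f$ for weakly irreducible $q$ in Lemma \ref{lem:spec2} and Proposition \ref{pr:spec1}. This uniform termination bound --- and the compatible choice of the inward deviation symbol, which is the most delicate point --- is the main obstacle; once it is in place, the bookkeeping with \eqref{eq:lang1} and the gluing step above finish the proof.
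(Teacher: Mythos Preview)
Your high--level plan is the same as the paper's: use Lemma \ref{lem:spec3} to handle the right half of the bridge, and use the case analysis of Lemma \ref{lem:suffix} on the left to obtain a bounded extension after which the connector from Lemma \ref{lem:spec3} may be safely attached. The difficulty you identify as ``the main obstacle'' is indeed the crux, and the paper resolves it in a way your sketch does not.

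There is a genuine gap. Your reduction to finding a \emph{strictly interior} extension $\upsilon\mu$ (every suffix of length $\ell$ strictly between $\overline{\al_{1}(q)\ldots\al_{\ell}(q)}$ and $\al_{1}(q)\ldots\al_{\ell}(q)$) is too strong and in fact impossible in general: for the $\ell=1$ suffix this demands $\overline{\al_{1}(q)}<(\upsilon\mu)_{|\upsilon\mu|}<\al_{1}(q)$, and whenever $M$ is odd with $\al_{1}(q)=k+1$ (in particular for every $q>q_{T}$ when $M=1$) one has $\al_{1}(q)-\overline{\al_{1}(q)}=1$, so no symbol lies strictly in between. Hence the gluing mechanism you describe cannot work as stated for those alphabets. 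More generally, ``follow $\al(q)$ then deviate by one'' ends the word in $\al_{n_{m}}(q)^{-}$, and there is no reason this single deviation makes \emph{all} suffixes strictly interior; a suffix of the form $\overline{\al_{1}(q)\ldots\al_{r}(q)}$ may appear, so you are pushed to the opposite boundary rather than to the interior.

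The paper avoids this by never aiming for strict interiority. Instead it works through the natural approximation from below $\{q^{-}_{m}\}$. The distance hypothesis is used solely to prove \emph{Claim A}: the gaps $n_{m+1}-n_{m}$ between consecutive indices with $(\al_{1}(q)\ldots\al_{n_{m}}(q)^{-})^{\f}\in\vb$ are at most $K+1$. Strong irreducibility is then used separately, and essentially, to guarantee that for all large $m$ the approximant $q^{-}_{m}$ lies in $\I$, so $(\vb_{q^{-}_{m}},\si)$ is a \emph{mixing} SFT (Proposition \ref{pr:mixingsft}). Given $\upsilon,\nu$, one passes via Lemma \ref{lem:aproxwordsbelow} to a fixed approximant $\vb_{q^{-}_{J'}}$ containing both, follows $\al(q^{-}_{J'})$ as in Lemma \ref{lem:suffix}, and uses Claim A to reach within at most $K+1$ steps an index $l'$ for which $(\al_{1}(q)\ldots\al_{l'}(q)^{-})^{\f}$ is irreducible; at that point one appends the word $\al_{1}(q)\ldots\al_{j}(q)^{-}$ (or its reflection) from Lemma \ref{lem:spec3}. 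The bridge length is then $\max\{\,n_{J}+j,\ K+1+j\,\}$, independent of $|\upsilon|,|\nu|$. In your write--up the role of strong irreducibility is blurred into the distance hypothesis (``the content of the hypothesis \ldots is precisely that this procedure terminates''); in fact the distance bound alone gives only Claim A, and without strong irreducibility the next available deviation index need not parametrise a transitive approximant, which is exactly what you need to glue on the connector from Lemma \ref{lem:spec3}. To repair your argument you should drop the strictly--interior target, incorporate the approximation $\{q^{-}_{m}\}$ and Claim A explicitly, and use $q^{-}_{m}\in\I$ (for $m\ge N$) at the deviation step.
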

\begin{proof}
Let $q \in \SI$. Then, there exists $N \in \N$ such that for every $m \geq N$, $q^-_m \in \I$. Let us assume that there is $K \in \N$ such that $d(\si^k(\al(q)), \overline{\al(q)}) \geq 1/2^K$ for every $k \in \N$. Clearly, $d(\si^k(\overline{\al(q)}), \al(q)) \geq 1/2^K$ for every $k \in \N$. Set $$\al(q^-_m) = (\al_1(q^-_m)\ldots \al_{n_m}(q^-_m))^\f = (\al_1(q)\ldots \al_{n_m}(q)^-)^\f$$ for every $m \in \N$. Let
\begin{equation}\label{spec1}
J = \min\set{m \in \N: \overline{\al_1(q) \ldots \al_K(q)}, \quad \al_1(q) \ldots \al_K(q) \in \La(\vb_{q^-_m}) \text{ and } q^-_m \in \I}.
\end{equation}
It is easy to check that $n_{J} > K$.
 
\vspace{1em}
\noindent \textsl{Claim A}: For every $m \geq J$ we have that $n_{m+1} - n_m \leq K+1.$ 

To show this, suppose that there is $m \geq J$ such that $n_{m+1} - n_m > K+1$. Then, for every $n_m+1\leq j \leq n_{m+1}$, $$(\al_1(q)\ldots\al_j(q)^-)^\f \notin \vb.$$ This, combined with the assumption $n_{m+1} - n_m > K+1$, implies that
$$\al_{n_m+1}(q)\ldots \al_{n_m+K+1}(q) = \al_{n_m+1}(q)\ldots \al_{n_{m+1}-i}(q) = \overline{\al_1(q) \ldots \al_{K+1}(q)}$$
for some $i \geq 0.$ Then, $$d(\si^{n_m}(\al(q)),\overline{\al(q)}) \leq 1/2^{K+1}$$ 
which is a contradiction. Therefore, our claim is true.

\vspace{1em}Let $\upsilon = u_1\ldots u_\ell$ and $\nu = v_1 \ldots v_\ell \in B_\ell(\vb_q)$. From Lemma \ref{lem:aproxwordsbelow} there exists $L\in \N$ such that $\upsilon, \, \nu \in \La(\vb_{q^-_m})$ for every $m \geq L$. Consider 
\begin{equation}\label{eq:spec2}
J' = \min\set{m > \max\set{L, N, J}: \upsilon, \nu \in \La(\vb_{q^-_m})}. 
\end{equation}
Note that Proposition \ref{pr:mixingsft} $(\vb_{q^-_{J}}, \si)$ and $(\vb_{q^-_{J'}}, \si)$ are mixing subshifts of finite type. Also, 
\begin{equation}\label{eq:spec4}
J \leq N \text{ and }\quad K < n_{J} \leq n_{N} \leq n_{J'}.
\end{equation}
Moreover, if $\ell \leq m_N$ then $J' = N+1$. Since $\upsilon \in B_{\ell}(\vb_{q^-_{J'}})$, then 
\begin{equation}\label{eq:spec3}
\overline{\al_1(q^-_{J'}) \ldots \al_{\ell-i}(q^-_{J'})} \lle u_{i+1} \ldots u_\ell \lle \al_1(q^-_{J'}) \ldots \al_{\ell-i}(q^-_{J'}).
\end{equation}

\vspace{1em}We now split the proof in three cases:

\noindent \emph{\textbf{Case 1}}: Strict inequalities hold in \eqref{eq:spec3}. 

Then from Lemma \ref{lem:suffix}, we have that the words $$\al_1(q^-_{J'}) \ldots \al_{t}(q^-_{J'})\quad \text{ and }\quad \overline{\al_1(q^-_{J'}) \ldots \al_{t}(q^-_{J'})}$$ satisfy that $$\upsilon \al_1(q^-_{J'})\ldots \al_{t}(q^-_{J'}), \quad \upsilon\overline{\al_1(q^-_{J'})\ldots \al_{t}(q^-_{J'})} \in \La(\vb_{q^-_{J'}})$$ for every $t \in \N$. Since $J < J'$, Lemma \ref{lem:aproxwordsbelow} implies that $$\upsilon \al_1(q)\ldots \al_{n_{J}}(q)^- = \upsilon \al_1(q^-_{J'}) \ldots \al_{n_M}(q^-_{J'})^- $$ and $$ \upsilon \overline{\al_1(q)\ldots \al_{n_{J}}(q)}^+ = \upsilon \overline{\al_1(q^-_{J'}) \ldots \al_{n_{J}}(q^-_{J'}) }^+ \in \La(\vb_{q^-_{J'}}).$$ From Lemma \ref{lem:spec3} there is $j \in \N$ such that $$\al_1(q)\ldots\al_j(q)^-\nu \quad \text{ or }\quad \overline{\al_1(q)\ldots\al_j(q)}^+\nu \in \La(\vb_{q_{J'}}).$$ Then, either $$\omega = \al_1(q^-_{J'}) \ldots \al_{m_M}(q^-_{J'})^-{\al_1(q)\ldots\al_j(q)}^-$$ or $$\omega =  \overline{\al_1(q^-_{J'}) \ldots \al_{m_{J}}(q^-_{J'}) }^+\overline{\al_1(q)\ldots\al_j(q)}^+ \in \La(\vb_{q^-_{J'}})$$ satisfy that $\upsilon \omega \nu \in \La(\vb_{q^-_{J'}})$, so $\upsilon \omega \nu \in \La(\vb_{q})$. Since $J < J'$ we obtain that $|\om| = n_{J} +j$.     

\noindent \emph{\textbf{Case 2}}: Let $s^+, s^-$ given by Lemma \ref{lem:suffix} and let $s = \min\set{s^+, s^-}$. We prove the case when $s = s^+$ since the proof for other case is analogous.

\emph{Case 2 $a)$}: Suppose that $s = 0$.  Then, there is $N_1 \in \N$ such that either 
\begin{equation}\label{eq:speccase1}
\upsilon = (\al_1(q^-_{J'}) \ldots \al_{n_{M'}}(q^-_{J'}))^{N_1}
\end{equation} 
or 
\begin{equation}\label{eq:speccase2}
\upsilon = (\al_1(q^-_{J'}) \ldots \al_{n_{J'}}(q))^{N_1} \al_{1}(q^-_{J'}) \ldots \al_{l}(q^-_{J'}) \text{ for } l \in \set{1, \ldots, n_{J'}-1}. 
\end{equation} 
Suppose that \eqref{eq:speccase1} holds. Again, as a consequence of Lemma \ref{lem:suffix}, we obtain that 
$$\al_1(q^-_{J'}) \ldots \al_{t}(q^-_{J'})\quad \text{ and }\quad \overline{\al_1(q^-_{J'}) \ldots \al_{t}(q^-_{J'})}$$ 
satisfy 
$$\upsilon \al_1(q^-_{J'})\ldots \al_{t}(q^-_{J'}), \quad \upsilon\overline{\al_1(q^-_{J'})\ldots \al_{t}(q^-_{J'})} \in \La(\vb_{q^-_{J'}})$$
for every $t \in \N$. Using a similar argument as in Case $1)$, we have that either 
$$\omega = \al_1(q^-_{J'}) \ldots \al_{n_J}(q^-_{J'})^-{\al_1(q)\ldots\al_j(q)}^-$$
or 
$$\omega =  \overline{\al_1(q^-_{J'}) \ldots \al_{n_{J}}(q^-_{J'}) }^+\overline{\al_1(q)\ldots\al_j(q)}^+ \in \La(\vb_{q^-_{J'}})$$
satisfy  
$\upsilon \omega \nu \in \La(\vb_{q^-_{J'}})$, and $\upsilon \omega \nu \in \La(\vb_{q})$. In this case, we also have $|\om| = n_{J} +j$.     

Suppose now that \eqref{eq:speccase2} holds. Then $l \in \set{1, \ldots n_{J'}-1}$. If $l \in \set{1, \ldots, N-1}$ then $\upsilon\al(q)_{l+1} \ldots \al_{n_N}(q)^- \in \La(\vb_{q^-_{J'}})$. Then either 
$$\omega = \al(q)_{l+1} \ldots \al_{n_N}(q)^-\al_1(q)\ldots\al_j(q)^-$$ 
or 
$$\om = \al(q)_{l+1} \ldots \al_{n_N}(q)^-\overline{\al_1(q)\ldots\al_j(q)}^+ $$ 
satisfy that $\upsilon \omega \nu \in \La(\vb_{q_{J'}}) \subset \La(\vb_{q})$. Note that $|\omega| \leq n_N-1 + j$. If $l \in \set{m_N \ldots n_{J'} - 1}$, Claim \textsl{A} together with \eqref{eq:spec4} imply that there is $l' \in \set{m_N \ldots m_{J'} - 1}$ such that $l < l'$, $(\al_1(q) \ldots \al_{l}(q)^-)^\f$ is an irreducible sequence and $l' \leq l \leq K+1$. Then, using Lemma \ref{lem:spec3} we have that $\omega = \al_{n_N+1}(q) \ldots \al_{l'}(q)\al_1(q)\ldots \al_j(q)^-$ or $\omega = \al_{n_N+1}(q) \ldots \overline{\al_{l'}(q)\al_1(q)\ldots \al_j(q)}^+$ satisfies that $\upsilon \om \nu \in \La(\vb_{q_{N''}}) \subset \La(\vb_{q}).$ Note that $|\om| = l' - l +j \leq K+1 + j$.

\emph{Case 2 $b)$}: Finally, let us assume that $s \neq 0$. Then
\begin{equation}\label{eq:sneq0}
u_{s+1} \ldots u_{\ell} = \al_1(q^-_{J'}) \ldots \al_{\ell-s}(q^-_{J'}).  
\end{equation}
Then, there exists $N_2 \in \N$ such that 
\begin{equation}\label{eq:speccase3}
u_{s+1} \ldots u_{\ell} = (\al_1(q^-_{J'}) \ldots \al_{n_{J'}}(q^-_{J'}))^{N_2}
\end{equation} 
or 
\begin{equation}\label{eq:speccase4}
u_{s+1} \ldots u_{\ell} = (\al_1(q^-_{J'}) \ldots \al_{n_{J'}}(q))^{N_2} \al_{1}(q^-_{J'}) \ldots \al_{l}(q^-_{J'}) 
\end{equation} 
for $l \in \set{1, \ldots, m_{J'}-1}$.
Then, we can proceed as in Case \emph{2 $a)$}.  

Combining cases \emph{1} and \emph{2} and Proposition \ref{pr:mixing} we get that for every $\upsilon, \, \nu \in B_\ell(\vb_q)$ there is $\omega$such that $\upsilon \om \nu \in \La(\vb_q)$ and $|\om| = S = \max\set{n_{J} +j , K+1 + j}$. Observe that $|\om|$ does not depend on $\ell$. This gives that $s_\ell(\vb_q) = S$ for every $\ell \in \N$. Then we conclude that $s_{\vb_q} = S$ and that $(\vb_q, \si)$ has the specification property.  
\end{proof}

\begin{corollary}\label{pr:specn}
Let $N \geq 2$. Then, if $q \in \I_N$ then $(\vb_q, \si)$ has specification.
\end{corollary}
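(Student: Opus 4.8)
The plan is to obtain this corollary purely by combining two results already established: Proposition~\ref{pr:spec3}, which gives $\I_N \subset \SI$, and Proposition~\ref{pr:spec2}, which upgrades strong irreducibility to the specification property once a uniform separation of the forward orbit of $\al(q)$ from $\overline{\al(q)}$ is available. So fix $q \in \I_N$; by Proposition~\ref{pr:spec3} the sequence $\al(q)$ is strongly irreducible, and it remains only to exhibit some $K \in \N$ with $d(\si^k(\al(q)),\overline{\al(q)}) \geq 1/2^K$ for all $k \in \N$, after which Proposition~\ref{pr:spec2} closes the argument.

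First I would record, directly from \eqref{IN} and \eqref{eq:0NMN}, that $\al(q)$ begins with $M^{2N-1}0$ and that for every $r \geq 0$ the block $\al_{rN+1}(q)\ldots\al_{(r+1)N}(q)$ is different from $0^N$: for $r = 0$ and $r = 1$ these blocks are $M^N$ and $M^{N-1}0$ (since $\al_1(q)\ldots\al_{2N}(q) = M^{2N-1}0$), and for $r \geq 2$ condition \eqref{eq:0NMN} gives $\al_{rN+1}(q)\ldots\al_{(r+1)N}(q) \succ 0^N$. Next I would observe that any window of $2N-1$ consecutive coordinates of $\al(q)$ contains one such aligned block in full: $[i, i+2N-2]$ contains $[rN+1,(r+1)N]$ exactly when $rN$ lies in the interval $[i-1, i+N-2]$ of $N$ consecutive integers, which contains a nonnegative multiple of $N$ because $i \geq 1$. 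Hence $0^{2N-1}$ is not a factor of $\al(q)$.

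Finally I would take $K = 2N-1$. Because $\al_1(q)\ldots\al_{2N-1}(q) = M^{2N-1}$, the first $K$ coordinates of $\overline{\al(q)}$ are $0^{2N-1}$, whereas the first $K$ coordinates of $\si^k(\al(q))$ form the word $\al_{k+1}(q)\ldots\al_{k+2N-1}(q)$, which by the previous paragraph is never $0^{2N-1}$. By the definition of the metric in \eqref{eq:metric d}, two sequences lie within distance $1/2^K$ only if their first $K$ coordinates agree; hence $d(\si^k(\al(q)),\overline{\al(q)}) \geq 1/2^{2N-1}$ for every $k \in \N$. Proposition~\ref{pr:spec2} applied with this $K$ then yields that $(\vb_q,\si)$ has the specification property. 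There is no real obstacle here; the only step needing a line of care is the elementary observation that a window of length $2N-1$ necessarily contains a full aligned $N$-block, which is exactly the counting argument described above.
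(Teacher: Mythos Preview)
Your proof is correct and follows essentially the same route as the paper: invoke Proposition~\ref{pr:spec3} to get $q \in \SI$, then verify the uniform separation hypothesis of Proposition~\ref{pr:spec2}. Your choice $K = 2N-1$, justified via the clean pigeonhole observation that any window of length $2N-1$ fully contains an aligned $N$-block (none of which equals $0^N$), is in fact more carefully argued than the paper's terse assertion that $d(\si^n(\al(q)),\overline{\al(q)}) \geq 1/2^N$.
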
 
\begin{proof}
Fix $M \in \N$, $N \geq 2$ and let $q \in \I_N$. Then, Proposition \ref{pr:spec3} implies that $q \in \I$. Then $\al(q)$ satisfies that for any $r \geq 2$, $$0^N \prec \al_{(rN)+1}(q) \ldots \al_{(N+1)r}(q) \prec M^N.$$ This implies that $d(\si^n(\al(q)), \overline{\al(q)}) \geq 1/2^N.$ Then the result follows directly from Proposition \ref{pr:spec2}.
\end{proof}

Note that Proposition \ref{pr:spec1} shows the necessity for $q \in \SI$ in order to get the specification property. As we show in the Proposition \ref{pr:spec4} the strong irreducibility of $q$ is not sufficient to get the specification property. In those cases $d(\si^n(\al(q)), \overline{\al(q)})$ is very small for infinitely many $n\in \N$.

\begin{proposition}\label{pr:spec4}
There exists $q \in \SI$ such that $(\vb_q, \si)$ has no specification.
\end{proposition}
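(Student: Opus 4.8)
The plan is to invoke the reduction of Proposition~\ref{pr:spec1}: since $(\vb_q,\si)$ having the specification property forces $q\in\SI$, it suffices to exhibit a single $q\in\SI$ with $\lim_{n\to\f}s_n(\vb_q)=\f$. I would build such a $q$ via the construction in Subsection~\ref{constructstrong}, with two harmless adjustments: start from a base $q_1\in\operatorname{Per}(\I)\cap(q_T,M+1)$ (which exists by Proposition~\ref{pr:irreducibledense}), and at step $n$ choose the fundamental word length $m_{n+1}$ of $p_n$ with $m_{n+1}\ge 2m_n+1$ --- possible because, by Lemma~\ref{lem:admissible1}, $p_n\in\overline{\B}\cap\ul$ has arbitrarily long prefixes that are fundamental words, and the argument proving strong irreducibility in Subsection~\ref{constructstrong} never uses the upper bound $m_{n+1}\le 2m_n$. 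The resulting limit $q$ lies in $\SI$ exactly as there, and $\al(q)$ is non-periodic, so $q\in\ul$ by Theorem~\ref{thm:uclosureuv}, whence $\overline{\al(q)}\prec\si^j(\al(q))\prec\al(q)$ for all $j\ge 1$. Tracking the construction, $\al(q)$ agrees with the period block $\al_1(p_{n-1})\ldots\al_{m_n}(p_{n-1})$ of $q_n$ on its first $m_n-1$ coordinates and with $\al(q_{n+1})$ on its first $m_{n+1}-1$ coordinates; since $\al(p_{n-1})$ has $\overline{\al(q_{n-1})}$ for a tail, this forces $\al_{m_n+i}(q)=\overline{\al_i(q)}$ for $1\le i\le m_n-1$, i.e. $\si^{m_n}(\al(q))$ begins with $\overline{\al_1(q)\ldots\al_{L_n}(q)}$ where $L_n:=m_n-1\to\f$. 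In particular $d(\si^{m_n}(\al(q)),\overline{\al(q)})\le 2^{-m_n}$, so the sufficient condition of Proposition~\ref{pr:spec2} fails for this $q$.

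I would then prove $s_n(\vb_q)\to\f$ by a forcing argument using the lexicographic description \eqref{eq:lang1} of $\La(\vb_q)$. Fix $n$, set $\rho_n=\al_1(q)\ldots\al_{m_n}(q)\in B_{m_n}(\vb_q)$, take $\upsilon=\rho_n$, and take $\nu\in B_{m_n}(\vb_q)$ equal to $\rho_n$ (or, if needed, to the shorter word $\al_1(q)\ldots\al_{p_n}(q)$ of the next paragraph, padded back to length $m_n$). Suppose $\rho_n\,\om\,\nu\in\La(\vb_q)$. The shift of $\rho_n\om\nu$ at position $1$ gives $\om\nu\lle\si^{m_n}(\al(q))$, which begins with $\overline{\al_1(q)\ldots\al_{L_n}(q)}$; the shift at position $m_n+1$ gives $\om\nu\lge\overline{\al(q)}$, which also begins with $\overline{\al_1(q)\ldots\al_{L_n}(q)}$. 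Since $|\nu|=m_n>L_n$, this squeezes the first $L_n$ symbols of $\om\nu$, so $\om$ must begin with $\overline{\al_1(q)\ldots\al_{\min(|\om|,L_n)}(q)}$. If $|\om|=s<L_n$ then $\om=\overline{\al_1(q)\ldots\al_s(q)}$ entirely, hence $\rho_n\om=\al_1(q)\ldots\al_{m_n+s}(q)$ is a prefix of $\al(q)$, and admissibility of $\rho_n\om\nu$ then demands $\nu\lle\si^{m_n+s}(\al(q))$. Because $q\in\ul$, each of $\si^{m_n}(\al(q)),\ldots,\si^{2m_n-2}(\al(q))$ is strictly $\prec\al(q)$; letting $\mu_n$ be the largest of them and $p_n$ the first coordinate at which $\mu_n$ departs from $\al(q)$, the word $\nu=\al_1(q)\ldots\al_{p_n}(q)$ satisfies $\nu\succ\mu_n|_{p_n}\lge\si^{m_n+s}(\al(q))|_{p_n}$ for every such $s$, contradicting $\nu\lle\si^{m_n+s}(\al(q))$. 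Hence $|\om|\ge L_n$, so $s_{m_n}(\vb_q)\ge L_n=m_n-1$, giving $\lim_n s_n(\vb_q)=\f$; with Proposition~\ref{pr:spec1} this shows $q\in\SI$ while $(\vb_q,\si)$ has no specification.

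The step I expect to be the main obstacle is ruling out short connectors cleanly, i.e. controlling $p_n$ --- equivalently, showing the length-$m_n$ prefix $\al_1(q)\ldots\al_{m_n}(q)$ does not reoccur inside $\al_1(q)\ldots\al_{2m_n-1}(q)$, so that $\mu_n$ already disagrees with $\al(q)$ within its first $m_n$ coordinates and one may simply take $\nu=\rho_n$. The useful facts are that $\al_1(q)\ldots\al_{m_n}(q)$ is ``supermaximal'' --- $(\al_1(q)\ldots\al_{m_n}(q))^\f\succ\al(q)$, since $\al_{m_n}(q)=\al_{m_n}(q_n)^+$ --- and that its first $m_n-1$ coordinates form a prefix of the irreducible sequence $\al(q_n)$, whose period block $\al_1(q_{n-1})\ldots\al_{m_{n-1}}(q_{n-1})^+\overline{\al_1(q_{n-1})\ldots\al_{m_n-m_{n-1}}(q_{n-1})}$ has only a bounded reflected prefix as a suffix; combined with $q>q_T$ these exclude the dangerous recurrences. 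For edge cases (notably $M=1$, where $\al(q)$ necessarily contains runs of the minimal symbol) one either exploits the freedom in choosing $q_1$ and the $m_{n+1}$'s to keep the longest run of the minimal symbol shorter than the initial run of $\al_1(q)$ --- which already bounds $p_n$, so $\nu=\rho_n$ works --- or falls back to $\nu=\al_1(q)\ldots\al_{p_n}(q)$, for which the previous paragraph applies unconditionally once the common-length bookkeeping (padding $\rho_n$ on the left by a suitable admissible prefix ending in $\al_1(q)\ldots\al_{m_n}(q)$) is done. The rest is routine manipulation of \eqref{eq:lang1}.
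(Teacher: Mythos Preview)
Your construction of $q$ is essentially the paper's: the paper also runs the procedure of Subsection~\ref{constructstrong} (there with $m_{n+1}\in\{m_n+1,\dots,2m_n\}$ chosen maximal) and obtains a non-periodic $q\in\SI$ with $d(\si^{m_n}(\al(q)),\overline{\al(q)})\le 2^{-m_n}$. Your modifications (starting above $q_T$, taking $m_{n+1}\ge 2m_n+1$) are harmless and yield the same conclusion $L_n=m_n-1\to\infty$. So on the construction side you match the paper.

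Where you diverge is in justifying ``no specification''. The paper is terse: it stops after showing the distance condition of Proposition~\ref{pr:spec2} fails and declares the conclusion. The intended (but unwritten) argument is the following direct negation, which bypasses your $p_n$ obstacle entirely. Suppose $(\vb_q,\si)$ had specification with constant $S$; pick $n$ with $L_n>S$ and set $\upsilon=\al_1(q)\ldots\al_{m_n}(q)$. Your own squeeze shows the \emph{only} $\om\in B_S(\vb_q)$ with $\upsilon\om\in\La(\vb_q)$ is $\om=\overline{\al_1(q)\ldots\al_S(q)}$, so $\upsilon\om=\al_1(q)\ldots\al_{m_n+S}(q)$. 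Specification then forces $\nu\lle\si^{m_n+S}(\al(q))\big|_{|\nu|}$ for \emph{every} $\nu\in\La(\vb_q)$; taking $\nu=\al_1(q)\ldots\al_N(q)$ for all $N$ gives $\al(q)\lle\si^{m_n+S}(\al(q))$, hence $\al(q)=\si^{m_n+S}(\al(q))$, contradicting non-periodicity of $\al(q)$ (which holds since $q\in\ul$). This is both what the paper has in mind and the cleanest route.

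Your $s_n$-approach is morally correct but over-engineered. The obstacle you flag --- controlling $p_n$, i.e.\ ruling out a recurrence of $\al_1(q)\ldots\al_{m_n}(q)$ inside $\al_1(q)\ldots\al_{2m_n-1}(q)$ --- is a genuine nuisance for the choice $\nu=\rho_n$, and your sketched fixes (supermaximality, structure of the period block, tweaking $q_1$ and the $m_{n+1}$'s) are plausible but not argued to completion. Your fallback $\nu=\al_1(q)\ldots\al_{p_n}(q)$ \emph{does} work: $p_n<\infty$ since $q\in\ul$ gives $\mu_n\prec\al(q)$, and left-padding $\upsilon$ to length $p_n$ by any admissible prefix (which exists by transitivity) preserves the squeeze at position $|\tau|$ and $|\tau|+m_n$, yielding $s_{\max(m_n,p_n)}\ge m_n-1\to\infty$. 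So your proof can be completed along those lines --- but once you notice that specification must connect $\upsilon$ to $\al_1(q)\ldots\al_N(q)$ for \emph{every} $N$, the whole $p_n$ bookkeeping evaporates.
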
  
\begin{proof}
We construct now $q \in \SI$ such that for any $K \in \N$ there is $n \in \N$ such that $d(\si^n(\al(q)), \overline{\al(q)}) \leq 1/2^K$. Fix $p_1 \in \operatorname{Per}(\I)$ with quasi-greedy expansion 
$$\al(p_1) = (\al_1(p_1)\ldots\al_{m_1}(p_1))^\f.$$ 
Let $I(p_1)$ the irreducible interval generated by $p_1$ as \cite[Theorem 2]{AlcBakKon2016} and define $q_1$ implicitly by
\[
\al(q_1) = {\al_1(p_1)\ldots\al_{m_1}(p_1)}^+(\overline{\al_1(p_1)\ldots\al_{m_1}(p_1)})^\f.
\]
From Lemma \ref{lem:admissible1} and Lemma \ref{lem:admissible2} there exists infinitely many $n \in \N$ such that $\al_1(q_1) \ldots \al_{n}(q_1)$ is a fundamental word and $(\al_1(q_1) \ldots \al_{n}(q_1))^\f$ is an irreducible sequence. Let 
$$m_2 = \mathop{\max}
\left\{
\begin{array}{clrr}
m \in \set{m_1 + 1, \ldots 2 \cdot m_1} : &\al_1(q_1) \ldots \al_{m}(q_1) \text{ is a fundamental word and }\\
&(\al_1(q_1) \ldots \al_{m}(q_1))^\f \text{ is irreducible}
\end{array}
\right\}.$$
Let $p_2$ be defined implicitly as 
$$\al(p_2) = (\al_1(q_1) \ldots \al_{m_2}(q_1))^\f.$$ 
Set $q_2$ to have quasi-greedy expansion
$$\al(q_2) = {\al_1(p_1)\ldots\al_{m_2}(p_2)}^+(\overline{\al_1(p_2)\ldots\al_{m_2}(p_2)})^\f.$$  
Again, applying \cite[Theorem 2]{AlcBakKon2016} we have that $I(p_2)$ is an entropy plateau. Also note that $p_1 < q_1 < p_2 < q_2$. As before, applying Lemma \ref{lem:admissible1} and Lemma \ref{lem:admissible2} we obtain 
$$m_3 = \mathop{\max}
\left\{
\begin{array}{clrr}
m \in \set{m_2 + 1, \ldots 2 \cdot m_2} : &\al_1(q_2) \ldots \al_{m}(q_2) \text{ is a fundamental word and }\\
&(\al_1(q_2) \ldots \al_{m}(q_2))^\f \text{ is irreducible}
\end{array}
\right\}.$$ 
Then, we can define implicitly $p_3$ to be 
$$\al(p_3) = (\al_1(q_2) \ldots \al_{m_3}(q_2))^\f.$$ 
Assuming that $I(p_n)$ is already defined, we define $p_{n+1}$ to satisfy $$\al(p_{n+1}) = (\al_1(q_n) \ldots \al_{m_{n+1}}(q_n))^\f$$ 
and 
$$m_{n+1} = \mathop{\max}
\left\{
\begin{array}{clrr}
m \in \set{m_n + 1, \ldots 2 \cdot m_n} : &\al_1(q_n) \ldots \al_{m}(q_n) \text{ is a fundamental word and }\\
&(\al_1(q_n) \ldots \al_{m}(q_n))^\f \text{ is irreducible}
\end{array}
\right\}.$$
Then, $p_n < q_n < p_{n+1}$ for every $n \in \N$. Note that, $m_n < m_{n+1}$ for every $n \in \N$. Furthermore, using \ref{constructstrong} we obtain that $p = \mathop{\lim}\limits_{n \to \f} p_n$ exists and $p \in \SI$. As we constructed $p$ we have that 
$$d(\si^{m_n}(\al(p)), \overline{\al(p)}) \leq 1/2^{m_n}.$$ 
Since $m_n < m_{n+1}$ we obtain the desired conclusion.
\end{proof}

\begin{remark}
The procedure described in Subsection \ref{constructstrong} can also be performed to get a base $p \in \SI$ with the specification property. In Proposition \ref{pr:spec4} the sequence $\set{m_n}_{n = 1}^\f$ is not bounded. If a bounded sequence $\set{m_n}_{n = 1}^\f \subset \N$ is considered then the resulting limit point $p$ will satisfy that the subshift $(\vb_p, \si)$ has specification. Here $K = \mathop{\max}\set{m_n}_{n = 1}^\f + 1$ will satisfy the hypothesis of Proposition \ref{pr:spec2}.      
\end{remark}

On the other hand, it is not difficult to show that
$$\operatorname{Per}(\I^*) = 
\set{q \in \I^*: \al(q) \text{ is periodic }}$$ 
will satisfy that there is $N \in \N$ with $d(\si^n(\al(q)), \overline{\al(q)}) \geq 1/2^N$ for every $n \in \N$. However, from \cite[Lemma 3.3]{AlcBakKon2016}, the subshift $(\vb_q, \si)$ is not transitive, thus, it can not have the specification property.

We obtain the following corollary as a direct consequence of \cite[Lemma 6.2]{AlcBakKon2016} Proposition \ref{pr:weakirreduciblesequence}, and Corollary \ref{pr:specn}.

\begin{corollary}\label{cor:c3}
The class $\Ciii$ is uncountable and $(\overline{\B} \cap [q_T, M+1]) \setminus \Ciii$ is an uncountable set.  
\end{corollary}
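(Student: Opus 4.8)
The plan is to obtain both assertions purely by combining the characterisation of $\Ciii$ established in this section with the existence results for the families $\I_N$ and $\WI$. No new construction is needed.

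First I would prove that $\Ciii$ is uncountable. For every $N \geq 2$ one has $\I_N \subset \I \subset \vl$, and by Corollary \ref{pr:specn} the subshift $(\vb_q, \si)$ has the specification property for every $q \in \I_N$; hence $\I_N \subseteq \Ciii$. Invoking \cite[Lemma 6.4]{AlcBakKon2016}, which gives $\dim_{\operatorname{H}}(\I_N) > 0$, we conclude that $\I_N$ is uncountable, and therefore so is $\Ciii$. (One may fix a single value of $N$, say $N = 2$; no union over $N$ is required.)

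Next I would show that $(\overline{\B} \cap [q_T, M+1]) \setminus \Ciii$ is uncountable by exhibiting $\WI$ inside it. By Proposition \ref{pr:spec1}, if $(\vb_q, \si)$ has the specification property then $q \in \SI$; since $\WI = \I \setminus \SI$, this yields $\WI \cap \Ciii = \emptyset$. On the other hand $\WI \subset \overline{\B} \cap [q_T, M+1]$, and by Proposition \ref{pr:weakirreduciblesequence} the set $\WI$ is uncountable. Hence $\WI \subseteq (\overline{\B} \cap [q_T, M+1]) \setminus \Ciii$, and the latter set is uncountable as claimed.

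Since every ingredient is already available, there is no real obstacle: the proof is a short piece of bookkeeping, and the only points to verify are the two inclusions $\I_N \subseteq \Ciii$ and $\WI \subseteq (\overline{\B} \cap [q_T, M+1]) \setminus \Ciii$, both of which are immediate from the quoted statements. If one wishes, one can record in passing the quantitative improvement $\dim_{\operatorname{H}}(\Ciii) \geq \sup_{N \geq 2} \dim_{\operatorname{H}}(\I_N)$, which foreshadows the Hausdorff-dimension computations carried out in Section \ref{sec:hd}.
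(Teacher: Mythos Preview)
Your proof is correct and follows essentially the same route as the paper, which derives the corollary directly from Corollary~\ref{pr:specn} (giving $\I_N \subset \Ciii$, hence $\Ciii$ uncountable) and Proposition~\ref{pr:weakirreduciblesequence} (giving $\WI$ uncountable and disjoint from $\Ciii$). Your explicit invocation of Proposition~\ref{pr:spec1} to justify $\WI \cap \Ciii = \emptyset$ and of \cite[Lemma~6.4]{AlcBakKon2016} for the uncountability of $\I_N$ simply makes the paper's one-line derivation more transparent.
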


\section{Synchronised $q$-subshifts}
\label{sec:synchr}

\noindent In this section we characterise the set of $q \in \vl$ such that $(\vb_q, \si)$ is synchronised. Recall that a word $\om \in \La(X)$ for a transitive subshift $(X, \si)$ is \emph{intrinsically synchronising} (colloquially $\om$ is a \emph{magic word}) if whenever $\upsilon\omega$ and $\omega\nu \in \La(X)$ we have $\upsilon\omega\nu \in \La(X).$ We call a transitive subshift $(X, \si)$ to be \emph{synchronised} if there exists an intrinsically synchronising word $\om \in \La(X)$. Following this, let us observe that $\Cv \neq \emptyset$ since for every $q \in \vl \cap (q_{G}, q_T)$, $(\vb_q, \si)$ cannot be a synchronised subshift. Also, notice that $\Civ \subset \I.$

We will show the existence of an intrinsically synchronising word whenever $\al(q)$ is irreducible and the orbit of $\al(q)$ is not dense in $\vb_q$; that is, there is a word $\om \in \La(\vb_q)$ such that $\om$ is not a factor of $\al(q)$. Our intuition is based on Propositions \ref{pr:spec1} and \ref{pr:spec4}.

\begin{lemma}\label{lem:synch1}
If $\al(q)$ is an irreducible sequence and the orbit of $\al(q)$ under $\si$ is not dense in $\vb_q$ then there exists an intrinsically synchronising word $\om \in \La(\vb_q)$.
\end{lemma}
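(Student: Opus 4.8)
The plan is to locate a word in $\La(\vb_q)$ which is forbidden to appear in $\al(q)$ and then, by padding it with long blocks of $\al_1(q)\ldots\al_m(q)$ (or its reflection), obtain a word that behaves as an ``escape valve'': any legal left extension followed by any legal right extension stays legal. Concretely, since the orbit of $\al(q)$ is not dense in $\vb_q$, there is a word $\rho \in \La(\vb_q)$ with $\rho$ not a factor of $\al(q)$; by passing to $\overline{\rho}$ if necessary and using that $\La(\vb_q)$ is closed under reflection, I may assume $\rho$ is also not a factor of $\overline{\al(q)}$. Fix $\ell = |\rho|$. The candidate intrinsically synchronising word will have the shape
\[
\om = \al_1(q)\ldots\al_t(q)\;\rho\;\overline{\al_1(q)\ldots\al_t(q)}
\]
(or a variant with the roles of $\al$ and $\overline{\al}$ swapped, or with one-sided padding), where $t$ is chosen large enough — at least $\ell$, so that $\rho$ appearing as a factor of $\om$ forces a ``strict drop'' on both sides, i.e.\ none of the relevant suffixes of $\upsilon\om$ nor prefixes of $\om\nu$ can equal $\al_1(q)\ldots\al_j(q)$ or $\overline{\al_1(q)\ldots\al_j(q)}$ for the critical indices.

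First I would record the admissibility criterion \eqref{eq:lang1}: $\upsilon\om\nu\in\La(\vb_q)$ iff every window of $\upsilon\om\nu$ is squeezed between $\overline{\al_1(q)\ldots\al_r(q)}$ and $\al_1(q)\ldots\al_r(q)$. Windows lying entirely inside $\upsilon\om$ or inside $\om\nu$ are fine by hypothesis; the only windows to control are those straddling the junction, i.e.\ starting inside $\upsilon$ and ending inside $\nu$. The key observation is that such a window, read from its start, must pass through the block $\rho$ sitting in the middle of $\om$. I would argue that because $\rho$ is not a factor of $\al(q)$, the comparison of this straddling window with $\al_1(q)\al_2(q)\ldots$ is already decided (strictly) by the time we reach the end of the copy of $\rho$ — at the latest — so it cannot violate the upper bound; symmetrically, because $\rho$ is not a factor of $\overline{\al(q)}$, it cannot violate the lower bound. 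The padding $\al_1(q)\ldots\al_t(q)$ of length $t\ge\ell$ on the left guarantees that any straddling window long enough to reach $\rho$ has already ``committed'' on the $\al$-side before hitting $\rho$: more precisely, if such a window agreed with $\al_1(q)\ldots$ all the way up to entering $\rho$, then $\rho$ (shifted appropriately) would be a factor of $\al(q)$, contradiction; and the one it disagrees at gives a strict inequality, which survives all further symbols. The right padding $\overline{\al_1(q)\ldots\al_t(q)}$ plays the mirror role for windows that straddle in the other direction or for windows reading into $\nu$.

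The main obstacle I anticipate is the bookkeeping for windows whose starting point lies inside the left padding $\al_1(q)\ldots\al_t(q)$ of $\om$ itself (not inside $\upsilon$): such a window reads $\al_i(q)\ldots\al_t(q)\,\rho\,\ldots$, and I need to know that $\al_i(q)\ldots\al_t(q)\rho_1\ldots\rho_j$ is strictly dominated by $\al_1(q)\ldots\al_{t-i+1+j}(q)$ and strictly above its reflection. This is exactly where I will invoke that $\rho$ is not a factor of $\al(q)$ nor of $\overline{\al(q)}$, together with the fact that $\al_i(q)\ldots\al_t(q)$ being admissible already forces $\al_i(q)\ldots\al_t(q)\preccurlyeq\al_1(q)\ldots\al_{t-i+1}(q)$, so that either the inequality is already strict (and we are done) or we have equality up through position $t$, in which case the next block must be $\rho$ compared against $\al_{t-i+2}(q)\ldots$, and not being a factor of $\al(q)$ forces the strict drop within $|\rho|$ more symbols — which is why $t\ge\ell$ is irrelevant here but the choice of $\rho$ avoiding both $\al(q)$ and $\overline{\al(q)}$ is essential. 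I would also need the auxiliary fact (immediate from irreducibility and $q\ge q_T$, cf.\ Lemma \ref{lem:suffix} and the arguments in Proposition \ref{pr:spec3}) that $\om$ itself lies in $\La(\vb_q)$, which follows by the same window check since $\al_1(q)\ldots\al_t(q)$, $\rho$, and $\overline{\al_1(q)\ldots\al_t(q)}$ are individually admissible and the junctions are handled as above. Assembling these, every straddling window of $\upsilon\om\nu$ is strictly between the two bounds, hence $\upsilon\om\nu\in\La(\vb_q)$, so $\om$ is intrinsically synchronising.
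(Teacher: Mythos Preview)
Your core mechanism is exactly the paper's: once you have a word $\om\in\La(\vb_q)$ that is a factor of neither $\al(q)$ nor $\overline{\al(q)}$, every suffix $u_{i+1}\ldots u_\ell\om$ of $\upsilon\om$ must satisfy the bounds in \eqref{eq:lang1} \emph{strictly}, and strict inequalities survive appending $\nu$. The paper runs precisely this argument --- but with $\om$ itself, no padding. So your detour through $\al_1(q)\ldots\al_t(q)\,\rho\,\overline{\al_1(q)\ldots\al_t(q)}$ is unnecessary, and it is where your proposal breaks.

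There are two genuine gaps. First, your reflection trick does not produce a $\rho$ avoiding both $\al(q)$ and $\overline{\al(q)}$: if $\rho$ is not a factor of $\al(q)$ but \emph{is} a factor of $\overline{\al(q)}$, then $\overline{\rho}$ is a factor of $\al(q)$, so passing to $\overline{\rho}$ trades one obstruction for the other. (The fix is to use transitivity: concatenate $\rho$ and $\overline{\rho}$ via some bridge word $\gamma$ to get $\rho\gamma\overline{\rho}\in\La(\vb_q)$, which then avoids both.) Second, and more seriously, your padded word $\om=\al_1(q)\ldots\al_t(q)\,\rho\,\overline{\al_1(q)\ldots\al_t(q)}$ need not lie in $\La(\vb_q)$ at all: the concatenation $\al_1(q)\ldots\al_t(q)\rho$ can violate the upper bound at a window starting inside the padding (for instance, if a suffix $\al_{i+1}(q)\ldots\al_t(q)$ already equals $\al_1(q)\ldots\al_{t-i}(q)$ and $\rho_1>\al_{t-i+1}(q)$). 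Your appeal to ``the same window check'' and Lemma~\ref{lem:suffix} does not cover this; Lemma~\ref{lem:suffix} lets you extend $\upsilon$ on the right by further digits of $\al(q)$, not glue an arbitrary $\rho$ after a prefix of $\al(q)$.

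The clean route is the paper's: drop the padding entirely, take $\om$ to be any admissible word occurring in neither $\al(q)$ nor $\overline{\al(q)}$, and run your straddling-window argument directly on $\upsilon\om\nu$. Everything you wrote about strict drops then goes through without further construction.
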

\begin{proof}
We claim that any word $\om \in \La(\vb_q)$ such that $\om$ is neither a factor of $\al(q)$ nor a factor of $\overline{\al(q)}$ is an intrinsically synchronising word. Let $\upsilon = u_1 \ldots u_{\ell}$ and $\nu = v_1 \ldots v_n \in \La(\vb_q)$ such that $\upsilon \om \in \La(\vb_q)$ and $\om \nu \in \La(\vb_q)$. Suppose that $|\om| = m$. Since, $\upsilon \om \in \La(\vb_q)$ we have that 
\begin{equation}\label{eq:ad1}
\overline{\al_1(q)\ldots \al_{\ell + m -i}(q)} \lle u_{i+1} \ldots u_{\ell}w_1 \ldots w_m \lle \al_1(q)\ldots \al_{\ell + m -i}(q)  .
\end{equation}
for every  $i \in \set{0, \ldots \ell -1}.$ Moreover, since $\om \nu \in \La(\vb_q)$ we obtain that
\begin{equation}\label{eq:ad2}
\overline{\al_1(q)\ldots \al_{m + n -j}(q)} \lle w_{j+1} \ldots w_{m}v_1 \ldots v_n \lle \al_1(q)\ldots \al_{m + n -j}(q)  
\end{equation}
for every  $j \in \set{0, \ldots m -1}.$ Suppose that $\upsilon \om \nu \notin \La(\vb_q)$. From \eqref{eq:ad1} and \eqref{eq:ad2} there exists $i \in \set{0, \ldots \ell -1}$ such that either 
$$\al_1(q) \ldots \al_{\ell + m + n - i}(q) \lle  u_{i+1}\ldots u_{\ell}w_1 \ldots w_{m}v_1\ldots v_n$$ 
or 
$$\overline{\al_1(q) \ldots \al_{\ell + m + n - i}(q)} \lge u_{i+1}\ldots u_{\ell}w_1 \ldots w_{m}v_1\ldots v_n.$$ 
Suppose that 
$$\al_1(q) \ldots \al_{\ell + m + n - i}(q) \lle  u_{i+1}\ldots u_{\ell}w_1 \ldots w_{m}v_1\ldots v_n.$$ 
Then, \eqref{eq:ad1} implies the following: either 
\begin{equation}\label{eq:lemma5.1eq1}
u_{i+1}\ldots u_{\ell}w_1 \ldots w_{m} =  \al_1(q) \ldots \al_{\ell + m - i}(q)\quad \text{and} \, v_1 > \al_{\ell + m - i+1}(q)
\end{equation}
 or there is $j' \in \set{2, \ldots n}$ such that 
\begin{equation}\label{eq:lemma5.1eq2}
u_{i+1}\ldots u_{\ell}w_1 \ldots w_{m}v_1\ldots v_{j'-1} = \al_1(q) \ldots \al_{\ell + m + j'-1 - i}(q) \quad \text{and} \, v_j > v_{\ell + m + j' -i}.
\end{equation} 
Both \eqref{eq:lemma5.1eq1} and \eqref{eq:lemma5.1eq2} contradict that $\omega$ is not a factor of $\al(q)$. If $$\overline{\al_1(q) \ldots \al_{\ell + m + n - i}(q)} \lge u_{i+1}\ldots u_{\ell}w_1 \ldots w_{m}v_1\ldots v_n$$ the proof follows from a similar argument. 
\end{proof}

The following proposition gives a sufficient condition on $\al(q)$ to guarantee that there exists $q \in \I$ such that $(\vb_q, \si)$ is transitive and non-synchronised.

\begin{proposition}\label{pr:synch2}
If $q \in \I$ and $\al(q)$ has dense orbit under $\si$ then no word $\om \in \La(\vb_q)$ is intrinsically synchronising.
\end{proposition}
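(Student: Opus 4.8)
The goal is to prove that \emph{no} word of $\La(\vb_q)$ is intrinsically synchronising; equivalently, given an arbitrary $\om = w_1\ldots w_m \in \La(\vb_q)$, to produce $\upsilon,\nu\in\La(\vb_q)$ with $\upsilon\om\in\La(\vb_q)$, $\om\nu\in\La(\vb_q)$ and $\upsilon\om\nu\notin\La(\vb_q)$. Both witnesses will be read off from $\al(q)$ itself: the hypothesis that the orbit of $\al(q)$ is dense guarantees that $\om$ sits inside $\al(q)$ in at least two places whose right-hand continuations differ, and playing those two continuations against each other forces a word exceeding $\al_1(q)\ldots\al_n(q)$ lexicographically, hence failing \eqref{eq:lang1}.

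The key preliminary step --- and the part I expect to need the most care --- is turning ``dense orbit'' into the combinatorial statement just described. Since $q\in\I$, $(\vb_q,\si)$ is topologically transitive by Theorem \ref{th:mainrsd}$(i)$; since $q\ge q_T>q_{KL}$ one has $h_{\operatorname{top}}(\vb_q)>0$, so $\vb_q$ is infinite, and an infinite topologically transitive subshift has no isolated points. Consequently a dense forward orbit enters every cylinder of $\vb_q$ infinitely often, so $\al(q)$ is not eventually periodic and $\om$ occurs in $\al(q)$ at infinitely many positions; discarding the occurrence at $0$ if it exists, there remain infinitely many occurrence positions $\ge 1$. If every occurrence position $a\ge1$ produced the same tail $\si^{a+m}(\al(q))$, then $\al(q)$ would be eventually periodic, contrary to the above; hence there are occurrence positions $a\ge1$ and $a'$ of $\om$ in $\al(q)$ with $\si^{a'+m}(\al(q))\succ\si^{a+m}(\al(q))$ (interchange the two if necessary). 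Let $p\ge1$ be least with $\al_{a'+m+p}(q)\neq\al_{a+m+p}(q)$; then $\al_{a'+m+p}(q)>\al_{a+m+p}(q)$.

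It remains to assemble the witness. Put $\upsilon:=\al_1(q)\ldots\al_a(q)$ and $\nu:=\al_{a'+m+1}(q)\ldots\al_{a'+m+p}(q)$. Then $\upsilon\om=\al_1(q)\ldots\al_{a+m}(q)$ is a prefix of $\al(q)$ and $\om\nu=\al_{a'+1}(q)\ldots\al_{a'+m+p}(q)$ is a factor of $\al(q)$, so both belong to $\La(\vb_q)$. On the other hand $\upsilon\om\nu=\al_1(q)\ldots\al_{a+m}(q)\,\al_{a'+m+1}(q)\ldots\al_{a'+m+p}(q)$ has length $a+m+p$; by the minimality of $p$ its first $a+m+p-1$ letters coincide with $\al_1(q)\ldots\al_{a+m+p-1}(q)$, while its last letter is $\al_{a'+m+p}(q)>\al_{a+m+p}(q)$, so $\upsilon\om\nu\succ\al_1(q)\ldots\al_{a+m+p}(q)$. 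This violates the case $i=0$ of \eqref{eq:lang1}, whence $\upsilon\om\nu\notin\La(\vb_q)$. Thus $\om$ is not intrinsically synchronising, and since $\om$ was arbitrary the proof is complete. Together with Lemma \ref{lem:synch1} this yields the synchronisation dichotomy for $q\in\I$: $(\vb_q,\si)$ is synchronised precisely when the orbit of $\al(q)$ --- equivalently that of $\overline{\al(q)}$ --- is not dense in $\vb_q$.
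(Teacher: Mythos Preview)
Your proof is correct and follows essentially the same strategy as the paper's: find an occurrence of $\om$ in $\al(q)$ whose right continuation is strictly smaller than some other admissible continuation $\nu$, and take $\upsilon$ to be the prefix of $\al(q)$ up to that occurrence, so that $\upsilon\om\nu$ lexicographically exceeds the corresponding prefix of $\al(q)$ and hence violates \eqref{eq:lang1}. The only difference is cosmetic---you extract the two distinct continuations from two occurrences of $\om$ inside $\al(q)$ (using that a dense orbit in an infinite subshift cannot be eventually periodic), whereas the paper picks two elements $\gamma\prec\nu$ of the follower set $F^m_{\vb_q}(\om)$ and locates $\om\gamma$ in $\al(q)$ and $\om\nu$ in $\overline{\al(q)}$.
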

\begin{proof}
Let $\om \in \La(\vb_q)$ and let us assume that $|\om| = n$. Since the orbit under $\si$ of $\al(q)$ is dense in $\vb_q$, it is clear that the orbit of $\overline{\al(q)}$ is also dense in $\vb_q$. Recall that the \emph{follower set} of $\om$ is given by
$$F_{\vb_q}(\om) = \set{\nu \in \La(\vb_q): \om \nu \in \La(\vb_q)}.$$ 
Fix $m \in \N$ and let ${F^m_{\vb_q}}(\om) = \set{\nu \in F_{\vb_q}(\om) : |\nu| = m}$. Since $\al(q)$ and $\overline{\al(q)}$ have dense orbits in $\vb_q$ then for every $\nu \in {F^m_{\vb_q}}(\om)$ there exist $k, k' \in \N$ such that $\om\nu$ is a prefix of $\si^k(\al(q))$ and $\om\nu$ is a prefix of $\si^{k'}(\overline{\al(q)})$. Fix $\gamma, \nu  \in {F^m_{\vb_q}}(\om)$ with $\gamma \prec \nu$. Then, there exist $k$ and $k' \in \N$ such that $\si^{k}(\al(q)) = \om \gamma$ and $\si^{k'}(\overline{\al(q)}) = \om\nu$. Clearly $\om \gamma \prec \om \nu$. Let $\upsilon = \al_1(q) \ldots \al_{k}(\al(q))$. Then $\upsilon \om \in \La(\vb_q)$. However $\upsilon \om \nu \notin \La(\vb_q)$ since $\upsilon \om \nu \succ \al_1(q) \ldots \al_{k+n+m}$. Thus, $\om$ is not a synchronising word.   
\end{proof}

As a direct consequence of \eqref{eq:sequenceofshifts} and Corollary \ref{cor:c3}, we obtain the following corollary.

\begin{corollary}\label{cor:c4}
The class $\Civ$ is uncountable.
\end{corollary}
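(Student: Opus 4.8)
The statement to prove is that the class $\Civ$ is uncountable. The plan is to derive this as a direct consequence of the implication chain \eqref{eq:sequenceofshifts} together with Corollary \ref{cor:c3}. Recall that \eqref{eq:sequenceofshifts} asserts, for topologically mixing subshifts, that having the specification property implies being synchronised. So the first step is to observe that every $q \in \Ciii$ parametrises a subshift $(\vb_q, \si)$ which has the specification property; by Proposition \ref{pr:mixing} (or more directly because specification always forces mixing) such a subshift is topologically mixing, hence by \eqref{eq:sequenceofshifts} it is synchronised. Since synchronised subshifts are by definition transitive, and $q \in \Ciii \subset \I$ by Proposition \ref{pr:spec1}, we conclude $q \in \Civ$. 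This gives the inclusion $\Ciii \subseteq \Civ$.

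The second step is simply to invoke Corollary \ref{cor:c3}, which states that $\Ciii$ is uncountable. Combining with the inclusion $\Ciii \subseteq \Civ$ just established, we obtain that $\Civ$ is uncountable, which is exactly the assertion of the corollary. One should perhaps explicitly point the reader to the already-exhibited uncountable family: for every $N \geq 2$, Corollary \ref{pr:specn} gives $\I_N \subset \Ciii$, and $\dim_{\operatorname{H}}(\I_N) > 0$ (hence $\I_N$ uncountable) by \cite[Lemma 6.4]{AlcBakKon2016}. So $\I_N \subseteq \Ciii \subseteq \Civ$ already furnishes an uncountable subset of $\Civ$.

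There is essentially no obstacle here — the corollary is a bookkeeping consequence of results already in hand. The only point requiring a sentence of care is the appeal to \eqref{eq:sequenceofshifts}: that chain of implications is stated for topologically mixing subshifts, so one must remark that specification $\Rightarrow$ mixing (noted right after Definition \ref{def:topologicalproperties}) to license its use; alternatively one can cite Proposition \ref{pr:mixing} since $\Ciii \subseteq \I$. A short proof of the following form suffices:

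\begin{proof}
Let $q \in \Ciii$. By Proposition \ref{pr:spec1} we have $q \in \I$, so by Proposition \ref{pr:mixing} the subshift $(\vb_q, \si)$ is topologically mixing; it also has the specification property. Hence, by the chain of implications \eqref{eq:sequenceofshifts}, $(\vb_q, \si)$ is synchronised, i.e. $q \in \Civ$. This shows $\Ciii \subseteq \Civ$, and the conclusion follows from Corollary \ref{cor:c3}. (Concretely, for every $N \geq 2$ one has $\I_N \subseteq \Ciii \subseteq \Civ$ by Corollary \ref{pr:specn}, and $\I_N$ is uncountable since $\dim_{\operatorname{H}}(\I_N) > 0$.)
\end{proof}
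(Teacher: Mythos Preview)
Your proposal is correct and follows exactly the paper's approach: the paper derives the corollary as a direct consequence of the implication chain \eqref{eq:sequenceofshifts} together with Corollary \ref{cor:c3}, which is precisely your argument that $\Ciii \subseteq \Civ$ via specification $\Rightarrow$ synchronised (for mixing subshifts) and then the uncountability of $\Ciii$.
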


\subsection{Existence non synchronised and transitive symmetric $q$-subshifts.}
\label{constructdense}

We will show now that there are bases $q \in \I$ with $\al(q)$ dense in $\vb_q$. We will perform a construction inspired by the one given by Schmeling in \cite[Proof of Theorem B]{Sch1997}. Unfortunately, the presented construction is algorithmically complicated. Nonetheless, it is not our objective to give an optimal construction for such bases.

Firstly, fix $q \in \operatorname{Per}(\I)$. So, $\al(q) = (\al_1(q) \ldots \al_m(q))^\f$ where $m$ is the period of $\al(q)$. Consider $B_m(\vb_q)$ ordered in a decreasing way with respect to the lexicographical order. Note that the largest element of $B_m(\vb_q)$ is $\al_1(q) \ldots \al_m(q)$ and the smallest is $\overline{\al_1(q) \ldots \al_m(q)}$. Now we recall \cite[Lemma 3.9]{AlcBakKon2016}.

\begin{lemma}\label{lem:prefix}
Let $q\in [q_T, M+1] \cap \vl$ and $m\in\N$. Then for any $\nu\in\La(\vb_q)$ there exists $\eta\in\La(\vb_q)$ such that $k^m$ is a prefix of $\eta\nu\in\La(\vb_q)$ if $M=2k$, or $((k+1)k)^m$ is a prefix of $\eta \nu \in\La(\vb_q)$ if $M=2k+1$.
\end{lemma}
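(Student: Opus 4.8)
The plan is to take $\eta$ to be the central block itself: set $\eta = k^m$ when $M = 2k$ and $\eta = ((k+1)k)^m$ when $M = 2k+1$. With this choice $\eta\nu$ manifestly has the required prefix, so everything reduces to verifying that $\eta\nu \in \La(\vb_q)$ via the characterisation \eqref{eq:lang1}. No $\nu$-dependent construction is needed; the same $\eta$ works uniformly, which in particular gives $\eta \in \La(\vb_q)$ as the special case $\nu = \epsilon$.

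First I would record the consequences of the hypothesis $q \geq q_T$. By Lemma~\ref{lem:quasigreedyexpansion} and \eqref{eq:transconstant} we have $\al(q) \succcurlyeq \al(q_T)$, hence $\al_1(q) \geq k+1$; moreover, when $M = 2k+1$ and $\al_1(q) = k+1$, applying $\si$ gives $\al_2(q) \geq k+1$ as well. Reflecting, $\overline{\al_1(q)} \leq k-1$ when $M = 2k$ and $\overline{\al_1(q)} \leq k$ when $M = 2k+1$, with $\overline{\al_2(q)} \leq k$ in the borderline case $\al_1(q) = k+1$. These are precisely the inequalities that place the symbol $k$ (and, in the odd case, the pair $(k+1)k$) in the interior of the admissibility window at every position.

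Then I would check \eqref{eq:lang1} for $\om = \eta\nu = w_1\ldots w_n$ at each starting index $i$. If the factor $w_{i+1}\ldots w_n$ lies entirely inside the copy of $\nu$, the two lexicographic inequalities follow immediately from $\nu \in \La(\vb_q)$. Otherwise $w_{i+1}\ldots w_n$ consists of a suffix of $\eta$ followed by all of $\nu$. For $M = 2k$ this factor begins with $k$, so it is $\prec \al_1(q)\ldots$ because $\al_1(q) \geq k+1$ and $\succ \overline{\al_1(q)}\ldots$ because $\overline{\al_1(q)} \leq k-1$. For $M = 2k+1$ the factor begins with $k+1$ or with $k$ according to the parity of the truncation point inside $((k+1)k)^m$; in each subcase the first symbol already resolves both inequalities, except when $\al_1(q) = k+1$, where the second symbol does so, using $\al_2(q) \geq k+1$ and $\overline{\al_2(q)} \leq k$. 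The edge case $q = q_T$ is harmless since \eqref{eq:lang1} asks only for $\preccurlyeq$, not strict inequality.

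The only mildly delicate point I expect is the bookkeeping for $M = 2k+1$ with $\al_1(q) = k+1$: there one must pass to second coordinates and track whether the relevant suffix of $((k+1)k)^m$ starts on an even or an odd index. Both subcases are routine once the inequalities of the second paragraph are in hand, so I do not anticipate any genuine obstacle.
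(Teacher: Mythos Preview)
Your approach is correct when $M = 2k$: since $q \geq q_T$ gives $\al_1(q) \geq k+1$, we have $\overline{\al_1(q)} \leq k-1 < k < k+1 \leq \al_1(q)$, so every suffix of $k^m\nu$ that still touches $\eta = k^m$ has its first symbol strictly between $\overline{\al_1(q)}$ and $\al_1(q)$, and \eqref{eq:lang1} follows immediately.

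For $M = 2k+1$ the uniform choice $\eta = ((k+1)k)^m$ does \emph{not} work. The trouble is at $i = 2m-1$: the suffix of $\eta\nu$ starting there is $k\,v_1v_2\ldots v_r$, and the ``second symbol'' you appeal to is $v_1$, not $k+1$. When $\al_1(q) = k+1$ one has $\overline{\al_1(q)} = \overline{\al_2(q)} = k$, so the first symbol of the suffix gives only equality with $\overline{\al(q)}$, and nothing in the hypotheses forces $v_1 > k$. Concretely, take $M = 3$ (so $k = 1$), $q = q_T$, $\al(q_T) = 2(21)^\f$. The word $\nu = 11$ lies in $\La(\vb_{q_T})$ (it is a factor of $(1122)^\f \in \vb_{q_T}$), yet with $m = 1$ your $\eta\nu = 2111$ is inadmissible: its suffix $111$ satisfies $111 \prec 112 = \overline{\al_1(q_T)\al_2(q_T)\al_3(q_T)}$, violating the lower inequality in \eqref{eq:lang1}. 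Hence no $\nu$-independent $\eta$ can exist in the odd case; a correct argument must insert a $\nu$-dependent bridge between $((k+1)k)^m$ and $\nu$. (The present paper quotes this lemma from \cite[Lemma~3.9]{AlcBakKon2016} without reproving it, but any valid proof must account for this junction, which your shortcut misses.)
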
   

Then, for every $\om \in B_n(\vb_q)$ there is a word $\eta_{\omega} \in \La(\vb_q)$ such that $(k+1)k$ is a prefix of $\eta_{\om}\om \in \La(\vb_q)$ if $M = 2k+1$, or $k$ is a prefix of $\eta_{\om}\om \in \La(\vb_q)$ if $M = 2k$. On the other hand, Lemma \ref{lem:suffix} assures us that for every $j > |\eta_{\om}\om|$, there exists a word $\gamma_{\eta_{\om}\om} \in \La(\vb_q)$ such that $\eta_{\om}\om\gamma_{\eta_{\om}\om} \in \La(\vb_q)$ and the word $\al_1(q) \ldots \al_m(q)$ is a suffix of $\eta_{\om}\om\gamma_{\eta_{\om}\om}$.  For each $i \in \set{2, \ldots, \#B_m(\vb_q)-1}$ and for each word $\om_i \in B_m(\vb_q)$ we call the word
\begin{equation}\label{def:extendedword}
\omega'_i = \eta_{\om_i}\om_i\gamma_{\eta_{\om_i}\om_i}
\end{equation} 
\emph{an extended word of $\om_i$}. For $\om_1 = \al_1(q)\ldots \al_m(q)$ we set $\om'_1 = (\al_1(q)\ldots \al_m(q))^t$ for a fixed $t \in \N$ with $t \geq 2$,\footnote{This technical condition will help us to prove that the word generated in Lemma \ref{lem:densegenerator} will parameterise a transitive symmetric subshift.} and for $\om_{\#B_m(\vb_q)} = \overline{\al_1(q) \ldots \al_m(q)}$ we set $\om'_{\#B_m(\vb_q)} = \om_{\#B_m(\vb_q)}$.

From \cite[Theorem 1]{AlcBakKon2016} there exists $\set{\upsilon_i}_{i = 1}^{\#B_m(\vb_q) -1} \subset \La(\vb_q)$ such that 
\begin{equation}\label{eq:denseword}
\delta(q) = \om'_1\upsilon_1\om'_2\upsilon_2 \ldots \om'_{\#B_m(\vb_q)-1}\upsilon_{\#B_m(\vb_q)-1} \om'_{\#B_m(\vb_q)} \in \La(\vb_q).
\end{equation}  
Clearly, the word $\delta(q)$ defined in \ref{eq:denseword} satisfies that $\nu$ is a factor of $\delta(q)$ for every $\nu \in \mathop{\bigcup}\limits_{k = 0}^m B_k(\vb_q)$.

We recall now the notion of \emph{primitive word} introduced in \cite[Definition 3.10]{AlcBakKon2016}. Given a finite word $\om = w_1 \ldots w_n$ we say that $\om$ is \emph{primitive} if 
\begin{equation}\label{eq:primitive}
\overline{w_1 \ldots w_{n-i}} \prec w_{i+1} \ldots w_n \lle w_1 \ldots w_{n-i}\quad \text{ for every } i \in \set{0, \ldots m-1}. 
\end{equation} 
Note that  
\eqref{eq:primitive} is similar to \eqref{eq:admissible1}. Also, it is clear that 
\begin{equation}\label{eq:distance}
d(\delta(q), \al_1(q) \ldots \al_{|\delta(q)|}) \leq 1/2^{m \cdot t}.
\end{equation} 
Here, we are inducing the distance $d$ for $\Sig$ as a distance in $B_{|\delta(q)|}(\vb_q)$. 

The main idea of our construction is to show that for any $q \in \operatorname{Per}(\I)$ it is possible to find a set $\set{\upsilon_i}_{i = 1}^{\#B_m(\vb_q)-1} \subset \La(\vb_q)$ satisfying that $\delta(q)$ is a prefix of a fundamental word $\theta(q)$. For this purpose, we need to recall the notion of \emph{reflection recurrence word} introduced in \cite[Definition 3.11]{AlcBakKon2016} and some related results. 

Given a primitive word $\om = w_1 \ldots w_n$, the \emph{reflection recurrence word of $\om$} is the truncated word
$$\mathfrak{R}(\om) =  w_1 \ldots w_s$$ 
where 
$$s = \mathop{\min}\set{s \in \set{0, \ldots, n-1}: w_{s+1}\ldots w_n^- = \overline{w_1 \ldots w_{n-s}}}.$$ 
In the case that $s = 0$, we have that $\mathfrak{R}(\om) = \epsilon$. We summarise the results proven in \cite[Lemmas 3.13, 3.14. 3.15 and 3.16]{AlcBakKon2016} in the following lemma.

\begin{lemma}\label{lem:reflectionreccurrence}
Suppose that $\om$ is a primitive word with $|\om| = m$.
\begin{enumerate}[$i)$]
\item If  $m \geq 2$ then 
$$m/2 \leq |\mathfrak{R}(\om)| \leq m;$$
\item $\mathfrak{R}(\om)$ is primitive;
\item For $n \in \N$ set $\mathfrak{R}^n(\om) = \mathfrak{R}(R^{n-1}(\om))$ and $\mathfrak{R}^0(\om) = \mathfrak{R}(\om)$. If $m \geq 2$ then there exists $j \in \set{0, \ldots, m}$ such that either 
$$\mathfrak{R}^{j+1}(\om) = \mathfrak{R}^{j}(\om)$$ 
with $|\mathfrak{R}^j(\om)| \leq 2$ or $\mathfrak{R}^j(\om) = w_1\overline{w_1}^+$.  
\item Let $q \in \I$. There exists infinitely many $m \in \N$ such that $\al_1(q) \ldots \al_m(q)$ is a primitive word and for each of such $m \in \N$ there exists $N = N(m)$ such that 
\begin{align}\label{eq:primitive1} 
\overline{\al(q^-_N)} &\prec \si^n(\al_1(q) \ldots \al_m(q)(\overline{\al_1(q) \ldots \al_m(q)}^+)^\f) \prec \al(q^-_N);\\
\label{eq:primitive1prime}
\overline{\al(q^-_N)} &\prec \si^n(\overline{\al_1(q) \ldots \al_m(q)}(\al_1(q) \ldots \al_m(q)^-)^\f) \prec \al(q^-_N);
\end{align}
and for every $r \in \N$ 
\begin{align}
\label{eq:primitive2} 
\overline{\al(q^-_N)} &\prec \si^n({\al_1(q) \ldots \al_m(q)}^-(\mathfrak{R}(\al_1(q) \ldots \al_m(q))^-)^\f) \prec \al(q^-_N);\\
\label{eq:primitive2prime}
\overline{\al(q^-_N)} &\prec \si^n(\overline{\al_1(q) \ldots \al_m(q)}^+(\mathfrak{R}(\al_1(q) \ldots \al_m(q))^+)^\f) \prec \al(q^-_N).
\end{align}
\end{enumerate}
\end{lemma}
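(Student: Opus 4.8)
\textbf{Proof proposal for Lemma \ref{lem:reflectionreccurrence}.}

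The plan is to prove the four items essentially in the order stated, since $(ii)$ and $(iii)$ build directly on the definition of $\mathfrak{R}$ and $(iv)$ is a lexicographic estimate that uses the previous parts together with the fact that $q \in \I$. For $(i)$, the lower bound $|\mathfrak{R}(\om)| \geq m/2$ comes from unwinding the defining condition $w_{s+1}\ldots w_n^- = \overline{w_1 \ldots w_{n-s}}$: if $s < m/2$ then the two halves $w_1 \ldots w_s$ and $w_{s+1} \ldots w_{2s}$ would overlap in a way forcing $w_1 \ldots w_s = \overline{w_1 \ldots w_s}$ up to the $\pm 1$ correction, which one checks is incompatible with primitivity (the primitivity inequalities \eqref{eq:primitive} applied at shift $i = s$ are strict on the left, $\overline{w_1 \ldots w_{n-s}} \prec w_{s+1}\ldots w_n$, so equality after subtracting $1$ is the only possibility and it pins down $s$ from below). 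The upper bound $|\mathfrak{R}(\om)| \leq m$ is immediate from $s \in \{0, \ldots, n-1\}$, hence $|\mathfrak{R}(\om)| = s \le m-1 < m$; I would double-check whether the intended bound is $s \le m$ or the slightly sharper $s \le m-1$ and phrase accordingly.

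For $(ii)$, I would verify the primitivity inequalities \eqref{eq:primitive} for $\mathfrak{R}(\om) = w_1 \ldots w_s$ directly: for $0 \le i \le s-1$ we need $\overline{w_1 \ldots w_{s-i}} \prec w_{i+1} \ldots w_s \lle w_1 \ldots w_{s-i}$. The right inequality is inherited from the primitivity of $\om$ restricted to its prefix. For the left (strict) inequality, the key point is that by minimality of $s$ in the definition of $\mathfrak{R}$, for every $0 \le i < s$ one does \emph{not} have $w_{i+1}\ldots w_n^- = \overline{w_1 \ldots w_{n-i}}$; combined with the primitivity of $\om$ at shift $i$ this forces the strict inequality already at the level of the length-$(s-i)$ prefixes. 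This is the most delicate bookkeeping in the lemma and I expect it to be the main obstacle: one has to argue carefully that a failure of strictness for $\mathfrak{R}(\om)$ at some shift $i$ propagates back to either a violation of primitivity of $\om$ or a contradiction with the minimality of $s$.

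For $(iii)$, part $(i)$ gives $|\mathfrak{R}^{n}(\om)|$ is non-increasing in $n$ and drops by a factor of at most $2$ each step (and strictly decreases as long as it exceeds $2$, unless we have hit a fixed point), so the sequence of lengths stabilises after at most $\log_2 m \le m$ steps; a short case analysis on the stabilised word, using $(ii)$ that each $\mathfrak{R}^j(\om)$ is primitive, shows the only primitive words of length $\le 2$ fixed by $\mathfrak{R}$ are the trivial ones, and the length-$2$ primitive words are exactly those of the form $w_1 \overline{w_1}^+$ (this is a direct check against \eqref{eq:primitive} with $n=2$). Finally, for $(iv)$: the existence of infinitely many $m$ with $\al_1(q)\ldots\al_m(q)$ primitive follows from Lemma \ref{lem:admissible1} (a fundamental word satisfying the extra strict-on-the-right condition is primitive, or one adjusts by passing to a suitable prefix). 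Given such an $m$, one takes $N$ large enough in the natural approximation from below $\{q^-_N\}$ so that $d(\al(q), \al(q^-_N))$ is smaller than $2^{-m}$; then the four displayed chains \eqref{eq:primitive1}--\eqref{eq:primitive2prime} are each a statement that a specific sequence built from $\al_1(q)\ldots\al_m(q)$ (and its reflection recurrence word) has all $\si$-shifts strictly between $\overline{\al(q^-_N)}$ and $\al(q^-_N)$. Since these sequences agree with $\al(q)$ (resp.\ $\overline{\al(q)}$) on their first $m$ coordinates and $\al(q) \in \vb$, the shifts automatically lie in $[\overline{\al(q)}, \al(q)]$; primitivity and the $+$/$-$ perturbations give the strictness at the boundary, and choosing $N$ so that $n_N > m$ upgrades "$\preccurlyeq \al(q)$" to "$\prec \al(q^-_N)$" on the relevant long initial segments. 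I would cite \cite[Lemmas 3.13--3.16]{AlcBakKon2016} as the source and reproduce only the adjustments needed for the approximation-from-below formulation used here.
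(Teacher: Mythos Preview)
The paper does not prove this lemma: immediately before the statement it writes ``We summarise the results proven in \cite[Lemmas 3.13, 3.14, 3.15 and 3.16]{AlcBakKon2016} in the following lemma'' and gives no argument at all. Your closing sentence---cite \cite[Lemmas 3.13--3.16]{AlcBakKon2016} and only record the adjustments to the approximation-from-below language---is therefore exactly what the paper itself does, and is the correct approach here.

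Your sketch of how the cited proofs run is broadly sound and goes well beyond what the paper provides. One small correction for part $(iv)$: fundamental words (strict on the right, as in \eqref{eq:admissible1}) and primitive words (strict on the left, as in \eqref{eq:primitive}) are not the same, so Lemma~\ref{lem:admissible1} does not directly supply primitive prefixes of $\al(q)$; you would need the mirror argument, or simply defer to \cite[Lemma~3.16]{AlcBakKon2016}, which is precisely the statement that for $q \in \I$ infinitely many prefixes $\al_1(q)\ldots\al_m(q)$ are primitive and the displayed inequalities hold in some $\vb_{q^-_N}$.
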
  

We now show the desired properties for $\delta(q)$. We want to remark that the proof of the following lemma is strongly based on the argument used to prove \cite[Propositon 3.17]{AlcBakKon2016}.

\begin{lemma}\label{lem:primitive}
Let $q \in \operatorname{Per}(\I)$. Then there exists $\set{\upsilon_i}_{i = 1}^{\#B_m(\vb_q) -1} \subset \La(\vb_q)$ such that $\delta(q)$ is a prefix of a fundamental word $\theta(q)$.
\end{lemma}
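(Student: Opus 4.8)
The plan is to build the word $\theta(q)$ by carefully choosing the connecting words $\upsilon_i$ so that the whole concatenation becomes a primitive (in fact fundamental) word, and the main tool is Lemma \ref{lem:reflectionreccurrence}, exactly as in the proof of \cite[Proposition 3.17]{AlcBakKon2016}. First I would recall that $\delta(q)$ begins with $\om_1' = (\al_1(q)\ldots\al_m(q))^t$, so by \eqref{eq:distance} the prefix of $\delta(q)$ agrees with $\al_1(q)\ldots\al_{|\delta(q)|}(q)$ up to a very long initial block; this forces every ``suffix comparison'' $\overline{\theta_1\ldots\theta_{n-i}}\lle\theta_{i+1}\ldots\theta_n\prec\theta_1\ldots\theta_{n-i}$ needed in the definition \eqref{eq:admissible1} of a fundamental word to be checked only for cuts $i$ lying not too far from the end. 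Since $q\in\operatorname{Per}(\I)\subset\I$, Lemma \ref{lem:reflectionreccurrence}(iv) gives infinitely many $m'\ge m$ for which $\al_1(q)\ldots\al_{m'}(q)$ is primitive together with the four inequalities \eqref{eq:primitive1}--\eqref{eq:primitive2prime} relative to a suitable $q^-_N$ of the natural approximation from below.

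The key steps, in order, are as follows. (1) Replace each of the extended words $\om_i'$ (including $\om_1'$ and $\om_{\#B_m(\vb_q)}'$) by a slightly longer word that ends in a long enough block $\al_1(q)\ldots\al_{m'}(q)$ with $m'$ primitive, using Lemma \ref{lem:suffix}; this does not destroy the property that every $\nu\in\bigcup_{k\le m}B_k(\vb_q)$ is a factor, and it puts us in the situation where Lemma \ref{lem:reflectionreccurrence}(iv) applies at each junction. (2) At each junction between $\om_i'$ and $\om_{i+1}'$, choose $\upsilon_i$ of the form $(\overline{\al_1(q)\ldots\al_{m'}(q)}^+)$-type tails or $\mathfrak{R}$-iterate tails so that, reading across $\om_i'\upsilon_i\om_{i+1}'$, every suffix compared against a prefix satisfies the strict upper inequality $\prec$ and the lower inequality $\lge \overline{(\cdot)}$; here inequalities \eqref{eq:primitive1}--\eqref{eq:primitive2prime} are used precisely to certify that shifts landing inside the connecting region stay strictly between $\overline{\al(q^-_N)}$ and $\al(q^-_N)$, hence a fortiori between $\overline{\al(q)}$ and $\al(q)$. (3) Assemble the full word $\theta(q)=\om_1'\upsilon_1\cdots\om_{\#B_m(\vb_q)}'$ and verify \eqref{eq:admissible1} cut by cut: for cuts inside the initial $\om_1'$ block one uses that $\al_1(q)\ldots\al_m(q)$ is primitive and $t\ge 2$; for cuts in the interior one uses step (2); and one checks the terminal condition $w_{i+1}\ldots w_n\prec w_1\ldots w_{n-i}$ rather than just $\lle$, which is where the choice of the last extended word $\om_{\#B_m(\vb_q)}'=\overline{\al_1(q)\ldots\al_m(q)}$ together with a strict final comparison is needed. (4) Conclude that $\theta(q)$ is fundamental and that $\delta(q)$ (or a suitable modification of it with the same factor-richness) is a prefix of $\theta(q)$.

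The step I expect to be the main obstacle is step (2): choosing the $\upsilon_i$ so that \emph{all} suffixes of $\om_i'\upsilon_i$ that are long enough to reach back into $\om_i'$, as well as all suffixes of $\om_i'\upsilon_i\om_{i+1}'$ reaching back across the junction, simultaneously satisfy both lexicographic inequalities in \eqref{eq:admissible1}. The delicate point is that $\om_{i+1}'$ may start lexicographically large (close to $\al_1(q)\ldots$) while $\om_i'$ ends with $\al_1(q)\ldots\al_{m'}(q)$, so a naive concatenation would produce a factor exceeding $\al(q)$; inserting an $\mathfrak{R}$-iterate tail of length governed by Lemma \ref{lem:reflectionreccurrence}(i),(iii) ``resets'' the comparison to $w_1\overline{w_1}^+$ or a length-$\le 2$ primitive word, after which the prefix $(k+1)k$ (resp.\ $k$) guaranteed by Lemma \ref{lem:prefix} at the start of $\om_{i+1}'$ keeps the comparison strictly below $\al(q)$. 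Making this bookkeeping precise — tracking which iterate $\mathfrak{R}^{j}$ is needed and checking \eqref{eq:primitive2}, \eqref{eq:primitive2prime} for the relevant shifts — is the technical heart, and it is essentially a direct but lengthy adaptation of the argument of \cite[Proposition 3.17]{AlcBakKon2016} to the concatenated word $\delta(q)$ in place of a single primitive block.
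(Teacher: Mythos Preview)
Your proposal is correct and follows essentially the same route as the paper: the connecting words $\upsilon_i$ are built as $\gamma_i\nu_i$ with $\nu_i$ a concatenation of $\mathfrak{R}$-iterate tails exactly as in \cite[Proposition 3.17]{AlcBakKon2016}, and the decreasing ordering of the $\om_i$ (which you have implicitly via the definition of $\delta(q)$) is what guarantees the two-sided inequality for each partial concatenation $\om'_1\upsilon_1\cdots\om'_i$. The one concrete detail you leave vague is resolved in the paper simply by setting $\theta(q)=\delta(q)\,\overline{\al_1(q)}$: the extra symbol is appended because the terminal block $\om'_{\#B_m(\vb_q)}=\overline{\al_1(q)\ldots\al_m(q)}$ only gives the primitive (non-strict left) inequality, and the appended $\overline{\al_1(q)}$ upgrades the relevant comparisons to the strict form required by \eqref{eq:admissible1}.
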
 
\begin{proof}
Let $\al(q) = (\al_1(q)\ldots \al_m(q))^\f$ and consider $B_m(\vb_q)$ ordered in a decreasing way with respect to the lexicographical order. For each $\om_i \in B_m(\vb_q)$ let $\om'_i$ be a extended word of $\om_i$ given in \ref{def:extendedword}. From Lemma \ref{lem:reflectionreccurrence} there is a word $\ga_1$ such that 
\begin{equation}\label{eq:proofprim1}
\om_1\ga_1 \in \La(\vb_q) \quad \text{ and } \quad \om_1\ga_1 \quad \text{ is primitive.} 
\end{equation}
We can consider $\ga_1$ to have minimal length and satisfy \eqref{eq:proofprim1}. From \cite[Proposition 3.17]{AlcBakKon2016} there exists $\nu_1 \in \La(\vb_q)$ such that $\om'_1\ga_1\nu_1\om'_2 \in \La(\vb_q)$ and 
$$\nu_1 = (\overline{\mathfrak{R}(\om'_1\ga)}^+)^N \ldots \overline{\mathfrak{R}^{\ell}(\om'_1\ga_1)}^+)^N,$$ 
where $\ell \in \set{1, \ldots, |\om'_1\ga|}$ and $N$ is given by \eqref{eq:primitive2prime} of Lemma \ref{lem:reflectionreccurrence}. Let us set $$\om'_1\ga\nu_1\om'_2 = b^1_1 \ldots b^1_K$$ with $K_1 = |\om'_1\ga\nu_1\om'_2|$. Since $\om_1 \succ \om_2$ we have that for any $j \in \set{1, \ldots, K_1},$ $$\overline{b^1_1 \ldots b_{K_1-j}} \lle \si^{j}(b^1_1 \ldots b_{K^1}) \lle b^1_1 \ldots b^1_{K_1-j}.$$  Since $\om'_2$ is the suffix of $b_1 \ldots b_K$ we have that $\al_1(q)\ldots\al_m(q)$ is the suffix of length $m$ of $b_1 \ldots b_K$. Then, from Lemma \ref{lem:reflectionreccurrence} there is $\ga_2$ such that $\al_1(q)\ldots\al_m(q)\ga_2 \in \La(\vb_q)$ is primitive. We can consider again $\ga_2$ to have minimal lenght. Then \cite[Proposition 3.17]{AlcBakKon2016} implies that there is $\nu_2 \in \La(\vb_q)$ such that $\om'_1\ga_1\nu_1\om'_2\ga_2\nu_2\om'_3 \in \La(\vb_q)$ and
$$\nu_2 = (\overline{\mathfrak{R}(\al_1(q)\ldots\al_m(q)\ga_2)}^+)^N \ldots \overline{\mathfrak{R}^{\ell}(\al_1(q)\ldots\al_m(q)\ga_2)}^+)^N,$$
with $\ell \in \set{1, \ldots, m+|\ga_2|}$ and $N \in \N$. Set $K_2 = |\om'_1\ga_1\nu_1\om'_2\ga_2\nu_2\om'_3|$ and
$$\om'_1\ga_1\nu_1\om'_2\ga_2\nu_2\om'_3 = b^2_1 \ldots b^2_{K_1}.$$ 
Similarly, since $\om_2 \succ \om_3$ then for any $j \in \set{1, \ldots, K_2}$ we obtain that 
$$\overline{b^2_1 \ldots b_{K_2-j}} \lle \si^{j}(b^2_1 \ldots b_{K^2}) \lle b^2_1 \ldots b^2_{K_2-j}.$$ 
Iterating this procedure we obtain that the set $\set{\upsilon_i}_{i=1}^{\#B_m(\vb_q)}$ with $\upsilon_i = \ga_i\nu_i$ for every $i \in \set{1, \ldots \#B_m(\vb_q)-1}$ satisfies that 
$$\delta(q) = b_1\ldots b_K = \om'_1\upsilon_1\om'_2\ldots \upsilon_{\#B_m(\vb_q)-1}\om'_{\#B_m(\vb_q)} \in \La(\vb_q)$$ 
and 
$$ \overline{b_1\ldots b_{K-j}} \lle \si^j(\delta(q)) \lle b_1\ldots b_{K-j}$$ 
for every $j \in \set{0, \ldots, K-1}$. Observe that for $j = K-m$ we have that $\si^j(\delta(q)) =  \overline{\al_1(q)\ldots \al_m(q)}$. Consider $\theta(q) = \delta(q)\overline{\al_1(q)}$. Note that $\si^j(\theta(q)) = \overline{\al_1(q)\ldots\al_m(q)\al_1(q)}$. Since $\al_1(q)\ldots\al_m(q)$ is a fundamental word and satisfies
\[
\overline{\al_1(q)\ldots\al_m(q)\al_1(q)} \prec \al_1(q)\ldots\al_m(q)\al_1(q),
\]
we have that for every $j \in \set{1, \ldots m}$,
\[
\overline{\al_1(q)\ldots\al_{m+1-i}(q)} \lle \overline{\al_{i+1}(q) \ldots \al_m(q)\al_1(q)} \prec \al_1(q)\ldots\al_{m+1-i}(q).
\] 
This implies that $\theta(q)$ is a fundamental word.        
\end{proof}

As a consequence of Lemma \ref{lem:primitive} we obtain the following result.

\begin{lemma}\label{lem:densegenerator}
Let $q \in \operatorname{Per}(\I)$ with $\al(q) = (\al_1(q) \ldots \al_m(q))^\f$ and let $\theta(q) = \theta(q, t)$ given by Lemma \ref{lem:primitive}. Then:
\begin{enumerate}[$i)$]
\item $\theta(q)$ is a fundamental word;
\item For any $\nu \in \mathop{\bigcup}\limits_{k = 0}^m B_k(\vb_q)$, $\nu$ is a factor of $\theta(q)$.
\item $(\theta(q))^\f \prec \al(q)$
\item $(\theta(q))^\f$ is irreducible.
\end{enumerate}
\end{lemma}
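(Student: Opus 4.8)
The plan is to harvest each of the four properties from the construction carried out in Lemma \ref{lem:primitive}, using the reflection–recurrence machinery of Lemma \ref{lem:reflectionreccurrence} and the characterisation of fundamental words. Item $i)$ is immediate: Lemma \ref{lem:primitive} produces $\set{\upsilon_i}$ precisely so that $\delta(q)$ is a prefix of a fundamental word $\theta(q)$, and we restate this. For item $ii)$, recall from the discussion preceding Lemma \ref{lem:primitive} that $\delta(q)$ was built to contain every $\nu \in \bigcup_{k=0}^{m} B_k(\vb_q)$ as a factor (each block $\om_i \in B_m(\vb_q)$ literally appears inside the extended word $\om'_i$, and every shorter admissible word is a factor of some length-$m$ admissible word); since $\delta(q)$ is a prefix of $\theta(q)$, the same factors appear in $\theta(q)$.

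For item $iii)$, I would use the distance estimate \eqref{eq:distance}: in Lemma \ref{lem:primitive} the first extended word is $\om'_1 = (\al_1(q)\ldots\al_m(q))^t$ with $t\ge 2$, so $\theta(q)$ starts with $\al_1(q)\ldots\al_{2m}(q)$ but then, at position $K-m$ (shown in the proof of Lemma \ref{lem:primitive}), $\si^{K-m}(\theta(q)) = \overline{\al_1(q)\ldots\al_m(q)\al_1(q)}$; in particular $(\theta(q))^\f$ has a shift strictly below $\al(q)$ in a position where $\al(q)$ itself is stationary at $(\al_1(q)\ldots\al_m(q))^\f$, so the first place where $(\theta(q))^\f$ and $\al(q)$ differ already forces $(\theta(q))^\f \prec \al(q)$. (More directly: since $\theta(q)$ is a fundamental word, $\overline{\theta(q)} \lle \si^j(\theta(q)) \prec \text{prefix of }\theta(q)$ for $1\le j\le |\theta(q)|-1$, and by \eqref{eq:admissible1} with $i=0$ there is a strict inequality somewhere, which combined with $\theta(q)$ being a prefix of $\al(q)^{\text{long}}$ via the initial $\om'_1$ block yields $(\theta(q))^\f \prec \al(q)$; I would pick whichever of these two phrasings is cleanest.)

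Item $iv)$, irreducibility of $(\theta(q))^\f$, is the substantive step and the main obstacle. I would argue that whenever $(\theta_1\ldots\theta_j^-)^\f \in \vb$ for $1\le j < |\theta(q)|$, one has $\theta_1\ldots\theta_j(\overline{\theta_1\ldots\theta_j}^+)^\f \prec (\theta(q))^\f$. The key point is that the only ``dangerous'' prefixes $\theta_1\ldots\theta_j$ of $\theta(q)$ with $(\theta_1\ldots\theta_j^-)^\f \in \vb$ are the ones ending at a position where a full block $\al_1(q)\ldots\al_m(q)$ or a primitive prefix has just been completed, and at exactly those positions Lemma \ref{lem:reflectionreccurrence} $iv)$ — specifically the inequalities \eqref{eq:primitive1}–\eqref{eq:primitive2prime} applied with $q^-_N$ chosen so that $\al(q^-_N) \prec \al(q)$ (possible since $q\in\operatorname{Per}(\I)$ so $q \in \vl$ is approximable from below by Proposition \ref{pr:aproxconvergencebelow}) — guarantees that $\theta_1\ldots\theta_j(\overline{\theta_1\ldots\theta_j}^+)^\f$ stays below $\al(q^-_N) \prec \al(q)$, hence below $(\theta(q))^\f$ once we also use $\al(q^-_N)\lle(\theta(q))^\f$. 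For $j$ in the ``interior'' of an extended word or of a connecting word $\upsilon_i$, either $(\theta_1\ldots\theta_j^-)^\f \notin \vb$ outright, or the relevant shift is governed by the fundamentality of $\theta(q)$ from Lemma \ref{lem:primitive}, which already encodes the required strict lexicographic inequality. Assembling these cases — mirroring the proof of \cite[Proposition 3.17]{AlcBakKon2016} — gives irreducibility. The delicate bookkeeping is keeping track of which prefixes of $\theta(q)$ admit $(\cdot^-)^\f \in \vb$; I expect this case analysis, rather than any single estimate, to be where the real work lies.
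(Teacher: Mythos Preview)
For items $i)$--$iii)$ your plan is fine and essentially matches the paper; in particular the paper dispatches $iii)$ in one line (``since $\om_1 \succ \om_2$''), which is exactly the content of your first phrasing.

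For item $iv)$ your approach diverges from the paper's and has a real gap. The paper's argument is considerably simpler than what you propose: it splits on the threshold $t\cdot m$. If $j \le t\cdot m$, then $\theta_1(q)\ldots\theta_j(q)$ coincides with $\al_1(q)\ldots\al_j(q)$ (because $\om'_1 = (\al_1(q)\ldots\al_m(q))^t$ is the initial block of $\theta(q)$), so the irreducibility inequality \eqref{eq:laultima} is inherited directly from the irreducibility of $\al(q)$. If $j > t\cdot m$, then $\overline{\theta_1(q)\ldots\theta_j(q)}$ begins with $(\overline{\al_1(q)\ldots\al_m(q)})^t$; since every extended word $\om'_i$ with $i < \#B_m(\vb_q)$ satisfies $\overline{\al_1(q)\ldots\al_m(q)} \prec \om'_i$, this long run of reflected period blocks cannot match $\si^j((\theta(q))^\f)$, which forces the strict inequality. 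The whole point of insisting $t\ge 2$ in the construction is to make this second case work.

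Your plan instead invokes Lemma~\ref{lem:reflectionreccurrence}~$iv)$ to bound $\theta_1\ldots\theta_j(\overline{\theta_1\ldots\theta_j}^+)^\f$ below some $\al(q^-_N)$, and then to pass to $(\theta(q))^\f$. This is problematic on two counts. First, Lemma~\ref{lem:reflectionreccurrence}~$iv)$ is stated for primitive prefixes $\al_1(q)\ldots\al_m(q)$ of $\al(q)$, not for arbitrary prefixes of $\theta(q)$; once $j > t\cdot m$ the word $\theta_1\ldots\theta_j$ is no longer a prefix of $\al(q)$, so the lemma does not apply. Second, even granting such a bound, you would then need $\al(q^-_N) \lle (\theta(q))^\f$, which you assert but do not justify --- and it is not automatic, since a priori you only know $(\theta(q))^\f \prec \al(q)$, which says nothing about the ordering between $(\theta(q))^\f$ and $\al(q^-_N)$. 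The paper avoids both issues by using the block structure of $\theta(q)$ directly rather than the reflection--recurrence machinery.
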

\begin{proof}
Note that $i)$ and $ii)$ are direct consequences of Lemma \ref{lem:primitive}. Moreover, since $\om_1 \succ \om_2$ we obtain $iii)$ directly from the definition of $\theta(q)$. It remains to show that $(\theta(q))^\f$ is irreducible. Let $j \in \N$ such that $(\theta_1(q) \ldots \theta_j(q)^-)^\f \in \vb$. Then if $j \in \set{1, \ldots, t \cdot m}$ then 
\begin{equation}\label{eq:laultima}
\theta_1(q) \ldots \theta_j(q)(\overline{\theta_1(q) \ldots \theta_j(q)}^+)^\f \prec (\theta(q))^\f
\end{equation} 
holds from the irreducibility of $\al(q)$. On the other hand for $t\cdot m \geq j$ note that since $\overline{\al_1(q)\ldots \al_m(q)} \prec \om'_i$ for every $i \in \set{1, \ldots, \#B_m(\al(q))-1}$ and $\overline{\al_1(q)\ldots \al_m(q)}^t$ is a factor of $\overline{\al_1(q) \ldots \al_j(q)}$ we have that \eqref{eq:laultima} holds. Thus $\theta(q)$ is irreducible.  
\end{proof}

\begin{proposition}\label{pr:existnonsynchsubshift}
There exists $q \in \I$ such that $\set{\sigma^n(\al(q))}_{n=0}^\f$ is a dense subset of $\vb_q$. 
\end{proposition}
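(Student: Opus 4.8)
The plan is to build $q\in\I$ iteratively so that $\al(q)$ visits every cylinder of $\vb_q$, by chaining together the fundamental words $\theta(p,t)$ produced by Lemma \ref{lem:densegenerator}. First I would fix a starting base $q_1\in\operatorname{Per}(\I)$ with period block of length $m_1$, apply Lemma \ref{lem:densegenerator} with some $t_1\ge 2$ to obtain a fundamental word $\theta_1=\theta(q_1,t_1)$ which is irreducible, satisfies $(\theta_1)^\f\prec\al(q_1)$, and contains every word in $\bigcup_{k=0}^{m_1}B_k(\vb_{q_1})$ as a factor. Then I would let $q_2$ be defined implicitly by $\al(q_2)=(\theta_1)^\f$, so $q_1<q_2$ and (by irreducibility of $\theta_1$ together with Theorem \ref{th:mainrsd}) $q_2\in\operatorname{Per}(\I)$; moreover $\al(q_2)\in\vb$. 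Iterating, at step $n$ I would apply Lemma \ref{lem:densegenerator} to $q_n$ with a parameter $t_n$ chosen large enough (say $t_n\ge n$) to obtain a fundamental, irreducible word $\theta_n=\theta(q_n,t_n)$ with $(\theta_n)^\f\prec\al(q_n)$, containing all words of length $\le m_n$ admissible in $\vb_{q_n}$ (hence also in the larger $\vb_q$ we are building), and set $q_{n+1}$ implicitly by $\al(q_{n+1})=(\theta_n)^\f$. Note $m_{n+1}=|\theta_n|>m_n$ strictly, so the lengths grow.

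The sequence $\{q_n\}$ is strictly increasing. For it to converge to a \emph{finite} limit $q<M+1$ I would control the growth: since at step $n$ the fundamental word $\theta_n$ begins with $\al_1(q_n)\ldots\al_{m_n}(q_n)$ (only the chosen prefix of $\al(q_n)$ is retained up to the extension tails), the base $q_{n+1}$ stays below the natural approximation from above $q_1^+$ of $q_1$ given by Lemma \ref{lem:admissible1}; more concretely each $\al(q_n)$ shares a long common prefix with $\al(q_1^+)$, so $q_n<q_1^+<M+1$ and $\{q_n\}$ converges to some $q\in(q_1,q_1^+)$. Because $d(\al(q_{n+1}),\al(q_n))\le 1/2^{m_n}$ and $m_n\to\f$, the sequences $\al(q_n)$ converge in $\Sig$, and by Lemma \ref{lem:quasigreedyexpansion} the limit is $\al(q)$. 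To see $q\in\I$, I would check irreducibility of $\al(q)$ directly: if $(\al_1(q)\ldots\al_i(q)^-)^\f\in\vb$, then for $n$ large enough $\al_1(q)\ldots\al_i(q)=\al_1(q_n)\ldots\al_i(q_n)$ and irreducibility of $\al(q_n)$ gives $\al_1(q)\ldots\al_i(q)(\overline{\al_1(q)\ldots\al_i(q)}^+)^\f\prec\al(q_n)\prec\al(q)$ — exactly the argument already used in Subsections \ref{constructstrong} and \ref{constructweak}. Since $\al(q)\in\vb$ as a limit of elements of $\vb$ (the lexicographic defining inequalities are closed), we get $q\in\vl$, hence $q\in\I$.

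It remains to show $\{\si^n(\al(q))\}_{n\ge 0}$ is dense in $\vb_q$, i.e. every $\nu\in\La(\vb_q)$ is a factor of $\al(q)$. Fix such a $\nu$; by Lemma \ref{lem:aproxwordsbelow} (applied to the natural approximation from below of $q$, noting $\nu$ has finite length) we have $\nu\in\La(\vb_{q_n})$ for all sufficiently large $n$, in particular $\nu\in B_k(\vb_{q_n})$ with $k\le m_n$ once $n$ is large. By construction $\nu$ is then a factor of $\theta_n$, and $\theta_n$ is a prefix of $\al(q_{n+1})$, which in turn shares an initial block of length $\ge m_{n+1}>|\theta_n|$ with $\al(q)$; hence $\theta_n$ — and so $\nu$ — is a factor of $\al(q)$. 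As $\nu$ was arbitrary, $\al(q)$ has dense orbit in $\vb_q$. The main obstacle, and the point requiring care, is the \emph{uniform control of $q_n$ from above}: one must ensure the chaining (the insertions of the words $\upsilon_i=\ga_i\nu_i$ in Lemma \ref{lem:primitive}, whose lengths depend on the reflection-recurrence construction and on the extension words $\eta_{\om_i},\ga_{\eta_{\om_i}\om_i}$) does not drive $\al(q_n)$ past $\al(q_1^+)$; this is exactly where the technical condition $t\ge 2$ on $\om'_1$ and the structure of Lemma \ref{lem:reflectionreccurrence} are needed to keep the first discrepancy between $\al(q_n)$ and $\al(q_1^+)$ pushed to ever-larger indices, so that the limit base remains strictly below $M+1$.
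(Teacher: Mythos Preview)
Your overall strategy matches the paper's: iterate Lemma \ref{lem:densegenerator} to build a nested sequence of periodic irreducible bases whose limit has dense quasi-greedy expansion. However, you have the direction of the sequence backwards, and this error cascades through the rest of the argument. Lemma \ref{lem:densegenerator} $iii)$ gives $(\theta(q_1))^\f \prec \al(q_1)$, so if $\al(q_2) = (\theta_1)^\f$ then by Lemma \ref{lem:quasigreedyexpansion} one has $q_2 < q_1$, not $q_1 < q_2$ as you write. The sequence $\{q_n\}$ is therefore strictly \emph{decreasing}, bounded below by $q_T$ (since each $\al(q_n)$ is irreducible), and converges to some $q \ge q_T$. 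Your ``main obstacle'' --- bounding from above by $q_1^+$ --- is thus a non-issue; convergence is immediate from the lower bound.

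Your irreducibility argument then fails as written: you need $\al(q_n) \prec \al(q)$ to conclude, but in fact $\al(q_n) \succ \al(q)$. The paper handles this differently --- it first shows $\vb_q = \bigcap_n \vb_{q_n}$ and that every word of $\La(\vb_q)$ is a factor of $\al(q)$, hence $(\vb_q,\si)$ is transitive, and \emph{then} invokes \cite[Theorem 1]{AlcBakKon2016} to deduce $q \in \I$. Your appeal to Lemma \ref{lem:aproxwordsbelow} is also misplaced (the $q_n$ are not the natural approximation from below of $q$), but this is harmless: since $q_n > q$ the inclusion $\La(\vb_q) \subset \La(\vb_{q_n})$ is automatic. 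Finally, one must check that the limit of $\al(q_n)$ really is $\al(q)$; since $q_n \searrow q$ from the right and $\Phi$ is only left-continuous, this requires the continuity of $\Phi^{-1}$ together with closedness of $\vb$, not just convergence of the $q_n$.
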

\begin{proof}
Let $q_1 \in \operatorname{Per}(\I)$ with quasi-greedy expansion $\al(q_1) =  (\al_1(q_1) \ldots \al_{m_1}(q_1))^\f$. Fix $t_1 \in \N$ with $t_1 \geq 2$. From Lemma \ref{lem:primitive} and Lemma \ref{lem:densegenerator} there is a fundamental word $\theta(q_1)$ such that: 
\begin{enumerate}[$i)$]
\item \label{1} $(\theta(q_1))^\f$  is irreducible;  
\item \label{2} $(\theta(q_1))^\f \prec \al(q_1)$;   and    
\item \label{3} $\nu$  is a factor of  $\theta(q_1)$  for any $\nu \in \mathop{\bigcup}\limits_{k = 0}^{m_1}B_k(\vb_{q_1}).$ 
\end{enumerate} 
Since $(\theta(q_1))^\f$ is irreducible then $\al(q_T) \prec (\theta(q_1))^\f$. Set $q_2$ be defined explicitly such that
$$\al(q_2) = (\theta(q_1))^\f = (\al_1(q_2) \ldots \al_{m_2}(q_2))^\f.$$
Clearly $m_1 < m_2$. Also, \eqref{eq:distance} implies that 
$$d(\al(q_1), \al(q_2)) < 1/2^{m_1 \cdot t_1}.$$
Moreover, from Lemma \ref{lem:quasigreedyexpansion} we have that $q_1 \geq q_2$. This combined with \cite[1.5.10]{LinMar1995} imply that 
$$B_{m_1}(\vb_{q_1}) = B_{m_1}(\vb_{q_2}).$$ 
Then, \emph{\ref{3})} implies that $\nu$ is a factor of $\al(q_2)$ for every $\nu \in \mathop{\bigcup}\limits_{k = 0}^{m_1}B_k(\vb_{q_2}).$

Consider $t_n \in \N$ with $t_n \geq 2$ for every $n \geq 2$. Suppose that $q_n$ is already defined. We define $q_{n+1}$ implicitly as $\al(q_{n+1}) = (\theta(q_n))^\f = (\theta(q_n, t_n))^\f$.

Note that from \emph{\ref{1})} we have that $\al(q_n)$ is irreducible, so $q_n \in \I$ for every $n \in \N$. Also, Lemma \ref{lem:quasigreedyexpansion} and $\ref{2}$ imply that $\set{q_n}_{n = 1}^\f$ is a decreasing sequence. Furthermore, $\set{q_n}$ is bounded from below by $q_T$. Thus $q_n \searrow q \in \overline{\ul} \cap [q_T, M+1]$ as $n \to \f$. This implies  
\[
\vb_q \subset \mathop{\bigcap}\limits_{n = 1}^\f \vb_{q_n}.
\]
Let us consider the quasi-greedy expansion of $q$, $\al(q)$. If $\x = (x_i) \in \mathop{\bigcap}\limits_{n = 1}^\f \vb_{q_n}$ then 
\[
\overline{\al(q_n)} \lle \si^m((x_i)) \lle \al(q_n) \quad \text{for every} \, n, m \in \N.
\] 
Therefore, $\overline{\al(q_n)} \prec \si^m((x_i)) \prec \al(q_n)$ for every $n, m \in \N$, which gives that $\overline{\al(q)} \lle \si^m((x_i)) \lle \al(q)$, i.e. $x \in \vb_q$. Thus, 
\begin{equation}\label{eq:intersection}
\vb_q = \mathop{\bigcap}\limits_{n = 1}^\f \vb_{q_n}.  
\end{equation}
Since, $m_n < m_{n+1}$, it is clear that $\al(q)$ is not a periodic sequence. Note that \emph{\ref{3})} implies that for every $n \in\N$ and for every $\nu \in \mathop{\bigcup}\limits_{k = 0}^{m_n}B_k(\vb_{q_n})$, $\nu$ is a factor of $\al(q_{n+1})$.  Also, observe that for every $n \in \N$, $\al_1(q_n) \ldots \al_{m_n}(q_n)$ is a prefix of $\al(q)$. Then, for any 
$$\nu \in \mathop{\bigcup}\limits_{n =1}^\f \mathop{\bigcup}\limits_{k = 0}^{m_n}B_k(\vb_{q_n}),$$
$\nu$ is a factor of $\al(q)$. Fix $\omega = w_1 \ldots w_k \in \La(\vb_q)$. Then 
$$\overline{\al_1(q) \ldots \al_{k-i}(q)} \lle w_{i+1} \ldots w_k \lle \al_1(q) \ldots \al_{k-i}(q) \quad \text{for every }\, i \in \set{0, \ldots, k-1}.$$ 
Then, from \eqref{eq:intersection} and since $q_n \searrow q$, \cite[1.5.10]{LinMar1995} implies that there is $N \in \N$ such that for every $n \geq N$, $\om \in B_{k}(\vb_{q_n}) = B_{k}(\vb_{q_N})$ and $B_k(\vb_{q_n}) = B_k(\vb_q)$. Thus, $\om$ is a factor of $\al(q_n)$ for every $n \geq N$ which gives that $\om$ is a factor of $\al(q)$. Then, since each word in $\La(\vb_q)$ has to appear in $\al(q)$ infinitely often we obtain that $\set{\si^n(\al(q))}_{n=0}^\f$ is a dense subset of $\vb_q$ and that $\vb_q$ is a transitive subshift. Therefore, \cite[Theorem 1]{AlcBakKon2016} gives that $q \in \I$.     
\end{proof}

\begin{corollary}\label{pr:notsyncisdense}
For any $q \in \overline{\B} \cap [q_T, M+1]$ there is $p$ such that $\set{\sigma^n(\al(p))}_{n=0}^\f$ is dense in $\vb_p$ and $p$ is arbitrarily close to $q$, i.e. $\set{p \in \I: \al(p) \text{ is dense in } \vb_p}$ is dense in $\overline{\B} \cap [q_T, M+1]$.
\end{corollary}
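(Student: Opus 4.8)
The plan is to combine the density result for $\operatorname{Per}(\I)$ obtained via Proposition \ref{pr:irreducibledense} (and Theorem \ref{th:mainrsd}) with the construction underpinning Proposition \ref{pr:existnonsynchsubshift}, showing that the latter construction can be started from a periodic irreducible base arbitrarily close to any prescribed $q \in \overline{\B} \cap [q_T, M+1]$, and that the limit base it produces stays close to the starting base. First I would fix $q \in \overline{\B} \cap [q_T, M+1]$ and $\varepsilon > 0$. By Lemma \ref{lem:quasigreedyexpansion} it suffices to find $p$ with $\al(p)$ dense in $\vb_p$ and $d(\al(p), \al(q))$ small. Using Proposition \ref{pr:irreducibledense} together with Theorem \ref{thm:uclosureuv} (or more directly the density of $\operatorname{Per}(\I)$, which follows as in Proposition \ref{pr:denseweakirr}), I would pick $q_1 \in \operatorname{Per}(\I)$ with $\al(q_1) = (\al_1(q_1) \ldots \al_{m_1}(q_1))^\f$ and $d(\al(q_1), \al(q)) < 1/2^{n}$ for $n$ as large as we like; in particular I want $\al_1(q_1) \ldots \al_n(q_1) = \al_1(q) \ldots \al_n(q)$ with $n$ large.

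Next I would run the construction in the proof of Proposition \ref{pr:existnonsynchsubshift} with this $q_1$ as the initial periodic irreducible base, but now being careful to \emph{estimate} how far the resulting limit $p = \lim_n q_n$ lies from $q_1$. The key point is the estimate \eqref{eq:distance}: each step replaces $\al(q_n)$ by $(\theta(q_n))^\f$, and $\theta(q_n)$ has $\al_1(q_n) \ldots \al_{m_n}(q_n)^{t_n}$ as a prefix with $t_n \geq 2$, so $d(\al(q_{n+1}), \al(q_n)) \leq 1/2^{m_n t_n} \leq 1/2^{2 m_n}$; since $m_n$ is strictly increasing and $m_1 = m_1$, summing the geometric-type series gives $d(\al(p), \al(q_1)) \leq \sum_{n\geq 1} 1/2^{2 m_n} \leq 2/2^{2 m_1} = 1/2^{2m_1 - 1}$. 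Choosing $q_1$ so that $m_1$ is large (which is possible since $\operatorname{Per}(\I)$ is dense and contains bases of arbitrarily large period close to $q$ — periods can be taken large because, by Lemma \ref{lem:infinitelymanyirreducibles}, the natural approximation from below of $q$ contains infinitely many periodic irreducible bases with $\al$ agreeing with $\al(q)$ on arbitrarily long prefixes) then forces both $d(\al(q_1), \al(q))$ and $d(\al(p), \al(q_1))$ below $\varepsilon/2$, hence $d(\al(p), \al(q)) < \varepsilon$, and by Lemma \ref{lem:quasigreedyexpansion} $|p - q| $ can be made $< \varepsilon$ after shrinking. Proposition \ref{pr:existnonsynchsubshift} already guarantees that $p \in \I$ and $\set{\si^n(\al(p))}_{n=0}^\f$ is dense in $\vb_p$, so this $p$ does the job.

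Finally, I would note that the statement "$\set{p \in \I : \al(p) \text{ is dense in } \vb_p}$ is dense in $\overline{\B} \cap [q_T, M+1]$" is now immediate: every point of $\overline{\B} \cap [q_T, M+1]$ has points of this set arbitrarily close. The main obstacle I anticipate is the bookkeeping in the middle step — namely verifying that one may genuinely \emph{start} the Proposition \ref{pr:existnonsynchsubshift} construction from a periodic irreducible base of arbitrarily large period that is prefix-close to the prescribed $q$, and that the first block $\om'_1 = (\al_1(q_1)\ldots \al_{m_1}(q_1))^{t_1}$ (together with the estimate \eqref{eq:distance}) really pins down the limit to within $1/2^{m_1 t_1}$ of $\al(q_1)$, so that choosing $q_1$ close to $q$ is enough. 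Everything else is a direct appeal to results already established: Lemma \ref{lem:quasigreedyexpansion} for the transfer between the symbolic metric and $|p-q|$, Lemma \ref{lem:infinitelymanyirreducibles} and Proposition \ref{pr:irreducibledense} for the availability of suitable periodic irreducible approximants, and Proposition \ref{pr:existnonsynchsubshift} for the density of the orbit of $\al(p)$.
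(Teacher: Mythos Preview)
Your approach is correct and, in fact, more streamlined than the paper's own argument. The paper proceeds by a three-case split depending on whether $q$ is the left endpoint of an irreducible interval, the right endpoint, or lies outside every irreducible interval; in each case it selects approximants (from below in Case~1, from above via Lemmas~\ref{lem:admissible1}--\ref{lem:admissible2} in Case~2, and both in Case~3) and then feeds them into the construction of Proposition~\ref{pr:existnonsynchsubshift}. You sidestep this case analysis entirely by observing that the parameter $t_1$ in the definition of $\om'_1 = (\al_1(q_1)\ldots\al_{m_1}(q_1))^{t_1}$ is free, so the estimate~\eqref{eq:distance} gives $d(\al(p),\al(q_1)) \le 1/2^{m_1 t_1}$, which can be made as small as desired regardless of the period $m_1$. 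This is the cleaner insight, and it is exactly what the paper uses implicitly in its Case~1.

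Two minor remarks. First, your appeal to Lemma~\ref{lem:infinitelymanyirreducibles} to obtain periodic irreducible approximants of large period requires $q\in\I$, whereas you only know $q\in\overline{\B}\cap[q_T,M+1]$; the right endpoints of irreducible intervals lie in $\overline{\B}$ but not in $\I$. However, as you yourself note, the freedom in $t_1$ makes the large-period requirement unnecessary: any $q_1\in\operatorname{Per}(\I)$ close to $q$ (whose existence follows from Proposition~\ref{pr:irreducibledense}) suffices. Second, the metric $d$ is an ultrametric, so your geometric-series bound $\sum_{n\ge 1} 1/2^{2m_n}$ is correct but unnecessarily pessimistic; one has directly $d(\al(p),\al(q_1)) \le 1/2^{m_1 t_1}$ since all later $\al(q_n)$ share the prefix $(\al_1(q_1)\ldots\al_{m_1}(q_1))^{t_1}$.
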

\begin{proof}
Let $q \in \overline{\B} \cap [q_T, M+1]$ with quasi-greedy expansion $\al(q)$.  We note here that if $q \in I(p')$ for some $p' \in \operatorname{Per}(\I)$ then, from \cite[Proposition 4.11]{AlcBakKon2016} Lemma \ref{lem:infinitelymanyirreducibles} we obtain that $q = p'$, or $q$ satisfies 
$$\al(q) = \al_1(p') \ldots \al_{m(p')}(p')^+(\overline{\al_1(p') \ldots \al_{m(p')}(p')})^\f
,$$ 
where $m(p')$ is the period of $\al(p')$. Then, we have to consider three cases:

\noindent \emph{\textbf{Case 1}}: Suppose that $q \in I(p')$ and $q = p'$. Let $\al(q) = (\al_1(q) \ldots \al_{m}(q))^\f$ the quasi-greedy expansion of $q$. Then, since $\operatorname{Per}(I)$ is dense in $\overline{\B} \cap [q_T, M+1]$ then, for every $N \in \N$ there exists $p_N \in \operatorname{Per}(\I)$, $\al(p_N) =  (\al_{1}(p_N) \ldots \al_{m_N}(p_N))^\f$ such that $d(\al(p_N), \al(q)) \leq 1/2^{N+1}$. In particular, for any $\varepsilon > 0$ there are $N, M \in \N$ such that 
$$d(\al(p_N), \al(q)) \leq 1/2^{(M \cdot m)+1} < 1/2^N < \varepsilon/2.$$ 
So, fix $t_1 \in \N$ with $t_1 \geq 2$ and consider $\theta(p_N)$ given by Lemma \ref{lem:densegenerator} and set $p_{N_1}$ defined implicitly by $\al(p_{N_1}) = (\theta(p_N))^\f$. Since $t_1 \geq 2$, we get 
$$d(\al(p_{N_1}, \al(p_N)) \leq 1/2^{2 \cdot m_N}.$$
Then, 
$$d(\al(p_{N_1}), q) \leq 1/2^{2 \cdot m_N} + 1/2^{(M \cdot m)+1} < 2/2^{(M \cdot m)+1} \leq \varepsilon. 
$$ 
Consider $\set{\theta(p_{N_i})}_{i = 1}^\f$, the sequence generated in the proof of Proposition \ref{pr:existnonsynchsubshift} and the associated sequence $\set{\al(p_{N_i})}_{i=1}^\f$ with limit $p$. Note that for every $i \geq 2$, 
$$\al_1(p_{N_i}) \ldots \al_{t_1 \cdot m_{N_1}}(p_{N_i}) = \al_1(p_{N_1}) \ldots \al_{t_1 \cdot m_{N_1}}(p_{N_1}),$$ 
where $m_{N_1}$ is the period of $\al(p_{N_1})$. Then, $d(\al(p_{N_i}), \al(q)) < \varepsilon$ for every $i \in \N$, thus $d(\al(p), \al(q)) < \varepsilon.$ As a consequence of Lemma \ref{lem:quasigreedyexpansion} we obtain the result for this case.

\noindent \emph{\textbf{Case 2}}: Suppose that $q \in I(p')$ and $\al(q) = \al_1(p') \ldots \al_{m(p')}(p')^+(\overline{\al_1(p') \ldots \al_{m(p')}(p')})^\f$. Let $\varepsilon > 0$. From Lemmas \ref{lem:admissible1} and \ref{lem:admissible2} there exists $N\in \N$ such that for every $n \geq N$, 
\begin{enumerate}[$i)$]
\item \label{prop1} $\al_1(q) \ldots \al_{m_n}(q)$ is a fundamental word;
\item \label{prop2} $(\al_1(q) \ldots \al_{m_n}(q))^\f$ is an irreducible sequence; and 
\item \label{prop3} $d((\al_1(q) \ldots \al_{m_n}(q))^\f, \al(q)) < 1/2^{N \cdot m_n}< \varepsilon.$ 
\end{enumerate}
Fix $n \in \N$ such that \emph{\ref{prop1}), \ref{prop2}) \ref{prop3})} holds for for $\varepsilon$ and let $q'$ with $\al(q') = (\al_1(q) \ldots \al_{m_n}(q))^\f$. Fix $t_1 \in \N$ with $t_1 \geq 2$. Let $\theta(p_1)$ given by Lemma \ref{lem:densegenerator} and set $p_1$ defined implicitly by $\al(p_1) = (\theta(q'))^\f$. Fix a sequence $\set{t_n}_{n = 2}^\f \subset \N$ with $t_n \geq 2$ for every $n \geq 2$. Then, for every $n \geq 2$ consider $\set{\theta(p_n)}_{i = 2}^\f$ the sequence generated in the proof of Proposition \ref{pr:existnonsynchsubshift} and the associated sequence $\set{\al(p_{n})}_{i=1}^\f$. From a similar argument as in Case $1$, the quasi-greedy expansion of $p$, $\al(p)$, where $p$ is the limit of the sequence $\set{p_n}_{n = 1}^\f$ satisfies the desired conclusion. 

\noindent \emph{\textbf{Case 3}}: Suppose that $q \in (\overline{\B} \cap [q_T, M+1]) \setminus \mathop{\bigcup}\limits_{p' \in \operatorname{Per}(\I)}I(p')$. In such case, note that for any $\varepsilon > 0$, there are $p^+, p^- \in \operatorname{Per}(\I)$ such that 
$$d(\al(p^-), \al(q)) < \varepsilon/2 \quad \text{ and } \quad \al(p^-) \prec \al(q);$$ 
and 
$$d(\al(p^+), \al(q)) < \varepsilon/2 \quad \text{ and }\quad \al(p^+) \succ \al(q).$$ 
Then, applying the construction exposed on Case $1$ to $\al(p^-) = (\al_1(p^-)\ldots \al_{m_{p^-}}(p^-))^\f$ and the construction exposed in Case $2$ to $\al(p^+) = (\al_1(p^+)\ldots \al_{m_{p^+}}(p^+))^\f$ we can construct the desired base.
\end{proof}

As in the previous constructions \ref{constructstrong} and \ref{constructweak}, the construction of a base $q$ with dense quasi-greedy expansion $\al(q)$ in $\vb_q$ depends on a sequence $\set{t_n}_{n = 1}^\f$. Then, the following corollary holds.

\begin{corollary}\label{cor:c5}
The subclass $\Cv \cap \I$ is uncountable.
\end{corollary}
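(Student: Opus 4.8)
The plan is to upgrade the single base built in Proposition \ref{pr:existnonsynchsubshift} to an injective map defined on an uncountable parameter space. First I would record that, since $\Civ\subset\I$, combining Lemma \ref{lem:synch1} with Proposition \ref{pr:synch2} gives
\[
\Cv\cap\I=\set{q\in\I:\ \set{\si^n(\al(q))}_{n=0}^\f\ \text{is dense in}\ \vb_q},
\]
so it suffices to produce uncountably many $q\in\I$ whose quasi-greedy expansion has dense orbit — and every base manufactured by the construction in the proof of Proposition \ref{pr:existnonsynchsubshift} is of this type.

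Next I would isolate the free data in that construction: fix $q_1\in\operatorname{Per}(\I)$, fix once and for all a canonical choice of every auxiliary word occurring in Lemmas \ref{lem:primitive} and \ref{lem:densegenerator} (the minimal $\ga_i$, the least admissible exponents inside the words $\nu_i$, etc.), and let the remaining parameters $\set{t_n}_{n=1}^\f$ range over $\set{2,3}^{\N}$. The construction then yields a strictly decreasing sequence $\set{q_n}_{n=1}^\f\subset\operatorname{Per}(\I)$ with $q_n\searrow q=:q(\set{t_n})$, where $\al(q_{n+1})=(\theta(q_n,t_n))^\f$; moreover each word $P_n:=\al_1(q_n)\ldots\al_{m_n}(q_n)$ ($m_n$ the period of $\al(q_n)$) is a prefix of $\al(q)$, and $\theta(q_n,t_n)=\al_1(q)\ldots\al_{m_{n+1}}(q)$ is a prefix of $\al(q)$ that begins, by Lemma \ref{lem:primitive}, with the block $P_n^{\,t_n}$. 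Because $\al(q)$ is not periodic, the integer $T_n:=\max\set{t\in\N:\ P_n^{\,t}\ \text{is a prefix of}\ \al(q)}$ is finite and $T_n\geq t_n$. The plan is to show $\set{t_n}\mapsto q(\set{t_n})$ is injective; since $\set{2,3}^{\N}$ is uncountable, Lemma \ref{lem:quasigreedyexpansion} then gives the corollary.

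Injectivity will follow by reconstructing $\set{t_n}$ from $\al(q)$ by induction: $q_1$ is fixed, $q_n$ depends only on $t_1,\ldots,t_{n-1}$, and I would recover $t_n$ from the prefix of $\al(q)$ as $T_n$ minus a correction determined by $q_n$ alone. Establishing that correction is the only real obstacle. After the block $P_n^{\,t_n}$, the word $\theta(q_n,t_n)$ continues with $\upsilon_1=\ga_1\nu_1$, where $P_n\ga_1$ is a primitive prefix of $\al(q_n)$ (hence $P_n^{\,t_n}\ga_1$ is a prefix of $(P_n)^\f$) and $\nu_1=(\overline{\mathfrak{R}(P_n^{\,t_n}\ga_1)}^+)^N\cdots$. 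By Lemma \ref{lem:reflectionreccurrence} $i)$ the word $\mathfrak{R}(P_n^{\,t_n}\ga_1)$ is itself a prefix of $(P_n)^\f$, so $\overline{\mathfrak{R}(P_n^{\,t_n}\ga_1)}^+$ starts with $\overline{\al_1(q_n)}$, and $\overline{\al_1(q_n)}<\al_1(q_n)$ because $q_n\geq q_T>q_G$; combining this with the primitivity of $P_n\ga_1$ and the fundamentality of $\theta(q_n,t_n)$ (which forces strict lexicographic decrease of the shifts) one checks that the $P_n$-periodicity of the prefix of $\al(q)$ breaks within a controlled number of symbols after $P_n^{\,t_n}\ga_1$, so that $T_n=t_n+\lfloor|\ga_1|/m_n\rfloor$ (or a similarly $q_n$-determined quantity). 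Solving for $t_n$ and iterating recovers $\set{t_n}$, which proves injectivity. This is the symmetric-$q$-shift analogue of Schmeling's argument in \cite{Sch1997}.
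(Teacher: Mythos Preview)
Your approach is the same as the paper's: the paper's entire proof is the sentence immediately preceding the corollary, namely that the construction in Proposition~\ref{pr:existnonsynchsubshift} depends on the free sequence $\set{t_n}_{n=1}^\f$, so varying it yields uncountably many limits. You supply the injectivity argument that the paper leaves implicit, which is a welcome addition.

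One simplification of your injectivity step: since $q_n\in\operatorname{Per}(\I)\subset\overline{\ul}$, the periodic block $P_n=\al_1(q_n)\ldots\al_{m_n}(q_n)$ is already primitive in the sense of \eqref{eq:primitive}, so the minimal word $\ga_1$ in Lemma~\ref{lem:primitive} is empty. Hence $\theta(q_n,t_n)$ begins with $P_n^{\,t_n}$ followed immediately by the first symbol of $\nu_1$, which (as you observe, using Lemma~\ref{lem:reflectionreccurrence}~$i)$ and $q_n\geq q_T$) is $\overline{\al_1(q_n)}\neq\al_1(q_n)$. Thus $T_n=t_n$ with no correction term, and the recovery of $\set{t_n}$ from $\al(q)$ is immediate.
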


\section{Hausdorff dimension of the classes $\Ciii, \Civ, \Cv$}
\label{sec:hd}

\noindent We will investigate the Hausdorff dimension of the classes $\Ciii, \Civ, \Cv$. Let us summarise the results concerning of this classes so far.

\noindent (A) $\Ciii = \set{q \in \I: q \in \SI \text{ and there is } K \in \N \text{ with }d(\si^n(\al(q)), \overline{\al(q)}) \geq 1/2^K \text{ for } n \in \N}$; 

\noindent (B) $\Civ = \set{q \in \I: \si^n(\al(q)) \text{ is not dense in } \vb_q}$;

\noindent (C) $\Cv = \set{q \in \I: \si^n(\al(q)) \text{ is dense in } \vb_q}$;

Observe that (A) is a consequence of Proposition \ref{pr:spec1}, Proposition \ref{pr:spec2} and Proposition \ref{pr:spec4}. Also, (B) and (C) are consequences of Lemma \ref{lem:synch1} and Proposition \ref{pr:synch2}. Finally, Corollary \ref{cor:c3}, Corollary \ref{cor:c4}, Proposition \ref{pr:notsyncisdense} and Corollary \ref{cor:c5} imply $\Ciii, \Civ,$ and $\Cv$ are uncountable and dense subsets of $\overline{\B}\cap[q_T, M+1]$. 

Let us establish the necessary results to complete the proof of Theorem \ref{th:symshift1}. From Theorem \ref{th:mainrsd} we have that $\dim_{\operatorname{H}}(\B) = 1$. Furthermore, from \cite[Lemma 4.10]{AlcBakKon2016} we have 
$$\overline{\B} \subset \overline{\ul} \subset \vl.$$ 
Then, combining \cite[Theorem 3]{AlcBakKon2016}, \cite[Lemma 2.6]{KalKonLiLu2017} it holds that
$$\dim_{\operatorname{H}}(\overline{\B}) = \dim_{\operatorname{H}}(\overline{\ul}) = \dim_{\operatorname{H}}(\vl) = 1.$$ 

Now, from \cite[Theorem 2]{KalKonLiLu2017} we know that for any $q \in \overline{\B}$ we have that $\dim_{\operatorname{H}}^{\operatorname{loc}}(\overline{\B}_q) = \dim_{\operatorname{H}}(\V_q)$. 

We recall now \cite[Lemma 6.4]{AlcBakKon2016}.

\begin{lemma}\label{lem:64}
Let $N\ge 2$. Then 
$$\dim_{\operatorname{H}}(\I_N)\ge \log((M+1)^N-2)/N\log(M+1).$$
\end{lemma}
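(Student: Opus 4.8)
The plan is to exhibit, inside $\I_N$, a large Cantor-type subset obtained by freely choosing the blocks of length $N$ in positions $rN+1,\dots,(r+1)N$ for $r\geq 2$, and then to push the lower bound on the Hausdorff dimension of this symbolic set through the bi-Lipschitz-type relation between $q$ and $\al(q)$ coming from Lemma \ref{lem:quasigreedyexpansion}. First I would fix the prefix $\al_1(q)\ldots\al_{2N}(q)=M^{2N-1}0$ required by \eqref{IN}, and for each $r\geq 2$ allow the block $\al_{rN+1}(q)\ldots\al_{(r+1)N}(q)$ to range over all words $w\in\{0,\ldots,M\}^N$ with $0^N\prec w\prec M^N$; there are exactly $(M+1)^N-2$ such words. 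One must check that any sequence built this way indeed lies in $\vb$, i.e. satisfies $\overline{\al(q)}\preccurlyeq\si^n(\al(q))\preccurlyeq\al(q)$ for all $n\geq 0$: the leading block $M^{2N-1}0$ dominates lexicographically from above, and since every free block is strictly between $0^N$ and $M^N$ while the head begins with $M$'s, the reflected sequence $\overline{\al(q)}$ (which begins with $0^{2N-1}M$) is strictly dominated from below. This is the content already asserted in the excerpt via $\I_N\subset\I\subset\vl$, so I would only sketch it.

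Next, I would set up the coding map. Let $C\subseteq\{0,\ldots,M\}^{\N}$ be the set of admissible sequences described above; as a subset of the full shift it is a self-similar Cantor set on an alphabet of $(M+1)^N-2$ ``letters'', each letter a word of length $N$, with a fixed finite prefix. Its Hausdorff dimension in the metric $d$ of \eqref{eq:metric d} is $\log\bigl((M+1)^N-2\bigr)/\log\bigl((M+1)^N\bigr)=\log\bigl((M+1)^N-2\bigr)/\bigl(N\log(M+1)\bigr)$ — this is the standard computation for a homogeneous Cantor set, via the natural measure giving each cylinder of ``depth'' $kN$ mass $\bigl((M+1)^N-2\bigr)^{-k}$ and diameter $2^{-kN}$, together with the mass distribution principle; note $\log$ here is $\log_{M+1}$ as fixed in the excerpt, which is why the denominator is $N\log(M+1)$ and not $N\log 2$. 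Then $\I_N=\Phi^{-1}(C)$ where $\Phi(q)=\al(q)$, and by Lemma \ref{lem:quasigreedyexpansion} both $\Phi$ and $\Phi^{-1}$ are monotone; the only delicate point is that $\Phi^{-1}$ is continuous (stated in the lemma) while $\Phi$ is only left-continuous. To transfer the dimension bound I would instead use that $\Phi^{-1}$ restricted to $C$ is Hölder, or more simply argue directly: the map $q\mapsto\al(q)$ sends the interval containing a depth-$kN$ cylinder onto that cylinder, and since $\pi_q$ and the dependence of $\al(q)$ on $q$ are analytic and non-degenerate on $[q_G,M+1]$, there is a constant $c>0$ with $c^{-1}|p-q|\leq d(\al(p),\al(q))\leq c|p-q|^{\gamma}$ for some $\gamma\in(0,1]$ on this range; Hölder maps do not increase Hausdorff dimension by more than a factor $1/\gamma$, but in fact here the cleaner route is to note the lower bound only needs $\dim_{\operatorname{H}}(\I_N)\geq\dim_{\operatorname{H}}(C)$, which follows as soon as $\Phi^{-1}|_C$ is Lipschitz; and $\Phi^{-1}$ being monotone and continuous on an interval, its restriction to the Cantor set $C$ is locally Lipschitz away from the countably many points where $\Phi$ jumps, which do not affect dimension.

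I expect the main obstacle to be exactly this last transfer step: reconciling the lexicographic/symbolic Cantor structure with the real-parameter $q$ through a map that is not globally bi-Lipschitz. The safest fix, which I would write out, is the mass-distribution principle applied directly in the $q$-variable: push the uniform Bernoulli measure $\mu$ on $C$ forward to $\nu=\Phi^{-1}_*\mu$ on $\I_N\subset\R$, and estimate $\nu$ of a small interval $(q-\rho,q+\rho)$. Because $\al$ is strictly increasing and the derivative of $q\mapsto\pi_q(\al)$ type quantities are bounded above and below on $[q_G,M+1]$, an interval of length $\rho$ maps into a union of boundedly many cylinders of $\al$-depth $\asymp\log_2(1/\rho)$, each of $\mu$-mass $\asymp\bigl((M+1)^N-2\bigr)^{-\log_2(1/\rho)/N}=\rho^{\,s}$ with $s=\log\bigl((M+1)^N-2\bigr)/\bigl(N\log(M+1)\bigr)$ after converting the base-$2$ exponent using $\log_2=\log_{M+1}(M+1)\cdot\log_2$ bookkeeping. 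Hence $\nu\bigl((q-\rho,q+\rho)\bigr)\leq C\rho^{\,s}$, and the mass distribution principle (\cite{Fal1990}) yields $\dim_{\operatorname{H}}(\I_N)\geq s$, which is the claimed bound. Everything else — admissibility, the count $(M+1)^N-2$, the homogeneity of the Cantor set — is routine.
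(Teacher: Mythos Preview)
The paper itself does not prove this lemma: it simply recalls it from \cite[Lemma~6.4]{AlcBakKon2016}. So there is nothing in the present paper to compare your argument against, and your proposal should be judged on its own merits. The overall strategy --- build a homogeneous Cantor set $C$ of admissible $\alpha$-sequences with $(M+1)^N-2$ free choices per $N$-block, then push a Bernoulli measure through to the $q$-parameter via the mass distribution principle --- is exactly the right idea, and is essentially what \cite{AlcBakKon2016} does. But the execution contains a genuine gap.

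The problem is the repeated confusion about where the base $M{+}1$ enters. Hausdorff dimension does not depend on a ``log convention''; in the metric $d(\x,\y)=2^{-j}$ of \eqref{eq:metric d} the set $C$ has dimension $\log_2\bigl((M{+}1)^N-2\bigr)/N$, not $\log\bigl((M{+}1)^N-2\bigr)/(N\log(M{+}1))$, and no amount of ``$\log_2=\log_{M+1}(M{+}1)\cdot\log_2$ bookkeeping'' changes that. Relatedly, your Lipschitz step is in the wrong direction: if $\Phi^{-1}\!\mid_C$ were Lipschitz (from $(C,d)$ to $(\I_N,|\cdot|)$) you would obtain $\dim_{\operatorname{H}}(\I_N)\le\dim_{\operatorname{H}}(C)$, not $\ge$. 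And in your mass-distribution sketch the cylinder depth corresponding to a $\rho$-interval in $q$ is not $\asymp\log_2(1/\rho)$.

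The correct mechanism is this: for $q,q'\in\I_N$ whose quasi-greedy expansions first differ at index $n$, one has $|q-q'|\asymp q^{-n}$; since every $q\in\I_N$ satisfies $q\le M{+}1$ (indeed $q$ is close to $M{+}1$ because $\alpha(q)$ begins with $M^{2N-1}$), this gives $|q-q'|\ge c\,(M{+}1)^{-n}$. Equivalently, the map $\Phi:q\mapsto\alpha(q)$ is H\"older of exponent $1/\log_2(M{+}1)$ into $(C,d)$, or --- more transparently --- is bi-Lipschitz if one equips $\Sig$ with the metric $(M{+}1)^{-j}$ instead of $2^{-j}$. With that correction your mass-distribution argument goes through verbatim: an interval of length $\rho$ in $q$ meets cylinders of depth $n\asymp\log_{M+1}(1/\rho)$, each of $\mu$-mass $\bigl((M{+}1)^N-2\bigr)^{-n/N}=\rho^{\,s}$ with $s=\log\bigl((M{+}1)^N-2\bigr)/(N\log(M{+}1))$, and the mass distribution principle yields $\dim_{\operatorname{H}}(\I_N)\ge s$. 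The admissibility check (that every such sequence lies in $\vb$, using that runs of $M$'s and of $0$'s in positions $>2N$ have length at most $2N-2$) is indeed routine, as you say.
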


Then, from Lemma \ref{lem:64} we obtain directly the following:

\begin{proposition}\label{pr:hdc3} 
$\dim_{\operatorname{H}}(\Ciii) = 1.$ Moreover, $\dim_{\operatorname{H}}(\Civ) = 1.$
\end{proposition}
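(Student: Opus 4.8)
The plan is to prove that $\dim_{\operatorname{H}}(\Ciii) = 1$ by exhibiting, for every $N \geq 2$, a subset of $\Ciii$ whose Hausdorff dimension is bounded below by $\log((M+1)^N - 2)/N\log(M+1)$, and then letting $N \to \infty$. The natural candidate is $\I_N$: by Corollary~\ref{pr:specn} (or Proposition~\ref{pr:spec3} together with Proposition~\ref{pr:spec2}), every $q \in \I_N$ satisfies that $(\vb_q, \si)$ has the specification property, hence $\I_N \subset \Ciii$ for every $N \geq 2$. Therefore Lemma~\ref{lem:64} gives
$$\dim_{\operatorname{H}}(\Ciii) \geq \dim_{\operatorname{H}}(\I_N) \geq \frac{\log((M+1)^N - 2)}{N\log(M+1)} \quad\text{for every } N \geq 2.$$
Letting $N \to \infty$, the right-hand side tends to $1$ since $\log((M+1)^N - 2)/N\log(M+1) = 1 + \log(1 - 2(M+1)^{-N})/N\log(M+1) \to 1$. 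Combined with the trivial upper bound $\dim_{\operatorname{H}}(\Ciii) \leq \dim_{\operatorname{H}}(\vl) = 1$ noted just above the statement, this yields $\dim_{\operatorname{H}}(\Ciii) = 1$.

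For the second assertion, $\dim_{\operatorname{H}}(\Civ) = 1$, I would first observe that the implication chain \eqref{eq:sequenceofshifts} gives $\Ciii \subseteq \Civ$ (a mixing subshift with specification is synchronised), since every $q \in \Ciii$ parametrises a transitive --- indeed mixing, by Proposition~\ref{pr:mixing} --- symmetric subshift with the specification property, which is therefore synchronised. Hence $\Civ \supseteq \Ciii$, and by the first part and monotonicity of Hausdorff dimension,
$$1 = \dim_{\operatorname{H}}(\Ciii) \leq \dim_{\operatorname{H}}(\Civ) \leq \dim_{\operatorname{H}}(\vl) = 1,$$
so $\dim_{\operatorname{H}}(\Civ) = 1$.

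I do not expect a genuine obstacle here: all the hard work has already been done, in Lemma~\ref{lem:64} (the dimension estimate for $\I_N$), in Corollary~\ref{pr:specn} (membership $\I_N \subset \Ciii$), and in the identification $\dim_{\operatorname{H}}(\vl)=1$. The only mild point of care is making the inclusion $\Ciii \subseteq \Civ$ rigorous: one must note that the elements of $\Ciii$ are by definition in $\vl$ with $\al(q)$ strongly irreducible, hence in $\I$, so the subshift is transitive and the notion of ``synchronised'' applies; the implication ``specification $\Rightarrow$ synchronised'' then applies verbatim via \eqref{eq:sequenceofshifts} since $(\vb_q,\si)$ is mixing. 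No new estimates are required.
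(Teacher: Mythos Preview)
Your proposal is correct and follows essentially the same approach as the paper's own proof: invoke $\I_N \subset \Ciii$ (via Corollary~\ref{pr:specn}), apply Lemma~\ref{lem:64}, let $N \to \infty$, and then deduce $\dim_{\operatorname{H}}(\Civ)=1$ from the inclusion $\Ciii \subset \Civ$ given by \eqref{eq:sequenceofshifts}. The paper's argument is slightly terser but identical in substance.
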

\begin{proof}
As in the proof of \cite[Theorem 3]{AlcBakKon2016}, we note that $\I_N \subset \Ciii$ for every $N \geq 2$. From Lemma \ref{lem:64} the statement holds by letting $N \to \f$. Finally, \eqref{eq:sequenceofshifts} gives us $\dim_{\operatorname{H}}(\Civ) = 1.$
\end{proof}

Now, as a consequence of Lemma \ref{th:hdderong}, Proposition \ref{pr:continuityinB} and Proposition \ref{pr:denseweakirr} we have $$\dim_{\operatorname{H}}(\WI) =\dim_{\operatorname{H}}(([q_T, M+1] \cap \vl)\setminus \Ciii) = 1,$$ thus
$$\dim_{\operatorname{H}}(\vl\setminus \Ciii) = 1 .$$ 

Finally, Lemma \ref{th:hdderong}, Proposition \ref{pr:continuityinB} and Corollary \ref{pr:notsyncisdense} imply the following statement.

\begin{proposition}\label{pr:dimHC5}
$$\dim_{\operatorname{H}}(\Cv) = 1.$$
\end{proposition}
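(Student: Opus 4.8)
The plan is to prove that $\dim_{\operatorname{H}}(\Cv) = 1$ by combining the local-to-global principle for Hausdorff dimension (Lemma \ref{th:hdderong}) with the continuity of the local dimension on $\overline{\B}$ (Proposition \ref{pr:continuityinB}) and the density of $\Cv \cap \I$ in $\overline{\B} \cap [q_T, M+1]$ (Corollary \ref{pr:notsyncisdense}). Since $\Cv \cap \I \subset \overline{\B} \cap [q_T, M+1] \subset \overline{\ul}$, and $\overline{\ul}$ is compact, the set $\Cv \cap \I$ has compact closure. I would first observe that, by Corollary \ref{pr:notsyncisdense}, the set $\set{p \in \I : \al(p) \text{ is dense in } \vb_p} = \Cv \cap \I$ is dense in $\overline{\B} \cap [q_T, M+1]$, so its closure contains $\overline{\B} \cap [q_T, M+1]$.

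Next, I would apply the second half of Lemma \ref{th:hdderong}: since the closure of $\Cv \cap \I$ is compact and the map $q \longmapsto \dim_{\operatorname{H}}^{\operatorname{loc}}(\overline{\B}, q)$ is continuous on $\overline{\B}$ (Proposition \ref{pr:continuityinB}), and since by \cite[Theorem 2]{KalKonLiLu2017} we have $\dim_{\operatorname{H}}^{\operatorname{loc}}(\overline{\B}, q) = \dim_{\operatorname{H}}(\V_q)$ for $q \in \overline{\B}$, we obtain
$$\dim_{\operatorname{H}}(\Cv) \geq \dim_{\operatorname{H}}(\Cv \cap \I) = \mathop{\sup}\limits_{q \in \Cv \cap \I} \dim_{\operatorname{H}}^{\operatorname{loc}}(\overline{\B}, q).$$
Because $\Cv \cap \I$ is dense in $\overline{\B} \cap [q_T, M+1]$ and the local dimension function is continuous there, this supremum equals $\mathop{\sup}\limits_{q \in \overline{\B} \cap [q_T, M+1]} \dim_{\operatorname{H}}^{\operatorname{loc}}(\overline{\B}, q) = \dim_{\operatorname{H}}(\overline{\B} \cap [q_T, M+1])$.

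To finish, I would verify that $\dim_{\operatorname{H}}(\overline{\B} \cap [q_T, M+1]) = 1$. This follows from Theorem \ref{th:mainrsd} $iii)$, which gives $\dim_{\operatorname{H}}(\overline{\B}) = 1$, together with the fact that $\overline{\B} \cap (1, q_T)$ contributes nothing extra: indeed, the full-dimension part of $\overline{\B}$ is realised in $[q_T, M+1]$ (the sets $\I_N$ constructed in Section \ref{sec:topologicaldynamics}, satisfying $\dim_{\operatorname{H}}(\I_N) \to 1$ by Lemma \ref{lem:64}, all lie in $[q_T, M+1]$ since $\I_N \subset \I \subset \overline{\B} \cap [q_T, M+1]$). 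Hence $\dim_{\operatorname{H}}(\overline{\B} \cap [q_T, M+1]) = 1$, and therefore $\dim_{\operatorname{H}}(\Cv) \geq 1$. Since $\Cv \subset \vl$ and $\dim_{\operatorname{H}}(\vl) = 1$, the reverse inequality is immediate, giving $\dim_{\operatorname{H}}(\Cv) = 1$.

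The main obstacle I anticipate is making the application of Lemma \ref{th:hdderong} fully rigorous: one must check that the continuity hypothesis applies to the set $A = \Cv \cap \I$ (whose closure is the compact set $\overline{\B} \cap [q_T, M+1]$, on which $q \mapsto \dim_{\operatorname{H}}^{\operatorname{loc}}(\overline{\B}, q)$ is continuous by Proposition \ref{pr:continuityinB}), and that $\dim_{\operatorname{H}}^{\operatorname{loc}}(\overline{\B}, q)$ and $\dim_{\operatorname{H}}^{\operatorname{loc}}(\Cv \cap \I, q)$ agree at density points — this last point is where the density of $\Cv \cap \I$ in $\overline{\B} \cap [q_T, M+1]$ and the continuity of the local dimension are jointly used, since for a dense subset $A$ of a set $B$ with continuous local-dimension function on $\overline{B}$, one has $\sup_{q \in A} \dim_{\operatorname{H}}^{\operatorname{loc}}(B,q) = \sup_{q \in B}\dim_{\operatorname{H}}^{\operatorname{loc}}(B,q)$. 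Everything else is a routine bookkeeping of inclusions already established in the paper.
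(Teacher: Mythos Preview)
Your proposal is essentially the paper's own proof: the paper simply states that Proposition~\ref{pr:dimHC5} follows from Lemma~\ref{th:hdderong}, Proposition~\ref{pr:continuityinB}, and Corollary~\ref{pr:notsyncisdense}, and you have identified and assembled exactly these three ingredients in the same way. The obstacle you flag in your last paragraph---that Lemma~\ref{th:hdderong} is stated for the local dimension of $A$ itself while Proposition~\ref{pr:continuityinB} gives continuity of $q\mapsto\dim_{\operatorname{H}}^{\operatorname{loc}}(\overline{\B},q)$---is real, and the paper leaves it equally implicit (the parallel argument for $\WI$ just above Proposition~\ref{pr:dimHC5} has the identical structure), so your write-up is at the same level of rigor as the paper.
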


\begin{proof}[\textbf{\textit{Proof of Theorem \ref{th:symshift1}}}]
Theorem \ref{th:symshift1} follows from Propositions \ref{pr:spec1}, \ref{pr:spec2}, \ref{pr:spec4}. \ref{pr:hdc3}, \ref{pr:synch2}, \ref{pr:dimHC5} and Lemma \ref{lem:synch1}.
 \end{proof}

\section{Final comments and open questions}
\label{sec:open}

\noindent We notice that Definition \ref{def:strongweaksequences} can be generalised in the following way. We say that a sequence $\al = (\al_i) \in \vb$ is \emph{strongly $*$-irreducible} if $\al$ is $*$-irreducible and there exists $N \in \N$ such that for every $n \geq N$ with 
$(\al_1 \ldots \al_n^-)^\f \in \vb$, then $(\al_1 \ldots \al_n^-)^\f$ is $*$-irreducible. Similarly, a sequence $(\al_i)^\f \in \vb$ is \emph{weakly $*$-irreducible} if $\al$ is $*$-irreducible and there exist infinitely many $n \in \N$ such that $(\al_1 \ldots \al_n^-)^\f \in \vb$ and $(\al_1 \ldots \al_n^-)^\f$ is not $*$-irreducible. We can also define \emph{strongly irreducible numbers} and \emph{weakly irreducible numbers} as in Definition \ref{def:strongweaknumber}. 

Using the constructions performed in Subsections \ref{constructstrong} and \ref{constructweak} with small modifications it is not difficult to see that there exist uncountably many strongly $*$-irreducible sequences and uncountably many weakly $*$-irreducible sequences. Also, it is not hard to check that strongly irreducible numbers and weakly irreducible numbers are dense in $(q_{KL} ,q_T) \cap \vl$ where $q_{KL}$ is defined in \eqref{eq:23}. Then, it is natural to investigate the Hausdorff dimension of the set of strongly $*$-irreducible numbers and $*$-weakly irreducible numbers.

\begin{question}\label{q:hd}
What is the Hausdorff dimension of strongly irreducible and weakly irreducible numbers? 
\end{question}

We would like to make some comments about the symbolic dynamics of $(\vb_q,\si)$ when $q \in (q_{KL}, q_T)$. In \cite[Proposition 5.10]{AlcBakKon2016} it was shown that if $q$ is a $*$-irreducible base $q$ such that $\al(q)$ is periodic then for every $p \in I^*(q)$, $\vb_p$ contains a unique subshift of finite type $(X,\si)$ such that $h_{\operatorname{top}}(X) = h_{\operatorname{top}}(\vb_q) = h_{\operatorname{top}}(\vb_p)$. To the best of the knowledge of the author it is not known that $(X,\si)$ is a mixing subshift. Also, if $q \in \vl \cap [q_{KL}, q_T)$ it is known that $(\vb_q, \si)$ is not a transitive subshift \cite[Lemma 3.3]{AlcBakKon2016}. However, we do not know much about the symbolic dynamics of the transitive components of $(\vb_q, \si)$.

\begin{question}\label{q:trans}
Consider $q \in \overline{\B} \cap (q_{KL}, q_T)$. 
\begin{enumerate}[$i)$]
\item Is it true that if $(\vb_q, \si)$ is sofic, then there exists a unique transitive subshift $(X, \si)$ such that $h_{\operatorname{top}}(X) = h_{\operatorname{top}}(\vb_q)$ and $(X,\si)$ is a sofic subshift?
\item Is it true that if $q$ is a strongly $*$-irreducible number then $(\vb_q, \si)$ contains a unique transitive subshift $(X, \si)$ such that $h_{\operatorname{top}}(X) = h_{\operatorname{top}}(\vb_q)$ and $(X,\si)$ is a has the specification property?
\item What are the conditions for the quasi-greedy expansion of $q$ to ensure that $(\vb_q, \si)$ contains a unique transitive subshift $(X, \si)$ such that $h_{\operatorname{top}}(X) = h_{\operatorname{top}}(\vb_q)$ and $(X,\si)$ is synchronised? 
\end{enumerate}
\end{question}    

It would be also interesting to know if for every $q \in \vl$ the subshift $(\vb_q, \si)$ is \emph{balanced} or \emph{boundedly supermultiplicative} as in \cite[Definition 3.1, Definition 3.2]{BakGhe2015}. In particular, this question is interesting from two different perspectives: firstly, it is not clear if there is an example of a non-transitive balanced shift or a non-transitive boundedly supermultiplicative subshift. Secondly, in \cite[p. 639]{BakGhe2015} the authors ask if there is an example of a balanced subshift without the almost specification property. It would be interesting to find such example in $\Cv$ or in $\overline{\B}\cap[q_T, M+1] \setminus \Ciii$.

It would be interesting to classify the set of $q \in (\overline{\B}\cap[q_T, M+1]) \setminus \Ciii$ satisfying the weaker forms of specification defined in \cite{KwiLacOpr2016} and calculate their size. Finally, to the best of the knowledge of the author, it is not known if every transitive symmetric $q$-shift is \emph{entropy minimal}, i.e. that every subshift $(X, \sigma)$ of $(\vb_q, \sigma)$ satisfies $$h_{\operatorname{top}}(X) < h_{\operatorname{top}}(\vb_q)$$ --- see\cite{GarRaPav2019}.
   
\section*{Acknowledgements}

\noindent The author is indebted with Felipe Garc\'ia-Ramos and with Edgardo Ugalde for all their support during the development of this research. Also, the author wants to thank Simon Baker, \'Oscar Guajardo and Derong Kong for their useful remarks and comments. Finally, the author thank the anonymous referee for their rigorous and meticulous reading of this research and their very helpful suggestions that led to an improved presentation.

\end{document}